\newtheorem{thm}{Theorem}[section]
\newtheorem{lem}[thm]{Lemma}
\newtheorem{prop}[thm]{Proposition}
\newtheorem{cor}[thm]{Corollary}
\newtheorem*{thm*}{Theorem}
\newtheorem*{prop*}{Proposition}
\newtheorem*{cor*}{Corollary}
\theoremstyle{definition}
\newtheorem{ex}[thm]{Example}
\theoremstyle{definition}
\newtheorem{rmk}[thm]{Remark}
\theoremstyle{definition}
\newtheorem{df}{Definition}
\begin{document}

\author{Hanno von Bodecker\footnote{Fakult{\"a}t f{\"u}r Mathematik, Ruhr-Universit{\"a}t Bochum, 44780 Bochum, Germany}}
\title{On the geometry of the $f$-invariant}
\date{}

\maketitle

\begin{abstract}
The $f$-invariant is a higher version of the $e$-invariant that takes values in the divided congruences between modular forms; it can be formulated as an elliptic genus of manifolds with corners of codimension two.

In this thesis, we develop a geometrical interpretation of the $f$-invariant in terms of index theory, thereby providing an analytical link between the stable homotopy groups of the spheres and the arithmetic of modular forms. 
In particular, we are able to establish a formula that allows us to compute the $f$-invariant from a single face. Furthermore, we apply our results to the situation of cartesian products and principal circle bundles,  performing explicit calculations. 
\end{abstract}

\newpage
\thispagestyle{empty}
\enlargethispage*{29cm}
\tableofcontents

\newpage
\addcontentsline{toc}{section}{Introduction}
\section*{Introduction}

The computation of the stable homotopy groups of the sphere, which, by the Pontrjagin-Thom construction, can be interpreted as the bordism groups of framed manifolds, is one of the most fundamental problems in pure mathematics, and the Adams-Novikov spectral sequence (ANSS) serves as a powerful tool to attack this problem, see e.g.\ \cite{rave}. In \cite{Lau00}, the ANSS is interpreted in terms of manifolds with corners, their codimension corresponding to the AN filtration degree; furthermore, an invariant for elements of second AN filtration is proposed. As a follow-up to the degree and the $e$-invariant, this so-called $f$-invariant arises as an elliptic genus of manifolds with corners of codimension two and takes values in the divided congruences between modular forms.

It is well-known that the classical genera of {\em{closed}} manifolds can be understood in terms of index theory, and the seminal work of Atiyah, Patodi, and Singer on index theory on manifolds with {\em{smooth}} boundary \cite{APS1} can be used  to relate the spectral asymmetry of Dirac operators to the $e$-invariant  \cite{APS2}.

The purpose of this thesis is to  show that these powerful ideas also enable us to provide geometrical insight into the $f$-invariant. The outline is as follows: 

Section \ref{prelim} consists of a brief recollection of the relevant background material, namely tangential structures, $\langle k\rangle$-manifolds, characteristic forms, some index theory results, and the Hirzebruch elliptic genus of level $N$, followed by an (admittedly biased) exposition of the index theoretical aspects of the $e$-invariant; lastly, we recall the definition of the $f$-invariant in terms of the relative classes of a suitable $\langle2\rangle$-manifold.

In section \ref{main}, we set out to explore the geometry behind the occurrence of the  divided congruences in the definition of the $f$-invariant. A somewhat surprising result is the following:

\begin{thm*}
Let $X$ be a compact manifold of dimension $2n$, and let $E_1$, $E_2$, and $F$  be hermitian vector bundles over $X$ such that $E_1\oplus E_2\cong TX^{st}$,  and that there is a given trivialization  $\psi:E_1|_{\partial X}\cong\partial X\times \mathbb{C}^k$. Then, for compatible connections, the inhomogeneous combination of modular forms given by
$$\int_X{\widetilde{Ell}(\nabla^{E_1})Ell_0(\nabla^{E_2})ch(\nabla^{F})}$$
has an integral $q$-expansion.
\end{thm*}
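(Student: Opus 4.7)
The plan is to establish integrality of the $q$-expansion coefficient-by-coefficient, identifying each $q^k$-coefficient of the integral as the APS index of a twisted Dirac-type operator on $X$.

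The first step is to rewrite the integrand in index-theoretic form. The level-$N$ elliptic genus admits a factorization $Ell(\nabla^E)=Ell_0(\nabla^E)\,ch(\nabla^{\Theta_q E})$ for a formal $q$-series $\Theta_q E\in K(X)[[q]]$ of virtual bundles built functorially from symmetric and exterior powers of $E$, with $\Theta_q E\equiv\mathbb{C}\pmod q$. Hence $\widetilde{Ell}(\nabla^{E_1})=Ell_0(\nabla^{E_1})\,ch(\nabla^{\Theta_q E_1-\mathbb{C}})$, and combining with the multiplicativity of $Ell_0$ and the splitting $E_1\oplus E_2\cong TX^{st}$ the integrand rearranges to
\begin{equation*}
Ell_0(\nabla^{TX})\,ch(\nabla^{V_q}),\qquad V_q:=(\Theta_q E_1-\mathbb{C})\otimes F,
\end{equation*}
a characteristic form in which $V_q$ has vanishing $q^0$-coefficient.

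The second step uses $\psi$ to kill the form near $\partial X$. I would choose $\nabla^{E_1}$ to be the flat connection induced by $\psi$ in a collar of $\partial X$; then every Chern-Weil form of the virtual bundle $\Theta_q E_1-\mathbb{C}$ vanishes in the collar, because its $q^0$-part is zero by construction and its higher $q$-parts are polynomials in the vanishing curvature of $\nabla^{E_1}$. Thus the integrand vanishes identically near $\partial X$, and extending $X$ to the cylindrical-end manifold $\bar X=X\cup(\partial X\times[0,\infty))$ — with $E_1$ extended as the trivial bundle and $E_2,F$ pulled back from $\partial X$ — leaves the integral unchanged.

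The third step applies the APS index theorem in its cylindrical-end $L^2$-version to the Dirac operator $D$ attached to the stable framing $E_1\oplus E_2\cong TX^{st}$, twisted by each virtual-bundle coefficient $V_k$ of $V_q$. Because $V_k$ is trivial as a virtual bundle on the cylindrical end and its connection decomposes as a direct sum of trivial connections there, the $L^2$-index $\operatorname{ind}_{L^2}(D\otimes V_k)$ is a well-defined integer, and the APS formula collapses to
\begin{equation*}
\int_X Ell_0(\nabla^{TX})\,ch(\nabla^{V_k})=\operatorname{ind}_{L^2}(D\otimes V_k)\in\mathbb{Z}.
\end{equation*}
The main obstacle will be the careful handling of the cylindrical-end contribution: one must verify that the combination of the trivial connection on $E_1$ from $\psi$ and the subtraction $\Theta_q E_1-\mathbb{C}$ encoded in $\widetilde{Ell}$ suffices to eliminate all $\eta$-type corrections, so that no secondary defect survives. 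Once this is confirmed, each $q^k$-coefficient is a bona fide integer index, and the integrality of the full $q$-expansion follows.
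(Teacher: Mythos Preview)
Your approach is essentially that of the paper: rewrite the integrand as a twisted Todd density times the Chern character of a virtual bundle which is canonically trivial on $\partial X$, and then appeal to APS. Two points deserve more care, and these are precisely where the paper spends its effort.

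First, your step~1 produces $Ell_0(\nabla^{TX})$, but what multiplicativity actually gives is $Ell_0(\nabla^{E_1}\oplus\nabla^{E_2})$, i.e.\ the Todd-type form built from the \emph{split} connection, whereas the local index density in APS is $\hat A(\nabla^{LC})\,ch(\nabla^{L^{1/2}})\,ch(\Lambda_{-\zeta}(E_1\oplus E_2)^*)$. These differ by an exact Chern--Simons term; the paper notes explicitly that the resulting boundary integral vanishes because it is multiplied by $ch(\nabla^{V_k})$, which is zero on $\partial X$ thanks to the pure-gauge condition. Your observation that the integrand vanishes in the collar is morally the same, but you should make the transgression explicit rather than writing the APS formula as if it directly outputs $Ell_0(\nabla^{TX})$.

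Second, the $\eta$-type correction does not literally vanish just because $V_k$ has virtual rank zero on the end; one needs that the two summands $V_k^{\pm}$ restrict to \emph{isomorphic} bundles with \emph{the same} (pure gauge) connection on $\partial X$, so that the boundary operators coincide and their $\xi$-invariants cancel. The paper packages this as Lemma~\ref{keylemma}: for a trivialized bundle with pure-gauge connection on a closed odd-dimensional $U$-manifold, the reduced $\xi$-invariant $\tilde\xi=\xi(\eth\otimes E)-\mathrm{rk}(E)\,\xi(\eth)$ is an integer, and this persists under twisting by an auxiliary $F$. Since each $q^k$-coefficient is built functorially from the virtually reduced bundles $\overline{E_1}$, $\overline{E_1^*}$, $\overline{E_1\otimes\mathbb{C}}$ (all trivialized with pure-gauge connection on $\partial X$), this lemma supplies exactly the control you flag as the ``main obstacle''.

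Finally, note that the index you obtain is not in $\mathbb{Z}$ but in $\mathbb{Z}[\zeta,1/N]$, because the twisting bundle and the stabilizing factor $(1-\zeta)^{-\mathrm{rk}}$ in $Ell_0$ introduce $\zeta$; this is the correct target ring for ``integral $q$-expansion'' here.
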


Having this result at our disposal, we write down a similar expression on a  $(U,fr)^2$-manifold and determine which reductions have to be made in order to 
retrieve the information that depends only on its corner; we call the resulting geometrical invariant $\check{f}$, and it will serve as a tool for an index theoretical computation of the topological $f$-invariant later on.

Section \ref{calculable?} is devoted to establishing a method of calculating the $f$-invariant in a more analytical fashion, based on $\check{f}$. 
To this end, we are going to introduce the notion of a {\em $(U,fr)^2_f$-manifold}; roughly speaking, these manifolds are families of $(U,fr)$-manifolds parameterized by $(U,fr)$-manifolds. This construction  enables us to reformulate the $f$-invariant in the following form:

\begin{thm*}
Let $Z$ be a $(U,fr)^2_f$-manifold,  and let  $\nabla_1=\pi^*\nabla^{E}$ and $\nabla_2$  be compatible connections on $E_1=\pi^*E$ and $E_2$, respectively.  
Then the $f$-invariant  of its  corner $M$ is given by
$$f(M)\equiv \int_B{\hat{e}_{\Gamma}\widetilde{Ell}(\nabla^{E})},$$
where we defined \textup{(}a de Rham representative of\textup{)} the $e_{\Gamma}$-invariant of  a family:
$$\hat{e}_{\Gamma}\equiv\int_{Z/B}Ell_0(\nabla_2)\mod\left(im(ch:K^{\Gamma}(B)\rightarrow H^{even}(B,\mathbb{Q}[\zeta]))\right)_{dR}.$$
\end{thm*}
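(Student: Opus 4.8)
The plan is to reduce the statement to the single-face description of the geometric invariant $\check f$ from Section~\ref{main}, to separate the variables along the fibration by the projection formula, and to interpret the resulting fibrewise integral through the families index theorem, as the families analogue of the Atiyah--Patodi--Singer description of the $e$-invariant recalled in Section~\ref{prelim}. A $(U,fr)^2_f$-manifold is by construction a bundle $\pi\colon Z\to B$ of $(U,fr)$-manifolds-with-boundary over a $(U,fr)$-manifold $B$ with framed boundary $\partial B$; I would first record that this realises $Z$ as a $(U,fr)^2$-manifold whose two faces are the vertical boundary $\partial^v Z\to B$ (a bundle of \emph{closed} framed manifolds) and $\pi^{-1}(\partial B)\to\partial B$, and whose corner is $M=\partial^v Z\cap\pi^{-1}(\partial B)$. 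With $E_1=\pi^*E=\pi^*TB^{st}$ and $E_2=T(Z/B)^{st}$ one has $E_1\oplus E_2\cong TZ^{st}$, the $(U,fr)$-structure of the fibres trivialises $E_2$ over $\partial^v Z$, and the framing of $\partial B$ trivialises $E_1$ over $\pi^{-1}(\partial B)$; these are exactly the data required by the Theorem of Section~\ref{main} and by the construction of $\check f$.

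Next I would invoke the single-face formula for $\check f$ applied to the face $\partial^v Z$: modulo divided congruences with integral $q$-expansion it expresses $f(M)$ as the integral over $\partial^v Z$ of $\widetilde{Ell}(\nabla^{E_1})\,Ell_0(\nabla^{E_2})$, any auxiliary Chern character factor being trivial in this situation. Since $\nabla_1=\pi^*\nabla^E$, the factor $\widetilde{Ell}(\nabla^{E_1})=\pi^*\widetilde{Ell}(\nabla^E)$ is pulled back from $B$; the projection formula for fibrewise integration, combined with Stokes' theorem on the manifold-with-corners $Z$ to rewrite the integral over the face $\partial^v Z$ as the fibrewise-interior integral over $Z/B$ (exactly as in the definition of $\check f$), then gives
$$f(M)\equiv\int_B\widetilde{Ell}(\nabla^E)\wedge\Big(\int_{Z/B}Ell_0(\nabla_2)\Big).$$

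It then remains to identify $\int_{Z/B}Ell_0(\nabla_2)$ as a de Rham representative of $\hat e_\Gamma$. This fibrewise integral over the manifold-with-boundary fibres is an eta-form of Bismut--Cheeger type for the vertical twisted Dirac operator attached to the level-$N$ genus; its class in $H^{even}(B;\mathbb{Q}[\zeta])$ is independent of the compatible geometric data up to the Chern character of the vertical family index, which by the families index theorem lies in $\mathrm{im}\big(ch\colon K^\Gamma(B)\to H^{even}(B;\mathbb{Q}[\zeta])\big)$, where $K^\Gamma$ is the level-$N$ (equivalently $\mathbb{Z}/N$-equivariant) $K$-theory carrying the torsion structure of $Ell$ and $\zeta$ a primitive $N$-th root of unity. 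Reducing modulo this image is precisely the definition of $\hat e_\Gamma$, whence $f(M)\equiv\int_B\hat e_\Gamma\,\widetilde{Ell}(\nabla^E)$.

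Finally I would verify that the ambiguity created by this reduction matches that of $f$: for $x\in K^\Gamma(B)$ represented by a connection $\nabla^F$, the Theorem of Section~\ref{main} applied with base $B$, $E_1=TB^{st}$, $E_2$ of rank zero and this $F$ shows that $\int_B\widetilde{Ell}(\nabla^E)\,ch(\nabla^F)$ has an integral $q$-expansion, i.e.\ lies in the indeterminacy of the topological $f$-invariant; conversely every such integral divided congruence arises this way, so the asserted congruence is sharp. I expect the crux to be the previous step --- pinning the indeterminacy of the fibrewise eta-form down to exactly $\mathrm{im}(ch\colon K^\Gamma(B)\to\cdots)$, neither coarser nor finer --- which needs the families index theorem in its Atiyah--Patodi--Singer/Bismut--Cheeger form for fibres with boundary, compatible with the level-$N$ twisting, together with a careful comparison of the transgression corrections carried by the two faces $\partial^v Z$ and $\pi^{-1}(\partial B)$; the remaining parts are bookkeeping with the projection formula and Stokes' theorem.
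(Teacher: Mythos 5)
Your proposal has the correct core ideas — fibrewise integration (Fubini) to separate the variables, and the Theorem of Section~\ref{main} applied with $E_1\cong TB^{st}$, $E_2=0$, $F$ arbitrary to show that the $K$-theory ambiguity in $\hat e_\Gamma$ only changes $\int_B\hat e_\Gamma\widetilde{Ell}(\nabla^E)$ by an integral divided congruence. That second point is exactly the step the paper uses. But there are several detours and misreadings that obscure what is in fact a very short argument.

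First, there is no ``single-face formula for $\check f$'' to invoke. By definition, $\check F(Z,\nabla_1,\nabla_2)=\int_Z\widetilde{Ell}(\nabla_1)Ell_0(\nabla_2)$ is an integral over the whole $(2n+2)$-dimensional $\langle2\rangle$-manifold $Z$, not over a face; the form $\widetilde{Ell}(\nabla_1)Ell_0(\nabla_2)$ has top degree $2n+2$ and cannot literally be integrated over the $(2n+1)$-dimensional face $Z''$. The correct first move is simply to apply Fubini to $\int_Z$, pulling $\widetilde{Ell}(\pi^*\nabla^E)=\pi^*\widetilde{Ell}(\nabla^E)$ out of the fibre integral; you arrive at the right display but via a route that does not parse.

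Second, your ``identification'' of $\int_{Z/B}Ell_0(\nabla_2)$ as a de Rham representative of $\hat e_\Gamma$, via Bismut--Cheeger eta-forms and the families index theorem, is not a step that needs proving here: $\hat e_\Gamma$ is \emph{defined} to be this fibrewise integral reduced modulo $(\mathrm{im}(ch))_{dR}$. Along the way you also call $\int_{Z/B}Ell_0(\nabla_2)$ an eta-form and speak of its cohomology class, but this form is generically \emph{not} closed (the fibres have boundary), and in the Bismut--Cheeger decomposition it equals the eta-form \emph{plus} the local index contribution and kernel contribution; that decomposition is the content of Corollary~\ref{formula}, not of this theorem. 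Also, $K^\Gamma(B)$ here is $K(B)\otimes\mathbb{Z}^\Gamma=K(B)\otimes\mathbb{Z}[\zeta,1/N]$ as used in the proof, not $\mathbb{Z}/N$-equivariant $K$-theory.

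Third, you never cleanly dispose of the ``$d\omega$'' part of the indeterminacy, which is what the subscript $_{dR}$ buys: $\hat e_\Gamma$ is only determined up to exact forms on $B$, and one must check that $\int_B d\omega\,\widetilde{Ell}(\nabla^E)=\int_{\partial B}\omega\,\widetilde{Ell}(\nabla^E)=0$ — this follows from Stokes and the flatness of $\nabla^E$ on $\partial B$, and is the third line of the paper's proof. Finally, the claim that the congruence is ``sharp'' is not asserted by the theorem and is not needed; the claim that this sharpness is ``the crux'' mislocates where the actual work lies (namely, Theorem~\ref{div}). Strip the eta-form detour and the face-integral detour, state Fubini directly, cite Theorem~\ref{div} for the $K$-theory image, and add the Stokes computation for exact forms, and you have exactly the paper's proof.
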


This result furnishes a close analogy to the $e$-invariant, and makes possible the use of  `classical'  techniques from index theory to compute the $f$-invariant from a single face:

\begin{cor*}
Let $Z$ be as above. Assume that the kernel of the twisted Dirac family $\eth^{\Gamma}_{\partial X}$ is of constant rank along the fibers. Then the $f$-invariant of the corner is given by
$$\int_B{\left\{\left(\hat{\eta}\left(\eth^{\Gamma}_{\partial X}\right)+{\textstyle{\frac{1}{2}}}ch\left(\nabla^{\ker\eth^{\Gamma}_{\partial X}}\right)+{\textstyle{\int_{Z''/B}{cs}}}\right) \widetilde{Ell}(\nabla^{E})\right\}}.$$
\end{cor*}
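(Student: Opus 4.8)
The plan is to deduce the Corollary from the preceding Theorem by making the de Rham representative $\hat e_\Gamma$ explicit, using the Atiyah--Patodi--Singer index theorem for families. Recall that the Theorem gives $f(M)\equiv\int_B\hat e_\Gamma\,\widetilde{Ell}(\nabla^E)$, where $\hat e_\Gamma$ is a de Rham representative of $\int_{Z/B}Ell_0(\nabla_2)$ modulo the image of $ch:K^\Gamma(B)\to H^{even}(B,\mathbb{Q}[\zeta])$; so it suffices to identify that class with $\hat\eta(\eth^\Gamma_{\partial X})+\tfrac12 ch(\nabla^{\ker\eth^\Gamma_{\partial X}})+\int_{Z''/B}cs$. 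The point of departure is that, by the construction of the level-$N$ genus recalled in Section~\ref{prelim}, $Ell_0(\nabla_2)$ is a characteristic form which arises as the local index density, along the fibers of $Z\to B$, of the twisted Dirac operator family $\eth^\Gamma_X$; hence $\int_{Z/B}Ell_0(\nabla_2)$ is precisely the quantity to which the family index theorem for manifolds with boundary applies. Since the fibers $X$ are $(U,fr)$-manifolds they carry a boundary $\partial X$, and the hypothesis that $\ker\eth^\Gamma_{\partial X}$ has constant rank is exactly what guarantees that these kernels assemble into a smooth vector bundle over $B$ and that the associated APS boundary problem varies smoothly with the parameter.

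With this set-up in place, I would invoke the family index theorem of Bismut--Cheeger --- and, in the presence of a non-trivial kernel, its refinement in which the eta form is replaced by the reduced eta form $\hat\eta+\tfrac12 ch(\nabla^{\ker})$: choosing geometric data that are of product type in a collar of $\partial Z$, one obtains in $H^{even}(B,\mathbb{Q}[\zeta])$
$$\int_{Z/B}Ell_0(\nabla_2)\;=\;ch\big(\mathrm{Ind}\,\eth^\Gamma_X\big)\;+\;\hat\eta\big(\eth^\Gamma_{\partial X}\big)\;+\;{\textstyle\frac12}ch\big(\nabla^{\ker\eth^\Gamma_{\partial X}}\big),$$
up to the sign conventions fixed earlier. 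The decisive point is that the (APS) index bundle $\mathrm{Ind}\,\eth^\Gamma_X$ represents a class in $K^\Gamma(B)$, so that $ch(\mathrm{Ind}\,\eth^\Gamma_X)$ falls into the subgroup $im\big(ch:K^\Gamma(B)\to H^{even}(B,\mathbb{Q}[\zeta])\big)$ by which we divide in the definition of $\hat e_\Gamma$; hence, modulo that subgroup, $\hat e_\Gamma\equiv\hat\eta(\eth^\Gamma_{\partial X})+\tfrac12 ch(\nabla^{\ker\eth^\Gamma_{\partial X}})$.

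It remains to remove the auxiliary assumption that the data on $Z$ --- in particular the connection $\nabla_2$ --- be of product form near $\partial Z$. On the transition region $Z''$ introduced in Section~\ref{calculable?} I would compare the given data with a deformation that is cylindrical there; under this deformation the characteristic form $Ell_0$ changes by an exact form $d(cs)$, with $cs$ the corresponding Chern--Simons transgression, and integrating over the fibers of $Z''\to B$ and applying Stokes' theorem produces the extra term $\int_{Z''/B}cs$. Adding the three contributions --- all of them differential forms on $B$ --- and substituting into $f(M)\equiv\int_B\hat e_\Gamma\,\widetilde{Ell}(\nabla^E)$ yields the asserted formula.

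The main obstacle is not the formal bookkeeping but making the analytic input precise in the twisted family setting: one must check that the level-$N$ twist is compatible with the Bismut superconnection formalism so that the index genuinely defines a class in $K^\Gamma(B)$ --- this is what makes it vanish in the quotient --- that the constant-rank hypothesis is exactly what is needed to pass from the APS index of a single manifold with boundary to a smoothly varying family, with $\nabla^{\ker\eth^\Gamma_{\partial X}}$ the projected connection appearing in the statement, and that the Chern--Simons term is correctly localized on $Z''$ with the prescribed connection rather than on some auxiliary interpolation. Keeping track of orientation and sign conventions through the fiber integrations and the APS boundary corrections is the remaining point that demands care.
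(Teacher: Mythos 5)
The overall scaffolding of your argument is right: you correctly recognize that the Theorem reduces the Corollary to identifying a de Rham representative of $\hat e_\Gamma=\int_{Z/B}Ell_0(\nabla_2)$ modulo the image of $ch$, you invoke the Bismut--Cheeger family index theorem (Theorem~\ref{bc}) to write $\int_{Z/B}(\text{index density})=ch(\mathrm{Ind})+\hat\eta+\tfrac12 ch(\nabla^{\ker})$, and you observe that $ch(\mathrm{Ind})$ is killed in the quotient while the exact forms on $B$ do not affect $\int_B(\cdot)\widetilde{Ell}(\nabla^E)$. All of this matches the paper.

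The gap is in your explanation of the Chern--Simons term. You attribute $\int_{Z''/B}cs$ to "removing the auxiliary assumption that the data be of product form near $\partial Z$" and "deforming to a cylindrical structure." But product-type structure near the faces is already built into the definition of compatible connections (Section~\ref{main}) and is explicitly assumed in the construction preceding the Corollary; if that were the only issue, the deformation would be trivial and you would obtain no $cs$ term at all. The actual source of $cs$ is a connection mismatch of a different sort: the local index density in Theorem~\ref{bc} is built from $\hat A(\nabla^{T(Z/B)})$, where $\nabla^{T(Z/B)}$ is the metric/Levi-Civita-type connection from \eqref{nablatzb}, whereas $Ell_0(\nabla_2)$ uses the unitary connection $\nabla_2$ on the complex bundle $E_2\cong T(Z/B)^{st}$, and these two connections on the underlying real bundle do not coincide (cf.\ the remark after Theorem~\ref{AS}: the Levi-Civita connection is not compatible with the complex structure). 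The paper replaces $\nabla_2^{\mathbb{R}}$ by $\nabla^{T(Z/B)}$ inside the $\hat A$ factor underlying $Ell_0$, the transgression \eqref{chernsimons} between these two connections is the $cs$ form, and fiberwise Stokes gives $\int_{Z/B}d\,cs\equiv\int_{Z''/B}cs$ modulo exact forms on $B$. This is the same mechanism already used to pass from \eqref{e_c} to \eqref{e_c_2} in the one-face ($e$-invariant) discussion --- a parallel worth noticing. Without correctly identifying which two connections the transgression compares, the $cs$ term in your argument has no content, so this is a genuine missing step rather than a cosmetic imprecision.
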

Furthermore, we establish a vanishing result:

\begin{thm*}
Let $M$ be the codimension-three corner of a $(U,fr)^3_f$-manifold $Y$. Then the $f$-invariant of $M$ is trivial. 
\end{thm*}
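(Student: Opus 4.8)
The plan is to feed the iterated structure of $Y$ into the family reformulation of the $f$-invariant obtained above and then to carry out one more transgression, in complete analogy with the proof that the $e$-invariant of a framed boundary vanishes. First I would unravel the definition of a $(U,fr)^3_f$-manifold: a suitable codimension-one face $Z=\partial_i Y$ through the codimension-three corner $M$ is then a $(U,fr)^2_f$-manifold whose codimension-two corner is exactly $M$, with the bundles $E_1=\pi^{*}E$, $E_2$ and the compatible connections inherited from $Y$; moreover the base $B$ of the family $\pi:Z\to B$ itself carries a $(U,fr)_f$-structure, so that $B$ is the total space of a fibre bundle $p:B\to A$ over a \emph{closed} $(U,fr)$-manifold $A$, compatibly with all the data, with $E$ and $\nabla^{E}$ pulled back from $A$ up to a compatible deformation. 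The second theorem above, applied to $Z$, then gives
$$ f(M)\equiv\int_B\hat{e}_{\Gamma}\,\widetilde{Ell}(\nabla^{E}),\qquad \hat{e}_{\Gamma}\equiv\int_{Z/B}Ell_0(\nabla_2)\mod\left(im(ch:K^{\Gamma}(B)\rightarrow H^{even}(B,\mathbb{Q}[\zeta]))\right)_{dR}. $$

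The next step is to show that the extra frame direction $p:B\to A$ pushes the right-hand side into the lattice modulo which the topological $f$-invariant is defined. Writing $\int_B=\int_A\circ\int_{B/A}$ and using both the transgression property of $\widetilde{Ell}$ and the fact that $\hat{e}_{\Gamma}$ is the fibrewise $\eta$-type form of the twisted boundary Dirac family along $Z/B$ (so that its differential is controlled by the corresponding family index density), a family APS-type argument along the fibres of $p$ — for which the $(U,fr)^3_f$-structure supplies the requisite family of manifolds with boundary — identifies $\int_{B/A}\!\left(\hat{e}_{\Gamma}\,\widetilde{Ell}(\nabla^{E})\right)$, modulo $im(ch)$, with the Chern character of a family index, hence with a class in $im(ch:K^{\Gamma}(A)\rightarrow H^{even}(A,\mathbb{Q}[\zeta]))$. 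Integrating such a class over the closed framed manifold $A$ produces, by the integral $q$-expansion statement of the first theorem, an inhomogeneous combination of modular forms lying in the defining lattice of the $f$-invariant, so that $f(M)\equiv 0$.

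The genuine obstacle is structural rather than analytic: one must extract from the inductive definition of a $(U,fr)^3_f$-manifold the correct face $Z$, verify that the data induced on $Z$ and on its base $B$ really constitute a $(U,fr)^2_f$- respectively $(U,fr)_f$-structure with mutually compatible connections, and keep track of the corner contributions along the several faces — the kernels of the twisted Dirac families $\eth^{\Gamma}_{\partial X}$ — either checking that they form vector bundles along the relevant fibres or stabilizing them so that they do, without changing the de Rham classes in play. Once the three nested fibrations and their connections have been set up coherently, the transgression argument and the appeal to integrality are routine, and the vanishing of $f(M)$ follows.
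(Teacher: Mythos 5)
The paper's proof is of a quite different and considerably simpler character; it does not invoke family index theory at all. After reducing (via Proposition~\ref{stable} and the well-definedness of $\check f$) to the face $Z=\pi^{-1}(\partial_1 B')$ with its induced $(U,fr)^2_f$-structure, the decisive observation is that $E$ is \emph{trivialized} over $\partial B$, so the connection $\nabla_1=\pi^{*}\nabla^{E}$ pulled back to $\partial_1 B'$ is \emph{flat}; hence $\widetilde{Ell}(\nabla_1\oplus\nabla_2)=\widetilde{Ell}(\nabla_2)$. Two successive fiber integrations then exhibit $\check f(M)$ as $\int_{\partial B}$ of a closed form that extends over $B$, and the whole quantity vanishes on the nose by Stokes' theorem — not merely modulo the defining lattice.

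Your proposal correctly identifies the nested fibrations and the idea of applying the family reformulation to one face, but the mechanism you propose for the vanishing is flawed. The premise that ``$E$ and $\nabla^{E}$ [are] pulled back from $A$'' does not hold: the bundle over the intermediate base $\partial_1 B'$ is $E_2^{B'}|_{\partial_1 B'}\oplus(\pi')^{*}E|_{\partial B}$, and only the second, already-trivialized summand is pulled back from $A=\partial B$. The proposed ``family APS-type argument'' identifying $\int_{B/A}\bigl(\hat e_\Gamma\,\widetilde{Ell}(\nabla^{E})\bigr)$ with the Chern character of a family index is unjustified — the extra factor $\widetilde{Ell}(\nabla^{E})$ is not an index density of any boundary family attached to $p$, and no theorem in the paper produces such an identification — and in any case it is aimed at the wrong target: you would at best land in the lattice, whereas the paper gets an identically zero representative. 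What is missing from your argument is precisely the flatness of $\nabla_1$ over $\partial_1 B'$, which is the actual geometric content of the $(U,fr)^3_f$-hypothesis, together with the elementary Stokes argument it enables; without that observation the proof does not close.
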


The following sections focus on sample calculations and further simplifications: In section \ref{productsection}, we treat cartesian products, in which case we can actually reduce the computations to the corner itself:

\begin{thm*}
Let $Y_1$, $Y_2$ be odd-dimensional framed manifolds, and let $m(Y_i)$ be {\em any} modular form of weight  $(\dim Y_i +1)/2$ w.r.t.~$\Gamma=\Gamma_1(N)$ such that $m(Y_i)\equiv e_{\mathbb{C}}(Y_i)\mod\mathbb{Z}^{\Gamma}[\![q]\!]$. Then we have
$$f(Y_1\times Y_2)\equiv {m}(Y_1) e_{\mathbb{C}}(Y_2)\equiv-{m}(Y_2) e_{\mathbb{C}}(Y_1).$$
\end{thm*}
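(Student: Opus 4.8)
The plan is to realize $Y_1 \times Y_2$ as the corner of a suitable $(U,fr)^2_f$-manifold and then feed this into the family formula for $f$ established earlier in the paper. First I would observe that each odd-dimensional framed $Y_i$ bounds a $(U,fr)$-manifold $X_i$ (since the framed bordism class dies in the complex cobordism with the appropriate structure), and that the $e$-invariant $e_{\mathbb{C}}(Y_i)$ is computed as a relative characteristic number on $X_i$. The product structure should then present $Y_1\times Y_2$ as the codimension-two corner of $X_1\times X_2$, viewed as a family over, say, $X_1$ (or $B$ a point after the Pontrjagin–Thom collapse), with the $(U,fr)^2_f$-structure coming from the two boundary trivializations $\partial X_i = Y_i$. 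The key point is that on a product the fiber integration $\int_{Z/B}$ and the base integration decouple, so the family $e_\Gamma$-invariant $\hat e_\Gamma$ reduces to (a representative of) $e_{\mathbb{C}}(Y_2)$ on the base $X_1$, while the remaining integral $\int_B \hat e_\Gamma\,\widetilde{Ell}(\nabla^E)$ becomes $\int_{X_1}\widetilde{Ell}(\nabla^{E})\cdot e_{\mathbb{C}}(Y_2)$, and this relative integral over $X_1$ is exactly a choice of modular form $m(Y_1)$ of the correct weight congruent to $e_{\mathbb{C}}(Y_1)$.

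Next I would pin down the weight and modularity bookkeeping: $\widetilde{Ell}(\nabla^{E_1})$ contributes a form whose $q$-expansion, after the relative integration over $X_1$ (using $E_1\oplus E_2\cong TX_1^{st}$ and the boundary trivialization), is a modular form of weight $(\dim X_1)/2 = (\dim Y_1+1)/2$ with respect to $\Gamma_1(N)$ up to an integral power series ambiguity — precisely the content of the first boxed theorem in the excerpt applied to the product. That ambiguity is why $m(Y_1)$ is only specified modulo $\mathbb{Z}^\Gamma[\![q]\!]$, and since $e_{\mathbb{C}}(Y_2)$ already lies in $\mathbb{Q}[\zeta][\![q]\!]$ modulo integral series, the product $m(Y_1)e_{\mathbb{C}}(Y_2)$ is well-defined in the divided-congruence quotient regardless of the choices. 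The antisymmetry $m(Y_1)e_{\mathbb{C}}(Y_2)\equiv -m(Y_2)e_{\mathbb{C}}(Y_1)$ I would derive from the graded-commutativity of the cup product combined with the sign inherent in swapping the two codimension-one faces of a $\langle 2\rangle$-manifold (equivalently, the corner of $X_1\times X_2$ inherits opposite co-orientations from the two factorizations as a family), which flips the sign of the relative fundamental class.

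I would carry this out in the following order: (1) recall the bounding manifolds $X_i$ and the relative formula for $e_{\mathbb{C}}(Y_i)$; (2) assemble the $(U,fr)^2_f$-structure on (a collapse of) $X_1\times X_2$ with corner $Y_1\times Y_2$, checking the compatibility-of-connections hypotheses so that $\nabla_1 = \pi^*\nabla^{E}$; (3) apply the family $f$-invariant theorem and use the product/Fubini structure to split $\hat e_\Gamma$ off as $e_{\mathbb{C}}(Y_2)$ over the base; (4) identify the base integral $\int_{X_1}\widetilde{Ell}(\nabla^E)$ with an admissible choice of $m(Y_1)$ via the first boxed theorem, noting independence of all choices in the quotient; (5) establish the antisymmetry by the co-orientation/graded-commutativity argument, and observe it also follows by running steps (1)–(4) with the roles of $Y_1$ and $Y_2$ exchanged.

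The main obstacle I expect is step (2): verifying that the naive product manifold genuinely carries a $(U,fr)^2_f$-structure in the precise sense defined in Section \ref{calculable?} — in particular that the "family of $(U,fr)$-manifolds parameterized by $(U,fr)$-manifolds" axioms are met by $X_1\times X_2\to X_1$, that the framings on the faces glue correctly at the corner, and that one can arrange compatible connections making $\nabla_1$ genuinely pulled back from the base so that the clean decoupling of the two fiber integrations is legitimate rather than merely true up to exact terms. Handling those correction terms (they should integrate to something in the integral-series ambiguity, hence vanish in the quotient) is the delicate bookkeeping that the proof will need to dispatch carefully, likely by a Chern–Simons argument analogous to the $\int_{Z''/B}cs$ term appearing in the corollary above.
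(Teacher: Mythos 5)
Your main line of attack matches the paper's: realize $Y_1\times Y_2$ as the corner of the product $X_1\times X_2$ viewed as the $(U,fr)^2_f$-manifold $Z=B\times X$ with $B=X_1$, $X=X_2$, invoke Theorem~\ref{f_from_family}, note that because the fiber bundle is trivial the family $\hat e_\Gamma$-form is concentrated in degree zero (a constant $e_\Gamma$ on $B$), and identify the surviving base integral $\int_B\widetilde{Ell}(\nabla^E)$ as the divided congruence that supplies $m(Y_1)$. That is exactly the derivation of \eqref{simpleproduct1}. One small imprecision: what you extract from $\hat e_\Gamma$ is a priori $e_\Gamma(\partial X)=\langle Ell_0^{\Gamma}(TX),[X,\partial X]\rangle$, not $e_\mathbb{C}$; that these agree modulo $\mathbb{Z}^\Gamma$ is the content of Lemma~\ref{ereduced} (an application of Lemma~\ref{keylemma}), and this reduction is an essential, not automatic, step. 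Likewise, $\int_{X_1}\widetilde{Ell}(\nabla^E)$ is the \emph{divided congruence} $m'(Y_1)-e_\Gamma(Y_1)$ (by Example~\ref{divex}), rather than a single modular form $m(Y_1)$ outright; that one can freely replace it with any $\bar m(Y_1)=m(Y_1)-e_\mathbb{C}(Y_1)$ in the quotient is exactly what the proof has to establish.

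Where your route genuinely diverges is the antisymmetry $m(Y_1)e_\mathbb{C}(Y_2)\equiv -m(Y_2)e_\mathbb{C}(Y_1)$. You propose either a geometric sign argument (graded-commutativity of cup product, co-orientations at the corner) or redoing steps (1)--(4) with roles swapped. The latter is plausible but relies on the external fact that for odd-dimensional factors $[Y_1\times Y_2]=-[Y_2\times Y_1]$ in $\pi^{st}_*$ and that $f$ is a homomorphism; the former, as phrased, is vague and does not engage with the fact that the defining integrand $\widetilde{Ell}(\nabla_1)\,Ell_0(\nabla_2)$ is not literally antisymmetric in $(\nabla_1,\nabla_2)$ --- one has to use identity \eqref{theidentity} and the integrality results to see that the symmetric defect $\int\widetilde{Ell}(\nabla^\oplus)-\int\widetilde{Ell}(\nabla_1)\widetilde{Ell}(\nabla_2)$ dies in the quotient. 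The paper instead proves the antisymmetry \emph{inside} the divided-congruence calculus by a short chain of congruences ($\bar m'(Y_1)e_\mathbb{C}(Y_2)\equiv\bar m'(Y_1)m(Y_2)\equiv -e_\Gamma(Y_1)m(Y_2)\equiv\cdots\equiv -m(Y_1)\bar m(Y_2)$), each step trading a factor by something killed by the ambiguity $\underline{\underline{D}}^\Gamma_{n+1}$; this simultaneously establishes independence of the choice of $m$ and the antisymmetry, and requires nothing from stable homotopy theory beyond what is already set up. Your swap argument buys a quicker intuition but leans on unproved or loosely cited facts; the paper's buys a self-contained, choice-independent statement. Finally, your worry about the $(U,fr)^2_f$-axioms on $X_1\times X_2$ is unfounded: the paper simply observes that the product carries this structure ``in the obvious way,'' since the bundle $Z\to B$ is trivial and all the compatibility conditions hold by construction, with $\nabla_1=\pi^*\nabla^E$ literally pulled back from the base.
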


Thus, the $f$-invariant of a product is determined by the $e_{\mathbb{C}}$-invariants of the factors, and the latter can be calculated by various  means. We illustrate this result by performing explicit calculations at the level $N=3$, covering a broad  variety of products. 

In section \ref{circlebundlesection}, we turn our attention to principal circle bundles, and our index theoretical approach yields the following: 

\begin{cor*}
Let $L$ be a hermitian line with unitary connection $\nabla^L$ over a $(U,fr)$-manifold $B$ of dimension $2n+2$, and let  $S(L)_|$ be the  framed  circle bundle over $\partial B$. Then we have 
$$f(S(L)_|)\equiv\sum_{k=0}^{n} \frac{B_{k+1}}{(k+1)!}\int_B{\left\{\left(\frac{iF^L}{2\pi}\right)^k\widetilde{Ell}(\nabla^{E})\right\}}.$$
\end{cor*}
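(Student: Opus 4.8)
We realise the corner $S(L)_|$ as the codimension-two corner of a disk bundle. Let $\pi\colon D(L)\to B$ be the closed unit disk bundle of $L$; its fibre is the $2$-disk, a $2$-dimensional $(U,fr)$-manifold, and its vertical tangent bundle is $\pi^*L$, so $TD(L)^{st}\cong\pi^*E\oplus\pi^*L$. Hence $D(L)$ carries a natural $(U,fr)^2_f$-structure over $B$ whose two faces are the sphere bundle $S(L)\to B$ (the fibrewise boundary) and $D(L)|_{\partial B}\to\partial B$, and whose corner is precisely $M=S(L)|_{\partial B}=S(L)_|$. Putting $E_1=\pi^*E$, $E_2=\pi^*L$, $\nabla_1=\pi^*\nabla^E$, and letting $\nabla_2$ be a connection on $\pi^*L$ compatible with the structure, the main theorem of Section~\ref{calculable?} reduces the problem to a single fibre integral (everything here and below being understood modulo the indicated image of $ch$):
$$f(S(L)_|)\equiv\int_B\hat e_\Gamma\,\widetilde{Ell}(\nabla^E),\qquad\hat e_\Gamma\equiv\int_{D(L)/B}Ell_0(\nabla_2).$$

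The crux is that $\nabla_2$ cannot be the pullback $\pi^*\nabla^L$ everywhere: over the face $S(L)$ the framing of the fibre circle trivialises $\pi^*L$ tautologically (the radial unit vector is a global section), so $\nabla_2$ restricts there to the associated trivial connection, while away from a collar of $S(L)$ we may take $\nabla_2=\pi^*\nabla^L$. Since $Ell_0(\pi^*\nabla^L)$ is pulled back from $B$ it has vanishing vertical degree, so its fibre integral over the $2$-disk vanishes; writing $Ell_0(\nabla_2)=Ell_0(\pi^*\nabla^L)+d\,cs(Ell_0;\pi^*\nabla^L,\nabla_2)$ and applying fibrewise Stokes (the fibrewise boundary being $S(L)$), the fibre integral collapses to $\int_{S(L)/B}cs(Ell_0;\nabla_2,\pi^*\nabla^L)$. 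In the tautological trivialisation $\pi^*\nabla^L=d+\alpha$, with $\alpha$ a global angular form, $d\alpha=\pi^*(iF^L/2\pi)$ and $\int_{S(L)/B}\alpha=1$; since $\alpha\wedge\alpha=0$ the curvature of the path $d+t\alpha$ is $t\,d\alpha$, so the Chern--Simons form of the characteristic power series $\phi_0$ of $Ell_0$ is $\alpha\wedge\pi^*\bigl((\phi_0(x)-\phi_0(0))/x\bigr)$ with $x=iF^L/2\pi$. Pushing down along the circle, and using that $Ell_0$ is the Todd genus (the $q=0$ shadow of the $\hat A$/Todd class behind the $e$-invariant recalled in Section~\ref{prelim}), so that $\phi_0(x)=x/(e^x-1)$ up to normalisation, we obtain
$$\hat e_\Gamma=\frac{\phi_0(x)-\phi_0(0)}{x}=\frac1{e^x-1}-\frac1x=\sum_{k\ge0}\frac{B_{k+1}}{(k+1)!}\,x^k.$$
Substituting this into the formula for $f$ above and expanding, the sum becomes finite for reasons of form-degree on $B$ and one reads off the stated identity. (The same output arises from the corollary preceding the statement, in keeping with the index-theoretic framing: the fibrewise operator is the spin$^c$-Dirac $\bar\partial$ of the disk fibres restricted to $S^1$, which is periodic, so $\ker\eth^\Gamma_{\partial X}$ is the constant-rank line of fibrewise constants; the reduced $\eta$-form of a circle bundle, together with $\tfrac12 ch(\nabla^{\ker})$ and the $cs$-term, reproduces the same Hurwitz-zeta/Bernoulli series --- this being the family version of the classical APS computation of the $\eta$-invariant of a circle.)

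The step I expect to be the main obstacle is the book-keeping in the middle paragraph: correctly identifying the framing-compatible connection $\nabla_2$ near the face $S(L)$ --- recognising that the boundary datum is the tautological trivialisation of $\pi^*L$, which is what forces the fibre integral to collapse to a transgression of $Ell_0$ depending only on $F^L$ --- together with tracking the exact terms and the $\partial B$-boundary contributions in the fibrewise Stokes argument. Once that is in place the Bernoulli numbers are essentially automatic, via the elementary identity $\int_0^1\phi_0'(ts)\,dt=(\phi_0(s)-\phi_0(0))/s$ applied to the Todd series; the remaining points --- the precise $q=0$ normalisation of $Ell_0$, the verification of the constant-rank hypothesis in the alternative route, and the check that nothing is lost modulo $(\operatorname{im}ch)_{dR}$ --- are routine.
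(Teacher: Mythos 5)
Your argument is correct but it replaces the paper's computation of $[\hat e_\Gamma]$ with a genuinely different and more elementary one. The paper (Proposition~\ref{eform}) first replaces $Ell_0$ by $Td$ (Lemma~\ref{ezdisk}), then transgresses from $\nabla_2$ to the canonical connection $\nabla^{T(Z/B)}$ (showing the resulting Chern--Simons term vanishes on $S(L)$ because the $\mathfrak{so}(2)$-curvature is abelian and restricts to zero), and then invokes the Bismut--Cheeger family APS theorem~\ref{bc} together with Zhang's formula for the $\hat\eta$-form of a circle bundle, which the paper rederives in Appendix~\ref{etazhang} via Poisson resummation. You instead transgress $Ell_0$ from $\nabla_2$ to the pullback connection $\pi^*\nabla^L$, note that $Ell_0(\pi^*\nabla^L)$ has no vertical degree so its integral over the disk fibre vanishes identically, and collapse the remaining term by fibrewise Stokes to $\int_{S(L)/B}cs$, where the Chern--Simons form factors as $\alpha\wedge\pi^*\bigl((\phi_0(x)-\phi_0(0))/x\bigr)$ and integrates against $\int_{S(L)/B}\alpha=1$. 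That bypasses the family index theorem and the $\hat\eta$-form computation entirely: the Bernoulli numbers drop out of the Todd power series rather than out of an $\eta$-invariant computation. Two small points worth cleaning up: the Todd series is $x/(1-e^{-x})$ (so your middle display should read $\tfrac{1}{1-e^{-x}}-\tfrac{1}{x}$), which is what produces the paper's $B_k=B_k(1)$ normalization — though the discrepancy with $x/(e^x-1)$ is only in the constant term $\pm\tfrac12$, hence invisible mod~$\mathbb{Z}$; and you work with $Ell_0$ directly rather than $Td$, which is fine because $Ell_0(x)-Td(x)=\tfrac{\zeta}{1-\zeta}x$ is a $\mathbb{Z}^\Gamma$-multiple of $c_1(L)$ and so falls into the indeterminacy of $\hat e_\Gamma$ — this is exactly what Lemma~\ref{ezdisk} records. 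Your parenthetical sketch of the index-theoretic alternative is essentially the paper's actual route.
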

 
As an application, we treat the case of principal torus bundles over a framed base. In particular, we perform explicit calculations for the generic situation up to (total) dimension 14, enabling us to determine the necessary and sufficient conditions for non-triviality (at the level $N=3$) in this range.

Finally, we decided to provide a rather extensive appendix: Besides some useful formul\ae, we remind the reader of the theory of modular forms of level $N$, deriving {{explicit}} expressions for $N=3$. Furthermore we expand the Hirzebruch genus at this level, thereby correcting some errors in the appendix of \cite{HBJ}. Moreover, we compile a list of the congruences relevant to the  computations carried out in the main part.  For the sake of completeness, we  also included a derivation of the $\hat\eta$-form in the situation of a principal circle bundle (following \cite{zhang94}).

\subsection*{Acknowledgements} 
First of all, I would like to thank my advisor, Professor Gerd Laures, for introducing me to the fascinating subject of elliptic cohomology and its diverse applications, and for proposing the topic of and supervising the work on this thesis. At the same time, I want to thank Professor Uwe Abresch, who has always been willing to share  his insight and interest in a wide variety of mathematical topics. 
Furthermore, I am grateful to all the members of the topology chair for creating such a pleasant atmosphere. 
 Last but not least, I acknowledge financial support from the DFG within the Graduiertenkolleg 1150 ``Homotopy and Cohomology''.

\section{Preliminaries}\label{prelim}

\subsection{Tangential structures}

We start by reviewing some basic definitions appearing in bordism theories, see e.g.~\cite{cf66}, \cite{stong68}: Let $X$ be a smooth compact manifold of dimension $n$ and consider `the' {\em stable tangent bundle} 
$$TX^{st}=TX\oplus X\times\mathbb{R}^{2k-n},\ 2k\geq n+2;$$
we suppress the dependence on $k$, as it does not matter for our purposes (as long as $2k\geq n+2$).
A {\em stably almost complex structure} on $X$ is a complex structure on $TX^{st}$, i.e.~a linear bundle map $J$ covering the identity and squaring to minus the identity on the fiber. Thus, $TX^{st}$ may be identified with a complex vector bundle $E$ (of rank $k$); consequently, we can define its Chern classes, and there is a preferred orientation on $X$. 
We denote the underlying homotopy class of the stably almost complex structure by $\phi$, which we will refer to as a {\em $U$-structure}; phrased differently, a $U$-structure is a lift (up to homotopy) of the classfying map of the tangent bundle: 
$$\begin{xy}
\xymatrix{
 & & BU\ar[d] \\
 X \ar[rr]^{TX^{st}}
 \ar[urr]^{\phi}& &BO }\\
 \end{xy}$$  
A pair $(X,\phi)$ is called a {\em $U$-manifold}; it will simply be  denoted by $X$ if confusion is unlikely.  Any $U$-structure admits a  `negative',  $-\phi$, and we set $-(X,\phi)=(X,-\phi)$; furthermore, $U$-structures are compatible w.r.t.~taking boundaries, i.e.~$\partial(X,\phi)=(\partial X,\partial\phi)$. We define an equivalence relation called {\em$U$-bordism}, $X_1 \sim_U X_2$, if  there is a $U$-manifold $W$ such that $\partial W\cong  X_1 \sqcup -X_2$. Disjoint union and cartesian product turn the set of $U$-bordism classes into a graded ring, the {\em complex bordism ring}  $\Omega^U_*$.

Similarly, a {\em framing} of $X$ is trivialization of the stable tangent bundle,
$$\psi:TX^{st}\cong X\times\mathbb{R}^{2k};$$
thus, up to homotopy, we have a lift of the classifying map to $EO$:
$$\begin{xy}
\xymatrix{
 & & EO\ar[d] \\
 X \ar[rr]^{TX^{st}}
 \ar[urr] & &BO }\\ 
 \end{xy}$$  
A {\em framed manifold} is a manifold with a homotopy class of trivializations of the stable tangent bundle. Take note that, by pulling back the canonical complex structure on $\mathbb{R}^{2k}$, any framed manifold becomes a $U$-manifold. 

Again, framings admit negatives, are compatible w.r.t.~taking boundaries, and we deem two framed manifolds to be equivalent, $X_1 \sim_{fr} X_2$, if there is a framed manifold $W$ such that $\partial W\cong  X_1 \sqcup -X_2$; the resulting graded ring, the {\em framed bordism ring}, will be denoted by $\Omega^{fr}_*$.

Let us sketch the relation to homotopy theory: We may embed the closed framed manifold $X$ into $\mathbb{R}^N$, for $N$ sufficiently large; up to homotopy, the framing is equivalent to a trivialization of the stable normal bundle $\nu X$, i.e.~we obtain a map
\begin{equation}\label{normal}
\varphi:\nu X\rightarrow \mathbb{R}^{N-n}.
\end{equation}
The normal bundle may be identified with a tubular neighborhood of $X$, and the map \eqref{normal} extends to a map of spheres,
$$\bar{\varphi}:S^N\rightarrow S^{N-n},$$
by sending the complement of the tubular neighborhood to the point at infinity; this is the so-called {\em Pontrjagin-Thom construction}. By Freudenthal's theorem, $\pi_{n+k}S^n$ is independent of $n$ provided that $n>k+1$; we call it the {\em $k^{th}$ stable homotopy group of the sphere} (or the {\em $k^{th}$ stable stem}) and denote it by $\pi^{st}_k$. The Pontrjagin-Thom construction depends only on the framed bordism class of $X$, and it is well-known that the map
$$\Omega^{fr}_k\rightarrow\pi^{st}_k,\quad [X]\mapsto[\bar{\varphi}],$$
is an isomorphism.

\subsection{$\langle k\rangle$-manifolds}
Later on, we want to allow manifolds to have {\em corners}: Recall from \cite{jaenich68} that we can  define a smooth $n$-dimensional manifold $Z$  with corners as being differentiably modeled on the open sets of $\{x\in\mathbb{R}^n|x_1\geq0,\dots,x_n\geq0\}$. If $x\in Z$ is represented by $(x_1,\dots x_n)$ in a local coordinate system, we denote by $c(x)$ the number of zeros in this $n$-tuple; this number is independent of the choice of coordinate system.
Note that $x$ belongs to the closure of at most $c(x)$ different connectedness components of $\{p\in Z|c(p)=1\}$. We call $Z$ a {\em manifold with faces}, if each $x\in Z$ does belong to the closure of $c(x)$ different components of $\{p\in Z|c(p)=1\}$. For a manifold with faces, the closure of a connectedness component of $\{p\in Z|c(p)=1\}$ has the structure of an $(n-1)$-dimensional manifold with corners and is called a {\em connected face} of $Z$; any union of pairwise disjoint connected faces is called a {\em face} of $Z$.

A {\em$\langle k\rangle$-manifold} is an $n$-dimensional manifold with faces $Z$ together with a $k$-tuple $(\partial_1Z,\dots,\partial_kZ)$ of faces such that
\begin{itemize}
\item[(i)]$\partial_1Z\cup\dots\cup\partial_kZ=\partial Z$ and
\item[(ii)]$\partial_iZ\cap\partial_jZ \mbox{ is a face of } \partial_iZ \mbox{ and of } \partial_jZ \mbox{ for } i\neq j.$
\end{itemize}
In particular, a $\langle0\rangle$-manifold is a manifold without boundary; in the situation of a $\langle1\rangle$-manifold, we recover the usual concept of a manifold with boundary.

\subsection{Connections, curvature, and Chern forms}

For the most part, we are adopting the notational conventions of \cite{BGV}: 
Let $E$ be a complex  vector bundle over a compact manifold $X$. We denote the space of all (smooth) sections  by $\Gamma(X,E)$, or, if confusion is unlikely, simply by $\Gamma(E)$. A {\em connection} (or {\em covariant derivative}) on $E$  is a  differential operator 
$$\nabla:\Gamma(E)\rightarrow\Gamma(T^*X\otimes E)$$
satisfying the Leibniz rule; 
usually we restrict our attention to covariant derivatives preserving a given hermitian metric on $E$, in which case we call the connection {\em unitary}. A covariant derivative extends to {\em $E$-valued differential forms}, which we denote by $\Omega^i(X,E)$. The {\em curvature} of a covariant derivative is the $End(E)$-valued two-form on $X$ given by
 $$F(u,v)=\nabla_u\nabla_v-\nabla_v\nabla_u-\nabla_{[u,v]},$$
 where $u$ and $v$ are vector fields on $X$.
Of course, these defintions carry over to the case of real vector bundles as well; in the situation of the Levi-Civita connection, we will denote the $\mathfrak{so}(TX)$-valued {\em Riemannian curvature} form  by $R$.

These concepts may be generalized to  $\mathbb{Z}/2$-graded bundles $E$, which will  be referred to as  {\em superbundles}; for such a superbundle, there is a total $\mathbb{Z}/2$-grading on the space of $E$-valued forms,  
$$\Omega^{\pm}=\sum\Omega^{2i}(X,E^{\pm})\oplus\sum\Omega^{2i+1}(X,E^{\mp}).$$
A {\em superconnection} on $E$ is an odd-parity first-order differential operator $\mathbb{A}:\Omega^{\pm}\rightarrow\Omega^{\mp}$ satisfying the Leibniz rule in the $\mathbb{Z}/2$-graded sense, and its action extends naturally to  $\Omega(X,End(E))$. The {\em curvature} of a superconnection $\mathbb{A}$ is the  operator $\mathbb{A}^2$, which is given by the action of an $End(E)$-valued differential form $F$ of even total degree; it satisfies the {\em Bianchi identity}, $$\mathbb{A}F=0.$$

Due to the supercommutativity of $\Lambda T^*X$, we obtain a canonical bundle map $Str:\Lambda T^*X\otimes End(E)\rightarrow\Lambda T^*X$; applied to sections, this yields the so-called {\em supertrace} on $End(E)$-valued forms. Now, if $\mathbb{A}^2\in\Omega^+(X,End(E))$ is the curvature of a superconnection, we may apply the supertrace to any analytical function $f$ of $\mathbb{A}^2$ to obtain an even  differential form on $X$, the {\em Chern-Weil form} of $\mathbb{A}$ corresponding to $f$; it is closed due to the Bianchi identity.
Furthermore, the {\em transgression formula},
$$\frac{d}{dt}Str(f(\mathbb{A}_t^2))=d\,Str\left(\frac{d\mathbb{A}_t}{dt}f^{\prime}(\mathbb{A}_t^2)\right),$$
applied to the family $\mathbb{A}_t=(1-t)\mathbb{A}_0+t\mathbb{A}_1=\mathbb{A}_0+t{\omega}$ and integrated w.r.t.~$t$, yields the relation
\begin{equation}\label{transgress}
Str(f(\mathbb{A}_1^2))-Str(f(\mathbb{A}_0^2))=d\int_0^1Str(\omega f^{\prime}(\mathbb{A}_t^2))dt.
\end{equation}
In particular, we may apply these constructions to an ordinary bundle with connection, and, contrary to \cite{BGV}, we normalize the characteristic forms such that they represent rational characteristic classes. Thus, if $E$ is a hermitian vector bundle with unitary connection $\nabla^E$ and curvature $F^E$, we denote the {\em Chern character form} by
$$ch\left(\nabla^E\right)=\mbox{tr}\exp\left(\frac{iF^E}{2\pi}\right),$$
and the {\em Todd genus form} by
$$Td\left(\nabla^E\right)=\det\left(\frac{iF/2\pi}{1-\exp\left(-iF/2\pi\right)}\right)=\exp\mbox{tr}\ln\left(\frac{iF/2\pi}{1-\exp\left(-iF/2\pi\right)}\right).$$
Similarly, for a real vector bundle with connection $\nabla$ and curvature $R$ we denote the {\em $\hat{A}$ genus form} by
$$\hat{A}(\nabla)={\det}^{1/2}\left(\frac{R/4\pi i}{\sinh(R/4\pi i)}\right)=\exp{\textstyle\frac{1}{2}}\mbox{tr}\ln\left(\frac{R/4\pi i}{\sinh(R/4\pi i)}\right).$$
Obviously, a unitary connection $\nabla^E$ on a hermitian vector bundle $E$  induces a connection $\nabla^{E_{\mathbb{R}}}$ on the underlying real bundle and, via the trace, a connection $\nabla^{\det E}$ on the hermitian line $\det E=\Lambda^{max}E$, which enables us to express
$$Td\left(\nabla^E\right)=\hat{A}\left(\nabla^{E_{\mathbb{R}}}\right)\exp\mbox{tr}\left(\frac{iF^E}{4\pi}\right).$$
Later on, it will be convenient to be able to change the connection on the underlying real bundle; by means of the transgression formula \eqref{transgress}, we can construct a differential form $cs\left(Td,\nabla_0,\nabla_1\right)$  satisfying
\begin{equation}\label{chernsimons}
d\ cs\left(Td,\nabla_0,\nabla_1\right)=\left\{\hat{A}(\nabla_1^{\mathbb{R}})-\hat{A}(\nabla_0^{\mathbb{R}})\right\}\exp\mbox{tr}\left(\frac{iF^E}{4\pi}\right),
\end{equation}
which we call the {\em Chern-Simons form} (associated to the Todd genus). This construction extends to twisted versions of the Todd genus as well, and we will often denote the corresponding forms simply by $cs$.

\subsection{Some classical index theory}\label{classicalindex}

Recall that the Atiyah-Singer index theorem \cite{Atiyah-Singer-I} identifies the analytical index of an elliptic (i.e.~Fredholm) operator $D$ on a closed oriented manifold $X$ with the topological index; more precisely, we obtain a map that sends the class of the symbol of $D$ - viewed as an element of the compactly supported $K$-theory of the tangent bundle - to the formal difference of the kernel and cokernel of $D$ (an element in the $K$-theory of the point), and the numerical value of the index can be computed using a simple formula  by passing to ordinary cohomology \cite{Atiyah-Singer-III}.

An alternative route to obtaining the index formula makes use of the heat kernel approach (our main reference is \cite{BGV}); its main advantage is the fact that it is {\em local} in the sense that it yields a differential form which is constructed canonically  from the metrical connections on the bundles involved and equates to the index upon integration. For our purposes, it will suffice to restrict our attention to the index theory of twisted $Spin^{\mathbb{C}}$ Dirac operators, although the theorems stated below hold for more general Clifford modules.

We define the group $Spin^{\mathbb{C}}(n)$ as (see e.g.~\cite{lawson-spin})
$$Spin^{\mathbb{C}}(n)=Spin(n)\times_{\mathbb{Z}/2}U(1),$$
and a {\em $Spin^{\mathbb{C}}$ structure} on an oriented Riemannian manifold $X$ of dimension $n$ is a choice of a hermitian line $L$ such that $$w_2(TX)\equiv c_1(L)\mod 2;$$
clearly, the existence of a $Spin^{\mathbb{C}}$ structure is equivalent to the condition $\beta w_2(TX)=0$, where $\beta$ is the Bockstein.

A unitary connection $\nabla^L$ on $L$, together with  the Levi-Civita connection $\nabla^{LC}$ on $TX$, determines a {\em $Spin^{\mathbb{C}}$ connection} $\nabla$; conversely, a $Spin^{\mathbb{C}}$ connections projects (under the canonical twofold covering) to a connection on the $SO(n)\times U(1)$ principal bundle and its associated vector bundles.

Take note that, for a hermitian line $L$ with curvature $F^L$, the  characteristic form given by
$$ch\left(\nabla^{L^{1/2}}\right)=\exp\left(\frac{iF^L}{4\pi}\right)$$
is well-defined even if  $L$ does not admit a global square root.

Obviously, the complex representations of $Spin^{\mathbb{C}}(n)$ are the same as those of $Spin(n)$, so we get an associated {\em complex spinor bundle $S$} over $X$; furthermore, if $n$ is even, $S$ is $\mathbb{Z}/2$-graded by means of the {\em chirality operator} $\gamma$, which is defined by Clifford multiplication with  $i^{n/2}e_1\dots e_n$.

We define the {\em $Spin^{\mathbb{C}}$ Dirac operator} by composing covariant differentiation with Clifford multiplication:
$$\eth={\textstyle\sum}\ e_i\cdot\nabla_{e_i}:\Gamma(X,S)\rightarrow\Gamma(X,S),$$
where the $e_i$ constitute a local orthonormal frame for $TX$.

Now let $\dim X=n$ be even; then $\eth$ anticommutes with $\gamma$, so we may decompose it into
$$\eth=\left(\begin{array}{cc} 0 & \eth^- \\ \eth^+ & 0 \end{array}\right),$$
where $\eth^{\pm}$ are the restrictions to sections of $S^{\pm}$, and $\eth^-$ is the adjoint of $\eth^+$, due to the unitarity of $\nabla$. We define the {\em index} of the $Spin^{\mathbb{C}}$ Dirac operator to be
$$Ind(\eth)=\dim\ker\eth^+-\dim\ker\eth^-.$$

Given a hermitian vector bundle $E$ over $X$ with unitary connection $\nabla^E$, we may form the {\em twisted $Spin^{\mathbb{C}}$ Dirac operator},
$$\eth\otimes E:\Gamma(X,S\otimes E)\rightarrow\Gamma(X,S\otimes E),$$
by using the  tensor product connection on $S\otimes E$, and all of our discussion above holds verbatim. Finally, we can state

\begin{thm}[Atiyah-Singer]\label{AS}
Let $X$ be a closed, oriented, even-dimensional Riemannian  manifold with $Spin^{\mathbb{C}}$ structure defined by a hermitian line $L$ and let $E$ be a hermitian vector bundle with unitary connection. Then the index of the twisted $Spin^{\mathbb{C}}$ Dirac operator  is given by the  formula
$$Ind(\eth\otimes E)=\int_X{\hat{A}(\nabla^{TX,LC})ch(\nabla^{L^{1/2}})ch(\nabla^E)}.$$
\end{thm}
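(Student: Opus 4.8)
The plan is to follow the heat-kernel proof of the index theorem in the spirit of \cite{BGV}, since the characteristic forms on the right-hand side have already been set up in Chern--Weil terms; in particular no appeal to the topological index is needed. Write $D=\eth\otimes E$ for the twisted $Spin^{\mathbb{C}}$ Dirac operator. On the closed manifold $X$ it is essentially self-adjoint with discrete real spectrum, $D^{2}$ preserves the $\mathbb{Z}/2$-grading of $S\otimes E$, and $D$ anticommutes with the chirality operator $\gamma$.

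\emph{Step 1 (McKean--Singer).} First I would show that for every $t>0$
$$\mathrm{Ind}(D)=\dim\ker\eth^{+}\!\otimes E-\dim\ker\eth^{-}\!\otimes E=\mathrm{Str}\bigl(e^{-tD^{2}}\bigr).$$
This is the usual supersymmetry cancellation: for each eigenvalue $\lambda\neq0$ of $D^{2}$, the operator $D$ restricts to an isomorphism between the $\lambda$-eigenspaces inside $\Gamma(S^{+}\otimes E)$ and inside $\Gamma(S^{-}\otimes E)$, so these summands contribute zero to the supertrace and only the finite-dimensional kernel survives. In particular the left-hand side of the claimed formula is independent of $t$, and it remains to compute $\lim_{t\to0}\mathrm{Str}(e^{-tD^{2}})$.

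\emph{Step 2 (Lichnerowicz and the short-time expansion).} Next I would record the Lichnerowicz/Bochner identity $D^{2}=\nabla^{*}\nabla+\tfrac{s}{4}+\mathcal{F}$, where $\nabla^{*}\nabla$ is the connection Laplacian on $S\otimes E$, $s$ is the scalar curvature of $X$, and $\mathcal{F}$ is the Clifford action of the twisting curvatures of $L$ and $E$. Standard parabolic theory then yields, along the diagonal, an asymptotic expansion $p_{t}(x,x)\sim(4\pi t)^{-n/2}\sum_{i\geq0}t^{i}\,\Phi_{i}(x)$ whose coefficients are universal expressions in the curvatures $R$, $F^{L}$, $F^{E}$ and their covariant derivatives, so that $\mathrm{Str}(e^{-tD^{2}})=\int_{X}(4\pi t)^{-n/2}\sum_{i}t^{i}\,\mathrm{str}\,\Phi_{i}$.

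\emph{Step 3 (Getzler rescaling).} The heart of the matter is to show that, once the fibrewise supertrace is taken, all apparently singular terms with $i<n/2$ cancel and the coefficient of $t^{0}$ equals the degree-$n$ part of $\hat{A}(\nabla^{TX,LC})\,ch(\nabla^{L^{1/2}})\,ch(\nabla^{E})$. I would carry this out by Getzler's rescaling: fix $x_{0}\in X$, pass to geodesic normal coordinates, trivialize $S\otimes E$ by radial parallel transport, and rescale simultaneously the time parameter, the coordinates, and the Clifford variables (a monomial of Clifford degree $p$ by $u^{-p/2}$ as $u\to0$). Under this rescaling $tD^{2}$ converges to a generalized harmonic oscillator, built from the Levi--Civita curvature \emph{two-form} $R$ of $X$ and the curvatures $F^{L}$, $F^{E}$, acting on $\Lambda T^{*}_{x_{0}}X\otimes\mathrm{End}(E_{x_{0}})$-valued functions. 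Mehler's formula gives the heat kernel of this oscillator in closed form; evaluating it at the origin and applying the supertrace — which by Berezin integration extracts the top exterior degree — reproduces, up to the normalizing powers of $2\pi i$ fixed above, precisely the top-degree component of $\hat{A}(\nabla^{TX,LC})\,ch(\nabla^{L^{1/2}})\,ch(\nabla^{E})$. In particular the negative powers of $t$ do not survive the supertrace.

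\emph{Step 4 (Conclusion).} Combining Steps 1--3,
$$\mathrm{Ind}(D)=\lim_{t\to0}\mathrm{Str}\bigl(e^{-tD^{2}}\bigr)=\int_{X}\hat{A}(\nabla^{TX,LC})\,ch(\nabla^{L^{1/2}})\,ch(\nabla^{E}),$$
since only the $t^{0}$ coefficient contributes. I expect Step 3 to be the genuine obstacle: one must justify that the rescaled heat kernels converge uniformly on the diagonal to the Mehler kernel — which requires the Duhamel/Volterra-series estimates together with careful bookkeeping of the filtration degrees — and then match Getzler's normalization of $R$, $F^{L}$, $F^{E}$ to the rational normalization of $\hat{A}$, $Td$, $ch$ adopted in this section. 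Everything else is either standard spectral theory on a closed manifold or formal Chern--Weil manipulation. (If one prefers to avoid Getzler's argument, the twisted $Spin^{\mathbb{C}}$ case can be deduced instead from the classical Atiyah--Singer theorem together with multiplicativity of the index under twisting, but the local heat-kernel proof is the one consonant with the rest of the text.)
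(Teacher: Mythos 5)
The paper does not prove Theorem~\ref{AS}; it is stated as classical background, with the topological-index route attributed to \cite{Atiyah-Singer-I}, \cite{Atiyah-Singer-III} and the local heat-kernel route to \cite{BGV}. Your four-step sketch — McKean--Singer cancellation, the Lichnerowicz formula and short-time diagonal expansion, Getzler rescaling with Mehler's formula and the Berezin-integral supertrace, then passage to the $t\to 0$ limit — is a correct outline of precisely the heat-kernel proof in \cite{BGV}, i.e.\ the route the paper itself singles out as best suited to the Chern--Weil normalizations it has just fixed. You also correctly flag the genuine technical burden (uniform convergence of the rescaled kernels and the matching of normalization constants in $\hat A$, $Td$, $ch$), so nothing essential is missing. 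Since the paper offers no proof to compare against, the only observation worth recording is that your sketch is faithful to the cited reference and to the local, connection-level formulation (with $\nabla^{TX,LC}$, $\nabla^{L^{1/2}}$, $\nabla^{E}$) that the subsequent sections of the paper actually use.
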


\begin{rmk}
If $X$ is a $U$-manifold, we get a canonical $Spin^{\mathbb{C}}$ structure by setting $L=\Lambda^{max}TX^{st}$. Take note that in this  situation
\begin{equation}\label{canon}
\hat{A}(\nabla^{TX,LC})ch(\nabla^{L^{1/2}})
\end{equation}
represents the Todd class, but we warn the reader that, unless $X$ is K\"ahler, the Levi-Civita connection will {\em not} be compatible with a complex structure on $TX$. On the other hand, if  $X$ is spin, i.e.~$w_2(TX)=0$, we may choose $L$ to be trivial, in which case we essentially recover the situation of `the' Dirac operator, usually denoted $\ /\!\!\!\!D$ (the precise definition of $\ /\!\!\!\!D$, which would take into account the choice of $Spin$ structure and avoid additional complexification, shall not be needed in this thesis).
\end{rmk}

Imposing {\em global} boundary conditions and requiring {\em product type structures} near the boundary, Atiyah, Patodi, and Singer were able to generalize the formula in Theorem \ref{AS} to the situation where $X$ has smooth non-empty boundary \cite{APS1}: 
Restriction to the boundary $\partial X$ induces an  operator 
$$\eth_{\partial X}\otimes E:\Gamma\left((S^+\otimes E)|_{\partial X}\right)\rightarrow\Gamma\left((S^+\otimes E)|_{\partial X}\right)$$
which is formally self-adjoint and elliptic; we may decompose the $L^2$ completion of $\Gamma((S^+\otimes E)|_{\partial X})$ into eigenspaces, and, letting  $P_{\geq0}$ denote the orthogonal projection onto the non-negative part, we define
$$\Gamma(S^+\otimes E,P_{\geq0})=\left\{s\in\Gamma(S^+\otimes E)\ |\ P_{\geq0}\left(s|_{\partial X}\right)=0\right\}.$$
Then, according to \cite{APS1}, 
\begin{equation}\label{apsproblem}
\eth^+\otimes E:\Gamma(S^+\otimes E,P_{\geq0})\rightarrow\Gamma(S^-\otimes E)
\end{equation}
defines an elliptic problem with finite index. Defining the function
$$\eta(\eth_{\partial X}\otimes E,s)=\sum_{\lambda\in spec\setminus\{0\}}{\lambda|\lambda|^{-s-1}},\ \mbox{Re}(s)>\!>1,$$
which extends meromorphically and is holomorphic at $s=0$, we may state:

\begin{thm}[Atiyah-Patodi-Singer]\label{aps}
Let $E$ be a hermitian vector bundle with unitary connection over an  oriented, even-dimensional, compact Riemannian  manifold $X$ with $Spin^{\mathbb{C}}$ structure defined by a hermitian line $L$. Assuming product type structures near the boundary $\partial X$, the index of the twisted $Spin^{\mathbb{C}}$ Dirac operator w.r.t.~the condition \eqref{apsproblem} is given by the  formula
$$Ind_{APS}(\eth  \otimes E)=\int_{X}{\hat{A}(\nabla^{TX,LC})ch(\nabla^{L^{1/2}})ch(\nabla^E)}-\xi(\eth_{\partial X} \otimes E),$$
where
$$\xi(\eth_{\partial X} \otimes E)=\textstyle{\frac{1}{2}}{\eta(\eth_{\partial X}\otimes E,0)}+\textstyle{\frac{1}{2}}\dim\ker(\eth_{\partial X}\otimes E).$$
\end{thm}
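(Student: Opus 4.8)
The plan is to follow the heat-kernel approach of Atiyah--Patodi--Singer, using Theorem~\ref{AS} to supply the interior contribution. Since the twisted $Spin^{\mathbb{C}}$ Dirac operator $\eth\otimes E$ is a generalized Dirac operator which, by the assumed product-type structure, takes the form $\sigma\,(\partial_u+\eth_{\partial X}\otimes E)$ on a collar $\partial X\times[0,1)$ (with $\sigma$ Clifford multiplication by the inward normal and $\eth_{\partial X}\otimes E$ formally self-adjoint), the general machinery applies verbatim. First I would attach the half-infinite cylinder $\partial X\times(-\infty,0]$ to obtain a complete manifold $\widehat X$, extending the bundles and connections translation-invariantly over the end, and recall (APS~I) that solutions of the boundary problem \eqref{apsproblem} correspond to $L^{2}$ solutions on $\widehat X$, so that $Ind_{APS}(\eth\otimes E)$ equals the $L^{2}$-index of $\eth^{+}\otimes E$ on $\widehat X$.

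Next I would invoke the McKean--Singer formula on $\widehat X$: for each $t>0$ the supertrace $\mathrm{Str}\,e^{-t(\eth\otimes E)^{2}}$ is finite, and as $t\to\infty$ it converges to the $L^{2}$-index corrected by the half-integer contribution of the extended (non-$L^2$, ``limiting'') harmonic spinors arising from the zero eigenvalue of $\eth_{\partial X}\otimes E$; this is the source of the term $\tfrac12\dim\ker(\eth_{\partial X}\otimes E)$. Because $\widehat X$ is non-compact the supertrace is not $t$-independent, and differentiating in $t$ shows it changes only through a boundary flux concentrated on the cylindrical end. As $t\to0$, Getzler rescaling gives the usual pointwise limit of the heat-supertrace density, namely $\hat A(\nabla^{TX,LC})\,ch(\nabla^{L^{1/2}})\,ch(\nabla^{E})$ in the interior --- precisely the integrand of Theorem~\ref{AS} --- while the collar contribution must be analysed separately against the translation-invariant model operator $\partial_u+\eth_{\partial X}\otimes E$ on the full cylinder $\partial X\times\mathbb{R}$.

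The heart of the argument is to show that this model contribution, once the boundary flux is integrated in $t$ over $(0,\infty)$, equals $\tfrac12\eta(\eth_{\partial X}\otimes E,0)$. For the model operator the relevant term decomposes over the eigenvalues $\lambda$ of $\eth_{\partial X}\otimes E$: each $\lambda\neq0$ contributes an error-function expression whose Mellin transform in $t$ reproduces $\mathrm{sgn}(\lambda)\,|\lambda|^{-s}$, so summing and evaluating at $s=0$ yields $\tfrac12\eta(\eth_{\partial X}\otimes E,0)$, while the $\lambda=0$ modes give exactly the $\dim\ker$ correction already identified in the large-$t$ analysis. Assembling the small-$t$ interior term, the large-$t$ index term, and the integrated boundary flux then gives $Ind_{APS}(\eth\otimes E)=\int_X\hat A(\nabla^{TX,LC})\,ch(\nabla^{L^{1/2}})\,ch(\nabla^{E})-\xi(\eth_{\partial X}\otimes E)$ with $\xi=\tfrac12\eta(0)+\tfrac12\dim\ker$.

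The main obstacle I anticipate is the analysis on the non-compact manifold $\widehat X$: establishing trace-class-ness and the precise large-$t$ asymptotics of the heat supertrace in the presence of continuous spectrum, justifying the identification of the $L^{2}$-index with $Ind_{APS}$, and --- most delicately --- controlling the interchange of the $t\to0$ limit with the integration over the infinite end so that the collar term is captured \emph{exactly} by $\tfrac12\eta(0)$ with no residual contribution. By contrast the interior asymptotics are essentially Theorem~\ref{AS} localized, and the algebra of the model computation is routine once the analytic framework is secured; alternatively, one may simply observe that $\eth\otimes E$ is a Dirac-type operator satisfying the hypotheses of \cite{APS1} and quote that theorem, recording \eqref{canon} to rewrite the interior integrand in Todd form.
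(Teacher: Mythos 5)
The paper does not actually prove this statement: Theorem~\ref{aps} is presented as background material, recalled verbatim from \cite{APS1} (applied to the Dirac-type operator $\eth\otimes E$), with the integrand rewritten via \eqref{canon}. So there is no proof in the paper to compare against, and the correct ``blind'' answer is precisely your closing sentence --- observe that $\eth\otimes E$ satisfies the hypotheses of \cite{APS1} and quote that theorem.

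Your main sketch instead reconstructs the heat-kernel argument of APS~I from scratch, and its broad architecture is sound: attach the cylindrical end, relate $Ind_{APS}$ to an $L^2$-index on $\widehat X$, take the McKean--Singer supertrace, get the local index form from the $t\to0$ Getzler limit, and extract $\tfrac12\eta(0)$ from the Mellin transform of the error-function contributions of the model cylinder operator. Two cautions are worth flagging. First, $Ind_{APS}(\eth\otimes E)$ is \emph{not} literally the $L^2$-index on $\widehat X$ when $\ker(\eth_{\partial X}\otimes E)\neq0$: the two differ by the count of ``extended'' (bounded but non-$L^2$) solutions, which is where the $\tfrac12\dim\ker$ term actually originates in APS~I (Cor.~3.14 and Prop.~3.15 there); your description places this correction in the large-$t$ asymptotics of the supertrace, which is a plausible reorganisation but is not how APS set it up, and would need to be reconciled carefully with the continuous spectrum at the threshold. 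Second, the ``differentiate in $t$ and track the boundary flux'' step is a modern (Bismut-style) variant, not what APS~I does --- they directly decompose the heat supertrace on $\widehat X$ into an interior piece and an explicitly computed cylinder piece. Neither of these is a gap that defeats the argument, and you have correctly identified where the delicacy lies, but in the context of the paper the intended content of Theorem~\ref{aps} is simply a citation, not a re-proof.
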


Next, we are interested in  generalizations of the results above to the case of {\em families}: Let $\pi:Z\rightarrow B$ be a submersion that defines a fiber bundle with typical fiber $X$ and let the vertical tangent bundle $T(Z/B)$ be equipped with a metric $g^{T(Z/B)}$; furthermore, we make a choice of splitting
$$TZ\cong T(Z/B)\oplus\pi^*TB.$$
Using an auxiliary metric on $TB$, we form the metric
\begin{equation}\label{metric}
g=g^{T(Z/B)}\oplus\pi^*g^{TB};
\end{equation}
let $\nabla^{TZ}$ be its Levi-Civita connection, and let $P:TZ\rightarrow T(Z/B)$ denote the orthogonal projection;  by setting
\begin{equation}\label{nablatzb}
\nabla^{T(Z/B)}=P\circ \nabla^{TZ}\circ P,
\end{equation}
we obtain  a  connection $\nabla^{T(Z/B)}$ on $T(Z/B)$; in particular, it does not depend on the metric $g^{TB}$ and restricts to the Levi-Civita connection on each fiber.
Take note that the connection on $TZ$ given by
$$\nabla^{\oplus}=\nabla^{T(Z/B)}\oplus\pi^*\nabla^{TB}$$
preserves the metric \eqref{metric}, but it is not torsion-free. We define the {\em curvature of the fibration}
$$T(u,v)=-P[u,v],$$
where $u$ and $v$ are horizontal vectors.

Fixing a $Spin^{\mathbb{C}}$ structure on the vertical tangent, we may form the associated complex spinor bundle $S$; given a hermitian vector bundle $E$ over $Z$ with unitary connection $\nabla^E$, we get a {\em family of twisted Dirac operators} $\{\eth_b\otimes E\}_{b\in B}$, where
\begin{equation}\label{Diracfamily}
\eth_b\otimes E:\Gamma\left((S\otimes E)|_{\pi^{-1}b}\right)\rightarrow\Gamma\left((S\otimes E)|_{\pi^{-1}b}\right).
\end{equation}
Let us briefly comment on the situation where the fiber $X$ is even-dimensional and {\em closed} \cite{Atiyah-Singer-IV}: If we assume that the kernel and cokernel of $\eth^+\otimes E$ form vector bundles over $B$, we can define the {\em index bundle} as the formal difference class,
\begin{equation}\label{indexbundle}
Ind(\eth\otimes E)=\left[\ker\left(\eth^+\otimes E\right)\ominus\ker\left(\eth^-\otimes E\right)\right]\in K(B).
\end{equation}
The caveat is that, generically, the kernels do {\em not} form vector bundles. However, this situation may be remedied by modifying the  family of  operators using a compact perturbation; as such a perturbation does not affect the index, we obtain a well-defined $K$-theory class, which, by abuse of notation, will still be denoted as in \eqref{indexbundle}. Furthermore, the $K$-theoretical construction of the index map generalizes to the families situation, and, upon applying the Chern character, one obtains the following formula in $H^*(B,\mathbb{Q})$ \cite{Atiyah-Singer-IV}:
\begin{equation}\label{ASfamilyformula}
ch(Ind(\eth \otimes E))=\pi_*[{\hat{A}(T(Z/B))\exp({\textstyle\frac{1}{2}}c_1(L))ch(E)}].
\end{equation}
There is a heat kernel proof of this result, due to Bismut \cite{bismut85}, which we shall omit; however, it introduces a concept that will be needed later on: 
Given a family  of twisted Dirac operators  $D$ constructed in the situation of a closed (but not necessarily even-dimensional) typical fiber, we define the {\em Bismut superconnection} to be 
\begin{equation}\label{bconnection}
\mathbb{A}_t=\sqrt{t}D+\tilde{\nabla}-\frac{c(T)}{4\sqrt{t}},
\end{equation}
where 
 $\tilde{\nabla}$ is the natural lift of $\nabla$ to the infinite dimensional bundle of sections of the vertical spinor bundle and $c(T)$ denotes Clifford multiplication with the curvature of the fibration; we refer the reader to \cite{bismut85}, \cite{BGV} for further details.
 
With this in mind, let us consider the family \eqref{Diracfamily} in the situation of an even-dimensional typical fiber $X$ with smooth, non-empty  boundary, where $g^{T(Z/B)}$ is assumed to be of product type near the boundary. Then the induced family of Dirac operators on the boundary gives rise to an associated Bismut superconnection  $\mathbb{A}_t$, and, if the kernel of $D$ forms a vector bundle over $B$, then the following differential form, 
\begin{equation*}\label{etaunnorm}
\tilde{\eta}=\frac{1}{\sqrt{\pi}}\int_0^{\infty}{\text{Tr}}^{ev}\left[\frac{d\mathbb{A}_t}{dt}\exp(-\mathbb{A}_t^2)\right]dt,
\end{equation*}
where $\text{Tr}^{ev}$ denotes the even form part of the trace, is well-defined, see e.g.~\cite{BGV}. This defines an even form on $B$, and its rescaled version
 $$\hat{\eta}|_{\deg2k}=(2\pi i)^{-k}\tilde{\eta}|_{\deg2k},$$
will be  referred to  as the {\em $\hat{\eta}$-form}.

Finally, a generalization of Theorem \ref{aps} to the situation of families, which is due to Bismut and Cheeger \cite{BC1}, \cite{BC2}, \cite{BCrem}, reads:
\begin{thm}[Bismut-Cheeger]\label{bc} Let $Z\rightarrow B$ be a fiber bundle, the even-dimensional typical fiber $X$ having smooth, non-empty boundary, and let $\eth\otimes E$ be a twisted Dirac family constructed using a metric that is of product type near the  boundary. 
If the kernel of the twisted Dirac operator induced on the fiberwise boundary is of constant rank, then the index bundle w.r.t.~the APS boundary condition is well-defined; furthermore, a representative in cohomology of the Chern character of the index bundle is given by the following smooth differential form on $B$:
$$\int_{Z/B}\left\{\hat{A}(\nabla^{T(Z/B)})ch(\nabla^{L^{1/2}})ch(\nabla^E)\right\}-\hat{\eta}(\eth_{\partial} \otimes E)-{\textstyle{\frac{1}{2}}}ch\left(\nabla^{\ker\left(\eth_{\partial}\otimes E\right)}\right).$$
\end{thm}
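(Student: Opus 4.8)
The plan is to run the heat-kernel / Bismut-superconnection machinery on the family of complete manifolds attached to $Z\rightarrow B$. Glue a half-infinite cylinder $\partial X\times[0,\infty)$ onto each fibre along the product-type collar to obtain a submersion $\hat{Z}\rightarrow B$ with complete fibres; the data $S$, $L^{1/2}$, $E$ and all connections extend translation-invariantly over the ends, so the twisted Dirac family $\eth^{+}\otimes E$ prolongs to an operator $\hat{D}^{+}$ on $\hat{Z}$. On the cylinder this operator has the form $\gamma(\partial_r+\eth_{\partial X}\otimes E)$, and separation of variables identifies solutions in the APS domain with $L^{2}$-solutions on $\hat{Z}$; when $\ker(\eth_{\partial X}\otimes E)$ has constant rank there is no interference from extended $L^{2}$ limiting values, the $L^{2}$-kernels of $\hat{D}^{\pm}$ are smooth sub-bundles of the bundle of sections, and one obtains the asserted well-definedness of $Ind_{APS}(\eth^{+}\otimes E)\in K(B)$ together with its identification with the $L^{2}$-index bundle of $\hat{D}$.

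On $\hat{Z}\rightarrow B$ one forms the Bismut superconnection $\mathbb{A}_t$ of \eqref{bconnection} and studies the Chern-Weil form $Str(e^{-\mathbb{A}_t^{2}})$ on $B$: it is closed by the Bianchi identity, and by the transgression formula \eqref{transgress} applied to $t\mapsto\mathbb{A}_t$ its de Rham class is independent of $t$, with $Str(e^{-\mathbb{A}_T^{2}})-Str(e^{-\mathbb{A}_\varepsilon^{2}})=-\,d\!\int_\varepsilon^{T}Str\!\left(\tfrac{d\mathbb{A}_t}{dt}\,e^{-\mathbb{A}_t^{2}}\right)dt$. It then suffices to compute the limits $\varepsilon\rightarrow0$ and $T\rightarrow\infty$ and to control the transgression integral interpolating them; this also requires, as a technical input, uniform Gaussian off-diagonal bounds for the heat kernel on the noncompact fibres, so that supertraces, limits and fibre integrations can be interchanged.

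For the short-time limit I would invoke locality of the supertrace density (finite propagation speed) and Getzler rescaling, fibrewise as in the closed case, to obtain pointwise convergence of the density, as $t\rightarrow0$, to $\hat{A}(\nabla^{T(Z/B)})\,ch(\nabla^{L^{1/2}})\,ch(\nabla^{E})$; over the cylindrical end this density is pulled back from the boundary fibration $\partial Z/B$ and carries no $dr$-factor, so it drops out of the fibre integral in the limit and $\lim_{\varepsilon\rightarrow0}Str(e^{-\mathbb{A}_\varepsilon^{2}})=\int_{Z/B}\hat{A}(\nabla^{T(Z/B)})\,ch(\nabla^{L^{1/2}})\,ch(\nabla^{E})$. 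For the rest one splits $\hat{Z}$ into the compact piece $Z$ and the cylinder: as $T\rightarrow\infty$ the heat operator localizes on the $L^{2}$-kernel bundle and contributes $ch$ of the index bundle, while on the cylinder separation of variables reduces $\mathbb{A}_t$ to the Bismut superconnection of the \emph{boundary} family $\eth_{\partial X}\otimes E$, so that in the infinite-cylinder limit the cylinder's share of the transgression integral --- together with the boundary term in the $T\rightarrow\infty$ limit, which in APS is responsible for the $\tfrac12\dim\ker\eth_{\partial X}$ contribution of Theorem \ref{aps} --- assembles into exactly $\hat{\eta}(\eth_{\partial X}\otimes E)+\tfrac12\,ch(\nabla^{\ker(\eth_{\partial X}\otimes E)})$. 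Collecting these identities as classes in $H^{*}(B,\mathbb{Q})$ produces the stated representative; that it is a \emph{closed} form follows from the variation formula for the $\hat{\eta}$-form together with Stokes' theorem for fibre integration, using that the product structure identifies $T(Z/B)|_{\partial Z}\cong T(\partial Z/B)\oplus\underline{\mathbb{R}}$, so that $\hat{A}(\nabla^{T(Z/B)})$ restricts to $\hat{A}(\nabla^{T(\partial Z/B)})$ on the boundary.

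I expect the main obstacle to be the analysis on the noncompact family in the regime $t\rightarrow\infty$ near the ends. As soon as $\ker(\eth_{\partial X}\otimes E)\neq0$ the continuous spectrum of the cylindrical-end Dirac operator reaches down to $0$, there is no spectral gap, the heat supertrace has a delicate large-time expansion, and the naive projection-onto-the-kernel argument fails; the constant-rank hypothesis is precisely what tames this, but making it effective requires either a careful parabolic rescaling (blow-up) of the heat kernel at the end, in the spirit of Melrose's $b$-calculus, or a reworking of Bismut and Cheeger's original estimates isolating the ``large time, small eigenvalue'' contribution and showing that it converges to the eta form. By contrast, the short-time local index computation, the Bianchi/transgression formalism, and --- once the ends are under control --- the identification of the cylinder's contribution with $\hat{\eta}$ and $\tfrac12\,ch(\nabla^{\ker(\eth_{\partial X}\otimes E)})$ are all families versions of arguments already available in the closed case and in the scalar Theorem \ref{aps}.
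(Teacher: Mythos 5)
The paper does not prove Theorem~\ref{bc}; it states it as a background result and attributes it to Bismut and Cheeger (\cite{BC1}, \cite{BC2}, \cite{BCrem}), with \cite{melrosepiazzafamily} mentioned later in the paper as the variant dispensing with the constant-rank hypothesis. There is therefore no in-paper argument to compare your sketch against. That said, your outline does follow the strategy of those references: attach half-infinite cylinders to obtain a complete fibration, identify the APS index bundle with the $L^{2}$-index bundle by separation of variables, form the Bismut superconnection of \eqref{bconnection}, and transgress between the short-time limit (local index density via Getzler rescaling) and the long-time limit (Chern character of the $L^{2}$-kernel bundle), with the cylinder's share of the transgression producing $\hat{\eta}(\eth_{\partial}\otimes E)+\tfrac{1}{2}ch\bigl(\nabla^{\ker(\eth_{\partial}\otimes E)}\bigr)$. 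The short-time, transgression, and closedness steps you invoke are all genuine families analogues of well-known arguments.

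What remains is precisely the hole you flag yourself: the large-$T$ analysis on the noncompact fibers. When $\ker(\eth_{\partial}\otimes E)\neq0$ the essential spectrum of the cylindrical-end operator reaches zero, the heat supertrace has a nontrivial large-time expansion, and the claimed convergence of the transgression integral to the $\hat{\eta}$-form requires either the $b$-calculus blowup of the heat kernel or the Bismut--Cheeger small-eigenvalue estimates, which you name but do not carry out; the constant-rank hypothesis makes the kernel, the $L^{2}$-index and the $\hat{\eta}$-form well-defined, but it does not make that analysis routine. So your proposal is a sound high-level reconstruction of the proof's architecture, not a proof --- an appropriate level of detail given that the paper itself defers to the literature, but you should not present the long-time step as settled. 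A minor secondary point: the clause asserting that constant rank gives ``no interference from extended $L^{2}$ limiting values'' is misleading; those limiting values are present whenever the boundary kernel is nonzero and are exactly what produce the $\tfrac{1}{2}ch(\ker)$ correction, which you do correctly attribute to the $T\rightarrow\infty$ boundary contribution a few lines later.
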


\subsection{Hirzebruch elliptic genera}
Let us recall the definition of the  Hirzebruch elliptic genus of level $N$ associated to the congruence subgroup $\Gamma=\Gamma_1(N)$ \cite{HBJ}: For fixed $N>1$, let  $\zeta$ be  a primitive $N^{th}$ root of unity and let $q=\exp(2\pi i\tau)$, $\tau\in\mathfrak{h}$. We consider the function 
$$Ell^{\Gamma}(x)=\frac{x}{1-e^{-x}}\frac{1-\zeta e^{-x}}{1-\zeta}\prod_{n=1}^{\infty}\frac{(1-q^n)^2}{(1-q^ne^x)(1-q^ne^{-x})}\frac{1-q^ne^x/\zeta}{1-q^n/\zeta}\frac{1-\zeta q^ne^{-x}}{1-\zeta q^n}$$
as a power series in the indeterminate $x$ of degree two. Making use of standard results on elliptic functions, it can be shown that the coefficient of $x^m$ is a modular form of weight $m$ w.r.t.~$\Gamma$ (confer e.g.~\cite{HBJ}); an alternative route is to rewrite the power series such that modularity becomes manifest, see appendix \ref{expell}.

For a hermitian vector bundle $E$ with unitary connection $\nabla^E$, we define the {\em elliptic genus form} to be
\begin{equation}\label{ellform}
Ell^{\Gamma}\left(\nabla^E\right)=\exp\mbox{tr}\left(\ln Ell^{\Gamma}\left(\frac{iF^E}{2\pi}\right)\right).
\end{equation}
If we introduce the power operations with respect to a formal parameter $t$,
$$S_t(V)=\bigoplus_{k\geq0}t^kS^k(V),\  \ \Lambda_t(V)=\bigoplus_{k\geq0}t^k\Lambda^k(V),$$
which extend to $K$-theory classes in the obvious way \cite{atiyahktheory}, we see that the underlying cohomology class of \eqref{ellform} is given by
\begin{equation}\label{elli}
Ell^{\Gamma}(E)=\overline{\chi}_{-\zeta}(E)ch\left(\bigotimes_{n=1}^{\infty} S_{q^n}(\overline{E\otimes\mathbb{C}})\otimes\Lambda_{-q^n/\zeta}\overline{E}\otimes\Lambda_{-\zeta q^n}\overline{E^*}\right),
\end{equation}
where the bar denotes virtual reduction of the complex bundles involved, and $\overline{\chi}_y$ is the {\em{stable}} $\chi_y$-genus, i.e.
$$\overline{\chi}_y(E)={(1+y)^{-rkE}}\chi_y(E)={(1+y)^{-rkE}}Td(E)ch(\Lambda_yE^*).$$
We have the following well-known result:
\begin{prop}\label{intelli}
Let $X$ be a closed $U$-manifold. Then the elliptic genus of $X$ has an integral $q$-expansion, i.e.
$$\langle Ell^{\Gamma}(TX),[X]\rangle\in\mathbb{Z}[\zeta,1/N][\![q]\!].$$
\end{prop}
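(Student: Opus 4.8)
The plan is to extract the $q$-expansion of $Ell^{\Gamma}(TX)$ from the $K$-theoretic formula \eqref{elli} and to recognize each of its coefficients, once paired with $[X]$, as a cyclotomic-integer combination of indices of twisted $Spin^{\mathbb C}$ Dirac operators, hence an element of $\mathbb Z[\zeta]$; a clearing-of-denominators step then promotes this to the stated ring $\mathbb Z[\zeta,1/N]$. Since $Ell^{\Gamma}(TX)$ is a class of even cohomological degree, the pairing vanishes when $\dim X$ is odd, so we may assume $\dim X$ is even.

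First I would rewrite \eqref{elli} so that $Ell^{\Gamma}(TX)$ appears as the Todd class times a single Chern character. Writing $k$ for the complex rank of $TX^{st}$ and using $\overline{\chi}_{-\zeta}(TX)=(1-\zeta)^{-k}\,Td(TX)\,ch(\Lambda_{-\zeta}TX^{*})$ together with the multiplicativity of $ch$, formula \eqref{elli} becomes
$$Ell^{\Gamma}(TX)=(1-\zeta)^{-k}\,Td(TX)\,ch(\mathcal W),\qquad
\mathcal W=\Lambda_{-\zeta}TX^{*}\otimes\bigotimes_{n=1}^{\infty}S_{q^n}(\overline{TX\otimes\mathbb C})\otimes\Lambda_{-q^n/\zeta}\overline{TX}\otimes\Lambda_{-\zeta q^n}\overline{TX^{*}}.$$
The crucial bookkeeping observation is that $\mathcal W$ is a power series in $q$ each of whose coefficients $\mathcal W_m$ (of $q^m$) is a \emph{finite} $\mathbb Z[\zeta]$-linear combination of honest vector bundles over $X$: only the factors with $n\le m$ contribute at order $q^m$, each of $S_{q^n}$, $\Lambda_{-q^n/\zeta}$, $\Lambda_{-\zeta q^n}$ contributes only finitely many terms up to that order, the virtual reductions "$\overline{(\,\cdot\,)}$" merely subtract integers, and $1/\zeta=\zeta^{N-1}\in\mathbb Z[\zeta]$, so no denominators beyond the overall $(1-\zeta)^{-k}$ ever appear.

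Pairing with $[X]$ and collecting powers of $q$ then gives
$$\langle Ell^{\Gamma}(TX),[X]\rangle=(1-\zeta)^{-k}\sum_{m\ge0}q^{m}\,\big\langle Td(TX)\,ch(\mathcal W_m),[X]\big\rangle.$$
Equip $X$ with its canonical $Spin^{\mathbb C}$ structure $L=\Lambda^{max}TX^{st}$; by the Remark following Theorem \ref{AS}, \eqref{canon} represents $Td(TX)$, so Theorem \ref{AS} applied to the Dirac operator twisted by each genuine-bundle summand of $\mathcal W_m$ identifies $\langle Td(TX)\,ch(\mathcal W_m),[X]\rangle$ with $Ind(\eth\otimes\mathcal W_m)$, a $\mathbb Z[\zeta]$-combination of integers, hence an element of $\mathbb Z[\zeta]$. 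Finally, evaluating $\tfrac{z^{N}-1}{z-1}=\prod_{j=1}^{N-1}(z-\zeta^{j})$ at $z=1$ yields $N=(1-\zeta)\prod_{j=2}^{N-1}(1-\zeta^{j})$, whence $(1-\zeta)^{-1}\in\mathbb Z[\zeta,1/N]$ and so $(1-\zeta)^{-k}\in\mathbb Z[\zeta,1/N]$; combining, every $q$-coefficient of $\langle Ell^{\Gamma}(TX),[X]\rangle$ lies in $\mathbb Z[\zeta,1/N]$, as claimed.

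The main obstacle is not conceptual but the careful verification of the middle step — that expanding the infinite tensor product $q$-adically genuinely produces finite-rank virtual bundles with coefficients in $\mathbb Z[\zeta]$, and that the virtual reductions and the substitution $1/\zeta=\zeta^{N-1}$ introduce no hidden denominators. Once that is pinned down, passing through Theorem \ref{AS} and the elementary cyclotomic identity $(1-\zeta)^{-1}\in\mathbb Z[\zeta,1/N]$ are routine. (Alternatively one could invoke the integrality of the $\chi_y$-genus of $U$-manifolds directly, but routing the argument through the twisted Todd genus as above keeps it self-contained and matches the index-theoretic viewpoint of the thesis.)
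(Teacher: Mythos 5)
Your proof is correct and follows essentially the same route as the paper's: clear the denominator by multiplying through by $(1-\zeta)^{rk(TX^{st})}$, interpret each $q^m\zeta^j$-coefficient as an index of a twisted $Spin^{\mathbb C}$ Dirac operator via Theorem~\ref{AS} (so each $q^m$-coefficient lands in $\mathbb Z[\zeta]$), and finish with $(1-\zeta)^{-1}\in\mathbb Z[\zeta,1/N]$. You have simply spelled out the bookkeeping (finiteness of the $q$-adic expansion, the canonical $Spin^{\mathbb C}$ structure from the Remark after Theorem~\ref{AS}, and the cyclotomic factorization of $N$) that the paper leaves implicit.
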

\begin{proof} Multiplying the LHS by $(1-\zeta)^{rk(TX^{st})}$, expanding the formal power series, and grouping powers of $\zeta$, we see that, by Theorem \ref{AS}, every coefficient admits an interpretation as the index of a suitably twisted $Spin^{\mathbb{C}}$ Dirac operator. Since $(1-\zeta)^{-1}\in  \mathbb{Z}[\zeta,1/N]$, the claim is proven.
\end{proof}
For convenience, we are going to delete the fixed group $\Gamma$ from the notation (after all, we suppressed the dependence on $\zeta$ from the very beginning); furthermore, we introduce the abbreviations 
\begin{equation*}
Ell_0=Ell|_{q=0},\quad \widetilde{Ell}=Ell-Ell_0.
\end{equation*}

\subsection{The $e$-invariant}

The original formulation of the $e$-invariant, $e:\pi^{st}_{2k+1}\rightarrow \mathbb{Q/Z}$, is due to Adams \cite{j4}; for our purposes however, it will be more convenient to use the cobordism description given by  Conner and Floyd \cite{cf66}:
\begin{df}\label{ufr}
A $(U,fr)$-manifold is a compact $U$-manifold $X$ with smooth boundary and a trivialization of $E\cong TX^{st}$ over the boundary, i.e.~a bundle map
$$\psi:E|_{\partial X}\cong\partial X\times\mathbb{C}^k.$$
\end{df}
In particular, $\psi$ provides a framing for $\partial X$; using the relative characteristic classes of the complex vector bundle $E\cong TX^{st}$,  the {\em complex $e$-invariant} of the framed bordism class of $\partial X$  is defined to be
\begin{equation}\label{e_c}
e_{\mathbb{C}}(\partial X)\equiv\langle{Td}(E),[X,\partial X]\rangle\mod\mathbb{Z}.
\end{equation}
As mentioned in the introduction,  the $e$-invariant admits an interpretation in terms of index theory, and  it can be related to (and computed from) the spectral asymmetry encoded into $\eta(\eth)$ \cite{APS2}; we begin by rephrasing the RHS of \eqref{e_c}: The framing induces a hermitian metric on $E|_{\partial X}$; we extend it to $E$ such that it is of product type near the boundary. Furthermore, we endow $E$ with a unitary connection $\nabla^E$ that restricts to the canonical flat connection specified by the trivialization, i.e.~the one w.r.t.~which the frame is parallel; this enables us to rewrite $\langle{Td}(E),[X,\partial X]\rangle=\int_X{Td(\nabla^E)}$. 

Now we can show that $e_{\mathbb{C}}$ is well-defined: Let $X'$ be another $(U,fr)$-manifold having the same framed boundary, i.e.\ $\partial X'\cong\partial X$, and let $W$ be the closed $U$-manifold obtained by gluing $X$ and $-X'$ along the boundary. Then, by the linearity of the integral, the relative Todd genera of $X$ and $X'$ differ by the Todd genus of $W$, which is an integer by Theorem \ref{AS}.  Noting that the integrand is trivial on any framed bordism $W'$ shows that $e_{\mathbb{C}}$ depends only on the framed bordism class of $\partial X$.
 
In order to compute the $e_{\mathbb{C}}$-invariant analytically, we consider the canonical $Spin^{\mathbb{C}}$ Dirac operator on the $(U,fr)$-manifold $X$ and apply Theorem \ref{aps}, 
$$\int_X{Td\left(\nabla^{TX,LC}\right)}\equiv \xi\left(\eth_{\partial X}\right)\mod\mathbb{Z},$$
where we used $Td(\nabla^{TX,LC})$ as the shorthand notation for \eqref{canon}, i.e.~the local index form associated to the $Spin^{\mathbb C}$ Dirac operator built from the Levi-Civita connection $\nabla^{LC}$ on $TX$ and the connection on $\det E$ (induced by $\nabla^E$). This implies
$$e_{\mathbb{C}}(\partial X)\equiv\xi\left(\eth_{\partial X}\right)+\int_X{\left\{Td\left(\nabla^E\right)-Td\left(\nabla^{TX,LC}\right)\right\}}\mod\mathbb{Z},$$
but with the help of \eqref{chernsimons} and Stokes' theorem, the integral can be reduced to an integral  over $\partial X$. Thus, the $e$-invariant is  computable from geometrical data on $M=\partial X$ itself:
\begin{equation}\label{e_c_2}
e_{\mathbb{C}}\left(M\right)\equiv\xi\left(\eth_M\right)+\int_Mcs\mod\mathbb{Z}.
\end{equation}

\begin{rmk}\label{r_vs_c}
In \cite{APS2}, Atiyah, Patodi, and Singer actually treat the {\em real} $e$-invariant, $e_{\mathbb{R}}:\pi^{st}_{4k-1}\rightarrow \mathbb{Q/Z}$: Since $MSpin_{4k-1}=0$, the framed manifold $M$ is the boundary of  a $Spin$-manifold $N$. Considering the Dirac operator $/\!\!\!\!D$,  the quaternionic structure of the spinors in dimensions $4k,4k-1,$ for $2\nmid k$ implies that  the kernels are even-dimensional, so one obtains the refined result
$$\epsilon(k)\left\{ \int_M{cs}+\xi(/\!\!\!\!D_{M})\right\}\equiv\epsilon(k)\langle\hat{{A}}(TN),[N,M]\rangle \equiv e_{\mathbb R}(M) \mod\mathbb{Z},$$
where $\epsilon(k)=1$ if $2|k$ and $1/2$ otherwise. 
We would like to point out that we have $MSU_{4k-1}=0$ as well \cite{cf66}; furthermore,  the first Chern class of an $SU$-manifold is trivial, in which case the Todd genus coincides with $\hat A$, showing  that $e_{\mathbb{R}}/\epsilon\equiv e_{\mathbb{C}}\mod\mathbb{Z}$.
\end{rmk}

Admittedly, the formula \eqref{e_c_2} seems of little practical use, as one rarely is in the situation to compute the spectrum of $\eth$ explicitly. There are, however, some notable exceptions: In particular, the analytical computation of the (real) $e$-invariant for nilmanifolds covered by Heisenberg groups has been carried out by Deninger and Singhof, thus exhibiting a family  representing (twice) the generator of $Im(J)$ in dimension $8k+3$ ($8k+7$) \cite{DSheisenberg}. On the other hand, index theory considerations yield a vanishing theorem for compact Lie groups of higher rank \cite{atiyah-smith}; strictly speaking, this result is formulated for $e_{\mathbb{R}}$ and holds under the additional assumption of semi-simplicity. The vanishing of the  complex $e$-invariant for higher rank Lie groups (not necessarily semi-simple) can also be deduced from the algebraic-topological results of \cite{kna78}, see \cite{Lau00} for a geometrical interpretation.

\subsection{The topological $f$-invariant}

Recall that the Adams-Novikov spectral sequence gives rise to a filtration of the stable stems. Geometrically, the AN filtration can be understood in terms of manifolds with corners \cite{Lau00}: A framed manifold is {\em in $k^{th}$ filtration} if it occurs as the codimension-$k$ corner of a so-called $(U,fr)^k$-manifold; in particular, we already defined the $e_{\mathbb{C}}$-invariant for boundaries of $(U,fr)$-manifolds, i.e.~for manifolds in first filtration.
\begin{df}\label{ufr2}
A {{$(U,fr)^2$-manifold}} is a compact $\langle2\rangle$-manifold $Z$  together with two complex vector bundles $E_1$, $E_2$, with trivializations  over the faces $\partial_1 Z$, $\partial_2 Z$, respectively, i.e.~a choice of bundle maps $\psi_i:E_i|_{\partial_iZ}\cong \partial_iZ \times \mathbb{C}^{k_i}$, and an isomorphism $TZ^{st} \cong E_1 \oplus E_2$ (in the stable sense).
\end{df}

Fixing $\Gamma=\Gamma_1(N)$, we set $\mathbb{Z}^{\Gamma}=\mathbb{Z}[\zeta,1/N]$ and denote by $M^{\Gamma}_*$ the graded ring of modular forms w.r.t.~$\Gamma$ which expand integrally, i.e.~which lie in $\mathbb{Z}^{\Gamma}[\![q]\!]$. We define the ring of {\em divided congruences} $D^{\Gamma}$ to consist of those rational combinations of modular forms which expand integrally; this ring can be filtered by setting
$$D_k^{\Gamma}=\left\{\left.f={\sum_{i=0}^{k}}f_i\ \right| f_i\in M_i^{\Gamma}\otimes\mathbb{Q},\ f\in\mathbb{Z}^{\Gamma}[\![q]\!]\right\}.$$
Finally, we introduce $\underline{\underline{D}}^{\Gamma}_{k}=D^{\Gamma}_k+M_0^{\Gamma}\otimes\mathbb{Q}+M_k^{\Gamma}\otimes\mathbb{Q}$.
 
Let $M^{2n}$ be the codimension-two corner of a $(U,fr)^2$-manifold $Z$. Using the relative Chern classes of the split tangent bundle,  the {\em$f$-invariant} of the framed bordism class of $M$ is defined to be
\begin{equation}\label{f}
f(M)\equiv\langle(Ell(E_1)-1)(Ell_{0}(E_2)-1),[Z,\partial Z]\rangle\mod\underline{\underline{D}}^{\Gamma}_{n+1},
\end{equation}
hence $f$ takes values in $\underline{\underline{D}}^{\Gamma}_{n+1}\otimes\mathbb{Q/Z}$. 
We refer  to  \cite{Lau99} and \cite{Lau00} for details concerning the homotopy theoretical construction  and its interpretation as  a genus arising from an ($MU^{\langle2\rangle}$-) orientation of a suitable $\langle2\rangle$-spectrum $E$; what we are aiming  at in this thesis, however, is to provide a geometrical  interpretation of  \eqref{f}, in  a fashion similar to the last section.

\section{The geometrical $\check{f}$-invariant}\label{main}

\subsection{Divided congruences from trivialized vector bundles}

Recall from \cite{APS2} that the spectral information encoded in $\xi$ can be used to formulate an invariant for {\em{flat}} vector bundles: Let $M$ be a closed $U$-manifold of odd dimension and let $E$ be a  hermitian vector bundle with flat unitary connection $\nabla^E$; then the expression
\begin{equation}\label{ro}
\tilde{\xi}\left(\nabla^E\right) \equiv \xi(\eth \otimes E) - rk(E)\xi({\eth})\mod\mathbb{Z},
\end{equation}
where $\eth$ denotes the canonical $Spin^{\mathbb{C}}$ Dirac operator on $M$, is independent of the metric, hence an $\mathbb{R/Z}$-valued invariant of the flat bundle $E$. This can be seen as follows: Since $\Omega_{odd}^U(BU)=0$, we can find a $U$-manifold $X$ with boundary $M$ and a vector bundle $\hat{E}$ over $X$ (not necessarily flat) that extends $E$; then Theorem \ref{aps} yields
\begin{equation}\label{rorephrased}
\tilde{\xi}\left(\nabla^E\right) \equiv \int_X \left\{ ch\left(\nabla^{\hat{E}}\right) - rk \hat{E} \right\} Td(\nabla^{TX})\mod\mathbb{Z},
\end{equation}
which is the mod $\mathbb{Z}$ reduction of the evaluation of a {\em{real}} relative cohomology class and  easily seen to be independent of all choices. Obviously, this invariance property persists if we couple $E$ to twisted versions of $\eth$, 
i.e.~we may consider
$$\tilde{\xi}\left(\nabla^E\right)\otimes F\equiv \xi(\eth \otimes F\otimes E) - rk(E)\xi({\eth \otimes F})\mod\mathbb{Z},$$
for some hermitian vector bundle $F$ with unitary connection. Take note that even if $E$ is trivial, the invariant can be non-zero; however, a choice of trivialization induces a canonical flat connection  $\nabla^{p.g.}$, namely the one with respect to which the global section trivializing the principal $U$-bundle is parallel (strictly speaking, this yields is a connection on the principal bundle, but since it canonically induces connections on any associated vector bundle, we do not bother to distinguish). In physics terminology, this  is a so-called {\em{globally pure gauge}} connection, and it has vanishing holonomy along all closed paths in $M$. 

\begin{lem}\label{keylemma}
Let $E$ be a trivialized hermitian vector bundle over an odd-dimensional closed $U$-manifold $M$, and let $\nabla^{p.g.}$ be the unitary connection preserving the trivialization. Then we have
$$\tilde{\xi}\left(\nabla^{p.g.}\right) \equiv 0\mod\mathbb{Z};$$
this remains true if we twist the $Spin^{\mathbb{C}}$ Dirac operator with an  auxiliary hermitian vector bundle $F$ with unitary connection.
\end{lem}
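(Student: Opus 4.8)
The plan is to reduce the statement to the observation made just before the lemma: $\tilde\xi$ of a flat bundle is computed, via Theorem \ref{aps}, as the mod $\mathbb{Z}$ reduction of the evaluation of a \emph{real} relative cohomology class, and this class vanishes for a \emph{trivialized} (not merely topologically trivial) bundle. Concretely, since $\Omega^U_{\mathrm{odd}}(BU)=0$, pick a compact $U$-manifold $X$ with $\partial X = M$ together with a hermitian bundle $\hat E$ over $X$ extending $(E,\nabla^{p.g.})$; we may also arrange product structures near the boundary. Because the connection on the boundary is globally pure gauge, we can extend the trivializing section of the principal $U$-bundle over all of $X$ (the obstruction to doing so lives in relative cohomology with coefficients in homotopy groups of $U$, but here there is no obstruction \emph{once we only ask for a section}, since we are free to choose $\hat E$ itself to be the product bundle $X\times\mathbb{C}^k$ with $\nabla^{p.g.}$). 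Then $ch(\nabla^{\hat E}) - rk\,\hat E = 0$ identically as a form on $X$, so the integrand in \eqref{rorephrased} vanishes and $\tilde\xi(\nabla^{p.g.})\equiv 0 \bmod \mathbb{Z}$.

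The twisted case is handled the same way: replacing $\eth$ by $\eth\otimes F$ throughout, and extending $F$ to a bundle $\hat F$ over $X$ (possible since we may first choose \emph{any} $X$ bounding $M$ and then, enlarging $X$ if necessary, extend $F$; alternatively one notes that the relevant invariant $\tilde\xi(\nabla^{p.g.})\otimes F$ again equals the mod $\mathbb{Z}$ reduction of $\int_X \{ch(\nabla^{\hat E}) - rk\,\hat E\}\,ch(\nabla^{\hat F})\,Td(\nabla^{TX})$), the factor $ch(\nabla^{\hat E}) - rk\,\hat E$ is still identically zero, so the whole integral is zero on the nose, not merely modulo $\mathbb{Z}$.

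The only genuine subtlety — and the step I would be most careful about — is the \emph{extension} claim: that $(E,\nabla^{p.g.})$ over $M$ extends to a bundle-with-connection over some $U$-coboundary $X$ for which the Chern character form is \emph{exactly} constant. The clean way to see this is not to extend the connection abstractly but to exploit that a globally pure gauge connection \emph{is} the product connection in the chosen trivialization, so one simply takes $\hat E = X\times\mathbb{C}^k$ with the product connection; the boundary restriction then agrees with $(E,\nabla^{p.g.})$ up to the given gauge, and gauge-equivalent connections give the same $\xi$ (indeed the same spectrum of $\eth\otimes E$). One should check that the product structure near $\partial X$ required by Theorem \ref{aps} is compatible with this choice, which it is, since both the metric on $E$ (induced by the framing) and $\nabla^{p.g.}$ are already of product type. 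With that in place the argument is a one-line application of \eqref{rorephrased} (and its twisted analogue), and no spectral computation is needed.
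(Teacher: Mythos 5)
Your proof is correct, and it takes a genuinely different and somewhat slicker route than the paper's. The paper chooses coboundaries $X$ and $X'$ of $M$ and $-M$, extends $(E,\nabla^{p.g.})$ to a bundle $\hat E$ over $X$ (and puts the trivial bundle over $X'$), and then uses the trivialization of $E$ over $M$ to \emph{glue}: $X\cup X'$ is a closed $U$-manifold carrying a bundle obtained by patching $\hat E$ with the trivial bundle, and Theorem~\ref{AS} forces $\int_X Td(\nabla^{TX})\,ch(\nabla^{\hat E}) + rk\,E\int_{X'}Td(\nabla^{TX'})$ to be an integer. You instead keep a single coboundary $X$, pick $\hat E$ to be the trivial product bundle with product connection (a legitimate choice of extension precisely because $\nabla^{p.g.}$ \emph{is} the product connection in the given trivialization, and $\tilde\xi$ only sees the isomorphism class of $(E,\nabla^E)$ over $M$), and observe that then $ch(\nabla^{\hat E})-rk\,\hat E$ vanishes identically as a form, so the integral in \eqref{rorephrased} (and its $F$-twisted analogue) is zero on the nose. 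Your argument is a one-liner once \eqref{rorephrased} is granted, and it needs only $\Omega^U_{odd}(BU)=0$ rather than $\Omega^U_{odd}(BU\times BU)=0$ for the twisted case; the paper's gluing argument is more self-contained in that it re-derives the relevant integrality directly from Theorem~\ref{AS} on a closed manifold rather than invoking the ``independence of choices'' built into \eqref{rorephrased}. One small imprecision in your writeup: ``enlarging $X$ if necessary'' to extend $F$ is not quite right — one may need to \emph{replace} $X$ by a different coboundary over which $(M,F)$ bounds — but your alternative formulation via the twisted analogue of \eqref{rorephrased} handles this cleanly.
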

\begin{proof} Since we do not need the full generality of   \cite[Theorem 3.3]{APS2}, we may argue as follows: Clearly, we have
\begin{equation*}\label{minus}
-\xi(\eth_M) \equiv \xi(\eth_{-M})\mod\mathbb{Z},
\end{equation*}
so we can interpret each summand of \eqref{ro} separately in terms of index theory on manifolds $X$, $X'$ with opposite boundary; in particular, we may represent \eqref{ro} by
$$\int_XTd(\nabla^{TX})ch(\nabla^{\hat{E}})+\int_{X'}Td(\nabla^{TX'})\ rkE.$$
Furthermore, using the trivialization of $E$, $\hat{E}$ and the trivial bundle $rkE$ patch together to form a hermitian bundle over the closed $U$-manifold $X\cup X'$; by Theorem \ref{AS}, the sum of the integrals  yields an integer. For the twisted case, we notice that $F$ also extends to $X$ and $X'$ (since $\Omega_{odd}^U(BU\times BU)=0$) and that the multiplication of the integrands by $ch(\nabla^F)$ does not change the validity of our argument above.
\end{proof}

This result might seem a little bit dull, but it enables us to establish a surprising relation between trivialized vector bundles and divided congruences: 

\begin{thm}\label{div}
Let $X$ be a compact manifold of dimension $2n$, and let $E_1$, $E_2$, and $F$  be hermitian vector bundles over $X$ such that $E_1\oplus E_2\cong TX^{st}$,  and that there is a given trivialization  $\psi:E_1|_{\partial X}\cong\partial X\times \mathbb{C}^k$. 
Equip $E_1$ with any unitary connection $\nabla^{E_1}$ that restricts to the pure gauge connection on the boundary. Then, for  arbitrary unitary connections $\nabla^{E_2}$ and $\nabla^{F}$, 
$$\int_X{\widetilde{Ell}(\nabla^{E_1})Ell_0(\nabla^{E_2})ch(\nabla^{F})\in D_n^{\Gamma}}.$$
\end{thm}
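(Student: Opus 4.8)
The plan is to reduce the statement to an application of Proposition~\ref{intelli} together with Lemma~\ref{keylemma}, using the standard trick of clearing denominators in $(1-\zeta)$. First I would recall that $\widetilde{Ell}(\nabla^{E_1}) = Ell(\nabla^{E_1}) - Ell_0(\nabla^{E_1})$, so that the integrand splits as
\begin{equation*}
\widetilde{Ell}(\nabla^{E_1})\,Ell_0(\nabla^{E_2})\,ch(\nabla^{F}) = Ell(\nabla^{E_1})\,Ell_0(\nabla^{E_2})\,ch(\nabla^{F}) - Ell_0(\nabla^{E_1})\,Ell_0(\nabla^{E_2})\,ch(\nabla^{F}).
\end{equation*}
The coefficient of $x^m$ in $Ell^\Gamma(x)$ is a modular form of weight $m$, so $\int_X$ of each of these two terms is, a priori, a power series in $q$ whose coefficients are rational combinations of modular forms; since $X$ has dimension $2n$, only forms of weight $\le n$ appear, so each integral lies in $M_*^\Gamma\otimes\mathbb{Q}$ truncated at weight $n$ (in fact in $\bigoplus_{i\le n} M_i^\Gamma\otimes\mathbb{Q}$, which is the ambient space for $D_n^\Gamma$). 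Hence the only thing left to prove is that $\int_X \widetilde{Ell}(\nabla^{E_1})\,Ell_0(\nabla^{E_2})\,ch(\nabla^{F})$ has an \emph{integral} $q$-expansion, i.e. lies in $\mathbb{Z}^\Gamma[\![q]\!]$.

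For integrality I would multiply by a suitable power of $(1-\zeta)$ to clear denominators — concretely by $(1-\zeta)^{rk E_1}$ (or a slightly larger power accounting for the $\Lambda_{-\zeta q^n}$ factors), exactly as in the proof of Proposition~\ref{intelli}. Expanding the resulting power series and grouping powers of $\zeta$ and $q$, each coefficient becomes a $\mathbb{Z}$-linear combination of integrals over $X$ of products of $\hat{A}$, $ch(\nabla^{L^{1/2}})$, and Chern character forms of bundles built by power operations from $E_1$, $E_2$, $F$ — that is, each coefficient is (formally) an APS-type index integrand for a twisted $Spin^\mathbb{C}$ Dirac operator on the manifold-with-boundary $X$. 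Because $E_1$ is trivialized on $\partial X$ with $\nabla^{E_1}$ restricting to the pure gauge connection, and $E_1\oplus E_2\cong TX^{st}$, the boundary restriction of each such integrand is governed by a flat bundle contribution; by Lemma~\ref{keylemma} the $\widetilde{Ell}$-part (which vanishes at $q=0$) kills precisely the ``closed-manifold'' ambiguity, so each coefficient of the cleared series is an \emph{integer}. Dividing back by $(1-\zeta)^{rk E_1}$, and using $(1-\zeta)^{-1}\in\mathbb{Z}^\Gamma$, gives integrality of the $q$-expansion.

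In more detail, the mechanism is: pick any $(U,fr)$-style closed-up companion. Since $\widetilde{Ell}$ starts at $q^1$, the $q^0$ coefficient of $\int_X \widetilde{Ell}(\nabla^{E_1})Ell_0(\nabla^{E_2})ch(\nabla^F)$ vanishes outright; for the higher $q$-coefficients, the presence of the factor $\widetilde{Ell}(\nabla^{E_1})$ means every occurrence of $E_1$ in the (cleared) integrand comes in a virtual-reduction $\overline{E_1}$-type combination, so the integrand is a characteristic form of $\overline{E_1}\oplus E_2 \cong TX^{st}$-data times $ch(\nabla^F)$; its boundary restriction uses only the pure gauge connection on the $E_1$-factor, so Lemma~\ref{keylemma} (twisted by the bundle coming from the $E_2$, $F$ and power-operation factors, which extends because $\Omega^U_{odd}(BU\times BU\times\cdots)=0$) applies verbatim and shows the boundary eta-type correction is integral. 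Thus each $q$-coefficient of the $(1-\zeta)^{rk E_1}$-multiple is a genuine Dirac index, hence in $\mathbb{Z}^\Gamma$, and the result follows.

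\textbf{Main obstacle.} The delicate point is bookkeeping: one must check that \emph{after} clearing the $(1-\zeta)$-denominators, \emph{every} $q$-coefficient is simultaneously (a) a $\mathbb{Z}^\Gamma$-combination of index-theoretic integrands and (b) one whose boundary term is handled by the twisted version of Lemma~\ref{keylemma} — i.e. that the factor $\widetilde{Ell}(\nabla^{E_1})$ really does force all boundary contributions into the ``pure gauge / flat bundle'' regime rather than leaving a genuine local index form on $\partial X$. This requires writing $Ell$ via the power-operation description \eqref{elli} so that $E_1$ enters only through virtual reductions $\overline{E_1}$, $\overline{E_1^*}$, $\overline{E_1\otimes\mathbb{C}}$, confirming that at $q=0$ these reductions all vanish (hence the $\widetilde{Ell}$ truncation is exactly what is needed), and then invoking that the auxiliary bundle $F$ and the $E_2$-power-operation factors extend over the bounding manifold because the relevant odd-dimensional complex bordism groups vanish. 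Everything else is the routine ``clear denominators, expand, reassemble as indices'' argument already used for Proposition~\ref{intelli}.
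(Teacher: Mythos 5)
Your plan reproduces the paper's own strategy: pass to the power-operation description of $Ell$, so that the $q^{\geq 1}$ part is governed by virtually reduced bundles built from $E_1$; clear the $(1-\zeta)$-denominator; interpret each $q$-coefficient via Theorem~\ref{aps}; and use the twisted Lemma~\ref{keylemma} to kill the boundary $\xi$-contribution because $\nabla^{E_1}|_{\partial X}$ is pure gauge. But there is a concrete gap at the step where you ``reassemble as indices''. Theorem~\ref{aps} is stated for the local index form $\hat A(\nabla^{TX,LC})\,ch(\nabla^{L^{1/2}})\,ch(\nabla^V)$, built from the Levi--Civita connection and the induced $Spin^{\mathbb C}$ connection, whereas the form you have after expansion is $Td(\nabla_1\oplus\nabla_2)\,ch(\nabla^V)$, built from the chosen unitary connections on $E_1\oplus E_2\cong TX^{st}$. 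These differ as differential forms (they agree only in cohomology), and on a manifold with boundary the difference of their integrals need not vanish --- the ``routine'' argument of Proposition~\ref{intelli} that you invoke works only on a \emph{closed} manifold, where the pairing depends on the class alone. The paper fills this in with the Chern--Simons transgression \eqref{chernsimons}: the two Todd forms differ by $d\,cs$, and after multiplying by the closed factor $ch(\nabla^{\mathcal P})$, where $\mathcal P$ denotes the virtually-reduced power-operation bundle, Stokes' theorem reduces the discrepancy to $\int_{\partial X}cs\cdot ch(\nabla^{\mathcal P})|_{\partial X}$, which vanishes precisely because $ch(\nabla^{\mathcal P})|_{\partial X}=0$ by the pure-gauge condition. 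Without this step you are not entitled to quote Theorem~\ref{aps}, so the gap is real.

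Two smaller slips. The correct clearing factor is $(1-\zeta)^{rk(E_1\oplus E_2)}$, coming from $Ell_0(\nabla_1\oplus\nabla_2)=(1-\zeta)^{-rk(E_1\oplus E_2)}\,Td(\nabla_1\oplus\nabla_2)\,ch\bigl(\nabla^{\Lambda_{-\zeta}(E_1\oplus E_2)^*}\bigr)$; the $\Lambda_{-\zeta q^n}$ factors introduce no denominators, and $(1-\zeta)^{rk E_1}$ alone does not suffice. And it is not the case that \emph{every} occurrence of $E_1$ in the cleared integrand comes in virtually reduced form --- $E_1$ also enters $Td(\nabla_1\oplus\nabla_2)$ and $\Lambda_{-\zeta}(E_1\oplus E_2)^*$ unreduced. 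What is true, and what suffices both for the Chern--Simons argument and for Lemma~\ref{keylemma}, is that each $q^{\geq 1}$-term of $ch(\nabla^{\mathcal P})$ contains at least one virtually reduced factor, so that $ch(\nabla^{\mathcal P})|_{\partial X}=0$. The paper arrives at this cleanly by first applying multiplicativity of $Ell_0$ to write $\widetilde{Ell}(\nabla_1)\,Ell_0(\nabla_2)=Ell_0(\nabla_1\oplus\nabla_2)\,ch(\nabla^{\mathcal P})$ before expanding; doing the same would tighten your argument considerably.
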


\begin{proof}
By the multiplicativity of $Ell_0$, the integrand may be rewritten as 
$$Ell_0(\nabla_1\oplus\nabla_2)ch\left\{\left(\bigotimes_{n=1}^{\infty} S_{q^n}\overline{E_1\otimes\mathbb{C}}\otimes\Lambda_{-q^n/\zeta}\overline{E_1}\otimes\Lambda_{-\zeta q^n}\overline{E_1^*}\right)-1\right\}ch(\nabla^F),$$
where we dropped the symbol $\nabla$ inside the curly brackets in favor of notational simplicity. Furthermore, $Ell_0$ itself is a twisted version of the Todd genus, so
$$Ell_0(\nabla_1\oplus\nabla_2)=(1-\zeta)^{-rk(E_1\oplus E_2)}Td(\nabla_1\oplus\nabla_2)ch\left(\nabla^{\Lambda_{-\zeta}(E_1\oplus E_2)^*}\right),$$
but $Td(\nabla_1\oplus\nabla_2)$ agrees with $Td(\nabla^{TX})$ up to the differential of a Chern-Simons term; the latter does not contribute to the integral, since it gets multiplied by a relative characteristic form vanishing on $\partial X$. Then we observe that the formal vector bundle inside the curly brackets expands such that each summand contains at least one factor that is a virtually reduced bundle, so we may apply Theorem \ref{aps} and Lemma \ref{keylemma} to establish integrality. 
On the other hand, the integral takes values in inhomogeneous modular forms, due to the factor $\widetilde{Ell}(\nabla_1)$; in fact, we may decompose the latter into the difference of $Ell(\nabla_1)-1$ and $Ell_0(\nabla_1)-1$, which, due to the pure gauge condition, represent classes in $\bigoplus_k \left(H^{2k}(X,\partial X;\mathbb{Q})\otimes M^{\Gamma}_k\right)$ and $H^{even}(X,\partial X;\mathbb{Q[\zeta]})$, respectively.
\end{proof}

\begin{ex}\label{divex}
We may consider the following special case: If $X$ is a $(U,fr)$-manifold of dimension $2n$,  we may choose $E_1\cong TX^{st}$, $E_2=0$ and $F=1$. Then we see that the  a priori {\em rational} modular form of weight $n$ given by the  elliptic genus of $X$ actually admits an {\em integral} $q$-expansion once we remove its constant term:
$$\int_XEll(\nabla^{E_1})-\int_XEll_0(\nabla^{E_1})=\int_X\widetilde{Ell}(\nabla^{E_1})\in D_n^{\Gamma}.$$
\end{ex}
\begin{rmk}
The preceding example seems to be well-known; for instance, it may also be deduced from the results of \cite{Lau99}, albeit with considerably more effort, at least compared to the simple {\em geometrical} statement of Theorem \ref{div}.
\end{rmk}

\subsection{Construction of $\check{f}$}\label{towardstheformula}

Having established an integrality result which may serve as a substitute for the Atiyah-Singer index theorem, we may now  leave the realm of manifolds with smooth boundary and turn our attention to manifolds with corners of codimension two. In the following, we consider $(U,fr)^2$-manifolds $Z$  of (positive) even dimension. We also want additional geometrical structures: From now on, we endow all our bundles with hermitian metrics and unitary connections which are of product type near the respective faces; furthermore, we want these connections to preserve the respective trivializations  on the faces, i.e.~we require that they restrict to the pure gauge ones. Let us call these connections {\em compatible}.

\begin{df}
For a $(U,fr)^2$-manifold $Z$ of real dimension $2n+2$ and any compatible connections $\nabla_i$ on the $E_i$, we set
$$\check{F}(Z,\nabla_1,\nabla_2)=\int_Z{\widetilde{Ell}(\nabla_1)Ell_0(\nabla_2)}\in\bigoplus_{k=0}^{n+1}M_k^{\Gamma}\otimes\mathbb{R}.$$
\end{df}

If we allow  $(U,fr)^2$-manifolds with empty corner, we obtain the following integrality results for $\check{F}$:

\begin{prop}\label{interior}
Let $E_1$ and $E_2$ be hermitian vector bundles over a closed manifold $X$ of dimension $2n+2$ such that $E_1\oplus E_2\cong TX^{st}$ \textup{(}in the stable sense\textup{)}. For any unitary connections $\nabla_i$  we have
$$\check{F}(X,\nabla_1,\nabla_2)\equiv0\mod D_{n+1}^{\Gamma}.$$
\end{prop}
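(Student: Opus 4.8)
The plan is to reduce Proposition~\ref{interior} to Theorem~\ref{div} by collapsing one of the two boundary faces. Since $X$ is a closed manifold of dimension $2n+2$ with empty corner, viewing it as a $(U,fr)^2$-manifold forces $\partial_1 X$ and $\partial_2 X$ to be faces whose intersection is the (empty) corner; the cleanest case is when $\partial X = \emptyset$ altogether, but to match the hypotheses of Theorem~\ref{div} I want $X$ to be a manifold with \emph{smooth} boundary carrying a trivialization of $E_1$ there. So first I would observe that a closed manifold is in particular a compact manifold with empty boundary, and the trivialization hypothesis $\psi:E_1|_{\partial X}\cong\partial X\times\mathbb{C}^k$ in Theorem~\ref{div} is then vacuous. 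With $\partial X=\emptyset$ the ``compatible connection restricting to the pure gauge connection on the boundary'' condition is automatically satisfied by \emph{any} unitary connection $\nabla^{E_1}$, and likewise there is no constraint on $\nabla^{E_2}$.

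Second, I would apply Theorem~\ref{div} directly with $F=1$ (the trivial line bundle with its trivial connection, so $ch(\nabla^F)=1$): this gives
$$\int_X \widetilde{Ell}(\nabla^{E_1})\,Ell_0(\nabla^{E_2}) \in D_n^{\Gamma}.$$
But wait --- here $\dim X = 2n+2$, so the relevant statement of Theorem~\ref{div} is the one with its ``$n$'' replaced by ``$n+1$'', yielding membership in $D_{n+1}^{\Gamma}$, which is exactly $\check{F}(X,\nabla_1,\nabla_2)\equiv 0 \mod D_{n+1}^{\Gamma}$. So the proof is essentially: \emph{unwind the definition of $\check{F}$, note that the empty-corner case of a $(U,fr)^2$-manifold of dimension $2n+2$ is, forgetting the (empty) face data, just a closed $2(n+1)$-dimensional manifold with a splitting $E_1\oplus E_2\cong TX^{st}$, and invoke Theorem~\ref{div} with $F$ trivial.}

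The only genuine subtlety --- and the ``main obstacle'' such as it is --- concerns the boundary hypotheses of Theorem~\ref{div}. That theorem was stated for a compact manifold $X$ with (possibly nonempty) smooth boundary equipped with a trivialization of $E_1$ over $\partial X$; its proof uses Theorem~\ref{aps} (APS) on $X$ and Lemma~\ref{keylemma} on $\partial X$. When $\partial X=\emptyset$, Theorem~\ref{aps} degenerates to Theorem~\ref{AS} (the Atiyah--Singer index theorem, no eta term), and Lemma~\ref{keylemma} is applied over the empty manifold, contributing nothing. So nothing breaks; one should just remark explicitly that the argument of Theorem~\ref{div} goes through unchanged when the boundary is empty, the APS correction term simply being absent. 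One could alternatively give a self-contained one-line argument: expand $\widetilde{Ell}(\nabla_1)\,Ell_0(\nabla_2)$ as in the proof of Theorem~\ref{div} so that each term is a $\zeta$-power combination of Todd genera twisted by honest (virtually reduced, but on a closed manifold still $K$-theory) bundles, apply Proposition~\ref{intelli} / Theorem~\ref{AS} coefficientwise to get integral $q$-expansions, and note the inhomogeneous modular-form structure is of weight $\le n+1$ because $X$ is $2(n+1)$-dimensional.

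I would therefore write the proof as a short corollary-style deduction: \emph{This is the special case of Theorem~\ref{div} with smooth boundary $\partial X = \emptyset$ and $F$ the trivial bundle; in that case the pure-gauge boundary condition on $\nabla^{E_1}$ is vacuous, the APS term in the proof of Theorem~\ref{div} disappears, and one is left with $\int_X\widetilde{Ell}(\nabla_1)Ell_0(\nabla_2)\in D_{n+1}^{\Gamma}$.} No hard calculation is needed.
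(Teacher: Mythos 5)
Your proposal is correct and matches the paper's own proof, which simply observes that the claim follows either directly from Theorem~\ref{AS} or from Theorem~\ref{div} applied with $\partial X=\emptyset$. You carefully noted the dimension shift (so Theorem~\ref{div} is invoked with its $n$ replaced by $n+1$) and that the boundary/pure-gauge hypotheses become vacuous, which is exactly the implicit content of the paper's one-line argument.
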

\begin{proof} 
Integrality is established by either making use of Theorem \ref{AS}, or by applying Theorem \ref{div} to the situation of $\partial Z=\emptyset$.
\end{proof}

\begin{prop}\label{bdry}
Let $X$ be a compact manifold of dimension $2n+2$ and let $E$ and $F$ be hermitian vector bundles over $X$ such that $E\oplus F\cong TX^{st}$ \textup{(}in the stable sense\textup{)} and that there is a given trivialization  $\psi:F|_{\partial X}\cong\partial X\times \mathbb{C}^k$. 
Regard  $X$ as a $(U,fr)^2$-manifold with empty corner, and choose any unitary connection $\nabla^F$ that restricts to the pure gauge one on the boundary. Then we have:
\begin{itemize}
\item[\textup{(i)}] $\check{F}(X,\nabla^F,\nabla^E)\equiv0\mod D_{n+1}^{\Gamma}.$
\item[\textup{(ii)}] $\check{F}(X,\nabla^E,\nabla^F)\equiv0\mod D_{n+1}^{\Gamma}+M^{\Gamma}_0\otimes\mathbb{R}+M_{n+1}^{\Gamma}\otimes\mathbb{R}.$
\end{itemize}
\end{prop}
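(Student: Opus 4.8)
The plan is to deduce Proposition~\ref{bdry} from Theorem~\ref{div} by bookkeeping the homogeneous degrees that appear in the two integrands. Recall that $\check F(X,\nabla_1,\nabla_2)=\int_X\widetilde{Ell}(\nabla_1)Ell_0(\nabla_2)$, and that $\widetilde{Ell}(\nabla_1)=(Ell(\nabla_1)-1)-(Ell_0(\nabla_1)-1)$, so that
\[
\check F(X,\nabla_1,\nabla_2)=\int_X(Ell(\nabla_1)-1)Ell_0(\nabla_2)-\int_X(Ell_0(\nabla_1)-1)Ell_0(\nabla_2).
\]
Theorem~\ref{div}, applied with $F=1$, already tells us that whenever the \emph{trivialized} bundle sits in the first slot the whole expression lies in $D_{n+1}^{\Gamma}$; this is exactly case~(i), since there $F$ (the trivialized bundle) plays the role of $E_1$. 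So the only real work is case~(ii), where the trivialized bundle $F$ sits in the \emph{second} slot, i.e.\ we must control $\int_X\widetilde{Ell}(\nabla^E)Ell_0(\nabla^F)$.

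For case~(ii) I would argue as follows. First, rewrite $Ell_0(\nabla^F)=Ell_0(\nabla^F)-1+1$ and split the integral accordingly. The term $\int_X\widetilde{Ell}(\nabla^E)\cdot 1=\int_X\widetilde{Ell}(\nabla^E)$ is, by Example~\ref{divex} (with $E_1\cong TX^{st}$ up to the usual Chern--Simons correction coming from $E\oplus F\cong TX^{st}$, which integrates to zero against a relative form supported away from $\partial X$), already in $D_{n+1}^{\Gamma}$ --- actually it is better to note directly that $\int_X Ell(\nabla^E\oplus\nabla^F)=\int_X Ell(\nabla^{TX})$ modulo an exact correction, and that $\widetilde{Ell}$ of the full tangent bundle lies in $D_{n+1}^{\Gamma}$ by Example~\ref{divex}. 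The genuinely new term is $\int_X\widetilde{Ell}(\nabla^E)(Ell_0(\nabla^F)-1)$. Here $Ell_0(\nabla^F)-1$ has no degree-$0$ part, and $Ell_0(\nabla^F)-1$ is built (via the multiplicativity of $Ell_0$ and the expansion \eqref{elli} at $q=0$, i.e.\ $Ell_0(\nabla^F)=(1-\zeta)^{-rk F}Td(\nabla^F)ch(\nabla^{\Lambda_{-\zeta}F^*})$) from $F$, which is trivialized on $\partial X$; so $Ell_0(\nabla^F)-1$ is represented by a \emph{relative} class in $H^{>0}(X,\partial X)\otimes\mathbb{Q}[\zeta]$. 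Multiplying $\widetilde{Ell}(\nabla^E)$ by this relative factor lets us apply Theorem~\ref{div} directly (now with $E_1=F$, $E_2=E$, and the bundle $F$ of the theorem being whatever virtual bundle witnesses $Ell_0(\nabla^F)-1$ as a twist; one must check the integrand of Theorem~\ref{div} matches, which it does after absorbing $Ell_0(\nabla^E)$ appropriately). The upshot is that $\int_X\widetilde{Ell}(\nabla^E)(Ell_0(\nabla^F)-1)\in D_{n+1}^{\Gamma}$.

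Putting the pieces together: in case~(ii) the only obstruction to landing in $D_{n+1}^{\Gamma}$ on the nose comes from the purely homogeneous extreme-degree pieces. Concretely, $\widetilde{Ell}(\nabla^E)$ itself contributes an inhomogeneous modular form of weights $1,\dots,n+1$, and after integration against the degree-$0$ part of $Ell_0(\nabla^F)$ (which is the scalar $(1-\zeta)^{-rk F}$, not $1$, so there is a genuine mismatch) one is left with a combination that may fail to be integral precisely in its weight-$0$ and weight-$(n+1)$ components --- the weight-$0$ part because the constant $(1-\zeta)^{-rk F}$ differs from $1$, and the weight-$(n+1)$ (top) part because the relative-class argument loses one degree of freedom at the top. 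This is exactly why the statement only claims membership in $D_{n+1}^{\Gamma}+M_0^{\Gamma}\otimes\mathbb{R}+M_{n+1}^{\Gamma}\otimes\mathbb{R}$, i.e.\ in $\underline{\underline D}^{\Gamma}_{n+1}\otimes\mathbb{R}$. I would make this precise by tracking which summands in the expansion of $(Ell_0(\nabla^F)-1)$ are relative (hence handled by Theorem~\ref{div}) and isolating the non-relative scalar $(1-\zeta)^{-rk F}-1$ explicitly; its contribution $((1-\zeta)^{-rkF}-1)\int_X\widetilde{Ell}(\nabla^E)$ is then a rational multiple of $\int_X\widetilde{Ell}(\nabla^E)\in D_{n+1}^{\Gamma}$, whose weight $0$ and weight $n+1$ parts are the only ones not automatically absorbed, giving the stated error term.

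The main obstacle is the careful degree/relative-class bookkeeping in case~(ii): one has to be scrupulous about (a) the Chern--Simons correction relating $Ell_0(\nabla^E\oplus\nabla^F)$ to $Ell_0(\nabla^{TX})$ and checking it contributes nothing because it multiplies a form vanishing on $\partial X$; (b) which factors in the $q$-expansion of the $S$- and $\Lambda$-operations involve the trivialized bundle $F$ and hence yield relative classes amenable to Lemma~\ref{keylemma}; and (c) correctly identifying that the only genuinely non-integral residue lives in $M_0^{\Gamma}\otimes\mathbb{R}$ and $M_{n+1}^{\Gamma}\otimes\mathbb{R}$. Case~(i) by contrast is essentially a direct citation of Theorem~\ref{div}.
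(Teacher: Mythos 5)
Your treatment of case (i) is correct and coincides with the paper's: $\check F(X,\nabla^F,\nabla^E)=\int_X\widetilde{Ell}(\nabla^F)Ell_0(\nabla^E)$ is a direct instance of Theorem~\ref{div} with the trivialized bundle $F$ in the first slot.

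Your argument for case (ii), however, has a genuine gap. You split $Ell_0(\nabla^F)=(Ell_0(\nabla^F)-1)+1$ and need to control both resulting integrals, but neither of your justifications works. For the second integral, you claim $\int_X\widetilde{Ell}(\nabla^E)\in D_{n+1}^{\Gamma}$ ``by Example~\ref{divex}.'' That example requires the bundle inside $\widetilde{Ell}$ to be trivialized on $\partial X$ (equivalently, $X$ a $(U,fr)$-manifold), but here it is $F$, not $E$, that is trivialized, and $E\oplus F\cong TX^{st}$ does not make $TX^{st}$ trivial over $\partial X$ either --- so neither $\int_X\widetilde{Ell}(\nabla^E)$ nor $\int_X\widetilde{Ell}(\nabla^{\oplus})$ is a divided congruence. (What \emph{is} true, and what you should have used, is that $\int_X\widetilde{Ell}(\nabla^E)$ lands in $M_0^{\Gamma}\otimes\mathbb{R}+M_{n+1}^{\Gamma}\otimes\mathbb{R}$ for trivial degree reasons: integration over $X^{2n+2}$ keeps only the top form degree of $Ell(\nabla^E)$, whose coefficient is a single modular form of weight $n+1$, together with the $q=0$ constant from $Ell_0(\nabla^E)$.) For the first integral, Theorem~\ref{div} is stated for $\int\widetilde{Ell}(\nabla^{E_1})Ell_0(\nabla^{E_2})ch(\nabla^F)$ with the \emph{trivialized} bundle $E_1$ sitting inside $\widetilde{Ell}$; in $\int_X\widetilde{Ell}(\nabla^E)(Ell_0(\nabla^F)-1)$ the non-trivialized bundle $E$ sits inside $\widetilde{Ell}$, and no amount of repackaging $Ell_0(\nabla^F)-1$ as a twist $ch(\nabla^{F'})$ will move $F$ into the $\widetilde{Ell}$ slot where Theorem~\ref{div} needs it. Your remark about the degree-$0$ part of $Ell_0(\nabla^F)$ being $(1-\zeta)^{-rkF}$ rather than $1$ is also incorrect: the normalization $(1+y)^{-rkF}$ in $\overline{\chi}_y$ is built in precisely so that $Ell_0$ starts with $1$ in degree $0$, so no such scalar mismatch appears.

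The missing idea is the algebraic identity the paper uses,
\[
\widetilde{Ell}(\nabla_1)Ell_0(\nabla_2)=\widetilde{Ell}(\nabla^{\oplus})-\widetilde{Ell}(\nabla_1)\widetilde{Ell}(\nabla_2)-\widetilde{Ell}(\nabla_2)Ell_0(\nabla_1),
\]
which, applied with $\nabla_1=\nabla^E$, $\nabla_2=\nabla^F$, moves $\nabla^F$ \emph{inside} the $\widetilde{Ell}$ in the last two summands so that Theorem~\ref{div} applies to them (the middle term is handled by writing $\widetilde{Ell}(\nabla^E)=Ell(\nabla^E)-Ell_0(\nabla^E)$ and absorbing $Ell(\nabla^E)/Ell_0(\nabla^E)$ as a $q$-dependent twist), while the first summand $\int_X\widetilde{Ell}(\nabla^{\oplus})$ sits in $M_0^{\Gamma}\otimes\mathbb{R}+M_{n+1}^{\Gamma}\otimes\mathbb{R}$ by the degree argument. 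Your decomposition isolates the wrong pieces and leaves the essential term in a form where the trivialized bundle is on the wrong side of $\widetilde{Ell}$/$Ell_0$.
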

\begin{proof}
The first statement is clear by Theorem \ref{div}. For the second statement, we  make use of the identity 
\begin{equation}\label{theidentity}
\widetilde{Ell}(\nabla_1)Ell_0(\nabla_2)=\widetilde{Ell}(\nabla^{\oplus})-\widetilde{Ell}(\nabla_1)\widetilde{Ell}(\nabla_2)-\widetilde{Ell}(\nabla_2)Ell_0(\nabla_1),
\end{equation}
where $\nabla^{\oplus}=\nabla_1\oplus\nabla_2$. Thus, up to modular terms of top and zero weight, i.e.~up to $\int\widetilde{Ell}(\nabla^{\oplus})$, we can express $\check{F}(X,\nabla^E,\nabla^F)$ in terms to which Theorem \ref{div} applies.
\end{proof}

Furthermore, we have a stability result concerning the splitting, namely:

\begin{prop}\label{stable}
Let $(Z,E_1,E_2)$ be a $(U,fr)^2$-manifold of dimension $2n+2$. Suppose that we have a different splitting  of the tangent bundle of the same manifold, $TZ\cong E_1'\oplus E_2'$, such that $E_1|_{\partial Z}=E_1'|_{\partial Z}\oplus F|_{\partial Z}$, where $F$ is trivialized over all of $\partial Z$, and  let $\nabla_0$ be any unitary connection on $F$ that restricts to the pure gauge one on both faces. Then we have
$$\check{F}(Z,\nabla_1'\oplus\nabla_0,\nabla_2)\equiv \check{F}(Z,\nabla_1',\nabla_0\oplus\nabla_2) \mod D_{n+1}^{\Gamma}.$$
\end{prop}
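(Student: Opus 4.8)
The first step is purely formal: since $Ell$ and $Ell_0$ are both multiplicative under direct sums of connections, one checks directly (using only this multiplicativity and linearity of $\int$) the identity
$$\check F(Z,\nabla_1'\oplus\nabla_0,\nabla_2)-\check F(Z,\nabla_1',\nabla_0\oplus\nabla_2)=\int_Z Ell(\nabla_1')\,\widetilde{Ell}(\nabla_0)\,Ell_0(\nabla_2).$$
The verification uses nothing about $Z$ beyond $E_1'\oplus F\oplus E_2\cong TZ^{st}$, so the analogous identity holds on any closed manifold carrying such data. Hence it suffices to show the right-hand side lies in $D_{n+1}^\Gamma$; and since by the very definition of $\check F$ it is automatically an inhomogeneous modular form of weight $\le n+1$, the only real content left is the integrality of its $q$-expansion.

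I would establish this by capping off the corner and appealing to Proposition \ref{interior}. Because $\Omega^U_{2n+1}(BU)=MU_{2n+1}(BU)=0$, the closed odd-dimensional $U$-manifold $\partial Z$ (with the $U$-structure restricted from $Z$), together with the classifying map of $E_1'|_{\partial Z}$, is null-bordant: there is a compact $U$-manifold $X'$ with $\partial X'\cong-\partial Z$ over which $E_1'$ extends as a bundle, and the orthogonal complement of this extension together with $\underline{\mathbb C}^{\,k}$ inside the (harmlessly stabilized) stable tangent bundle of $X'$ gives a bundle restricting to $E_2|_{\partial Z}$. I extend $\nabla_1'$ and $\nabla_2$ over $X'$ of product type near $\partial X'$, and over $X'$ equip $\underline{\mathbb C}^{\,k}$ with its trivial flat connection, whose restriction to $\partial X'$ is exactly the pure-gauge connection $\nabla_0|_{\partial Z}$. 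Gluing along $\partial Z$ produces a \emph{closed} $U$-manifold $\hat Z=Z\cup_{\partial Z}X'$ of dimension $2n+2$, together with bundles $\mathcal E_1',\mathcal F,\mathcal E_2$ satisfying $\mathcal E_1'\oplus\mathcal F\oplus\mathcal E_2\cong T\hat Z^{st}$, and a connection on $\mathcal F$ which equals $\nabla_0$ over $Z$ and is flat over $X'$.

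I then apply the formal identity on $\hat Z$. Over $X'$ the connection on $\mathcal F$ is flat, so $Ell=Ell_0=1$ there and $\widetilde{Ell}$ of it vanishes identically on $X'$; hence the contribution of $X'$ to the integral drops out, and the left-hand side of the identity for $\hat Z$ equals $\int_Z Ell(\nabla_1')\widetilde{Ell}(\nabla_0)Ell_0(\nabla_2)$, i.e.\ equals the difference $\check F(Z,\nabla_1'\oplus\nabla_0,\nabla_2)-\check F(Z,\nabla_1',\nabla_0\oplus\nabla_2)$ under consideration. On the other hand, by Proposition \ref{interior} each of the two $\check F$-terms on the closed manifold $\hat Z$ is $\equiv 0\pmod{D_{n+1}^\Gamma}$, so their difference is as well. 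Comparing the two computations proves the proposition.

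The crux is the capping-off step, and it is where the hypothesis is used to the hilt: one must fill $\partial Z$ compatibly with all the tangential and trivialization data while \emph{at the same time} extending $F$ by a flat bundle over the filling, so that $\widetilde{Ell}$ of the extended bundle dies on $X'$ — and this is possible only because $F$ is trivialized over the \emph{entire} boundary $\partial Z$ (not merely over the two faces separately), which is what makes $\nabla_0|_{\partial Z}$ the restriction of a flat connection on $\underline{\mathbb C}^{\,k}$ over $X'$. The existence of the filling itself is handed to us by $\Omega^U_{\mathrm{odd}}(BU)=0$; the remaining points — realizing virtual bundles as honest bundles of the right rank, extending product-type connections across collars, and gluing the Riemannian metrics — are routine.
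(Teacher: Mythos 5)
Your argument is correct, and the opening reduction is exactly the one the paper makes: using the identity $\widetilde{Ell}(\nabla_1'\oplus\nabla_0)-\widetilde{Ell}(\nabla_1')\,Ell_0(\nabla_0)=Ell(\nabla_1')\,\widetilde{Ell}(\nabla_0)$ (a consequence of \eqref{theidentity}), the difference in question equals $\int_Z Ell(\nabla_1')\,\widetilde{Ell}(\nabla_0)\,Ell_0(\nabla_2)$. Where you diverge is in how you establish integrality of this quantity. You cap off the \emph{entire} boundary $\partial Z$: invoking $\Omega^U_{\mathrm{odd}}(BU)=0$ you produce a $U$-manifold $X'$ over which $E_1'$ extends and over which $F$ extends by the trivial flat bundle, glue to a closed $\hat Z$, note that $\widetilde{Ell}$ of the flat extension of $F$ vanishes identically over $X'$ so the integral is unaffected, and then apply Proposition~\ref{interior} twice. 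The paper does something cheaper: since $\nabla_0$ is pure gauge near both faces, $\widetilde{Ell}(\nabla_0)$ already vanishes in a neighborhood of the corner, so one may simply \emph{smooth the corner} to obtain a manifold $Z_s$ with smooth boundary carrying the same data and the same integral, and then Theorem~\ref{div} (applied with the trivialized factor being $F$ and the remaining $Ell(\nabla_1')Ell_0(\nabla_2)$ absorbed as $Ell_0$ of the complement times a Chern character twist, order by order in $q$) gives integrality directly. Your route requires the extra step of realizing a virtual complement as an honest bundle restricting correctly on the boundary — the same maneuver the paper uses for well-definedness of $\check f$, so it is indeed standard here — but it replaces a one-line geometric observation (the integrand dies near the corner, so round it off) with a full null-bordism construction. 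Both work; the paper's is the shorter path, yours buys uniformity with the capping-off technique used elsewhere, and also makes more transparent that only a trivialization of $F$ over all of $\partial Z$ is needed, a point you rightly flag.
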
 
\begin{proof}
From \eqref{theidentity} we deduce
$$\widetilde{Ell}\left(\nabla_1'\oplus\nabla_0\right)-\widetilde{Ell}\left(\nabla_1'\right)Ell_0\left(\nabla_0\right)=Ell\left(\nabla_1'\right)\widetilde{Ell}\left(\nabla_0\right);$$
then we multiply by $Ell_0(\nabla_2)$, integrate, and apply Theorem \ref{div} making use of the fact that, since the integrand vanishes near the corner, the integral over $Z$ will yield the same result as the integral over a manifold $Z_s$ obtained from smoothing the corner.
\end{proof}

The preceding results suggest the following:

\begin{df}
Let $M$ be a closed manifold of positive even dimension $2n$, which is the corner of a $(U,fr)^2$-manifold $Z$, and therefore inherits a splitting of its framing. Then, using compatible connections, we set
\begin{equation}\label{fcheck}
\check{f}(M,\nabla_1|_M,\nabla_2|_M)\equiv\check{F}(Z,\nabla_1,\nabla_2)\mod D_{n+1}^{\Gamma}+M^{\Gamma}_0\otimes\mathbb{R}+M_{n+1}^{\Gamma}\otimes\mathbb{R}.
\end{equation}
We call $\check{f}$ the {\em{geometrical f-invariant}}. In fact, this is well-defined.
\end{df}
\begin{proof} 
First of all, two $(U,fr)^2$-manifolds $Z_1$, $Z_2$, having  in common one face (and therefore having the same corner), in the sense that, say, $\partial_1Z_1\cong\partial_1Z_2$ together with identifications of the respective $E_i$ thereon, give rise to congruent $\check{f}$-invariants, for we may  glue $-Z_1$ and $Z_2$ along this face. By assumption, the metric and connections near the boundary are of product type, so everything fits  together to yield a manifold $Y$ with smooth boundary to which  we may apply Proposition \ref{bdry}. Similarly, any other manifold $Z_3$ coinciding with $Z_2$ on the {\em{other}} face will have the same $\check{f}$ as well. Finally, given $Z_1$ and $Z_3$ having in common the corner $M$ (together with an identification of the trivialized vector bundles $E_i|$ thereon), there always exists a suitable $(U,fr)^2$-manifold $Z_2$: The faces $\partial_1Z_3$ and $\partial_2Z_1$ fit together along $M$ to form a topological $U$-manifold $N$ of odd dimension; furthermore the restrictions of the vector bundles (and the connections thereon) fit together to form vector bundles $\tilde{E}_i$. Since $\Omega^U_{odd}(BU)=0$, there exists a $U$-manifold $P$ such that $\partial P\cong N$, and a complex vector bundle $F_1$ over $P$ extending $\tilde{E}_1$. Specifying a vector bundle representative of the $K$-theory class $[TP^{st}\ominus F_1]$,  $P$ is turned into the desired $(U,fr)^2$-manifold $Z_2$. 
\end{proof}

Thus, we have succeeded in constructing a geometrical invariant of the corner of a $(U,fr)^2$-manifold; furthermore,  $\check{f}$ bears a striking resemblance to the topological $f$-invariant, and, in fact, the former will serve as a tool for the index theoretical computation of the latter in the following section.

\section{Calculability}\label{calculable?}

It is a natural question to ask whether the (geometrical) $f$-invariant is computable using index theory. In order to establish a formula that is similar to \eqref{e_c_2}, we have to address the following problems:
\begin{itemize}
\item[(i)] Analysis: A good starting point would be an index theorem on manifolds with corners of codimension two - alas, there are no theorems comparable to the generality of \cite{APS1}; however, we would like to mention \cite{mue96}, where an index formula is proved under the assumption that the induced Dirac operators are {\em invertible}. The results of \cite{stmwc} show that, without this assumption, it is still possible to obtain an `index formula', but the latter holds only {\em modulo the integers}.
\item[(ii)] Modularity: We want our formula to yield a result that is still recognizable as a combination of modular forms, but this property would inadvertently be spoiled by  reducing modulo the index (which takes values in $\mathbb{Z}^{\Gamma}[\![q]\!]$); furthermore, working one operator at a time, we obtain just a finite amount of coefficients of a $q$-expansion. Unfortunately, it is unclear under which conditions a finite amount of reduced coefficients can be lifted to an inhomogeneous combination of modular forms, and this task is complicated  by the fact  that $\check{f}$ is defined only up to {\em real} modular forms of top degree.
\item[(iii)] Geometry: Lastly, we have to keep in mind that the definition of the $f$-invariant makes use of a $(U,fr)^2$-manifold, the construction of which is also a non-trivial task. 
\end{itemize}

Our approach is to simplify matters by making some assumptions on the underlying geometry, i.e.\ we seek out $(U,fr)^2$-manifolds that are sufficiently `nice', in the sense that they allow the problems (i) and (ii) to be resolved. 

\subsection{Corners via fiber bundles}\label{basicsetup}

As a first step, we restrict our attention to manifolds of the following form:

\begin{df}\label{two_f}
We define a {\em $\langle 2\rangle_f$-manifold} to be a compact $\langle2\rangle$-manifold that is a fiber bundle
$$\pi:Z\longrightarrow B,$$
where both the fiber $X$ and the base $B$ are even-dimensional compact $\langle1\rangle$-manifolds, and the faces are given by
$$Z'=\partial_1Z,\ \ X\rightarrow Z'\rightarrow \partial B,$$ 
$$Z''=\partial_2Z,\ \ \partial X\rightarrow Z''\rightarrow B,$$
which are fiber bundles themselves.
\end{df}
Expecting such manifolds to be accessible to index theory considerations, we proceed along the lines of section \ref{classicalindex}, i.e.\ we introduce metrics $g^{T(Z/B)}$ and $g^{TB}$  (which are assumed to be of product type near the respective faces), make a choice of splitting $TZ\cong T(Z/B)\oplus\pi^*TB$,  and construct the connection $\nabla^{\oplus}=\nabla^{T(Z/B)}\oplus\pi^*\nabla^{TB}$.

As a model situation, we consider $Z$ to be equipped with fixed $Spin$ structures on the bundles; these induce natural orientations, and we may compute  the integral of the $\hat{A}$ genus form using integration over the fiber:
$$ \int_Z{\hat{A}(\nabla^{\oplus})}=\int_Z{\hat{A}(\pi^*\nabla^{TB})\hat{A}(\nabla^{T(Z/B)})}=\int_B{\left\{\hat{A}(\nabla^{TB})\int_{Z/B}{\hat{A}(\nabla^{T(Z/B)})}\right\}}.$$
For simplicity, we assume the Dirac operator of the boundary family to be invertible; thus, the application of Theorem \ref{bc} yields 
$$\int_{Z/B}{\hat{A}(\nabla^{T(Z/B)})}=\left(ch(Ind)\right)_{dR}+\hat{\eta},$$
where the subscript indicates that we are dealing with a {\em de Rham representative} of the Chern character of the index bundle. Let us choose a virtual vector bundle $E$ with unitary connection $\nabla^E$, such that $[E]=[E_1\ominus E_2]=[Ind]$; then we may write $\left(ch(Ind)\right)_{dR}=ch(\nabla^E)+d\omega$, for some $\omega\in\Omega^{odd}(B)$, i.e.\ 
$$\int_Z{\hat{A}(\nabla^{\oplus})}=\int_B{\left\{\hat{A}(\nabla^{TB})\left(ch(\nabla^E)+d\omega+\hat{\eta}\right) \right\}};$$
finally, we apply Theorem \ref{aps} to the first summand and Stokes' theorem to the second  and  obtain:
$$\int_Z{\hat{A}(\nabla^{\oplus})}=Ind(/\!\!\!\!D\otimes E)+\left\{\xi(/\!\!\!\!D_{\partial B}\otimes E)+\int_{\partial B}\omega\hat{A}(\nabla^{TB})\right\}+\int_B{\hat{\eta}\hat{A}(\nabla^{TB})}.$$
Thus, we have an interpretation of the integral of the $\hat A$ genus form in terms of an index and contributions from each of the two faces;  the caveat is that this is just a formal result, in the sense that we had to introduce the form $\omega$ to store information we usually do not have access to. 
Rest assured however, that this information will not be needed for the computation of the $f$-invariant.

\subsection{Application to the $\check{f}$-invariant}

Let us extend the construction of the previous section to incorporate complex structures by making the following

\begin{df}\label{ufr2f}
A $(U,fr)^2_f$-manifold consists of
\begin{itemize}
\item{a $\langle2\rangle_f$-manifold Z,}
\item{a vector bundle $E$ over $B$ that turns $B$ into a $(U,fr)$-manifold,}
\item{a complex  vector bundle $E_2$ stably isomorphic to $T(Z/B)^{st}$ }
\item{a trivialization of the restriction of $E_2$ to the face $Z^{\prime\prime}\rightarrow B$,}
\item{and an isomorphism $E_2\oplus\pi^*E\cong TZ^{st}$ (in the stable sense).}
\end{itemize}
\end{df}
Again, we equip the complex bundles with hermitian metrics that restrict to the ones induced by the trivializations, and call unitary connections thereon compatible, if they restrict to the pure gauge ones. Then we have:

\begin{thm}\label{f_from_family}
Let $Z$ be a $(U,fr)^2_f$-manifold of dimension $2n+2$,  and let  $\nabla_1=\pi^*\nabla^{E}$ and $\nabla_2$  be compatible connections on $E_1=\pi^*E$ and $E_2$, respectively. Then the $\check{f}$-invariant  of its  corner $M$ is given by
$$\check{f}(M,\pi^*\nabla^{E}|_M,\nabla_2|_M)\equiv \int_B{\hat{e}_{\Gamma}\widetilde{Ell}(\nabla^{E})},$$
where we defined \textup{(}a de Rham representative of\textup{)} the $e_{\Gamma}$-invariant of  a family:
$$\hat{e}_{\Gamma}\equiv\int_{Z/B}Ell_0(\nabla_2)\mod\left(im(ch:K(B)\otimes\mathbb{Z}^{\Gamma}\rightarrow H^{even}(B,\mathbb{Q}[\zeta]))\right)_{dR}.$$
\end{thm}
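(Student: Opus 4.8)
The strategy is to start from the definition $\check{f}(M,\pi^*\nabla^E|_M,\nabla_2|_M)\equiv\check{F}(Z,\pi^*\nabla^E,\nabla_2)=\int_Z\widetilde{Ell}(\pi^*\nabla^E)Ell_0(\nabla_2)$ and to push the integration over the fiber $Z\to B$ through. Since $\nabla_1=\pi^*\nabla^E$ is pulled back from $B$, the factor $\widetilde{Ell}(\pi^*\nabla^E)=\pi^*\widetilde{Ell}(\nabla^E)$ is constant along the fibers, so the projection formula gives
\begin{equation*}
\int_Z\widetilde{Ell}(\pi^*\nabla^E)Ell_0(\nabla_2)=\int_B\widetilde{Ell}(\nabla^E)\left(\int_{Z/B}Ell_0(\nabla_2)\right).
\end{equation*}
This already exhibits the right-hand shape; what remains is to identify $\int_{Z/B}Ell_0(\nabla_2)$ with $\hat e_\Gamma$ modulo the stated image, and to check that replacing the fiber integral by $\hat e_\Gamma$ does not change the value of $\int_B\widetilde{Ell}(\nabla^E)(\cdots)$ modulo $D^\Gamma_{n+1}+M^\Gamma_0\otimes\mathbb{R}+M^\Gamma_{n+1}\otimes\mathbb{R}$, i.e.\ that the ambiguity in $\hat e_\Gamma$ is harmless after multiplying by $\widetilde{Ell}(\nabla^E)$ and integrating over $B$.

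\textbf{Key steps.} First I would expand $Ell_0(\nabla_2)$, exactly as in the proof of Theorem \ref{div}, as $(1-\zeta)^{-\mathrm{rk}}$ times a twisted Todd form times Chern characters of the power-operation bundles $\bigotimes S_{q^n}\overline{E_2\otimes\mathbb{C}}\otimes\Lambda_{-q^n/\zeta}\overline{E_2}\otimes\Lambda_{-\zeta q^n}\overline{E_2^*}$, coefficient by coefficient in $q$; each coefficient is a finite combination (with $\mathbb{Z}^\Gamma$-coefficients) of forms $\hat A(\nabla^{T(Z/B)})ch(\nabla^{L^{1/2}})ch(\nabla^{F_j})$ for genuine twisting bundles $F_j$ over $Z$, up to Chern--Simons corrections that vanish near the faces. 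Fiber-integrating each such term and invoking the families index theorem (Theorem \ref{bc}; here the trivialization of $E_2$ over $Z''$ guarantees the boundary family is controlled and the $cs$-terms restrict to pure-gauge data) shows $\int_{Z/B}Ell_0(\nabla_2)$ agrees, modulo exact forms, with a $\mathbb{Z}^\Gamma$-combination of Chern characters of index bundles, i.e.\ with an element of $\big(im(ch\colon K(B)\otimes\mathbb{Z}^\Gamma\to H^{even}(B,\mathbb{Q}[\zeta]))\big)_{dR}$ plus the ``defect'' that genuinely depends on the geometry — this is precisely what the congruence in the definition of $\hat e_\Gamma$ records. Second, I would verify well-definedness of the displayed congruence: if $\int_{Z/B}Ell_0(\nabla_2)$ is changed by $ch(\nabla^V)+d\omega$ with $[V]\in K(B)\otimes\mathbb{Z}^\Gamma$, then $\int_B\widetilde{Ell}(\nabla^E)(ch(\nabla^V)+d\omega)$ contributes, after Stokes (the boundary term lives on $\partial B$, where $\widetilde{Ell}(\nabla^E)$ is built from the pure-gauge connection and hence represents a relative class), an integral of the form $\int_B\widetilde{Ell}(\nabla^E)ch(\nabla^V)$ which by Theorem \ref{div} (applied with $E_1=E$, $E_2$ the complementary bundle of the $(U,fr)$-structure on $B$, $F=V$, up to the usual top/zero-weight slack from the identity \eqref{theidentity}) lies in $D^\Gamma_{n+1}+M^\Gamma_0\otimes\mathbb{R}+M^\Gamma_{n+1}\otimes\mathbb{R}$. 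Hence the right-hand side is unchanged modulo the defining ambiguity of $\check f$, and the theorem follows.

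\textbf{Main obstacle.} The routine part is the projection formula and the coefficient-by-coefficient bookkeeping; the delicate point is matching the boundary contributions. When I fiber-integrate $Ell_0(\nabla_2)$ I must use Theorem \ref{bc}, which requires product-type structures near $\partial B$ and a constant-rank hypothesis for the boundary Dirac family — but the theorem statement does not assume constant rank (that is deferred to the Corollary). So the honest version of the argument has to avoid invoking the $\hat\eta$-form at this stage: instead, one works entirely at the level of differential forms and relative cohomology, using that the integrand $\widetilde{Ell}(\pi^*\nabla^E)Ell_0(\nabla_2)$ vanishes near the corner (so one may smooth the corner, as in Proposition \ref{stable}) and that the families index map lifts to $K$-theory up to compact perturbation, so that $\int_{Z/B}Ell_0(\nabla_2)$ is, modulo exact forms, a Chern character of a genuine (perturbed) index bundle with $\mathbb{Z}^\Gamma$-coefficients regardless of rank jumps. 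Making this ``perturb, then integrate over $B$ against $\widetilde{Ell}(\nabla^E)$'' argument clean — in particular checking that the perturbation and the $d\omega$ it introduces really do die after multiplication by the relative class $\widetilde{Ell}(\nabla^E)$ and integration over the closed-up-to-$\partial B$ base — is where the real work sits, and it is exactly what the modulo-image clause in the definition of $\hat e_\Gamma$ is designed to absorb.
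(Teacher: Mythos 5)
Your core argument is correct and is precisely the paper's: after the projection formula reduces the integral to $\int_B\widetilde{Ell}(\nabla^E)\int_{Z/B}Ell_0(\nabla_2)$, the only thing to check is that altering the fiber integral by $ch(\nabla^V)+d\omega$ with $[V]\in K(B)\otimes\mathbb{Z}^\Gamma$ does not change $\check f$, and this follows from Theorem~\ref{div} (for the $ch$ term) and Stokes' theorem plus the flatness of $\nabla^E|_{\partial B}$ (for the $d\omega$ term, since a pure-gauge connection has zero curvature, so $\widetilde{Ell}(\nabla^E)$, which has no degree-zero part, literally vanishes on $\partial B$ --- it is cleaner to say this than to speak of a ``relative class''). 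Also, Theorem~\ref{div} applies directly with $E_1=E$, $E_2=0$, $F=V$: since $E\cong TB^{st}$ is the bundle trivialized over $\partial B$, there is no need for the top/zero-weight slack from~\eqref{theidentity} at this point.

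However, your ``first step'' --- expanding $Ell_0(\nabla_2)$ and invoking the families index theorem (Theorem~\ref{bc}) to argue that $\int_{Z/B}Ell_0(\nabla_2)$ agrees, modulo exact forms, with Chern characters of index bundles --- is not needed, and it is also the source of the obstacle you flag. The quantity $\hat e_\Gamma$ is \emph{defined} as the class of the fiber integral in the displayed quotient; nothing about its structure needs to be established, and no index theorem is required to perform a fiber integration. Your worry about the missing constant-rank hypothesis is real evidence that you were reaching for the wrong tool: Theorem~\ref{bc} enters only in Corollary~\ref{formula}, where one wishes to \emph{compute} a representative of $\hat e_\Gamma$ in terms of $\hat\eta$-forms and boundary data. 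Drop the first step entirely (and the proposed ``perturb, then integrate'' workaround); your second step alone, cleaned up as above, is the proof.
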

\begin{proof}
First of all, we notice that $T(Z/B)$ inherits a natural orientation from $E_2$, so we may integrate over the fiber,
$$\int_Z{Ell_0\left(\nabla_2\right)\widetilde{Ell}\left(\pi^*\nabla^E\right)}=\int_B\left\{\widetilde{Ell}\left(\nabla^E\right)\int_{Z/B}Ell_0\left(\nabla_2\right)\right\}.$$
Then we observe that, by Theorem \ref{div}, $\int_Bch(\nabla^F)\widetilde{Ell}(\nabla^{E})$ yields a divided congruence for arbitrary hermitian vector bundles $F$ with unitary connection; finally, for arbitrary $\omega\in\Omega^{odd}\left(B\right)$, we have
$$\int_Bd\omega\widetilde{Ell}(\nabla^{E})=\int_{\partial B}\omega\widetilde{Ell}(\nabla^{E})=0$$ by Stokes' theorem and the flatness of $\nabla^E$ restricted to the boundary.
\end{proof}

\begin{rmk}
The rationale behind the definition of $\hat{e}_{\Gamma}$ is to exhibit an analogy as close as possible to the $e$-invariant, thus paving the way for Corollary \ref{formula}. However, the results of \cite{bunke-2007}\footnote{Thanks to U.\ Bunke for pointing out this reference} (which I was unaware of until recently) show that one can actually define an $e$-invariant of a family of framed manifolds  in terms of  smooth $K$-theory.
\end{rmk}

Take note that $E_2$ also induces  a canonical $Spin^{\mathbb{C}}$ structure on the vertical tangent. Thus, given a metric and connection on $T(Z/B)$ (which we assume to be of product type near the face $Z''$), we can construct  a family of Dirac operators  coupled to the formal vector bundle $(1-\zeta)^{-rkE_2}\Lambda_{-\zeta}E_2^*$; let us denote this family by $\eth^{\Gamma}$. 
\begin{cor}\label{formula}
Let $Z$ be as in Theorem \ref{f_from_family}, and let $\eth^{\Gamma}$ be constructed as above. If  the kernel of $\eth^{\Gamma}_{\partial X}$ is of constant rank along the fibers, then the $\check{f}$-invariant of the corner $M$ is given by
$$\int_B{\left\{\left(\hat{\eta}\left(\eth^{\Gamma}_{\partial X}\right)+{\textstyle{\frac{1}{2}}}ch\left(\nabla^{\ker\eth^{\Gamma}_{\partial X}}\right)+{\textstyle{\int_{Z''/B}{cs}}}\right) \widetilde{Ell}(\nabla^{E})\right\}}.$$
In particular, this expression is calculable from the face $Z''$.
\end{cor}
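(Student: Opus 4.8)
The plan is to combine Theorem~\ref{f_from_family} with the Bismut--Cheeger index theorem for families (Theorem~\ref{bc}), exactly mirroring the way the closed-boundary $e$-invariant formula \eqref{e_c_2} was derived from Theorems~\ref{AS} and~\ref{aps}. By Theorem~\ref{f_from_family}, the $\check f$-invariant of $M$ is represented by $\int_B \hat e_\Gamma\,\widetilde{Ell}(\nabla^E)$, where $\hat e_\Gamma$ is the class of $\int_{Z/B}Ell_0(\nabla_2)$ modulo the image of the Chern character from $K(B)\otimes\mathbb{Z}^\Gamma$. So everything reduces to rewriting the fiber integral $\int_{Z/B}Ell_0(\nabla_2)$ modulo that indeterminacy, using that $Z\to B$ has fiber $X$ with boundary $\partial X$, and $Z''\to B$ is the induced fiber bundle with fiber $\partial X$.

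First I would unwind $Ell_0(\nabla_2)$ as a twisted Todd form. Since $E_2$ gives a $Spin^{\mathbb C}$ structure on $T(Z/B)$, and $Ell_0 = (1-\zeta)^{-rk}Td\cdot ch(\Lambda_{-\zeta}E_2^*)$, the fiber integral of $Ell_0(\nabla_2)$ computed with the connection $\nabla^{E_2}$ differs from the one computed with the Levi--Civita connection $\nabla^{T(Z/B)}$ only by the fiber integral of a Chern--Simons term $cs$ (as in \eqref{chernsimons}); because $cs$ is of product type near $Z''$ and built from connections that agree there, its fiber integral localizes as $\int_{Z''/B}cs$ up to an exact form on $B$ — actually one must track this carefully, since here the transgression need not vanish on the face. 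Then, to the Levi--Civita version, apply Theorem~\ref{bc}: the constant-rank hypothesis on $\ker\eth^\Gamma_{\partial X}$ is exactly what is needed for the family index bundle (with APS boundary condition) to be well-defined, and Theorem~\ref{bc} gives
$$\int_{Z/B}\hat A(\nabla^{T(Z/B)})ch(\nabla^{L^{1/2}})ch(\nabla^{\Lambda_{-\zeta}E_2^*}) = ch(Ind)_{dR} + \hat\eta(\eth^\Gamma_{\partial X}) + {\textstyle\frac12}ch(\nabla^{\ker\eth^\Gamma_{\partial X}}),$$
after multiplying through by $(1-\zeta)^{-rk E_2}\in\mathbb{Z}^\Gamma$. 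The term $ch(Ind)_{dR}$ is the Chern character (a de Rham representative) of an element of $K(B)\otimes\mathbb{Z}^\Gamma$, hence lies in the subgroup we are quotienting by in the definition of $\hat e_\Gamma$, so it drops out.

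Collecting the pieces, $\hat e_\Gamma \equiv \hat\eta(\eth^\Gamma_{\partial X}) + \tfrac12 ch(\nabla^{\ker\eth^\Gamma_{\partial X}}) + \int_{Z''/B}cs$ modulo $(im\,ch)_{dR}$, and substituting into $\int_B\hat e_\Gamma\,\widetilde{Ell}(\nabla^E)$ gives the stated formula; the indeterminacy disappears upon integrating against $\widetilde{Ell}(\nabla^E)$ precisely by the argument already used in the proof of Theorem~\ref{f_from_family} (Theorem~\ref{div} kills $\int_B ch(\nabla^F)\widetilde{Ell}(\nabla^E)$, and Stokes plus flatness of $\nabla^E$ on $\partial B$ kills exact contributions). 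That all three surviving terms are supported on $Z''$ — the $\hat\eta$-form and kernel bundle live on $B$ but are built from the fiberwise boundary family, and $cs$ is fiber-integrated over $Z''$ — gives the final "calculable from $Z''$" assertion.

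The main obstacle I expect is the bookkeeping in the transgression/Chern--Simons step: passing from $\int_{Z/B}Ell_0(\nabla_2)$ with the bundle connection to the Levi--Civita-based local index form is not just "differ by an exact form," because $T(Z/B)$ carries the connection $\nabla^{T(Z/B)}$ from \eqref{nablatzb} rather than anything compatible with the complex structure, and one has to check that the transgression term indeed integrates over the fiber to $\int_{Z''/B}cs$ without leftover boundary contributions on $B$ (using product structure near $Z''$ and that $\nabla^{E_2}$ restricts to pure gauge there). A secondary subtlety is confirming that $(1-\zeta)^{-rkE_2}$ times an integral $K$-theory class still lands in $K(B)\otimes\mathbb{Z}^\Gamma$ so that $ch(Ind)$ really is absorbed into the $\hat e_\Gamma$ indeterminacy; this is immediate from $(1-\zeta)^{-1}\in\mathbb{Z}^\Gamma$, as in Proposition~\ref{intelli}.
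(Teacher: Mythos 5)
Your proposal follows essentially the same route as the paper's own proof: replace $\nabla_2^{\mathbb R}$ by $\nabla^{T(Z/B)}$ via a Chern--Simons transgression, push the resulting exact term to $\int_{Z''/B}cs$ modulo $d\Omega^{odd}(B)$ by fiberwise Stokes, apply Theorem~\ref{bc} to the Levi--Civita local index form, and absorb $ch(Ind)_{dR}$ into the indeterminacy of $\hat e_\Gamma$ while killing the exact remainder by the Stokes/flatness argument already used for Theorem~\ref{f_from_family}. The subtleties you flag (tracking the face contribution in the transgression, and $(1-\zeta)^{-rk E_2}\in\mathbb{Z}^\Gamma$) are exactly the ones implicit in the paper's terse proof, and you handle them correctly.
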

\begin{proof}
We replace $\nabla_2^{\mathbb{R}}$ by $\nabla^{T(Z/B)}$ in the $\hat{A}$ genus form underlying $Ell_0$,  hence giving rise to a Chern-Simons term of the form \eqref{chernsimons}, 
but clearly,
$$\int_{Z/B}d\ cs\equiv\int_{Z''/B}cs\mod im\left(d:\Omega^{odd}(B)\rightarrow\Omega^{even}(B)\right);$$
thus, taking into account the indeterminacy of $\hat{e}_{\Gamma}$, the application of Theorem \ref{bc}  yields the claim. 
\end{proof}

\begin{rmk}
We would like to point out that we can actually obtain a formula that does not require the kernel of $\eth^{\Gamma}_{\partial X}$ to form a vector bundle: In \cite{melrosepiazzafamily}, Melrose and Piazza prove a general index theorem for families with boundary by making use of $b$-calculus techniques and introducing the notion of a {\em{spectral section}} $P$. The formal application of their theorem to our situation is straightforward, but  since their result reduces to Theorem \ref{bc} under the aforementioned assumption (which will indeed be satisfied in the examples considered in this thesis), we are not going to elaborate on this.
\end{rmk}

In fact, Corollary \ref{formula} gives an index theoretical formula for the {\em topological} $f$-invariant, for we have:

\begin{prop}\label{equality}
Let $Z$ be as in Theorem \ref{f_from_family}. Then we have
$$\int_B{\hat{e}_{\Gamma}\widetilde{Ell}(\nabla^{E})}\in D_{n+1}^{\Gamma}\otimes\mathbb{Q/Z},$$
and this gives a representative of the topological $f$-invariant of $M$.
\end{prop}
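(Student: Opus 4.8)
The plan is to show that the de Rham quantity $\int_B \hat{e}_\Gamma \widetilde{Ell}(\nabla^E)$ defined in Theorem \ref{f_from_family} agrees, modulo $\underline{\underline{D}}^\Gamma_{n+1}$, with the topological $f$-invariant $f(M)$ as defined in \eqref{f}, i.e.\ with the evaluation $\langle (Ell(E_1)-1)(Ell_0(E_2)-1),[Z,\partial Z]\rangle$ against the fundamental class of the $\langle 2\rangle$-manifold $Z$. The natural first step is to expand $(Ell(E_1)-1)(Ell_0(E_2)-1) = \widetilde{Ell}(\nabla_1)Ell_0(\nabla_2) + (Ell_0(\nabla_1)-1)(Ell_0(\nabla_2)-1)$; the second term is a relative $H^{even}(Z,\partial Z;\mathbb{Q}[\zeta])$-class whose integral contributes only something in $M_0^\Gamma\otimes\mathbb{Q}$ plus (by Proposition \ref{interior}/\ref{bdry}-type reasoning applied to $\partial_1 Z$ or $\partial_2 Z$) an element of $D^\Gamma_{n+1}$, hence drops out mod $\underline{\underline{D}}^\Gamma_{n+1}$. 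So up to the allowed indeterminacy the topological $f$-invariant is represented by $\check F(Z,\nabla_1,\nabla_2)=\int_Z \widetilde{Ell}(\nabla_1)Ell_0(\nabla_2)$, which is exactly $\check f(M)$; and by Theorem \ref{f_from_family} this equals $\int_B \hat e_\Gamma\widetilde{Ell}(\nabla^E)$ on the nose.

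Next I would pin down the integrality statement, namely that $\int_B \hat e_\Gamma\widetilde{Ell}(\nabla^E)$ lies in $D^\Gamma_{n+1}\otimes\mathbb{Q}/\mathbb{Z}$ rather than just in inhomogeneous real modular forms mod something. The weight count is immediate: $\widetilde{Ell}(\nabla^E)$ contributes modular forms of weights $1,\dots,n+1$ (the degree-$0$ piece is killed by the tilde and by flatness on $\partial B$, as in the proof of Theorem \ref{f_from_family}), while $\hat e_\Gamma$ is a sum of $q$-expansions with $\mathbb{Q}[\zeta]$-coefficients of cohomological degree $\le 2n+2-\deg(\text{weight part})$; after fiber integration the total is an inhomogeneous modular form of weights $\le n+1$, i.e.\ lands in $\bigoplus_{k\le n+1}M_k^\Gamma\otimes\mathbb{Q}$. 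For the $\mathbb{Z}^\Gamma[\![q]\!]$-integrality of the ambiguity-reduced class one invokes Theorem \ref{div}: replacing the de Rham representative $\hat e_\Gamma$ by $\int_{Z/B}Ell_0(\nabla_2)$ and re-assembling $\int_Z \widetilde{Ell}(\nabla_1)Ell_0(\nabla_2)=\check F(Z,\nabla_1,\nabla_2)$, the faces $\partial_1 Z$, $\partial_2 Z$ supply trivializations so that the relevant summands are integral; the remaining contributions — those coming from the indeterminacy $im(ch:K(B)\otimes\mathbb{Z}^\Gamma\to H^{even})$ paired with $\widetilde{Ell}(\nabla^E)$ — are integral by Theorem \ref{div} applied with an auxiliary bundle $F$, and exact forms contribute nothing by Stokes and flatness of $\nabla^E|_{\partial B}$. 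Hence the class is well-defined in $D^\Gamma_{n+1}\otimes\mathbb{Q}/\mathbb{Z}$.

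The last point is well-definedness of the representative: one must check it does not depend on the choices of splitting $TZ\cong T(Z/B)\oplus\pi^*TB$, the metrics, the compatible connections, nor on the particular $(U,fr)^2_f$-manifold $Z$ bounding (in the corner sense) the given framed manifold $M$. Dependence on connections and metrics within a fixed $Z$ is handled by the transgression formula \eqref{transgress} exactly as in the proof that $\check f$ is well-defined: a change of compatible connection alters $\check F$ by the integral of $d(\text{Chern--Simons})$ times a relative class vanishing near $\partial Z$, hence by an exact term plus, via Stokes over $\partial Z$, terms of top/zero weight and divided congruences. Independence of the bordism $Z$ reduces to the already-established well-definedness of $\check f(M)$ in \eqref{fcheck}: glue two $(U,fr)^2_f$-manifolds with a common face (or, via the $\Omega^U_{odd}(BU)=0$ argument in the proof of \eqref{fcheck}, with only a common corner) into a closed-up object and apply Propositions \ref{interior}--\ref{bdry}. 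Finally, one must verify that the topological $f$-invariant itself is insensitive to the choice of $(U,fr)^2$-structure — but that is part of its definition as a framed bordism invariant, so it is enough to exhibit \emph{some} $(U,fr)^2$-structure, namely the one underlying the $(U,fr)^2_f$-manifold $Z$, computing it.

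\textbf{Main obstacle.} The genuinely delicate step is controlling the ambiguities so they really land inside $\underline{\underline{D}}^\Gamma_{n+1}$ and not in something larger: $\check f$ is defined only modulo $D^\Gamma_{n+1}+M_0^\Gamma\otimes\mathbb{R}+M_{n+1}^\Gamma\otimes\mathbb{R}$ with \emph{real} modular forms of top and bottom weight, whereas the topological $f$-invariant is reduced modulo $\underline{\underline{D}}^\Gamma_{n+1}$, whose top- and bottom-weight pieces are only \emph{rational}. One must argue that the real top-weight modular form produced by $\check f$ is actually forced to be rational here — because $\int_B\hat e_\Gamma\widetilde{Ell}(\nabla^E)$, being a de Rham incarnation of a rational relative cohomology pairing up to the stated $K$-theoretic indeterminacy, has rational weight-$(n+1)$ component — so that the coarser topological reduction absorbs it. Making this rationality precise (essentially: the top-weight part of $\check F(Z,\nabla_1,\nabla_2)$ equals $\langle Ell_{n+1}(E_1),[Z,\partial Z]\rangle$, a rational number) is the crux; once it is in hand, matching the two reductions and concluding is routine.
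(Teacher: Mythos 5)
Your overall strategy — expand $(Ell(E_1)-1)(Ell_0(E_2)-1)$, identify the $\widetilde{Ell}\cdot Ell_0$ piece with $\check F(Z,\nabla_1,\nabla_2)=\int_B\hat e_\Gamma\widetilde{Ell}(\nabla^E)$, control the $K$-theoretic indeterminacy with Theorem~\ref{div}, and isolate the rationality question as the real issue — is the same as the paper's, and you have correctly identified the crux. But the argument you offer for closing it does not actually work. You claim that the top-weight part of $\check F(Z,\nabla_1,\nabla_2)$ equals $\langle Ell_{n+1}(E_1),[Z,\partial Z]\rangle$. In the $(U,fr)^2_f$ setting $\nabla_1=\pi^*\nabla^E$ is pulled back from $B$, so $\widetilde{Ell}_{n+1}(\nabla_1)=\pi^*\widetilde{Ell}_{n+1}(\nabla^E)$ is the pullback of a top-degree form on $Z$ from a lower-dimensional base and therefore vanishes; the weight-$(n+1)$ piece of $\check F$ is simply zero, not a nonzero Chern number. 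More importantly, rationality is needed in \emph{all} weights $1,\ldots,n$ (since the proposition asserts membership in $D_{n+1}^\Gamma\otimes\mathbb{Q/Z}$, whose definition forces each weight component to be a \emph{rational} modular form), not only in weight $n+1$; your argument, even had it been correct, addresses only the boundary weight.

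The paper closes this gap in one clean step by exploiting exactly the fact that $\nabla_1$ comes from the base. First one observes $\int_Z Ell_0(\nabla_2)\widetilde{Ell}(\nabla_1)=\int_Z(Ell_0(\nabla_2)-1)\widetilde{Ell}(\nabla_1)$ (the discarded term $\int_Z\widetilde{Ell}(\nabla_1)$ vanishes because the fiber has positive dimension). Then one integrates over the fiber and notes that the two factors $\int_{Z/B}(Ell_0(\nabla_2)-1)$ and $\widetilde{Ell}(\nabla^E)$ represent classes in $H^*(B,\mathbb{Q}[\zeta])$ and $H^{even}(B,\partial B;\mathbb{Q})\otimes D^\Gamma$, respectively — the first because it is the fiber integral of a Chern--Weil representative of a rational class which vanishes near $Z''$, the second by compatibility of $\nabla^E$. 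Pairing a $\mathbb{Q}[\zeta]$-class against a $\mathbb{Q}\otimes D^\Gamma$-class produces a rational combination of divided congruences in every weight at once, and Theorem~\ref{div} shows the $\hat e_\Gamma$-indeterminacy lands inside $D_{n+1}^\Gamma$. Only after this rationality is secured does the decomposition $(Ell_0(\nabla_2)-1)\widetilde{Ell}(\nabla_1)=(Ell_0(\nabla_2)-1)(Ell(\nabla_1)-1)-(Ell_0(\nabla_2)-1)(Ell_0(\nabla_1)-1)$ identify the first summand with the defining expression \eqref{f} and dispose of the second (a $\mathbb{Q}[\zeta]$-pairing of constant $q$-expansion). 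So your plan is on the right track, but the missing ingredient is the joint rationality of the two cohomology classes paired over $(B,\partial B)$, not a single characteristic-number identity in the top weight.
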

\begin{proof} The Chern-Weil forms constructed from the curvature of a compatible connection $\nabla_i$ represent characteristic classes of  $E_i$ relative to  $\partial_iZ$. 
Since $\nabla_1=\pi^*\nabla^{E}$ comes from  the base, we have
$$\int_Z{Ell_0(\nabla_2)\widetilde{Ell}(\nabla_1)}=\int_Z{(Ell_0(\nabla_2)-1)\widetilde{Ell}(\nabla_1)};$$integrating over the fiber, we observe that the closed differential form  on $B$ given by $\int_{Z/B}(Ell_0(\nabla_2)-1)$ has rational periods, i.e.\  it represents a  class in $H^*(B,\mathbb{Q}[\zeta])$. On the other hand, the differential form $\widetilde{Ell}(\nabla^E)$ can be considered as a representative of a class in $H^{even}(B,\partial B;\mathbb{Q})\otimes D^{\Gamma}$. Thus, the integral  yields a rational combination of divided congruences, whereas the indeterminacy in $\hat e_{\Gamma}$ manifests itself in true (i.e.\ integral) divided congruences (by Theorem \ref{div}). Finally, we observe that
\begin{equation*} \begin{split}
 & \int_Z{(Ell_0(\nabla_2)-1)\widetilde{Ell}(\nabla_1)}\\    = & \int_Z{(Ell_0(\nabla_2)-1)(Ell(\nabla_1)-1)}- \int_Z{(Ell_0(\nabla_2)-1)(Ell_0(\nabla_1)-1)},\\ 
\end{split}
\end{equation*}
and the same reasoning as above shows that the second expression takes values in $\mathbb{Q}[\zeta]$, whereas the first expression can be identified with \eqref{f}.
\end{proof}
\begin{rmk}
We would like to stress that it is the fact that  $Ell(\nabla_1)$ comes from the base that allows us to show that the integrals above take rational values instead of real ones. 
\end{rmk}
Summarizing, we see that the distinction between $\check{f}$ and $f$ becomes negligible in the context of $(U,fr)^2_f$-manifolds: The former comes with a natural lift to the latter, and this property is respected by our formul\ae\ (Theorem \ref{f_from_family} and Corollary \ref{formula}).

\subsection{A vanishing theorem}

It can be shown {\em algebraically} that the topological $f$-invariant vanishes on framed manifolds which are in third filtration, i.e.\ which lift to $(U,fr)^3$-manifolds \cite{Lau00}. Here we shall provide geometrical insight by considering the following situation:

\begin{df}
A $(U,fr)^3_f$-manifold is a $\langle3\rangle$-manifold $Y$ that is a fiber bundle over a $(U,fr)^2_f$-manifold $B'$ where the typical fiber is a compact $\langle1\rangle$-manifold; this time, we do {\em not} require  $B'$ to be of even dimension. In addition, there is a complex vector bundle $E_3\cong T(Y/ B')^{st}$ that is trivialized over the face $\partial_3Y\rightarrow B'$, and there is an isomorphism $E_3\oplus\pi^*TZ^{st}\cong TY^{st}$.
\end{df}

\begin{thm}\label{vanish}
Let $M$ be the codimension-three corner of a $(U,fr)^3_f$-manifold $Y$. Then $\check{f}(M)\equiv0$. 
\end{thm}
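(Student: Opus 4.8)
The plan is to mimic the argument that made $\check{f}$ well-defined and the family reformulation of Theorem~\ref{f_from_family}, pushing everything down one more level. The corner $M$ now sits at the bottom of a tower $Y\to B'\to B$, where $B'$ is itself a $\langle 2\rangle_f$-manifold fibering over a $(U,fr)$-manifold $B$. The key point is that $\check{f}(M)$ can be computed from a suitable face of $Y$, and that face will itself carry enough fiber-bundle structure over the odd-dimensional piece to force the relevant integrand to be exact or to integrate to zero by Stokes.

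\textbf{Steps.} First I would fix compatible connections throughout: $\nabla_3$ on $E_3$ restricting to pure gauge on $\partial_3 Y$, $\nabla_2$ on $E_2$, and $\nabla_1=\pi^*\nabla^E$ pulled back from $B$, so that $TY^{st}\cong E_3\oplus\pi^*E_2\oplus\pi^*\pi'^*E$ with a corresponding direct-sum connection $\nabla^\oplus$. Second, I would express $\check{f}(M)$ via one of the faces of $Y$ containing $M$ — concretely, the face $Z''\times_{B'}\partial_3 Y$-type corner — using the well-definedness of $\check{f}$ (it only depends on the corner, and any $(U,fr)^2$-manifold with that corner gives the same answer mod $D_{n+1}^\Gamma+M_0^\Gamma\otimes\mathbb{R}+M_{n+1}^\Gamma\otimes\mathbb{R}$). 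Third, I would write the defining integrand $\widetilde{Ell}(\nabla_1)Ell_0(\nabla_2)$ on that $\langle 2\rangle$-manifold and integrate over the $Y/B'$ fiber first; since $\nabla_1=\pi^*\nabla^E$ comes from the base $B$, the factor $\widetilde{Ell}(\nabla^E)$ pulls out, exactly as in Theorem~\ref{f_from_family}, and one is left integrating $\int_{Y/B'}Ell_0(\nabla_3\oplus\nabla_2)$ against $\widetilde{Ell}(\nabla^E)$ over $B'$. Fourth — and this is where the extra filtration degree pays off — I would observe that $\int_{Y/B'}Ell_0$ lives over $B'$, and $B'$ is odd-dimensional relative to the part that matters: after further integrating over the $B'\to B$ fibration, the trivialization of $E_3$ over $\partial_3 Y$ together with the flatness of $\nabla^E$ on $\partial B$ makes the boundary contributions vanish, and the remaining bulk term is a pullback from an odd-dimensional base so that its product with $\widetilde{Ell}(\nabla^E)$ is either a rational (integral) divided congruence or a Stokes boundary that dies. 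Concretely I expect to invoke Theorem~\ref{div} (with $E_1$ playing the role of $\pi^*E$ over the appropriate even-dimensional compact manifold-with-boundary obtained by smoothing) to land the answer in $D^\Gamma$, hence $\check{f}(M)\equiv 0$ mod the appropriate group. Throughout I would use, as in Proposition~\ref{stable}, that the integrand vanishes near the deeper corners so integration over $Y$ agrees with integration over a manifold obtained by smoothing one corner, allowing the fiber-integration and Stokes manipulations to be carried out honestly.

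\textbf{Main obstacle.} The delicate point is bookkeeping the filtration degrees and the modular weights: $\check{f}(M)$ is only defined modulo $D_{n+1}^\Gamma+M_0^\Gamma\otimes\mathbb{R}+M_{n+1}^\Gamma\otimes\mathbb{R}$, and one must check that the vanishing survives this indeterminacy — i.e.\ that the ``one extra dimension'' of $Y$ over a $(U,fr)^2_f$-manifold really does improve integrality from ``inhomogeneous real modular forms mod $D$'' to ``genuinely in $D$,'' rather than just reshuffling the ambiguity. The other technical nuisance is that $B'$ is not assumed even-dimensional, so the relevant fiber integrations change parity and one has to be careful which faces of $Y$ are used and which Dirac-type operators (if one wants the index-theoretic version) are actually defined; I would sidestep the analytic subtleties by arguing entirely at the level of Chern–Weil forms and Theorem~\ref{div}, exactly as in the proof of Proposition~\ref{equality}, and only invoke index theory if a cleaner statement is wanted. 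The heart of the matter, then, is the same mechanism as before — ``$\widetilde{Ell}$ of a bundle pulled back from the base, multiplied by anything, integrates to a divided congruence'' — applied one level deeper, with the third trivialized bundle $E_3$ supplying the pure-gauge condition needed to make Theorem~\ref{div} apply on the relevant face.
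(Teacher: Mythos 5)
Your high-level toolkit is the right one (well-definedness of $\check{f}$, Proposition~\ref{stable}, fiber integration, flatness of $\nabla^E$ over $\partial B$, Stokes), but the concrete route you sketch does not close, and the fallback you lean on — Theorem~\ref{div} applied to a smoothed version of $Y$ — cannot do the job.

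Two problems. First, the splitting you set up, namely $\tilde E_1=\pi^*\pi'^*E$ versus $\tilde E_2=E_3\oplus\pi^*E_2$, is not the splitting of a valid $(U,fr)^2$-structure on any face of $Y$: $E_3$ is trivialized only over $\partial_3 Y$ while $E_2$ is trivialized only over $\pi^{-1}(\partial_2 B')$, so $E_3\oplus\pi^*E_2$ never acquires a uniform pure-gauge trivialization over a single face. Likewise $E_1=\pi^*\pi'^*E$ is trivialized over $\pi^{-1}(\partial_1 B')$ only, not over all of $\partial Y$, so Theorem~\ref{div} applied after smoothing the corners of $Y$ does not meet its hypothesis. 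And even if it did, all that would yield is $\check F\in D^\Gamma$ modulo the ambiguity, which is exactly Proposition~\ref{equality} — well-definedness, not vanishing.

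What actually makes the argument work, and what is missing from your sketch, is the choice of \emph{which} face to compute from: $Z=\pi^{-1}(\partial_1 B')$. This whole face sits over $\partial B$, so $\nabla_1=\pi^*\pi'^*\nabla^E$ is flat on \emph{all} of $Z$, not merely on a boundary stratum. Consequently $\widetilde{Ell}(\nabla_1\oplus\nabla_2)=\widetilde{Ell}(\nabla_2)$ globally on $Z$, the factor $\widetilde{Ell}(\nabla^E)$ disappears from the picture entirely, and after pushing forward along $Y/B'$ and then $B'/B$ one is left with
$$\check F(Z)=\int_{\partial B}\Bigl\{\textstyle\int\bigl\{\widetilde{Ell}(\nabla_2)\int Ell_0(\nabla_3)\bigr\}\Bigr\},$$
where the integrand is a \emph{closed} form that visibly extends over all of $B$ (the fibrations $Y\to B'\to B$, the bundles $E_2,E_3$, and the connections all live over the full $B$). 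Stokes then gives $\check F(Z)=0$ on the nose, which is strictly stronger than landing in $D^\Gamma$ modulo the indeterminacy. Your ``Main obstacle'' paragraph worries about whether the extra filtration degree genuinely improves integrality rather than reshuffling the ambiguity; the answer is that it does more than improve integrality — it kills the representative outright — but only because one computes from the face lying entirely over $\partial B$, a step your outline never pins down.
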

\begin{proof}
First of all, we know that $\check{f}$ depends only on the corner and its split framing; furthermore, by Proposition \ref{stable}, all the possible splittings induced by the $(U,fr)^3_f$-structure yield congruent results. Thus, we may compute $\check{f}(M)$ from $Z=\pi^{-1}(\partial_1B')$, which we endow with its obvious $(U,fr)^2_f$-structure. Successive integration along the fibers yields
\begin{equation*}
\begin{split}
\check{f}(M,\pi^*(\nabla_1\oplus\nabla_2)|_M,\nabla_3|_M)&\equiv\int_ZEll_0(\nabla_3)\widetilde{Ell}(\pi^*(\nabla_2\oplus\nabla_1))\\
&=\int_{\partial_1B'}\left\{\widetilde{Ell}(\nabla_1\oplus\nabla_2)\int Ell_0(\nabla_3)\right\}\\
&=\int_{\partial_1B'}\left\{\widetilde{Ell}(\nabla_2)\int Ell_0(\nabla_3)\right\}\\
&=\int_{\partial B}\left\{\int\left\{\widetilde{Ell}(\nabla_2)\int Ell_0(\nabla_3)\right\}\right\},\\
\end{split}
\end{equation*}
where we made use of the fact that $\nabla_1$ is flat on $Z$. By assumption, the integrand, a closed form, extends over $B$, so the integral vanishes by Stokes' theorem.
\end{proof}

\section{The $f$-invariant of cartesian products}\label{productsection}

Now it is the time to illustrate the preceding ideas:  Let $X$, $B$ be $(U,fr)$-manifolds of even dimension. Then the cartesian product $Z=B\times X$ becomes a $(U,fr)^2_f$-manifold in the obvious way; furthermore, since $Z$ is a trivial fiber bundle, $\hat{e}_{\Gamma}$ is concentrated in degree zero, hence a constant function $e_{\Gamma}$ on $B$, so the formula of Theorem \ref{f_from_family} simplifies to
\begin{equation}\label{simpleproduct1}
\check{f}(\partial B\times\partial X)\equiv e_{\Gamma}\int_B{\widetilde{Ell}}(\nabla^{E}).
\end{equation}
Of course, the kernel of the operator $\eth^{\Gamma}_{\partial X}$ forms a trivial vector bundle on $B$, so Corollary \ref{formula} applies; we can do better though:

\begin{lem}\label{ereduced}
Let $X$ be a $(U,fr)$-manifold, and let
$$e_{\Gamma}(\partial X)\equiv\langle Ell_0^{\Gamma}(TX),[X,\partial X]\rangle\mod\mathbb{Z}^{\Gamma}.$$
Then we have
$$e_{\Gamma}(\partial X)\equiv e_{\mathbb{C}}(\partial X)\mod\mathbb{Z}^{\Gamma}.$$
\end{lem}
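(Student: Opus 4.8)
The plan is to unwind the definition of $e_{\Gamma}(\partial X)$ by separating the constant term $Ell_0^\Gamma$ into its ``Todd'' part and the part built from virtually reduced bundles, and to show that only the Todd part survives modulo $\mathbb{Z}^\Gamma$. Recall from the expansion \eqref{elli} that at $q=0$ we have $Ell_0^\Gamma(E) = \overline{\chi}_{-\zeta}(E) = (1-\zeta)^{-\mathrm{rk}\,E}Td(E)ch(\Lambda_{-\zeta}E^*)$, so when $E = TX^{st}$ this is $(1-\zeta)^{-k}Td(\nabla^{TX})ch(\nabla^{\Lambda_{-\zeta}(TX^{st})^*})$ with $k = \mathrm{rk}\,TX^{st}$. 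The first step is therefore to write $ch(\nabla^{\Lambda_{-\zeta}(TX^{st})^*}) = (1-\zeta)^k + (\text{terms involving virtually reduced bundles})$, i.e. to split off the rank-zero-part from $\Lambda_{-\zeta}(TX^{st})^*$; concretely, $\Lambda_{-\zeta}V^* = (1-\zeta)^{\mathrm{rk}\,V}\cdot(1) + \overline{(\cdots)}$ where the remainder is a virtual bundle each of whose monomials contains a factor of the form $\overline{V^*}$ (reduced bundle). This is the same bookkeeping already used in the proof of Theorem \ref{div}.

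The second step is to observe that the ``remainder'' contribution to $\langle Ell_0^\Gamma(TX),[X,\partial X]\rangle$ is integral, i.e. lies in $\mathbb{Z}^\Gamma$. Here the trivialization $\psi$ on $\partial X$ and the pure gauge connection are essential: the relative characteristic classes of $TX^{st}$ involving a reduced factor can be evaluated by Theorem \ref{AS}/\ref{aps} together with Lemma \ref{keylemma}, exactly as in Theorem \ref{div} but with no $\widetilde{Ell}$ factor and only the $q=0$ piece — so every such term is (after clearing the harmless power of $(1-\zeta)^{-1}\in\mathbb{Z}^\Gamma$) an integer combination coming from indices of twisted $Spin^{\mathbb C}$ Dirac operators on a closed manifold. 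Hence modulo $\mathbb{Z}^\Gamma$ we are left with $\langle (1-\zeta)^{-k}Td(\nabla^{TX})\cdot (1-\zeta)^{k},[X,\partial X]\rangle = \langle Td(\nabla^{TX}),[X,\partial X]\rangle$, which by the very definition \eqref{e_c} of the complex $e$-invariant is $e_{\mathbb C}(\partial X)$ modulo $\mathbb{Z}$ — and a fortiori modulo $\mathbb{Z}^\Gamma$.

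A small technical point to handle carefully is the difference between $\mathbb{Z}$ and $\mathbb{Z}^\Gamma = \mathbb{Z}[\zeta,1/N]$: the congruence $e_{\mathbb C}(\partial X)\equiv\langle Td(\nabla^{TX}),[X,\partial X]\rangle \bmod \mathbb Z$ is in fact already a congruence mod $\mathbb{Z}^\Gamma$ after we agree to regard $e_{\mathbb C}$ as taking values in $\mathbb{Q}/\mathbb{Z}\hookrightarrow (\mathbb{Q}[\zeta])/\mathbb{Z}^\Gamma$; I would just state the identity at the level of $\mathbb{Q}[\zeta]/\mathbb{Z}^\Gamma$ directly. I expect the only real obstacle to be making the ``split off the rank'' combinatorics of $Ell_0^\Gamma$ precise and matching it cleanly with the normalization conventions (the $(1-\zeta)^{-\mathrm{rk}}$ factor and the fact that $Td(\nabla_1\oplus\nabla_2)$ versus $Td(\nabla^{TX})$ differ only by an exact Chern--Simons term that dies against a relative class, as already noted in Theorem \ref{div}); everything else is a direct consequence of Theorem \ref{aps}, Lemma \ref{keylemma}, and the definition of $e_{\mathbb C}$.
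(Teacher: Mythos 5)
Your argument is correct and is essentially the paper's own: equip $E\cong TX^{st}$ with a compatible connection, virtually reduce $\Lambda_{-\zeta}E^*$ by its formal rank $(1-\zeta)^{\mathrm{rk}E}$, invoke Lemma~\ref{keylemma} (via Theorem~\ref{aps}) to absorb the reduced part into $\mathbb{Z}[\zeta]$, and then multiply by $(1-\zeta)^{-\mathrm{rk}E}\in\mathbb{Z}^{\Gamma}$ to land on $\langle Td(\nabla^E),[X,\partial X]\rangle=e_{\mathbb{C}}(\partial X)$. The only minor inaccuracies are cosmetic: the ``each monomial contains a factor $\overline{V^*}$'' claim should really just say the remainder is a $\mathbb{Z}[\zeta]$-combination of rank-zero virtual bundles trivialized on $\partial X$, and the worry about $Td(\nabla_1\oplus\nabla_2)$ vs.\ $Td(\nabla^{TX})$ is moot here since there is no $E_1\oplus E_2$ splitting in this lemma.
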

\begin{proof} We equip the hermitian vector bundle $E\cong TX^{st}$ with a unitary connection $\nabla^E$ preserving the trivialization of $E|_{\partial X}$; then the bundle  $\Lambda_{-\zeta}E^*$ inherits a connection $\nabla^{\Lambda_{-\zeta}E^*}$ preserving the induced trivialization, and by Lemma \ref{keylemma} we have
$$\int_XTd(\nabla^E)ch\left(\nabla^{\Lambda_{-\zeta}E^*}\right)\equiv(1-\zeta)^{rkE}\int_XTd(\nabla^E)\mod\mathbb{Z}[\zeta].$$
Stabilizing, i.e.~multiplying by $(1-\zeta)^{-rkE}\in\mathbb{Z}^{\Gamma}$, the claim follows.
\end{proof}

This enables us to establish the following remarkable result:

\begin{thm}\label{f_from_e}
Let $Y_1$, $Y_2$ be odd-dimensional framed manifolds, and let $m(Y_i)$ be {\em any} modular form of weight  $(\dim Y_i +1)/2$ w.r.t.~the fixed congruence subgroup $\Gamma=\Gamma_1(N)$ such that $\bar{m}(Y_i)=m(Y_i)-e_{\mathbb{C}}(Y_i)\in\mathbb{Z}^{\Gamma}[\![q]\!]$. Then we have
$$\check{f}(Y_1\times Y_2)\equiv \bar{m}(Y_1) e_{\mathbb{C}}(Y_2)\equiv-\bar{m}(Y_2) e_{\mathbb{C}}(Y_1).$$
In particular, the $\check{f}$-invariant of a product is antisymmetric under exchange of the factors.
\end{thm}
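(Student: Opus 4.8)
The plan is to realise $Y_1\times Y_2$ as the corner of a split cartesian product and feed it through Theorem \ref{f_from_family} in its product form \eqref{simpleproduct1}, together with Lemma \ref{ereduced} and Example \ref{divex}. Since $\Omega^U_{odd}=0$, I may choose for each $i$ a compact $(U,fr)$-manifold $X_i$ with $\partial X_i=Y_i$ and $\dim X_i=2n_i$ (so that $n+1=n_1+n_2$ and $\dim Z=2n+2$), and equip all bundles with compatible connections. Then $Z=X_1\times X_2$, viewed as a fibre bundle over $B=X_1$ with fibre $X_2$, $E=TX_1^{st}$, $E_2=\mathrm{pr}_2^*TX_2^{st}$ and the evident trivialisations on the two faces, is a $(U,fr)^2_f$-manifold whose corner is $Y_1\times Y_2$; here $\hat e_\Gamma$ is constant, equal to $\langle Ell_0(TX_2),[X_2,\partial X_2]\rangle$, and \eqref{simpleproduct1} gives
$$\check f(Y_1\times Y_2)\equiv e_\Gamma(Y_2)\int_{X_1}\widetilde{Ell}(\nabla^{TX_1^{st}}).$$

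Next I would identify the two factors with the quantities in the statement, tracking the indeterminacy $D_{n+1}^{\Gamma}+M_0^{\Gamma}\otimes\mathbb{R}+M_{n+1}^{\Gamma}\otimes\mathbb{R}$ of $\check f$. By Lemma \ref{ereduced}, $e_\Gamma(Y_2)\equiv e_{\mathbb{C}}(Y_2)\bmod\mathbb{Z}^{\Gamma}$, and since $\int_{X_1}\widetilde{Ell}\in D_{n_1}^{\Gamma}$ (Example \ref{divex}) this replacement only changes the product by an element of $\mathbb{Z}^{\Gamma}\cdot D_{n_1}^{\Gamma}\subseteq D_{n+1}^{\Gamma}$. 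For the remaining factor I would use the decomposition of relative classes
$$\langle Ell(TX_1),[X_1,\partial X_1]\rangle=\langle Ell_0(TX_1),[X_1,\partial X_1]\rangle+\int_{X_1}\widetilde{Ell}(\nabla^{TX_1^{st}}):$$
the left side is a rational modular form of weight $n_1$, the first term on the right is $\equiv e_{\mathbb{C}}(Y_1)\bmod\mathbb{Z}^{\Gamma}$ (Lemma \ref{ereduced}), and the second lies in $\mathbb{Z}^{\Gamma}[\![q]\!]$; hence $m_0(Y_1):=\langle Ell(TX_1),[X_1,\partial X_1]\rangle$ is an admissible choice of $m(Y_1)$, and $\int_{X_1}\widetilde{Ell}=\bar m_0(Y_1)+c$ with $c\in\mathbb{Z}^{\Gamma}$ a constant. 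Multiplying by $e_{\mathbb{C}}(Y_2)$ and discarding negligible summands ($e_{\mathbb{C}}(Y_2)c\in M_0^{\Gamma}\otimes\mathbb{R}$, and the error from the previous step lies in $D_{n+1}^{\Gamma}$) yields $\check f(Y_1\times Y_2)\equiv e_{\mathbb{C}}(Y_2)\bar m_0(Y_1)$.

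To pass to an arbitrary admissible $m(Y_1)$ I must show that the right side is unchanged when $m(Y_1)$ is altered by an integral modular form $g$ of weight $n_1$, that is, that $e_{\mathbb{C}}(Y_2)\,g\in\underline{\underline{D}}^{\Gamma}_{n+1}$. I expect this to be the main obstacle, and it is arithmetic rather than geometric: the denominator of $e_{\mathbb{C}}(Y_2)$ divides the order of $\mathrm{im}(J)$ in $\pi^{st}_{2n_2-1}$, essentially a Bernoulli denominator, and a Kummer-type congruence — of the sort collected in the appendix — supplies a weight-$n_2$ Eisenstein series congruent to a constant modulo that denominator, so that $g$ is congruent modulo it to a modular form of weight $n_1+n_2=n+1$; therefore $e_{\mathbb{C}}(Y_2)\,g\in\mathbb{Z}^{\Gamma}[\![q]\!]+M_{n+1}^{\Gamma}\otimes\mathbb{Q}$, which is negligible. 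Granting this, $\check f(Y_1\times Y_2)\equiv\bar m(Y_1)e_{\mathbb{C}}(Y_2)$ for every admissible $m(Y_1)$.

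Finally, for the antisymmetry I would run the identical computation with the projection of $Z=X_1\times X_2$ onto $X_2$, obtaining $\check f(Y_2\times Y_1)\equiv e_{\mathbb{C}}(Y_1)\bar m(Y_2)$. Since $Y_1$ and $Y_2$ are both odd-dimensional, interchanging the factors reverses the framing, $Y_2\times Y_1\cong-(Y_1\times Y_2)$, and $\check f$ is odd under orientation reversal; comparing the two computations gives $\check f(Y_1\times Y_2)\equiv-\bar m(Y_2)e_{\mathbb{C}}(Y_1)$, which is the asserted antisymmetry. By Proposition \ref{equality} the same expression simultaneously computes the topological $f$-invariant, recovering the product formula announced in the introduction.
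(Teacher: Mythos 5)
Your setup — realising $Y_1\times Y_2$ as the corner of $Z=X_1\times X_2$ and reducing via \eqref{simpleproduct1}, Lemma~\ref{ereduced}, and Example~\ref{divex} to $\check f(Y_1\times Y_2)\equiv e_{\mathbb{C}}(Y_2)\bar m_0(Y_1)$ for a particular geometric choice $\bar m_0(Y_1)$ — matches the paper's proof. But where you write ``I expect this to be the main obstacle,'' you are right, and the Kummer-congruence workaround you sketch is a genuine gap, not a routine fill-in. Nothing in the appendix supplies such a congruence in the generality you need (you would need a modular form of weight $n_2$ congruent to a unit multiple of a constant mod the exact denominator of $e_{\mathbb{C}}(Y_2)$, for every admissible value of $e_{\mathbb{C}}(Y_2)$ at every level $N$), and you never actually prove the step. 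Worse, the workaround is unnecessary: you already possess exactly the tool that dispatches it. The existence of an admissible divided congruence $\bar m(Y_2)=m(Y_2)-e_{\mathbb{C}}(Y_2)\in\mathbb{Z}^{\Gamma}[\![q]\!]$ for the \emph{second} factor is guaranteed by the same geometry (Lemma~\ref{ereduced} and Theorem~\ref{div} applied to a nullbordism of $Y_2$), and once you have it, for any integral modular form $g$ of weight $n_1$ you simply write
$$g\cdot e_{\mathbb{C}}(Y_2)=g\,m(Y_2)-g\,\bar m(Y_2).$$
The first summand is a rational modular form of weight $n_1+n_2=n+1$, hence lies in $M_{n+1}^{\Gamma}\otimes\mathbb{Q}$; the second is the product of an integral modular form of weight $n_1$ with an element of $D^{\Gamma}_{n_2}$, so it lies in $D^{\Gamma}_{n+1}$. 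Both are absorbed by $\underline{\underline{D}}^{\Gamma}_{n+1}$. This is precisely the move the paper encodes in its chain of congruences
$$\bar m'(Y_1)e_{\mathbb{C}}(Y_2)\equiv\bar m'(Y_1)m(Y_2)\equiv -e_{\Gamma}(Y_1)m(Y_2)\equiv -e_{\Gamma}(Y_1)\bar m(Y_2)\equiv -e_{\mathbb{C}}(Y_1)\bar m(Y_2)\equiv\cdots\equiv\bar m(Y_1)e_{\mathbb{C}}(Y_2),$$
which passes the dependence on the geometric lift $\bar m'(Y_1)$ over to the $Y_2$-side, where only $e_{\mathbb{C}}(Y_1)$ survives, and then back.

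Your antisymmetry argument is also weaker than it needs to be. Appealing to $Y_2\times Y_1\cong-(Y_1\times Y_2)$ and the oddness of $\check f$ under orientation reversal is plausible but requires checking that orientation reversal interacts as expected with the split framing and the sign conventions in the definition of $\check f$ — neither fact is established in the paper. The paper's chain of equivalences delivers $-e_{\mathbb{C}}(Y_1)\bar m(Y_2)$ as an intermediate stage, so the antisymmetry falls out for free with no appeal to orientation reversal. You should replace both your ``main obstacle'' paragraph and your antisymmetry paragraph by the paper's chain; the remainder of your proposal is correct.
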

\begin{proof}
Combining \eqref{simpleproduct1} and Lemma \ref{ereduced}, we know that
$$\check{f}(Y_1\times Y_2)\equiv \bar{m}^{\prime}(Y_1) e_{\mathbb{C}}(Y_2),$$
where, by Example \ref{divex}, $\bar{m}^{\prime}(Y_1)$ is a divided congruence of the form $m^{\prime}-e_{\Gamma}(Y_1)$ for a rational homogeneous modular form $m^{\prime}$ of weight $(\dim Y_1 +1)/2$. If we choose {\em{any}} divided congruence $\bar{m}(Y_2)=m(Y_2)-e_{\mathbb{C}}(Y_2)$ for a suitable homogeneous modular form $m(Y_2)$ of level $N$ and weight $(\dim Y_2 +1)/2$, we obtain:
$$\begin{array}{rcccl}\check{f}(Y_1\times Y_2)&\equiv&\bar{m}^{\prime}(Y_1) e_{\mathbb{C}}(Y_2)&\equiv&\bar{m}^{\prime}(Y_1)m(Y_2)
\\ &\equiv
& -e_{\Gamma}(Y_1)m(Y_2) &
 \equiv& -e_{\Gamma}(Y_1)\bar{m}(Y_2)\\ & \equiv& -e_{\mathbb{C}}(Y_1)\bar{m}(Y_2)&\equiv&-m(Y_1)\bar{m}(Y_2)\\
 &\equiv&\bar{m}(Y_1)e_{\mathbb{C}}(Y_2)& &\end{array}.$$
Take note that Lemma \ref{ereduced} and Theorem \ref{div} ensure the existence of $\bar{m}(Y_i)$.
\end{proof}

This immediately implies:
\begin{cor}\label{e0f0}
Let $M$ be the cartesian product of two odd-dimensional framed factors, one of which has vanishing $e_{\mathbb{C}}$-invariant. Then $\check{f}(M)\equiv0$.
\end{cor}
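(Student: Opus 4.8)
The statement to prove is Corollary~\ref{e0f0}: if $M = Y_1 \times Y_2$ with both $Y_i$ odd-dimensional framed and, say, $e_{\mathbb{C}}(Y_2) \equiv 0 \bmod \mathbb{Z}^{\Gamma}$, then $\check{f}(M) \equiv 0$. This should be an essentially immediate consequence of Theorem~\ref{f_from_e}, so the plan is to invoke that theorem directly rather than to redo any of the fiber-integration work.

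Here is how I would carry it out. First, I would recall that Theorem~\ref{f_from_e} gives the two-sided expression $\check{f}(Y_1 \times Y_2) \equiv \bar{m}(Y_1)\, e_{\mathbb{C}}(Y_2) \equiv -\bar{m}(Y_2)\, e_{\mathbb{C}}(Y_1)$, valid for \emph{any} choices of modular forms $m(Y_i)$ of the appropriate weights with $\bar{m}(Y_i) = m(Y_i) - e_{\mathbb{C}}(Y_i) \in \mathbb{Z}^{\Gamma}[\![q]\!]$; the existence of such $m(Y_i)$ is guaranteed (as noted at the end of that proof) by Lemma~\ref{ereduced} together with Theorem~\ref{div}. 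Second, I would use the hypothesis: if $e_{\mathbb{C}}(Y_2) \equiv 0$, then in fact we may take $m(Y_2) = 0$ (the zero modular form of the relevant weight), since then $\bar{m}(Y_2) = 0 - e_{\mathbb{C}}(Y_2) \equiv 0$ lies in $\mathbb{Z}^{\Gamma}[\![q]\!]$; alternatively, simply observe that $\bar{m}(Y_1)\, e_{\mathbb{C}}(Y_2) \equiv \bar{m}(Y_1) \cdot 0 = 0$ directly. Either way, the first factor in the product formula vanishes, so $\check{f}(M) \equiv 0$. By the antisymmetry built into Theorem~\ref{f_from_e}, the case where it is $e_{\mathbb{C}}(Y_1)$ that vanishes is handled identically (or by relabeling). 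Since $\check{f}$ depends only on the corner and its split framing, this settles the claim.

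There is essentially no obstacle here: the corollary is a one-line deduction from the product formula, and the only thing to be careful about is making sure the degenerate choice $m(Y_2) = 0$ is legitimate, i.e.\ that a modular form of the required weight is allowed to be zero and that $0 \in \mathbb{Z}^{\Gamma}[\![q]\!]$ — both trivially true. I would therefore keep the proof to a single sentence invoking Theorem~\ref{f_from_e} and the vanishing hypothesis, perhaps with a parenthetical remark that one may take the corresponding $m(Y_i)$ to be zero.

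\begin{proof}
Without loss of generality assume $M = Y_1 \times Y_2$ with $e_{\mathbb{C}}(Y_2) \equiv 0 \bmod \mathbb{Z}^{\Gamma}$ (the other case follows by exchanging the factors, using the antisymmetry of $\check{f}$ on products). By Theorem~\ref{f_from_e} we have $\check{f}(Y_1 \times Y_2) \equiv \bar{m}(Y_1)\, e_{\mathbb{C}}(Y_2)$ for any admissible choice of $m(Y_1)$; since $e_{\mathbb{C}}(Y_2) \equiv 0$, the right-hand side is congruent to zero. Hence $\check{f}(M) \equiv 0$.
\end{proof}
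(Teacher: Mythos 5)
Your proof is correct and takes the same route as the paper, which simply prefaces the corollary with ``This immediately implies:'' after Theorem~\ref{f_from_e}; the one-line deduction you spell out (plug the vanishing $e_{\mathbb{C}}$-invariant into either side of the two-sided product formula) is exactly what is intended. The only slight care needed — that an integer representative of $e_{\mathbb{C}}(Y_2)\equiv 0$ multiplied by the divided congruence $\bar m(Y_1)$ still lands in $D^{\Gamma}_{n+1}$ — is routine and you handle it implicitly.
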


\begin{rmk} While this result is similar to Theorem \ref{vanish}, it does not require any geometrical assumptions concerning $(U,fr)^3$-structures. \end{rmk}

\subsection{Sample calculations at level three}

Due to Theorem \ref{f_from_e}, it is quite simple to determine a {\em representative} of the $f$-invariant of a product, but it still has to be checked whether $f$ is non-trivial, which requires explicit calculations using divided congruences. Throughout this section, we fix $\Gamma=\Gamma_1(3)$, as $N=3$ is the smallest level at which two is not inverted. The ring of  modular forms for $\Gamma_1(3)$ is generated by
$$ E_1 = 1+6\sum_n \sum_{d|n} (\frac{d}{3})q^n,\quad\ E_3 = 1-9\sum_n \sum_{d|n} d^2(\frac{d}{3})q^n,$$
which are of weight one and three, respectively; we refer to the appendix for more details.

A word on notation: Although by Proposition \ref{equality} the distinction is  unnecessary,
 we are still going to use the notation $\check{f}$ in this section, and indicate the framed manifold it is computed from; if we write $f$, its argument will be given as the underlying element in the stable stems, which we denote by names prevalent in the literature, see e.g.~\cite{rave}.

\begin{prop}\label{nusquared}
Let $Y$ and $Y^{\prime}$ be framed manifolds that represent a generator $\nu$ of $\pi^{st}_3\cong\mathbb{Z}/24$. Then we have 
$$\check{f}(Y\times Y^{\prime})\equiv\frac{1}{2}\left(\frac{E_1^2-1}{12}\right)^2,$$
and this is non-trivial in $\underline{\underline{D}}_{4}^{\Gamma}\otimes\mathbb{Q/Z}$.
\end{prop}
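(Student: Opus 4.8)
The plan is to invoke Theorem \ref{f_from_e} with the specific choice of modular form that makes the first congruence manifest, then perform an explicit divided-congruence computation to check non-triviality. First I would recall that a generator $\nu$ of $\pi^{st}_3$ has $e_{\mathbb C}$-invariant of order $24$ in $\mathbb Q/\mathbb Z$; the standard normalization gives $e_{\mathbb C}(\nu)\equiv -\tfrac{1}{24}\bmod\mathbb Z$ (up to sign and choice of generator, which is harmless here since both factors are the same $\nu$ and the answer is quadratic). A natural homogeneous modular form of weight $(\dim Y+1)/2 = 2$ for $\Gamma_1(3)$ satisfying $m(\nu)\equiv e_{\mathbb C}(\nu)\bmod\mathbb Z^\Gamma[\![q]\!]$ is a suitable rational multiple of $E_1^2-1$: since $E_1 = 1+6q+\cdots$, one has $E_1^2-1 = 12q+\cdots\in 12\,q\,\mathbb Z^\Gamma[\![q]\!]$, so $\tfrac{1}{12}(E_1^2-1)$ has integral $q$-expansion apart from a constant term, and by adjusting by an element of $\mathbb Z^\Gamma$ we may take $\bar m(\nu) = \tfrac{1}{12}(E_1^2-1)$ (weight $2$), which differs from $e_{\mathbb C}(\nu)$ by an element of $\mathbb Z^\Gamma[\![q]\!]$. (Here I would cite the explicit level-$3$ computations in the appendix for the precise identification.)

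Second, applying Theorem \ref{f_from_e} with $Y_1 = Y$, $Y_2 = Y'$, $m(Y_i) = \tfrac{1}{12}(E_1^2-1)$ gives
$$\check f(Y\times Y') \equiv \bar m(Y)\,e_{\mathbb C}(Y') \equiv \frac{1}{12}(E_1^2-1)\cdot e_{\mathbb C}(\nu).$$
Now using $e_{\mathbb C}(\nu)\equiv\pm\tfrac{1}{24}$ and combining with the antisymmetry $\check f(Y\times Y')\equiv -\bar m(Y')e_{\mathbb C}(Y)$, a short manipulation (exactly the chain of congruences in the proof of Theorem \ref{f_from_e}, now with both factors equal) shows $\check f(Y\times Y')\equiv -\bar m(\nu)^2 \cdot \tfrac{1}{?}$; more precisely, writing $\bar m(\nu) = e_{\mathbb C}(\nu) + (\text{integral})$ and using $12\,e_{\mathbb C}(\nu)\equiv \tfrac12\bmod\mathbb Z^\Gamma$ one extracts the factor $\tfrac12$ and is left with
$$\check f(Y\times Y')\equiv \frac{1}{2}\left(\frac{E_1^2-1}{12}\right)^2.$$
I would present this as a direct substitution, being careful about the factor of two: since $N=3$ does not invert $2$, the $\tfrac12$ genuinely survives, which is precisely why level three is the relevant case.

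Third, for non-triviality I would argue in $\underline{\underline D}^\Gamma_{4}\otimes\mathbb Q/\mathbb Z$. Expand $\left(\tfrac{E_1^2-1}{12}\right)^2$ using $E_1 = 1+6q+6q^2+\cdots$ (from the appendix formulas) to get a weight-$4$ inhomogeneous object; one then checks that $\tfrac12\left(\tfrac{E_1^2-1}{12}\right)^2$ is \emph{not} congruent mod $\mathbb Z^\Gamma[\![q]\!]$ to any element of $M_0^\Gamma\otimes\mathbb Q + M_4^\Gamma\otimes\mathbb Q + D_4^\Gamma$ — i.e., it represents a nonzero class in $\underline{\underline D}^\Gamma_4\otimes\mathbb Q/\mathbb Z$. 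Concretely this reduces to a finite linear-algebra problem: compute enough $q$-coefficients of a basis of $M_0^\Gamma\oplus M_4^\Gamma$ (which, for $\Gamma_1(3)$, is spanned by $1$, $E_1^4$, $E_1 E_3$, and, if the weight-$4$ space is larger, the relevant further generators listed in the appendix) and of the integral divided congruences $D_4^\Gamma$, and verify that $\tfrac12(\tfrac{E_1^2-1}{12})^2$ has a half-integral coefficient that cannot be cancelled. The main obstacle is precisely this last bookkeeping step: one must know a generating set for $D_4^\Gamma$ (equivalently, the congruences among level-$3$ modular forms of weights $\le 4$) accurately enough to rule out cancellation, and this is where I would lean on the explicit congruence list compiled in the appendix rather than rederiving it. Modulo that input, the computation is routine.
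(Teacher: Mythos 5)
Your overall plan (invoke Theorem~\ref{f_from_e} with an explicit choice of $\bar m$, then check non-triviality against a basis of $M_4^\Gamma\otimes\mathbb Q$) is the same as the paper's, and your identification $\bar m(\nu)=\pm\frac{E_1^2-1}{12}$ is the right one. However, there is a concrete error in the value of $e_{\mathbb C}(\nu)$, and it matters.

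You assert $e_{\mathbb C}(\nu)\equiv -\tfrac{1}{24}$; this is the \emph{real} $e$-invariant. The correct value of the \emph{complex} $e$-invariant is $e_{\mathbb C}(\nu)=-\tfrac{1}{12}$, which the paper gets by realizing $\nu$ as the framed circle bundle $S(L)$ of the Hopf line over $S^2$ and evaluating the relative Todd genus on the disk bundle (cf.\ Remark~\ref{r_vs_c}: $e_{\mathbb R}=\tfrac12 e_{\mathbb C}$ in dimensions $8k+3$, and indeed $\tfrac12\cdot(-\tfrac1{12})=-\tfrac1{24}$). This is not a harmless sign or normalization choice. With $e_{\mathbb C}(\nu)=-\tfrac1{24}$ your $\bar m(\nu)=\tfrac{1}{12}(E_1^2-1)$ is no longer of the form $m-e_{\mathbb C}$ with $m$ a \emph{homogeneous} weight-2 modular form (one would need $m=\frac{2E_1^2-3}{24}$, which has a nonzero weight-0 part), so Theorem~\ref{f_from_e} cannot be applied as stated; and if one naively feeds $e_{\mathbb C}=-\tfrac1{24}$ into $\bar m(\nu)\cdot e_{\mathbb C}(\nu)$, one gets $-\tfrac{E_1^2-1}{288}$, which is \emph{not} congruent to $\tfrac12\left(\tfrac{E_1^2-1}{12}\right)^2$ modulo $\underline{\underline D}^\Gamma_4$ (the discrepancy $-\tfrac{(E_1^2-1)E_1^2}{288}$ has a weight-2 part that cannot be absorbed, as a quick check of the $q^1,q^2$ coefficients against $E_1^4$ and $E_1E_3$ shows). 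Your "extract the factor $\tfrac12$ from $12\,e_{\mathbb C}(\nu)\equiv\tfrac12$" step inherits this error, and the placeholder "$\tfrac1?$" signals exactly where the argument is not pinned down. With the correct $e_{\mathbb C}(\nu)=-\tfrac1{12}$ the computation is clean: $\bar m(\nu)\,e_{\mathbb C}(\nu)=\tfrac1{12}\cdot\tfrac{E_1^2-1}{12}$, and since $\tfrac1{12}\cdot\tfrac{E_1^2-1}{12}+\tfrac12\left(\tfrac{E_1^2-1}{12}\right)^2=\tfrac{E_1^4-1}{288}\in M_0^\Gamma\otimes\mathbb Q+M_4^\Gamma\otimes\mathbb Q$ and $\left(\tfrac{E_1^2-1}{12}\right)^2\in D_4^\Gamma$, one gets $\equiv-\tfrac12\left(\tfrac{E_1^2-1}{12}\right)^2\equiv\tfrac12\left(\tfrac{E_1^2-1}{12}\right)^2$.

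On non-triviality, your plan of comparing against a basis of $M_4^\Gamma\otimes\mathbb Q$ is correct but left as bookkeeping; the paper has a short way to finish: take $G_4$ and $G_4^*$ as basis, observe that their $q$-expansions agree on coefficients of $q^n$ with $3\nmid n$, and note that $\tfrac12\left(\tfrac{E_1^2-1}{12}\right)^2=\tfrac12 q^2+\cdots$ has $q^1$ coefficient $0$ and $q^2$ coefficient $\tfrac12$, so no rational linear combination of $G_4,G_4^*$ (plus a constant and an integral divided congruence) can cancel the $\tfrac12$ at $q^2$ without introducing a non-integral $q^1$ coefficient.
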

\begin{proof}
We consider the  sphere $S^3$ as the sphere bundle $S(L)$ of the Hopf line $L$ over $S^2$; framing the base and the vertical tangent in a straightforward manner, we obtain a framing for the total space. By evaluating the (relative) Todd genus on the associated disk bundle, we  compute (see also Remark \ref{e_rmk}) that $e_{\mathbb{C}}(S(L))$ is given by $-\frac{1}{12}$; in view of Remark \ref{r_vs_c}, the $e_{\mathbb{R}}$-invariant must be either $-\frac{1}{24}$ or $\frac{11}{24}$, thus we conclude that $S(L)$ represents $\nu$. We apply Theorem \ref{f_from_e},  choose $$-\bar{m}(\nu)=\frac{E_1^2-1}{12}=\sum_{n=1}^{\infty}\left(\sum_{3\nmid d|n}d\right)q^n\in\mathbb{Z}[\![q]\!],$$
and compute
$$\frac{1}{12}\frac{E_1^2-1}{12}\equiv-\frac{1}{2}\left(\frac{E_1^2-1}{12}\right)^2\equiv\frac{1}{2}\left(\frac{E_1^2-1}{12}\right)^2.$$
To see that this is non-trivial in $\underline{\underline{D}}_4^{\Gamma}\otimes\mathbb{Q/Z}$, we have to compare the $q$-expansion of
$$f(\nu^2)\equiv\frac{1}{2}\left(\frac{E_1^2-1}{12}\right)^2=\frac{1}{2}q^2+3q^3+\frac{11}{2}q^4+O(q^5)$$
to those of a suitable basis of $M_4^{\Gamma}\otimes\mathbb{Q}$. A convenient choice consists of  $G_4^*$ and $G_4$, since the $q$-expansions of the latter two agree on powers of $q$ not divisible by three; but looking at $G_4-\textstyle{\frac{1}{240}}=q+9q^2+O(q^3)$, we immediately deduce that $f(\nu^2)$ is non-trivial.
\end{proof}

\begin{prop}\label{sigmasquared}
Let $Y$ and $Y^{\prime}$ be framed manifolds that represent a generator $\sigma$ of $\pi^{st}_7\cong\mathbb{Z}/240$. Then we have 
$$\check{f}(Y\times Y^{\prime})\equiv\frac{1}{2}\left(\frac{E_4-1}{240}\right)^2,$$
and this is non-trivial in $\underline{\underline{D}}_{8}^{\Gamma}\otimes\mathbb{Q/Z}$.
\end{prop}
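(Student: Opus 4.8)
The plan is to run the argument of Proposition \ref{nusquared} essentially verbatim, with the weight-one Eisenstein series $E_1$ replaced by the weight-four Eisenstein series $E_4$. First I would fix an explicit framed representative of $\sigma$ with computable complex $e$-invariant: take $S^7$ as the unit circle bundle $S(L)$ of the hyperplane line $L=\mathcal O(1)$ over $\mathbb{CP}^3$, framed in the obvious way (trivial vertical tangent together with the standard stabilisation of $T\mathbb{CP}^3$). Evaluating the relative Todd genus on the associated disk bundle over $\mathbb{CP}^3$, exactly as in Remark \ref{e_rmk} and the proof of Proposition \ref{nusquared}, gives $e_{\mathbb C}(S(L))=\pm\tfrac1{240}$; since $\pi^{st}_7\cong\mathbb Z/240$ lies entirely in the image of $J$, on which $e_{\mathbb C}$ is injective, a class of $e_{\mathbb C}$-order $240$ must be a generator, so $S(L)$ represents $\sigma$. (In fact only $e_{\mathbb C}(\sigma)\equiv\pm\tfrac1{240}\bmod\mathbb Z$ is used, and this is classical; the geometric model is recorded merely to parallel Proposition \ref{nusquared}.)

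Next I would apply Theorem \ref{f_from_e} with both factors representing $\sigma$, so the relevant weight is $(\dim Y_i+1)/2=4$. The form $\tfrac1{240}E_4$ is a modular form for $\mathrm{SL}_2(\mathbb Z)$, hence for $\Gamma_1(3)$, and $\tfrac1{240}E_4-\tfrac1{240}=\tfrac{E_4-1}{240}=\sum_{n\ge1}\sigma_3(n)q^n\in\mathbb Z[\![q]\!]$, so (up to sign) $m(\sigma)=\pm\tfrac1{240}E_4$ is an admissible choice; Theorem \ref{f_from_e} then gives $\check f(Y\times Y')\equiv\bar m(\sigma)\,e_{\mathbb C}(\sigma)\equiv\tfrac1{240}\cdot\tfrac{E_4-1}{240}$, the signs cancelling. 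Writing $h=\tfrac{E_4-1}{240}=\tfrac{E_4}{240}-\tfrac1{240}$, a divided congruence of weights $0$ and $4$, one expands $h^2=\tfrac{E_4^2}{240^2}-\tfrac{2E_4}{240^2}+\tfrac1{240^2}$; its weight-$0$ and weight-$8$ summands lie in $M_0^{\Gamma}\otimes\mathbb Q+M_8^{\Gamma}\otimes\mathbb Q\subseteq\underline{\underline{D}}^{\Gamma}_8$, so modulo $\underline{\underline{D}}^{\Gamma}_8$ one has $\tfrac{h^2}{2}\equiv-\tfrac{E_4}{240^2}\equiv-\tfrac{h}{240}$, while $-\tfrac{h^2}{2}\equiv\tfrac{h^2}{2}$ because $h^2\in D_8^{\Gamma}$. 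Hence $\check f(Y\times Y')\equiv\tfrac{h}{240}\equiv\tfrac12\big(\tfrac{E_4-1}{240}\big)^2$ in $\underline{\underline{D}}^{\Gamma}_8$.

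It remains to prove non-triviality in $\underline{\underline{D}}^{\Gamma}_8\otimes\mathbb Q/\mathbb Z$. Since $h^2\in D_8^{\Gamma}$ the class $\tfrac12h^2$ is $2$-torsion, so it suffices to show $\tfrac12h^2\notin\underline{\underline{D}}^{\Gamma}_8$; absorbing its weight-$0$ and weight-$8$ parts, this reduces to showing that there is no rational weight-$8$ form $\psi$ for $\Gamma_1(3)$ (and no rational constant) with $a_n(\psi)\equiv\tfrac{\sigma_3(n)}{240}\pmod{\mathbb Z^{\Gamma}}$ for all $n\ge1$ --- equivalently, comparing $\tfrac12\big(\tfrac{E_4-1}{240}\big)^2=\tfrac12q^2+9q^3+\tfrac{137}{2}q^4+O(q^5)$ with a basis of $M_8^{\Gamma}\otimes\mathbb Q$, its half-integral coefficients (those at $q^2,q^4,\dots$) cannot be cancelled against weight-$8$ forms. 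This is a finite, explicit $q$-expansion computation using the ring structure of $M_*^{\Gamma_1(3)}$ (generated by $E_1,E_3$), of exactly the same nature as the comparison with $G_4$ in Proposition \ref{nusquared}: one needs only a short list of exponents coprime to $3$ to force an inconsistent system modulo $\mathbb Z^{\Gamma}$, the obstruction coming, as there, from $2$ not being invertible in $\mathbb Z^{\Gamma}$ (which is why the level $N=3$ is chosen). This comparison is the only substantive step; everything else is formal and identical to the $\nu^2$ case, and pinning down the sign of $e_{\mathbb C}(S^7)$ is likewise routine and immaterial to the final answer.
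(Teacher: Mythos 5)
Your overall strategy coincides with the paper's: pick a representative of $\sigma$, feed it to Theorem \ref{f_from_e}, "complete the square", and then verify non-triviality by a $q$-expansion comparison. Your completion-of-the-square algebra (the chain $\check f\equiv\tfrac{h}{240}\equiv-\tfrac{h^2}{2}\equiv\tfrac{h^2}{2}$, using $h^2\in D_8^\Gamma$ for the last step) is correct and in fact more explicit than the paper's terse "complete the square"; that part is a welcome elaboration.

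However, your geometric model of $\sigma$ is wrong. You take $S^7=S(\mathcal O(1))$ over $\mathbb{CP}^3$, but $\mathbb{CP}^3$ is not stably parallelizable (for instance $p_1(T\mathbb{CP}^3)=4x^2\neq0$), so it is not a framed manifold, and neither Remark \ref{e_rmk} nor the recipe of Proposition \ref{nusquared} applies. The "obvious" framing you describe on $S^7$ trivializes $\pi^*(T\mathbb{CP}^3\oplus\mathbb C)\cong\pi^*\mathcal O(1)^{\oplus4}$ via the Euler sequence, and the corresponding relative Todd genus of $D(\mathcal O(1))$ is $\bigl\langle\bigl(\tfrac{u}{1-e^{-u}}\bigr)^5,[D,S]\bigr\rangle=1\equiv0\bmod\mathbb Z$ (this is just the arithmetic genus of $\mathbb{CP}^4$), so this framing gives $e_{\mathbb C}\equiv0$, i.e.\ the trivial class, \emph{not} $\pm\tfrac1{240}$. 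The paper instead takes $S^7$ as the quaternionic Hopf bundle over $S^4$, which works precisely because $S^4$ is framed and the relative Todd class reduces to $Td(\pi^*H)=1+\tfrac{c_2}{12}+\tfrac{c_2^2}{240}+\cdots$, yielding $\tfrac1{240}$. You do hedge that only $e_{\mathbb C}(\sigma)\equiv\pm\tfrac1{240}$ is used and is classical, which saves the proof logically, but the claim that the $\mathbb{CP}^3$ disk-bundle computation produces $\pm\tfrac1{240}$ is simply false and should be replaced by the $S^4$ model (or by citing the classical computation without the spurious geometry).

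Second, the non-triviality check is the substantive content of the proposition and you do not actually carry it out. You assert "its half-integral coefficients cannot be cancelled against weight-$8$ forms", but this is not self-evident: $M_8^{\Gamma_1(3)}\otimes\mathbb Q$ is $3$-dimensional, and one must exhibit that no rational combination of a basis plus a constant matches $\tfrac12\bigl(\tfrac{E_4-1}{240}\bigr)^2=\tfrac12q^2+9q^3+\tfrac{137}{2}q^4+325q^5+1175q^6+O(q^7)$ modulo $\mathbb Z^\Gamma$. The paper does this concretely by comparing the first, second, third, and sixth $q$-coefficients against those of $G_8^*+\tfrac{1093}{240}$, $\tfrac{E_8-1}{480}$, and $\tfrac{E_1^8-1}{48}$. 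Without this finite linear-algebra verification your argument has a genuine gap; stating that it is "of exactly the same nature as" the $\nu^2$ case is not a substitute for doing it, since both the dimension of the modular space and the relevant coefficients differ.
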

\begin{proof}
Similar to the preceding case, the sphere $S^7$, considered as  the sphere of the quaternion line over $S^4$, represents $\sigma$, see e.g.~\cite{cf66}. Then (at least up to sign conventions) $e_{\mathbb{C}}(\sigma)=\frac{1}{240}$, so  we choose $\bar{m}(\sigma)=\frac{1}{240}(E_4-1)$ and `complete the square'. Thus, 
$$f(\sigma^2)\equiv\frac{1}{2}\left(\frac{E_4-1}{240}\right)^2=\frac{1}{2}q^2+9q^3+\frac{137}{2}q^4+325q^5+1175q^6+O(q^7),$$
and it is straightforward to show that this is non-trivial  by making use of the first, second, third, and sixth coefficients of
\begin{equation*}
\begin{split}
G_8^*+\frac{1093}{240}&=q+129q^2+q^3+16513q^4+78126q^5+129q^6+O(q^7)\\
\frac{E_8-1}{480}&=q+129q^2+2188q^3+16513q^4+78126q^5+282252q^6+O(q^7)\\
\frac{E_1^8-1}{48}&=q+21q^2+253q^3+1933q^4+9870q^5+35553q^6+O(q^7).
\qedhere\end{split}
\end{equation*}
\end{proof}

Recall from \cite{j4} that there is an 8-periodic family in the stable stems generalizing $\eta\in\pi^{st}_1$. Although we are not going to give an explicit representative, we denote by $\mu_k$ any framed manifold such that $[\mu_k]\in\pi^{st}_{8k+1}$ represents a member of this family (take note that our indexing differs from Adams').

\begin{prop}\label{musquared}
For any representatives $\mu$ of this family, we have
$$\check{f}(\mu_k\times\mu_l)\equiv\frac{1}{2}\frac{E_1-1}{2}.$$
\end{prop}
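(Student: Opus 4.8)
The plan is to apply Theorem~\ref{f_from_e} to the two factors $\mu_k$ and $\mu_l$, exactly as in the proofs of Propositions~\ref{nusquared} and \ref{sigmasquared}. The key input is the $e_{\mathbb{C}}$-invariant of a representative $\mu$ of the Adams $\mu$-family in $\pi^{st}_{8k+1}$. Since $\pi^{st}_{8k+1}$ contains a $\mathbb{Z}/2$ summand generated by $[\mu_k]$, the $e$-invariant (in its real or complex incarnation, compare Remark~\ref{r_vs_c}) must take the value $\tfrac12\in\mathbb{Q}/\mathbb{Z}$ on this class; this is the standard fact that $\mu_k$ is detected by $e_{\mathbb{R}}$ (equivalently by the $d$-invariant into $KO$). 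So $e_{\mathbb{C}}(\mu_k)\equiv\tfrac12\bmod\mathbb{Z}$, independently of $k$, and likewise for $\mu_l$.

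Next I would choose, for each factor, a modular form $m(\mu_k)$ of weight $(8k+1+1)/2=4k+1$ with respect to $\Gamma=\Gamma_1(3)$ whose $q$-expansion satisfies $\bar m(\mu_k)=m(\mu_k)-e_{\mathbb{C}}(\mu_k)\in\mathbb{Z}^\Gamma[\![q]\!]$, as guaranteed by Lemma~\ref{ereduced} together with Theorem~\ref{div}. Since $e_{\mathbb{C}}(\mu_k)=\tfrac12$, the divided congruence $\bar m(\mu_k)$ has constant term $-\tfrac12$ modulo $\mathbb{Z}$. Feeding this into the formula of Theorem~\ref{f_from_e} gives
$$\check f(\mu_k\times\mu_l)\equiv\bar m(\mu_k)\,e_{\mathbb{C}}(\mu_l)\equiv\bar m(\mu_k)\cdot{\textstyle\frac12}.$$
Now $\bar m(\mu_k)\cdot\tfrac12$ is, modulo $\mathbb{Z}^\Gamma[\![q]\!]\cdot\tfrac12$ (which together with the top- and zero-weight modular form ambiguity in $\check f$ is killed), congruent to $\tfrac12\cdot(\text{constant term of }\bar m(\mu_k))\equiv-\tfrac14\equiv\tfrac14\cdot(\text{something})$; more carefully, since all the higher $q$-coefficients of $\bar m(\mu_k)$ are integers, multiplying by $\tfrac12$ lands them in $\tfrac12\mathbb{Z}[\![q]\!]\subset\mathbb{Z}^\Gamma[\![q]\!]\otimes\mathbb{Q}$, and what survives modulo $\underline{\underline D}^\Gamma$ is the weight-$(4k+4l+2)$ piece coming from the product of the nonconstant parts. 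To pin down the stated closed form I would instead run the computation symmetrically as in Theorem~\ref{f_from_e}: using $e_{\mathbb{C}}(\mu_l)=\tfrac12$ and the level-three generator $E_1$, the natural choice is $\bar m(\mu_k)$ built so that $2\,\bar m(\mu_k)\equiv \tfrac{E_1-1}{2}$ up to integral divided congruences, giving $\check f(\mu_k\times\mu_l)\equiv\tfrac12\cdot\tfrac{E_1-1}{2}$.

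The main obstacle is the bookkeeping of the ambiguity: $\check f$ is only defined modulo $D^\Gamma_{n+1}+M_0^\Gamma\otimes\mathbb{Q}+M_{n+1}^\Gamma\otimes\mathbb{Q}$ (here $n+1=4k+4l+1$), so one must verify that the product $\bar m(\mu_k)\,e_{\mathbb{C}}(\mu_l)$, after discarding integral divided congruences and modular forms of extreme weight, genuinely reduces to the single low-weight expression $\tfrac12\cdot\tfrac{E_1-1}{2}$ rather than to something weight-$(4k+4l+2)$. This works precisely because one factor contributes only its constant $e$-invariant $\tfrac12$: the product of $\tfrac12$ with the nonconstant integral part of $\bar m(\mu_k)$ lies in $\tfrac12\mathbb{Z}[\![q]\!]$, hence in $\tfrac12 M^\Gamma_{4k+1}\otimes\mathbb{Q}$-type terms that are absorbed, leaving only the interaction of the two constant terms, which is captured by $\tfrac12\cdot\tfrac{E_1-1}{2}$ (the weight-one normalization $\tfrac{E_1-1}{2}=3\sum_n(\sum_{d\mid n}(\tfrac d3))q^n\in\mathbb{Z}[\![q]\!]$ having constant term $0$ and serving as the level-three stand-in for $\eta$'s $e$-data). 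I would also remark, as in the previous propositions, that unlike the $\nu$ and $\sigma$ cases one should not expect — and the statement does not claim — nontriviality here; establishing or refuting that would require a separate divided-congruence computation at weight $4k+4l+2$, which is why the proposition stops at the representative.
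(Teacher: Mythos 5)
Your framework is right — Theorem~\ref{f_from_e} with $e_{\mathbb{C}}(\mu_k)=\tfrac12$ is exactly how the paper proceeds — but the key computational step is missing and the reduction argument you give in its place is wrong.

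First, the bookkeeping claim ``$\tfrac12\mathbb{Z}[\![q]\!]\subset\mathbb{Z}^\Gamma[\![q]\!]\otimes\mathbb{Q}$, hence absorbed'' does not hold: the ambiguity $\underline{\underline D}^\Gamma_{n+1}$ consists of inhomogeneous rational modular forms of weight at most $n+1$ that expand \emph{integrally}, plus weight-$0$ and weight-$(n+1)$ modular forms; a half-integer power series is not generically of this form, so it is not killed. Also, you have the weight filtration backwards: with $\dim(\mu_k\times\mu_l)=8k+8l+2$, one has $n+1=4k+4l+2$, and modular forms of weight $4k+4l+2$ are part of the indeterminacy (hence discarded), not ``what survives.'' Finally, your penultimate step --- ``the natural choice is $\bar m(\mu_k)$ built so that $2\bar m(\mu_k)\equiv\tfrac{E_1-1}{2}$'' --- is circular: this is precisely the congruence you need to establish, not an assumption you get to make.

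What has to be done, and what the paper does, is to \emph{exhibit} a weight-$(4k+1)$ modular form and verify the congruence directly. Take $m(\mu_k)=\tfrac12 E_1^{4k+1}$, so that $\bar m(\mu_k)=\tfrac12(E_1^{4k+1}-1)\in\mathbb{Z}[\![q]\!]$ (since $E_1\equiv1\bmod 6$). Theorem~\ref{f_from_e} then gives the representative $\check f(\mu_k\times\mu_l)\equiv\tfrac12\cdot\tfrac{E_1^{4k+1}-1}{2}$, and the $k$-dependence is removed by the explicit divided congruence
$$\frac{1}{2}\frac{E_1^{4k+1}-1}{2}-\frac{1}{2}\frac{E_1-1}{2}=\frac{E_1^{4k}-1}{4}\,E_1\in\mathbb{Z}[\![q]\!],$$
which is integral because $E_1^{4k}-1$ is divisible by $4$ (write $E_1=1+6a$ and expand binomially), and which is a sum of rational modular forms of weights $1,5,\dots,4k+1\le n+1$, hence lies in $D^\Gamma_{n+1}$. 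That explicit verification is the content of the proposition, and it is exactly the piece your proposal leaves out.

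Your historical remark at the end is, however, accurate and matches Remark~\ref{musquaredrmk}: the proposition produces only a universal representative, and non-triviality is a separate question.
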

\begin{proof}
According to \cite{j4}, we have $e_{\mathbb{C}}(\mu_k)=1/2$, and this is the only possible non-trivial value of $e_{\mathbb{C}}$ in dimension $8k+1$. Thus, we judiciously choose 
$$\check{f}(\mu_k\times\mu_l)\equiv\frac{1}{2}\frac{E_1^{4k+1}-1}{2},$$
but
$$\frac{1}{2}\frac{E_1^{4k+1}-1}{2}-\frac{1}{2}\frac{E_1-1}{2}=\frac{E_1^{4k}-1}{4}E_1\in\mathbb{Z}[\![q]\!].$$
\end{proof}

\begin{rmk}\label{musquaredrmk}
Take note that Proposition \ref{musquared} just shows that the $f$-invariant of all products of the form $\mu_k\times\mu_l$ admits a universal representative;  in low dimensions, non-triviality of this representative is readily verified by hand (cf.~Example \ref{etasquared}), but we do not know how to establish this in full generality. On the other hand, it is known that these products represent non-trivial elements in $\pi^{st}_{8(k+l)+2}$: As shown in \cite{j4}, $e_{\mathbb{C}}(\mu_k)=1/2$ is equivalent to $d_{\mathbb{R}}(\mu_k)\neq0$, and, by the properties of the degree, then also $d_{\mathbb{R}}(\mu_k\times\mu_l)\neq0$.
\end{rmk}

\begin{ex}\label{etasquared}
It is easy to check that $[\mu_0]=\eta\in\pi^{st}_1\cong\mathbb{Z}/2$ can be represented by the circle with its non-bounding framing: Using a Fourier decomposition, we see that the $Spin^{\mathbb{C}}$ Dirac operator has symmetric spectrum and a single zero mode, so we conclude  $e_{\mathbb{C}}(\eta)=\frac{1}{2}$. We also see that
$$f(\eta^2) \equiv \frac{1}{2}\frac{E_1-1}{2} \equiv\frac{1}{2}\sum_n \sum_{d|n} (\frac{d}{3})q^n=\frac{1}{2}q+\frac{1}{2}q^3+O(q^4)$$
is non-trivial in $\underline{\underline{D}}_2^{\Gamma}\otimes\mathbb{Q/Z}$, since its $q$-expansion obviously cannot be congruent to a rational multiple of $({E_1^2-1})/{12}=q+3q^2+O(q^3)$.
\end{ex}

The images of $e_{\mathbb{C}}$ in dimensions $8k+3$, $8k+7$ are known  to be isomorphic to cyclic groups of order $d_{4k+2}$, $2d_{4k+4}$, respectively, where $d_{2k}$ denotes the denominator of $B_{2k}/2k$ (\cite{j4}, see also \cite{cf66}). The elements of $\pi^{st}_{8k+3}$, $\pi^{st}_{8k+7}$  on which these values are attained  lie in the image of the so-called {\em $J$-homomorphism},
$$J:\pi_rSO\rightarrow\pi^{st}_r,\quad r\geq1.$$

\begin{prop}\label{munu}
Let $M^{8k+3}$ be a representative of the generator of $Im(J)$ in dimension $8k+3$. Then
$$\check{f}(M^{8k+3}\times\mu_t)\equiv0.$$
\end{prop}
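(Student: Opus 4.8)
The plan is to run the product formula of Theorem~\ref{f_from_e} and then feed it Adams' computation of $e_{\mathbb C}$ in the image of $J$, so that a suitable Eisenstein series supplies a companion modular form whose divided congruence is divisible by $2$. First I would apply Theorem~\ref{f_from_e} with $Y_1=M^{8k+3}$ and $Y_2=\mu_t$, which gives $\check f(M^{8k+3}\times\mu_t)\equiv\bar m(M^{8k+3})\,e_{\mathbb C}(\mu_t)$ for any admissible choice of $\bar m(M^{8k+3})$; since $e_{\mathbb C}(\mu_t)=\tfrac12$ by \cite{j4}, this reads $\check f(M^{8k+3}\times\mu_t)\equiv\tfrac12\bar m(M^{8k+3})$. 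Writing $2n=\dim(M^{8k+3}\times\mu_t)=8(k+t)+4$, so that $n+1=4(k+t)+3$, the task is to produce an admissible $\bar m(M^{8k+3})$ for which $\tfrac12\bar m(M^{8k+3})$ already lies in the indeterminacy $D^\Gamma_{n+1}+M_0^\Gamma\otimes\mathbb R+M_{n+1}^\Gamma\otimes\mathbb R$ of $\check f$ from \eqref{fcheck}; then the proposition follows.

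For the choice of $m(M^{8k+3})$ I would invoke Adams: the image of $e_{\mathbb C}$ in dimension $8k+3=4(2k+1)-1$ is cyclic of order $d_{4k+2}$, the denominator of $B_{4k+2}/(4k+2)$, and $e_{\mathbb C}(M^{8k+3})\equiv u\,B_{4k+2}/(4k+2)\pmod{\mathbb Z}$ for some integer $u$ prime to $d_{4k+2}$. Let $\mathcal G_{4k+2}=-\tfrac{B_{4k+2}}{2(4k+2)}E_{4k+2}$ be the weight-$(4k+2)$ Eisenstein series, with $q$-expansion $-\tfrac{B_{4k+2}}{2(4k+2)}+\sum_{n\ge1}\sigma_{4k+1}(n)q^n$ and integral higher coefficients, and set $m(M^{8k+3})=-2u\,\mathcal G_{4k+2}$. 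This is a modular form of weight $4k+2$ with constant term $u\,B_{4k+2}/(4k+2)\equiv e_{\mathbb C}(M^{8k+3})\pmod{\mathbb Z}$, so $\bar m(M^{8k+3}):=m(M^{8k+3})-e_{\mathbb C}(M^{8k+3})\in\mathbb Z^\Gamma[\![q]\!]$ and is admissible; moreover every positive-degree coefficient of $\bar m(M^{8k+3})$ equals $-2u\,\sigma_{4k+1}(n)$, hence is divisible by $2$, while the constant term is an integer.

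It then follows that $\tfrac12\bar m(M^{8k+3})$ equals a half-integer constant $c_0$ plus $-u\sum_{n\ge1}\sigma_{4k+1}(n)q^n=-u\,\mathcal G_{4k+2}-u\,\tfrac{B_{4k+2}}{2(4k+2)}$, a rational linear combination of modular forms of weights $4k+2$ and $0$ with integral $q$-expansion, i.e.\ an element of $D^\Gamma_{4k+2}$. Since $c_0\in M_0^\Gamma\otimes\mathbb Q$ and $4k+2\le n+1$, we get $\tfrac12\bar m(M^{8k+3})\in D^\Gamma_{4k+2}+M_0^\Gamma\otimes\mathbb Q\subseteq D^\Gamma_{n+1}+M_0^\Gamma\otimes\mathbb R$, so $\check f(M^{8k+3}\times\mu_t)\equiv0$. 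Note this is a genuinely different mechanism from Corollary~\ref{e0f0}: here neither factor has vanishing $e_{\mathbb C}$.

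The step I expect to be the main obstacle is the low case $k=0$, where $M^{3}$ is a generator of $\pi^{st}_3\cong\mathbb Z/24$ (that is, $\nu$): the relevant weight is $4k+2=2$, there is no level-$1$ weight-$2$ Eisenstein series, and $M_2(\Gamma_1(3))$ is one-dimensional, spanned by $E_1^2$, for which no admissible $m(\nu)$ makes $\bar m(\nu)$ divisible by $2$; so one must treat $\nu\times\mu_t$ by hand (for $t=0$ it lies in $\pi^{st}_4=0$) or else by the other equality $\check f\equiv-\bar m(\mu_t)e_{\mathbb C}(\nu)$. The remaining care concerns the two number-theoretic inputs: the precise Adams identification $e_{\mathbb C}(M^{8k+3})\equiv u\,B_{4k+2}/(4k+2)\pmod{\mathbb Z}$ with $u$ a unit mod $d_{4k+2}$, and the von Staudt--Clausen fact that $2\mid d_{4k+2}$, which is exactly what leaves the constant term of $\bar m(M^{8k+3})$ possibly odd but harmless once it is absorbed into $M_0^\Gamma\otimes\mathbb Q$.
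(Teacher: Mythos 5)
Your $k>0$ argument is essentially the paper's: apply Theorem \ref{f_from_e} with $e_{\mathbb C}(\mu_t)=\tfrac12$, take the Eisenstein series in weight $4k+2$ as the companion modular form so that $\bar m(M^{8k+3})$ has positive coefficients equal to (a unit times) $2\sigma_{4k+1}(n)$, and conclude that $\tfrac12\bar m$ is an integral divided congruence in $D^\Gamma_{4k+2}\subseteq D^\Gamma_{n+1}$ plus a rational constant absorbed into $M_0^\Gamma\otimes\mathbb R$. Your bookkeeping of the unit $u$ is actually cleaner than the paper's ``WLOG'' phrasing, since $e_{\mathbb C}(M^{8k+3})$ is only a unit multiple of $B_{4k+2}/(4k+2)$; the parity argument is insensitive to $u$, so this is fine.

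The case $k=0$ (the products $\nu\times\mu_t$) is a genuine gap in your proposal, and you have correctly diagnosed why it cannot go through the same way: $M_2(\Gamma_1(3))$ is spanned by $E_1^2$, and for every admissible choice $m(\nu)=(j-\tfrac1{12})E_1^2$ one finds the first coefficient of $\bar m(\nu)$ equals $12j-1$, which is odd. Your two suggested escapes do not close this: the remark that $\pi^{st}_4=0$ only disposes of $t=0$, and the antisymmetric route $\check f\equiv-\bar m(\mu_t)e_{\mathbb C}(\nu)=\tfrac1{12}\bar m(\mu_t)$ does not obviously help either --- with the natural $\bar m(\mu_t)=\tfrac12(E_1^{4t+1}-1)$ this yields $\tfrac1{24}(E_1^{4t+1}-1)$, whose leading coefficient is $\tfrac{4t+1}{4}$, not integral, and it is not clear one can repair this by adjusting within the indeterminacy. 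The paper instead exploits the freedom to add a \emph{top-weight} form: writing
$$\tfrac12\tfrac{E_1^2-1}{12}\ \equiv\ \tfrac12\Bigl\{\tfrac{E_1^2-1}{12}+E_1^{4t}E_3-1\Bigr\}\ \equiv\ \tfrac12\Bigl\{\tfrac{E_1^2-1}{12}+E_3-1\Bigr\},$$
where the first step uses that $E_1^{4t}E_3$ has weight $4t+3=n+1$, the second that $\tfrac12 E_3(E_1^{4t}-1)=3E_3\cdot\tfrac{E_1^{4t}-1}{6}\in\mathbb Z[\![q]\!]$, and then it invokes the nontrivial congruence of Proposition \ref{E_1squaredE_3} to see that the result expands integrally. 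That congruence --- shifting by a half-odd multiple of $(E_3-1)/9$ --- is the ingredient your proposal is missing.
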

\begin{proof} Without loss of generality we may assume that $e_{\mathbb{C}}(M^{8k+3})$ is represented by $B_{4k+2}/(4k+2)$, so, for $k>0$, we have 
$$\check{f}(M^{8k+3}\times\mu_t)\equiv{\frac{1}{2}\frac{B_{4k+2}(1-E_{4k+2})}{4k+2}}=\sum\sigma_{4k+1}(n)q^n,$$
whereas for $k=0$ we have
\begin{equation*}
\begin{split}
\check{f}(M^{3}\times\mu_t)&\equiv{{\frac{1}{2}}}\frac{E_1^2-1}{12}\equiv{\frac{1}{2}}\left\{\frac{E_1^2-1}{12}+E_1^{4t}E_3-1\right\}\\
&\equiv{\frac{1}{2}}\left\{\frac{E_1^2-1}{12}+E_3-1\right\},\\
\end{split}
\end{equation*}
which expands integrally by Proposition \ref{E_1squaredE_3}.
\end{proof}

\begin{prop}\label{musigma}
 Let $M^{8l-1}$ be a representative of the generator of $Im(J)$ in dimension $8l-1$. Then
$$\check{f}(M^{8l-1}\times \mu_{t})\equiv\frac{1}{2}\frac{E_4-1}{16}.$$
\end{prop}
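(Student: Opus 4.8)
The plan is to follow the same strategy that produced Propositions \ref{munu} and \ref{musquared}, namely to apply Theorem \ref{f_from_e} with a judicious choice of the auxiliary modular form $m(Y_i)$, and then to reduce everything to an integral $q$-expansion modulo divided congruences. First I would invoke Theorem \ref{f_from_e} with $Y_1=M^{8l-1}$ and $Y_2=\mu_t$. By the discussion preceding Proposition \ref{musigma} (citing \cite{j4}), $e_{\mathbb{C}}(M^{8l-1})$ is represented by $B_{4l}/(4l)$ up to sign and up to adding elements of $\mathbb{Z}^{\Gamma}$, and $e_{\mathbb{C}}(\mu_t)=1/2$. Since $\check{f}(Y_1\times Y_2)\equiv \bar{m}(Y_1)\,e_{\mathbb{C}}(Y_2)$, the natural move is to pick $m(\mu_t)$ on the $\mu$-side so that $\bar m(\mu_t)=m(\mu_t)-e_{\mathbb{C}}(\mu_t)$ has weight $(8t+2)/2=4t+1$ and integral expansion; a clean choice is $m(\mu_t)=\tfrac12 E_1^{4t+1}$, giving $\bar m(\mu_t)\equiv \tfrac12(E_1^{4t+1}-1)\in\mathbb{Z}[\![q]\!]$, so that $\check{f}(M^{8l-1}\times\mu_t)\equiv e_{\mathbb{C}}(M^{8l-1})\cdot\tfrac12(E_1^{4t+1}-1)\pmod{\mathbb{Z}^{\Gamma}[\![q]\!]}$.

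Next I would reduce the $\mu$-exponent: the difference $\tfrac12(E_1^{4t+1}-1)-\tfrac12(E_1-1)=\tfrac14(E_1^{4t}-1)E_1$ lies in $\mathbb{Z}[\![q]\!]$ (this is exactly the computation already carried out in the proof of Proposition \ref{musquared}), so modulo $\mathbb{Z}^{\Gamma}[\![q]\!]$ we may replace $E_1^{4t+1}$ by $E_1$, hence $\check{f}(M^{8l-1}\times\mu_t)\equiv \tfrac12\,e_{\mathbb{C}}(M^{8l-1})\,(E_1-1)$. Now comes the key normalization: by Example \ref{divex}/Theorem \ref{div} the elliptic genus of (a $(U,fr)$-manifold representing) $M^{8l-1}$ supplies a homogeneous modular form $m'$ of weight $4l$ with $m'-e_{\Gamma}(M^{8l-1})$ integral, and since $M^{8l-1}\in\mathrm{Im}(J)$ one knows $e_{\mathbb{C}}(M^{8l-1})$ is a specific multiple of $1/(2d_{4l})$. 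I would then appeal to the level-$N$ congruences compiled in the appendix (the analogue of Proposition \ref{E_1squaredE_3}) to rewrite $e_{\mathbb{C}}(M^{8l-1})(E_1-1)$, modulo integral divided congruences, in the stated closed form $\tfrac12\cdot\tfrac{E_4-1}{16}$. Concretely, for the specific value $l$ this amounts to verifying that $\tfrac12\,e_{\mathbb{C}}(M^{8l-1})(E_1-1)-\tfrac12\cdot\tfrac{E_4-1}{16}$ has integral $q$-expansion, which is a finite Kummer-type congruence on the relevant Eisenstein coefficients.

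The main obstacle I expect is precisely this last congruence step: identifying the `right' weight-$4l$ Eisenstein series and checking that its $q$-expansion, scaled by $e_{\mathbb{C}}(M^{8l-1})$, differs from $\tfrac1{16}(E_4-1)$ by something in $\mathbb{Z}^{\Gamma}[\![q]\!]$. This is not automatic from the abstract machinery — it requires the explicit arithmetic of modular forms for $\Gamma_1(3)$ (von Staudt--Clausen for the relevant Bernoulli denominators, plus the divided-congruence relations of the appendix), exactly the kind of hands-on computation that Propositions \ref{nusquared}--\ref{munu} also rely on. A secondary point to be careful about is the indeterminacy: $\check{f}$ is only defined modulo $\underline{\underline{D}}^{\Gamma}_{n+1}$, so I must make sure the ambiguity in the sign and in the $\mathbb{Z}^{\Gamma}$-representative of $e_{\mathbb{C}}(M^{8l-1})$, as well as the choice $m(\mu_t)=\tfrac12E_1^{4t+1}$, all get absorbed into that indeterminacy — which they do, since each discrepancy is an integral (inhomogeneous, but of the right weight range) combination of modular forms.
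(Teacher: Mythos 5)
Your proposal diverges from the paper's proof in a way that introduces a genuine gap. The paper applies Theorem~\ref{f_from_e} with the divided congruence taken from the {\em higher-dimensional} factor: it sets $\bar m(M^{8l-1})=\tilde G_{4l}=\tfrac{B_{4l}}{8l}(1-E_{4l})$ and multiplies by the scalar $e_{\mathbb{C}}(\mu_t)=\tfrac12$, so that the only arithmetic that has to be controlled lives in $\tilde G_{4l}$ itself; it is then handled by von Staudt, Lemma~\ref{E_4power}, and an explicit $E_4^l\to E_4$ reduction. You instead take $\bar m$ on the $\mu$-side and multiply by the scalar $e_{\mathbb{C}}(M^{8l-1})=-B_{4l}/8l$, which is not a reciprocal power of $2$ but has the full Bernoulli denominator. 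This ruins your ``reduce $E_1^{4t+1}$ to $E_1$'' step: the identity $\tfrac12(E_1^{4t+1}-1)-\tfrac12(E_1-1)=\tfrac14 E_1(E_1^{4t}-1)\in\mathbb{Z}[\![q]\!]$ only gives a congruence modulo $\mathbb{Z}^\Gamma[\![q]\!]$ {\em before} you multiply by $e_{\mathbb{C}}(M^{8l-1})$. After multiplying, the discrepancy is $\tfrac14 E_1(E_1^{4t}-1)\,e_{\mathbb{C}}(M^{8l-1})$, which is a rational (not integral) multiple of a divided congruence; e.g.\ for $l=1,\,t=1$ the $q$-coefficient is $\tfrac{24}{480}=\tfrac1{20}$, which is not in $\mathbb{Z}^\Gamma=\mathbb{Z}[\zeta,1/3]$. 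That quantity is a combination of forms of weights $1$ and $4t+1$, strictly below the top weight $4(l+t)+1$, so it cannot be absorbed by adding a top-weight form; the replacement is therefore not valid modulo $\underline{\underline{D}}^\Gamma_{4(l+t)+1}$.

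There is a second, related gap at the end: you say the remaining task is to ``verify that $\tfrac12\,e_{\mathbb{C}}(M^{8l-1})(E_1-1)-\tfrac12\cdot\tfrac{E_4-1}{16}$ has integral $q$-expansion.'' Even granting the (invalid) reduction, this difference is typically not integral; e.g.\ for $l=1$ the $q$-coefficient is $\tfrac1{480}\cdot6-\tfrac1{32}\cdot240=\tfrac1{80}-\tfrac{15}{2}$, which has a factor of $5$ in the denominator. The correct target is congruence modulo $\underline{\underline{D}}^\Gamma_{4(l+t)+1}$, which requires adding a suitable modular form of top weight $4(l+t)+1$ (and this is precisely where the substantive work in the paper's proof happens --- Lemma~\ref{E_4power} and the two reduction steps on $E_4^l$). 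So the hard arithmetic is not a ``finite Kummer-type congruence'' to be waved at; it is the bulk of the argument, and it is most naturally organized by putting the Bernoulli number inside the divided congruence $\tilde G_{4l}$, as the paper does, rather than in the scalar multiplier, as you do.
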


\begin{proof}
The theorem of von Staudt allows the computation of the denominator of the Bernoulli numbers (see e.g.~\cite{apostol}). More precisely, let $d_{2k}$ denote the denominator of $B_{2k}/2k$; if $2^i|2k$, then $2^{i+1}|d_{2k}$, and this result is sharp. In particular, this implies that for $l=(2n+1)2^m$, $2^{m+4}|2d_{4l}$ (and $2d_{4l}$ is precisely the order of $Im(J)$ in dimension $8l-1$).  Writing
\begin{equation*}
\begin{split}
\tilde{G}_{4l}&=\frac{B_{4l}}{8l}\left(1-E_{4l}\right),\\
\frac{B_{4l}}{8l}&=\frac{n_{4l}}{2d_{4l}}=\frac{n_{4l}}{2^{m+4}u_{4l}},\\
\end{split}
\end{equation*}
we have
$$\frac{1}{2}\tilde{G}_{4l}\equiv \frac{1}{2}\tilde{G}_{4l}+\frac{u_{4l}-1}{2}\tilde{G}_{4l}=\frac{1}{2}\frac{n_{4l}(1-E_{4l})}{2^{m+4}}.$$
With the help of Lemma \ref{E_4power} we compute that, modulo $\underline{\underline{D}}_{4(l+t)+1}$,  
$$-\frac{n_{4l}}{2^{m+5}}E_{4l}\equiv\frac{n_{4l}}{2^{m+5}}E_1^{4t+1}\equiv-\frac{n_{4l}}{2^{m+5}}E_4^l\equiv\frac{1}{2}\frac{E_4^l-1}{2^{m+4}},$$
from which it follows that
$$\check{f}(M^{8l-1}\times \mu_{t})\equiv\frac{1}{2}\frac{E_4^l-1}{2^{4+m}}.$$
Now let $l=(2n+1)2^m$, $l^{\prime}=(2n^{\prime}+1)2^m$, $n\leq n^{\prime}$. Then
$$\frac{1}{2}\frac{E_4^{l^{\prime}}-E_4^l}{2^{4+m}}=\frac{E_4^{l^{\prime}-l}-1}{2^{4+m+1}}E_4^l\in\mathbb{Z}[\![q]\!],$$
which means we can always reduce to the situation $l=2^m$; but we also have
$$\frac{1}{2}\frac{(E_4^{2^m}-1)-2^m(E_4-1)}{2^{4+m}}\in\mathbb{Z}[\![q]\!],$$
thus proving the claim.
\end{proof}

\begin{rmk}
This  situation  is akin to Proposition \ref{musquared}/Remark \ref{musquaredrmk}: From \cite{j4} (see also \cite{rave}), we know that $\eta x_{2l}$, where $x_{2l}$ denotes the generator of $Im(J)$ in dimension $8l-1$, is the generator of $Im(J)$ in dimension $8l$, hence non-trivial in $\pi^{st}_{8l}$.
\end{rmk}

\begin{ex}\label{etasigma}
The lowest-dimensional example can be realized geometrically by considering the spheres $S^7$ and $S^1$ with framings that represent $\sigma$ and $\eta$, respectively. In $\underline{\underline{D}}_5^{\Gamma}\otimes\mathbb{Q/Z}$, we may modify the $q$-expansion to read
$$f(\eta\sigma)\equiv\frac{1}{2}\frac{E_4-1}{240}\equiv\frac{1}{2}\frac{E_4-1}{240}+\frac{1}{2}\frac{E_5^{\Gamma}-1}{3}=q-3q^2+\frac{29}{2}q^3+157q^4+O(q^5),$$
and now a quick comparison with the first three coefficients of
\begin{equation*}
\begin{split}
\frac{E_5^{\Gamma}-1}{3}&=q-15q^2+q^3+241q^4+O(q^5),\\
\frac{E_1^5-1}{6}&=5q+60q^2+365q^3+1205q^4+O(q^5),\\
\end{split}
\end{equation*}
shows that $f(\eta\sigma)$ cannot be completed to an integral $q$-expansion.
\end{ex}

\begin{prop}\label{nusigma}
Let $Y^{4k-1}$ represent a generator of Im(J) in dimension $4k-1$, $k>1$, and let $Y'$ represent $\nu$. Then we have
$$\check{f}(Y^{4k-1}\times Y')\equiv0.$$
\end{prop}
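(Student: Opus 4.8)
The plan is to mimic the structure of Proposition~\ref{munu}: use Theorem~\ref{f_from_e} to write $\check f(Y^{4k-1}\times Y')$ as $\pm\bar m(Y^{4k-1})\,e_{\mathbb C}(\nu)$, and then exploit the freedom in the choice of the modular form $m(Y^{4k-1})$ (together with the freedom modulo $\underline{\underline D}^{\Gamma}_{\bullet}$) to massage the $q$-expansion into something that manifestly lies in $\mathbb Z^{\Gamma}[\![q]\!]$. First I would record that $Y'$ represents $\nu\in\pi^{st}_3$, so as in Proposition~\ref{nusquared} we have $e_{\mathbb C}(\nu)\equiv -\tfrac1{12}$, and a convenient integral representative of the associated divided congruence is $-\bar m(\nu)=(E_1^2-1)/12\in\mathbb Z[\![q]\!]$. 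By Theorem~\ref{f_from_e}, $\check f(Y^{4k-1}\times Y')\equiv \bar m(Y^{4k-1})\,e_{\mathbb C}(\nu)$, where $m(Y^{4k-1})$ may be taken to be any modular form of weight $2k$ w.r.t.\ $\Gamma_1(N)$ congruent to $e_{\mathbb C}(Y^{4k-1})$ modulo $\mathbb Z^{\Gamma}[\![q]\!]$.

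Next I would pin down $e_{\mathbb C}(Y^{4k-1})$: since $Y^{4k-1}$ generates $\mathrm{Im}(J)$ in dimension $4k-1$ with $k>1$ even (so that $4k-1=8l-1$) or $k$ odd (so that $4k-1=8l+3$), its $e_{\mathbb C}$-invariant is, up to sign, represented by $B_{2k}/2k$ (cf.\ the discussion preceding Proposition~\ref{munu} and Remark~\ref{r_vs_c}). Hence one may choose $m(Y^{4k-1})=\tfrac{B_{2k}}{2k}(1-E_{2k})$, which is an honest modular form of weight $2k$, and $\bar m(Y^{4k-1})=\tfrac{B_{2k}}{2k}(1-E_{2k})-e_{\mathbb C}(Y^{4k-1})$. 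The product $\bar m(Y^{4k-1})\cdot\bigl(-\tfrac1{12}\bigr)$ then has to be shown to be an integral $q$-expansion modulo $\underline{\underline D}^{\Gamma}_{2k+1}$. The mechanism should be exactly von Staudt--Clausen together with the Kummer-type congruences already invoked in the appendix: the relevant statement is that $\tfrac{1}{12}\cdot\tfrac{B_{2k}}{2k}(E_{2k}-1)$ differs by an element of $\mathbb Z^{\Gamma}[\![q]\!]$ from a rational modular form of weight $2k$ — precisely because $12\mid d_{2k}$ by von Staudt when $2k\equiv 2\pmod{12}$, and more generally one absorbs the denominator into a suitable Eisenstein combination of weight $2k$, discarding the resulting top-weight and weight-zero pieces. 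Concretely I would write $\tfrac{B_{2k}}{2k}=n_{2k}/d_{2k}$ and argue, as in the proof of Proposition~\ref{musquared}, that $\tfrac1{12}\tfrac{n_{2k}(E_{2k}-1)}{d_{2k}}$ already lies in $\mathbb Z[\![q]\!]$ because $12 d_{2k}\mid$ the denominators appearing in the numerator coefficients of $E_{2k}-1$ up to the tolerance $\underline{\underline D}^{\Gamma}_{2k+1}$; in the single borderline case $k=2$ one checks $(E_1^2-1)/12$ against $(E_4-1)/240$ directly.

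The main obstacle I anticipate is the number-theoretic bookkeeping in that last step: one needs to know that $\tfrac{1}{12}\bigl(\tfrac{B_{2k}}{2k}\bigr)(1-E_{2k})$, which a priori is a rational multiple of a weight-$2k$ Eisenstein series with a potentially nasty denominator, actually becomes integral once we are allowed to add rational homogeneous modular forms of the extreme weights $0$ and $2k+1$ — i.e.\ once we work modulo $\underline{\underml{D}}^{\Gamma}_{2k+1}$ — wait, the correct symbol is $\underline{\underline D}^{\Gamma}_{2k+1}$. This is a purely arithmetic assertion about the interaction of von Staudt's denominator formula with the structure of $M^{\Gamma}_{2k}\otimes\mathbb Q$; in the range where explicit level-$3$ data is available (the appendix) it can be verified coefficient by coefficient, but the general statement requires the congruences compiled there. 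A clean way to sidestep delicate estimates is to note that, by Lemma~\ref{ereduced} and Theorem~\ref{f_from_e}, one is free to replace $\bar m(\nu)$ by \emph{any} integral lift, and in particular by a lift of the form $-\tfrac1{12}=e_{\mathbb C}(\nu)$ that combines nicely with the chosen $m(Y^{4k-1})$; the assertion $\check f\equiv 0$ then reduces to the single congruence $\tfrac{1}{12}\cdot\tfrac{B_{2k}}{2k}(1-E_{2k})\in\mathbb Z^{\Gamma}[\![q]\!]\pmod{\underline{\underline D}^{\Gamma}_{2k+1}}$, which is the content of (a weight-$2k$ analogue of) Proposition~\ref{E_1squaredE_3} and the congruence list in the appendix. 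I would finish by invoking that congruence, thereby concluding $\check f(Y^{4k-1}\times Y')\equiv 0$.
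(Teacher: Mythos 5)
Your setup is correct and matches the paper's strategy: reduce via Theorem~\ref{f_from_e} (and Lemma~\ref{ereduced}) to a purely arithmetic claim, namely that a suitably normalized multiple of $\frac{B_{2k}}{2k}(1-E_{2k})$ vanishes in $\underline{\underline{D}}^{\Gamma}_{2k+1}\otimes\mathbb{Q/Z}$. But that arithmetic claim \emph{is} the proposition — and your proposal leaves it unproven. You say the integrality ``should'' follow from von Staudt--Clausen, Kummer-type congruences, and ``the congruences compiled in the appendix,'' but you never exhibit the chain of congruences that actually does it, and you acknowledge as much (``the general statement requires the congruences compiled there''). Restating the required conclusion, as your ``clean way to sidestep'' does, is not a proof of it.

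Concretely, here is what is missing. Since $3$ is inverted in $\mathbb{Z}^{\Gamma}$ at level $3$, the problem is $2$-local, and the $2$-adic structure of $d_{2k}$ forces a genuine case split on the parity of $k$: for $k=2l$ one has $2d_{4l}=2^{m+4}u_{4l}$ with $u_{4l}$ odd (where $l=(2n+1)2^m$), whereas for $k=2l-1$ one has $d_{4l-2}=4u$ with $u$ odd. The paper exploits this structure explicitly: first it removes the odd factor $u$ from the denominator using the trick $\tfrac14\tilde G \equiv \tfrac14\tilde G \pm \tfrac{u\mp1}{4}\tilde G$ (this step has no analogue in your write-up); then it reduces $E_{2k}$ to powers of $E_4$ using Lemma~\ref{E_4power} together with~\eqref{ichoosel},~\eqref{chooseE_4}; and finally it invokes the specific level-$3$ congruence of Proposition~\ref{E_4/64} to show that $\tfrac{E_4-1}{2^6}$ (resp.\ $\tfrac{E_1^2-1}{2^4}$) is, modulo the allowed indeterminacy, congruent to $-l\tfrac{E_4-1}{2^5}$ (resp.\ a top-weight form), with a possible one-step iteration. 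None of these steps are gestured at in your argument. In addition, the clause ``$12\mid d_{2k}$ by von Staudt when $2k\equiv2\pmod{12}$'' is misleading (one has $12\mid d_{2k}$ for every $k\ge1$, but that fact alone does not settle the question, precisely because at level $3$ only the $2$-part of $d_{2k}$ matters). In short: the scaffolding agrees with the paper, but the load-bearing arithmetic is absent.
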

\begin{proof}
For $k=2l$ we may proceed as in the previous proof:
\begin{equation*}
\begin{split}
\check{f}(Y^{8l-1}\times Y')&\equiv\frac{1}{4}\tilde{G}_{4l}\equiv\frac{1}{4}\tilde{G}_{4l}\pm\frac{u_{4l}\mp1}{4}\tilde{G}_{4l}=\pm\frac{1}{4}\frac{n_{4l}(1-E_{4l})}{2^{m+4}}\\
&\equiv\mp\frac{n_{4l}}{2^{m+6}}E_1^2\equiv\pm\frac{n_{4l}}{4}\frac{E_4^l-1}{2^{m+4}}\equiv\pm \frac{n_{4l}}{4}\frac{(E_4-1)\cdot l}{2^{m+4}},\\
\end{split}
\end{equation*}
where the sign is chosen according to whether $u_{4l}\equiv\pm1\mod4$, and the last step follows from  \eqref{ichoosel} and \eqref{chooseE_4}.\newline  
Making use of  Proposition \ref{E_4/64}, we see that in $\underline{\underline{D}}_{4l+2}^{\Gamma}\otimes\mathbb{Q/Z}$:
\begin{equation*}
\begin{split}
\frac{E_4-1}{2^6}&\equiv\frac{E_1^2-1}{2^5}-\frac{1}{4}(E_3^2-1)-\frac{1}{8}(E_1^3E_3-1)\\
&\equiv\frac{E_1^2-1}{2^5}-\frac{1}{4}(E_4^{l-1}E_3^2-1)-\frac{1}{8}(E_4^{l-1}E_1^3E_3-1)\\
&\equiv\frac{E_1^2-1}{2^5}\equiv-\frac{E_4^l-1}{2^5}\equiv-l\frac{E_4-1}{2^5};\\
\end{split}
\end{equation*}
if $l$ is even, we are done, otherwise we have to iterate once.\newline\newline
Now let $k=2l-1>1$. Then we have
$$\check{f}(Y^{8l-5}\times Y')\equiv\frac{1}{4}2\tilde{G}_{4l-2}\equiv\frac{1}{2}\frac{n_{4l-2}(1-E_{4l-2})}{8}\equiv\frac{n_{4l-2}}{4}\frac{E_1^2-1}{4},$$
but the latter is seen to be congruent to a form of top weight,
\begin{equation*}
\begin{split}
\frac{E_1^2-1}{2^4}&\equiv\frac{E_4-1}{2^5}+\frac{1}{2}(E_3^2-1)+\frac{1}{4}(E_1^3E_3-1)\\
&\equiv\frac{E_4-1}{2^5}+\frac{1}{2}(E_1^{4l-6}E_3^2-1)+\frac{1}{4}(E_1^{4l-6+3}E_3-1)\\
&\equiv\frac{E_4-1}{2^5}\equiv l\frac{E_4-1}{2^{5+m}}\equiv\frac{E_4^l-1}{2^{5+m}},\\
\end{split}
\end{equation*}
thus vanishes in  $\underline{\underline{D}}_{4l}^{\Gamma}\otimes\mathbb{Q/Z}$.
\end{proof}

\begin{rmk}
We would like to point out that Propositions \ref{nusquared} through \ref{nusigma} may be thought of as an elliptic analogue of  \cite[Theorem 5.5.8.~(b)]{rave}.
\end{rmk}

\section{The $f$-invariant of principal circle bundles}\label{circlebundlesection}

Let $L$ be a hermitian line bundle with unitary connection $\nabla^L$ over a compact manifold $B$; restricting to the unit disk in the fibers, we get a bundle $Z=D(L)$. Furthermore, the connection $\nabla^L$ induces a splitting of the tangent bundle. Clearly, the vertical tangent bundle is
$$T(Z/B)\cong\pi^*L,$$
therefore it inherits a natural complex structure. Restricting to the sphere bundle $S(L)$, we have
$$(\pi^*L)|_{S(L)}\cong T(Z/B)|_{S(L)}\cong T(S(L)/B)\oplus S(L)\times\mathbb{R},$$
and we can trivialize the vertical tangent of the principal circle bundle $S(L)$, hence also $(\pi^*L)|_{S(L)}$. 
Thus, if $L$ is a hermitian line with connection over an even-dimensional $(U,fr)$-manifold $B$, 
 then $D(L)$ can be turned into a $(U,fr)^2_f$-manifold. 

Let us compute the $f$-invariant in this situation: Given any compatible  connections we may invoke Theorem \ref{f_from_family}; further simplification can be achieved by a result similar to Lemma \ref{ereduced}:

\begin{lem}\label{ezdisk}
In the situation of a disk bundle $Z=D(L)$, we may replace $Ell_0(\nabla_2)$ by $Td(\nabla_2)$ for the computation of the $\hat{e}_{\Gamma}$-form in Theorem \ref{f_from_family}.
\end{lem}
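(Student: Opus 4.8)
The plan is to reduce $Ell_0(\nabla_2)$ to $Td(\nabla_2)$ by expanding $Ell_0$ as a twisted Todd genus and showing the twisting bundle contributes only terms that are harmless for the $\hat{e}_{\Gamma}$-form. Recall from \eqref{elli} that $Ell_0(\nabla_2) = (1-\zeta)^{-rk(E_2)}Td(\nabla_2)\,ch(\nabla^{\Lambda_{-\zeta}E_2^*})$ (the infinite product over $n$ drops out at $q=0$). Since in the disk-bundle situation $E_2 \cong \pi^*L$ up to a stable summand and $E_2$ is trivialized over $Z''$, the induced connection $\nabla^{\Lambda_{-\zeta}E_2^*}$ restricts to the pure gauge connection there, so $ch(\nabla^{\Lambda_{-\zeta}E_2^*})$ represents a relative class. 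First I would therefore write
$$Ell_0(\nabla_2) = (1-\zeta)^{-rk(E_2)}Td(\nabla_2) + Td(\nabla_2)\left\{(1-\zeta)^{-rk(E_2)}ch(\nabla^{\Lambda_{-\zeta}E_2^*}) - 1\right\} \cdot (\text{const}),$$
more precisely split off $Td(\nabla_2)$ and collect the remainder, which carries a virtually-reduced bundle factor.

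Next I would push this through integration over the fiber $Z/B = D(L)/B$. The key point is that $\hat{e}_{\Gamma}$ is defined only modulo $\left(im(ch: K(B)\otimes\mathbb{Z}^{\Gamma}\rightarrow H^{even}(B,\mathbb{Q}[\zeta]))\right)_{dR}$, so any contribution to $\int_{Z/B}$ that is (a rational combination of) Chern characters of genuine bundles on $B$ may be discarded. The term $\int_{Z/B}Td(\nabla_2)\cdot\{\text{virtually reduced bundle}\}$ is, by the families index theorem (Theorem \ref{bc}, with the Dirac family here being the one on the disk bundle, or simply directly since $D(L)\to B$ has closed-interval-free fibers with boundary) — or even more elementarily by the fact that its fiber integral represents $ch$ of an honest $K$-theory class after multiplying by $(1-\zeta)^{rk}$ — exactly of the form that the indeterminacy in $\hat{e}_{\Gamma}$ absorbs. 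Concretely: multiplying by $(1-\zeta)^{rk(E_2)} \in \mathbb{Z}^{\Gamma}$, the twisting bundle $\Lambda_{-\zeta}E_2^*$ becomes a genuine bundle, Theorem \ref{bc} (or the discussion preceding Corollary \ref{formula}) identifies $\int_{Z/B}\hat{A}(\nabla^{T(Z/B)})ch(\nabla^{L^{1/2}})ch(\nabla^{\Lambda_{-\zeta}E_2^*})$ up to an $\hat\eta$-form and a $ch$ of a kernel bundle with a de Rham representative of $ch$ of the index bundle — all of which lie in the image of $ch$ from $K(B)\otimes\mathbb{Z}^{\Gamma}$, hence vanish mod the indeterminacy. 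Dividing back by $(1-\zeta)^{rk(E_2)}$ keeps us inside that image since $(1-\zeta)^{-1}\in\mathbb{Z}^{\Gamma}$.

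The one subtlety to handle carefully is the replacement of $\nabla_2^{\mathbb{R}}$ by $\nabla^{T(Z/B)}$ in the $\hat{A}$-form (the same move as in Corollary \ref{formula}): since we want to write things as fiber integrals of index-density forms, we should pass from the compatible complex connection $\nabla_2$ to the Levi-Civita connection $\nabla^{T(Z/B)}$, at the cost of a Chern-Simons term $cs$ as in \eqref{chernsimons}. But $\int_{Z/B}d\,cs \equiv \int_{Z''/B}cs \pmod{im\,d}$, and exact forms on $B$ also contribute nothing to $\hat{e}_{\Gamma}$ (indeed, after wedging with $\widetilde{Ell}(\nabla^E)$ and integrating over $B$ they die by Stokes and flatness on $\partial B$, exactly as in the proof of Theorem \ref{f_from_family}); moreover on $Z''$ both connections restrict to pure gauge so the $cs$ term there is controlled. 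I expect this bookkeeping — making sure every discarded term genuinely lands in the stated indeterminacy rather than merely being ``small'' — to be the main obstacle; the algebraic identity $Ell_0 = (1-\zeta)^{-rk}Td\cdot ch(\Lambda_{-\zeta}(\cdot)^*)$ and the appeal to Theorem \ref{bc} are routine. Once these pieces are assembled, $\int_{Z/B}Ell_0(\nabla_2)$ and $\int_{Z/B}Td(\nabla_2)$ differ only by elements of $\left(im\,ch\right)_{dR}$, which is precisely the claim.
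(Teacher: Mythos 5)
Your approach is much heavier than the situation calls for, and the key step is not justified. The paper exploits the specific structure of the disk bundle directly: $E_2$ is (stably) the rank-one bundle $\pi^*L$, so with $x=iF^{\nabla_2}/(2\pi)$ one has the explicit identity
$$Td(\nabla_2)-Ell_0(\nabla_2)=\frac{x}{1-e^{-x}}\left(1-\frac{1-\zeta e^{-x}}{1-\zeta}\right)=-\frac{\zeta}{1-\zeta}\,x,$$
a closed two-form whose fiber integral over $D(L)/B$ is a constant integer (the relative Euler number), and $\frac{\zeta}{1-\zeta}\in\mathbb{Z}^{\Gamma}$ is already in the indeterminacy. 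No index theory is needed at all.

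The gap in your argument is the assertion that, after multiplying by $(1-\zeta)^{rk(E_2)}$, the fiber integral $\int_{Z/B}\hat{A}(\nabla^{T(Z/B)})ch(\nabla^{L^{1/2}})ch(\nabla^{\Lambda_{-\zeta}E_2^*})$ decomposes via Theorem \ref{bc} into pieces ``all of which lie in the image of $ch$ from $K(B)\otimes\mathbb{Z}^{\Gamma}$''. That is false: Theorem \ref{bc} leaves behind an $\hat\eta$-form (and a $\tfrac12 ch(\ker)$ term), and the $\hat\eta$-form is precisely the part that does \emph{not} lie in the image of $ch$. Indeed, for the boundary circle bundle at hand Proposition \ref{eform} computes $[\hat\eta(\eth)]=\sum_{k\geq1}B_{k+1}c_1(L)^k/(k+1)!$, with Bernoulli-number coefficients that are not in $\mathbb{Z}^\Gamma$; if this landed in the indeterminacy, $\hat e_\Gamma$ would vanish and Corollary \ref{fcircle} would say nothing. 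Your ``more elementary'' alternative --- that the fiber integral itself represents $ch$ of a $K$-theory class --- only holds for closed fibers (cf.\ \eqref{ASfamilyformula}), not for the disk bundle. What you would actually need is a cancellation argument: the virtual twist $\Lambda_{-\zeta}E_2^*-(1-\zeta)^{rk}$ restricts to zero in $K$-theory over $Z''$ with matching pure-gauge connections, so the boundary $\hat\eta$- and kernel-contributions in Theorem \ref{bc} agree term-by-term and cancel in the difference $\int_{Z/B}(Ell_0-Td)$. That step is absent from your write-up, and once supplied, it is a long detour to the same conclusion the one-line computation above gives immediately. A secondary point: your opening decomposition $Ell_0=(1-\zeta)^{-rk}Td+Td\cdot\{\cdots\}$ leaves a scalar mismatch $\bigl((1-\zeta)^{-rk}-1\bigr)Td$ that is not multiplied by a reduced bundle and so does not fall into the indeterminacy you invoke; the clean split is $Ell_0-Td=Td\cdot\bigl((1-\zeta)^{-rk}ch(\Lambda_{-\zeta}E_2^*)-1\bigr)$.
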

\begin{proof}
In terms of the $i\mathbb{R}$-valued curvature two-form $F^{\nabla_2}$, we have
$$Td\left(\nabla_2\right)-Ell_0\left(\nabla_2\right)=-\frac{\zeta}{1-\zeta}\frac{iF^{\nabla_2}}{2\pi};$$
upon integration along the fiber, the RHS takes values in $\mathbb{Z}^{\Gamma}$, and, 
when computing $f$, this integer will get multiplied by the divided congruence $\int_B{\widetilde{Ell}(\nabla^E)}$.
\end{proof}
In order to make contact with index theory, we endow the vertical tangent bundle with a metric $g^{T(Z/B)}$ such that it is of product type near the boundary and that the circle acts isometrically. Then we construct the connection $\nabla^{T(Z/B)}$ as in section \ref{classicalindex}; if we denote by $e$ the vertical unit tangent of the circle bundle, we see that
$\nabla^{T(Z/B)}\langle e,e\rangle=0$, which implies that 
$$\nabla^{T(Z/B)}e=0,$$
i.e.\ this connection is already trivializing (cf.\ \cite{zhang94}). 
Now we can prove:

\begin{prop}\label{eform}
In the situation of the disk bundle $D(L)$, the cohomology class of the $\hat{e}_{\Gamma}$-form is given by
$$[\hat{e}_{\Gamma}]\equiv\sum_{k=0}^{\infty} B_{k+1}\frac{\left(c_1(L)\right)^k}{(k+1)!}\mod im(ch:K(B)\otimes\mathbb{Z}^{\Gamma}\rightarrow H^{even}(B,\mathbb{Q}[\zeta])).$$
\end{prop}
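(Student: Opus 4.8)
The plan is to compute the de Rham representative of $\hat e_\Gamma$ furnished by Theorem \ref{f_from_family} by means of the Bismut--Cheeger index theorem (Theorem \ref{bc}), using the special structure of the circle bundle $Z=D(L)$. By Lemma \ref{ezdisk} we may replace $Ell_0(\nabla_2)$ by $Td(\nabla_2)$, and since $T(Z/B)\cong\pi^*L$ with the induced complex structure, the vertical $Spin^{\mathbb C}$ Dirac operator is (up to the standard twist by $L^{1/2}$) the fiberwise Dolbeault operator on the unit disk. The first step is therefore to identify $\int_{Z/B}Td(\nabla_2)$, i.e.\ to carry out the fiber integration over the $2$-disk explicitly in terms of the curvature $F^L$; what survives is a power series in $c_1(L)=[\tfrac{iF^L}{2\pi}]$ whose coefficients are the values of a generating function obtained by integrating $\frac{x}{1-e^{-x}}$-type data over the fiber.

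Next I would set up the boundary contribution. Restricting to $S(L)\to B$, the vertical tangent is trivialized by the unit vertical field $e$, and as noted in the text the fiberwise connection $\nabla^{T(Z/B)}$ already satisfies $\nabla^{T(Z/B)}e=0$. The fiberwise boundary Dirac operator $\eth^\Gamma_{\partial X}$ is then essentially $-i\partial_\theta$ (plus a holonomy twist coming from $L$), acting on functions on the circle; I would Fourier-decompose to see that its kernel is of constant rank (so Theorem \ref{bc} applies) and compute its $\hat\eta$-form. This is the classical computation of the $\hat\eta$-form of a circle bundle, which the paper attributes to \cite{zhang94} and reproduces in the appendix; the output is a generating function in $c_1(L)$ involving the periodic Bernoulli/$\zeta$-function values, and it is precisely this step that produces the Bernoulli numbers $B_{k+1}$. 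Combining the fiber-integral term, the $\hat\eta$ term, and the $\tfrac12 ch(\nabla^{\ker})$ term via Theorem \ref{bc} gives a de Rham representative of $ch(Ind)$, and hence of $\hat e_\Gamma$, modulo an exact form; modulo the indeterminacy $im(ch:K(B)\otimes\mathbb Z^\Gamma\to H^{even})$ this collapses to the asserted closed form $\sum_k B_{k+1}\,c_1(L)^k/(k+1)!$.

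The bookkeeping step that ties it together is recognizing that the two generating functions — one from the interior fiber integral of $Td$, one from the $\hat\eta$-form of the boundary circle family — combine into the single series $\sum_{k\geq0}B_{k+1}c_1(L)^k/(k+1)!$, which is the expansion of $\frac{1}{e^x-1}+\frac{1}{2}-\frac{1}{x}$ or an equivalent Bernoulli generating function, evaluated at $x=c_1(L)$. I would verify the low-degree terms by hand ($k=0$ gives $B_1=-\tfrac12$, matching the $-\tfrac12$ from the kernel term, etc.) to fix signs and the treatment of the $k=0$ contribution, since the constant term is exactly where the $\tfrac12 ch(\nabla^{\ker\eth^\Gamma_{\partial X}})$ correction and the $1/x$ subtraction in the generating function interact.

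I expect the main obstacle to be the explicit evaluation of the $\hat\eta$-form of the circle bundle and the sign/normalization conventions: one must carefully track the $(2\pi i)^{-k}$ rescaling in the definition of $\hat\eta$, the holonomy of $\nabla^L$ as it enters the spectrum of $-i\partial_\theta$, and the interaction with the twist by $(1-\zeta)^{-rkE_2}\Lambda_{-\zeta}E_2^*$ that defines $\eth^\Gamma$ — though by Lemma \ref{ezdisk} the $\zeta$-dependence drops out of the relevant cohomology class, so effectively only the untwisted $Td$ computation matters. Everything else is a routine, if lengthy, application of Bismut--Cheeger plus the known circle-bundle $\hat\eta$-form computation reproduced in the appendix.
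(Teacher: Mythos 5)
Your plan tracks the paper's proof quite closely — reduce to $Td$ via Lemma~\ref{ezdisk}, apply Theorem~\ref{bc}, and pull the Bernoulli series from the appendix's $\hat\eta$-form computation — but you gloss over a step the paper handles explicitly: the Bismut--Cheeger index form is built from the geometric vertical connection $\nabla^{T(Z/B)}$, whereas the $\hat e_\Gamma$-form is defined from the compatible connection $\nabla_2$ on the complex line $E_2$. In general $Td(\nabla_2)$ differs from $\hat{A}(\nabla^{T(Z/B)})\exp\left(\tfrac{iF^{\nabla_2}}{4\pi}\right)$ by the exact Chern--Simons term $d\,cs\left(Td,\nabla^{T(Z/B)},\nabla_2\right)$, whose fiber integral reduces (up to exact forms on $B$) to $\int_{S(L)/B}cs$. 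The paper disposes of this by observing that both $\nabla^{T(Z/B)}$ and $\nabla_2^{\mathbb{R}}$ have abelian $\mathfrak{so}(2)$-valued curvature two-forms that vanish upon restriction to $S(L)$ — the former because $\nabla^{T(Z/B)}e=0$, the latter because $\nabla_2$ restricts to the pure-gauge connection — so the transgression integrand, and hence $cs$, vanishes there. Without that observation you would be left with an unaccounted-for boundary Chern--Simons contribution, and the claimed closed formula would not follow.

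A smaller point: your ``bookkeeping'' paragraph suggests that the interior fiber integral of $Td$ and the boundary $\hat\eta$-form each supply part of the final Bernoulli series. In fact all the Bernoulli numbers come from the boundary: the interior fiber integral \emph{is} the de Rham representative of $\hat e_\Gamma$, and Theorem~\ref{bc} equates it, modulo $ch(Ind)\in im(ch)$, to $\hat\eta + \tfrac12 ch\left(\nabla^{\ker\eth_{\partial}}\right)$, which is exactly where $\sum_{k\geq0}B_{k+1}c_1(L)^k/(k+1)!$ lives — the $k\geq1$ terms from $\hat\eta$, the $k=0$ term $B_1=\tfrac12$ from the kernel term under the paper's convention $B_n=B_n(1)$. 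Your remaining steps (constant-rank kernel via Fourier decomposition, appeal to the appendix $\hat\eta$-form) are correct and match the paper.
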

\begin{proof}
By Lemma \ref{ezdisk}, it is sufficient to consider
$$\int_{Z/B}Td\left(\nabla_2\right)=\int_{Z/B}Td\left(\nabla^{T(Z/B)}\right)+\int_{Z/B}d\ cs\left(Td,\nabla^{T(Z/B)},\nabla_2\right);$$
analogously to Corollary \ref{formula}, the last summand can be reduced to an integral over $S(L)/B$, at least up to exact forms on the base.  Clearly, we have
$$cs\left(Td,\nabla^{T(Z/B)},\nabla_2\right)=cs(\hat{A},\nabla^{T(Z/B)},\nabla_2^{\mathbb{R}})\exp\left(\frac{iF^{\nabla_2}}{4\pi}\right),$$
and since the $\hat{A}$-genus form comes from an even power series, formula \eqref{transgress} yields
$$cs(\hat A)=\sum_{k=0}^{\infty}c_k\int_0^1\mbox{tr}\left(\omega F_t^{2k+1}\right)dt,$$
for explicitly calculable coefficients $c_k$; 
but  the $\mathfrak{so}(2)$-valued (i.e.\ abelian) curvature two-forms of $\nabla^{T(Z/B)}$ and $\nabla_2^{\mathbb{R}}$ vanish upon restriction to $S(L)$, hence so do $F_t$ and $cs$.\newline
Now,  by assumption, the circle acts isometrically on the fibers, which implies that the kernel of the  boundary family $\eth_{\partial}$ is induced by the (trivial) $S^1$-representation $\ker\eth_{S^1}$, hence it is of constant rank. Therefore, we may apply Theorem \ref{bc}, and since $ch(\ker\eth_{\partial})=1$, we have
$$[\hat{e}_{\Gamma}]\equiv[\hat{\eta}(\eth)]+{\textstyle\frac{1}{2}}\mod im(ch:K(B)\otimes\mathbb{Z}^{\Gamma}\rightarrow H^{even}(B,\mathbb{Q}[\zeta])).$$
Luckily, the $\hat{\eta}$-form of the Dirac operator for principal circle bundles has been computed in \cite{zhang94} and  \cite{goette00} (see also appendix \ref{etazhang}); its underlying cohomology class is 
\begin{equation}\label{sign2}
[\hat{\eta}(\eth)]=\sum_{k=1}^{\infty}B_{k+1}\frac{(c_1(L))^k}{(k+1)!},
\end{equation}
and the addition of ${\textstyle\frac{1}{2}}=B_1(1)$ completes the proof.
\end{proof}

\begin{cor}\label{fcircle}
Let $L$ be a hermitian line with unitary connection $\nabla^L$ over a $(U,fr)$-manifold $B$ of dimension $2n+2$. If we denote by $S(L)_|$ the  framed  circle bundle over $\partial B$, we have 
$$f(S(L)_|)\equiv\sum_{k=0}^{n} \frac{B_{k+1}}{(k+1)!}\int_B{\left\{\left(\frac{iF^L}{2\pi}\right)^k\widetilde{Ell}(\nabla^{E})\right\}}.$$
\end{cor}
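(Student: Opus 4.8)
The plan is to combine Proposition \ref{eform} with Theorem \ref{f_from_family} and then restrict attention to the base $B$, using the dimensional bound to truncate the infinite sum. First I would note that by Theorem \ref{f_from_family}, the $f$-invariant of the corner $M = S(L)_|$ is represented by $\int_B \hat{e}_\Gamma \widetilde{Ell}(\nabla^E)$, where $\hat{e}_\Gamma$ is the $e_\Gamma$-form of the disk bundle $D(L)$ regarded as a $(U,fr)^2_f$-manifold over $B$ (recall that $\partial D(L)$ restricted over $\partial B$ is the framed circle bundle $S(L)_|$, and the corner of $D(L)$ is precisely $S(L)_|$). By Proposition \ref{equality}, this representative actually lies in $D^\Gamma_{n+1}\otimes\mathbb{Q}/\mathbb{Z}$ and computes the topological $f$-invariant, so the congruence sign in the statement is justified.

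Next I would substitute the cohomology class of $\hat{e}_\Gamma$ from Proposition \ref{eform}, namely $\sum_{k=0}^\infty B_{k+1} (c_1(L))^k/(k+1)!$ modulo the image of $ch: K(B)\otimes\mathbb{Z}^\Gamma \to H^{even}(B,\mathbb{Q}[\zeta])$. The key observation is that the indeterminacy in $\hat{e}_\Gamma$ — Chern characters of genuine (virtual) bundles over $B$ — when paired with $\widetilde{Ell}(\nabla^E)$ and integrated over $B$, produces true divided congruences by Theorem \ref{div} (this is exactly the mechanism already invoked in the proof of Proposition \ref{equality}), hence vanishes modulo $D^\Gamma_{n+1}\otimes\mathbb{Q}/\mathbb{Z}$ and does not affect the $f$-invariant. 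Likewise, the passage from the de Rham representative to the cohomology class only changes things by $d\omega$ terms, which contribute nothing after integration over the closed-in-the-relative-sense pairing (Stokes plus flatness of $\nabla^E$ on $\partial B$, again as in Theorem \ref{f_from_family}). So we may legitimately replace $\hat{e}_\Gamma$ by the closed form $\sum_k \frac{B_{k+1}}{(k+1)!}\left(\frac{iF^L}{2\pi}\right)^k$, using that $c_1(L)$ is represented by $iF^L/2\pi$.

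Finally I would truncate the sum: since $\dim B = 2n+2$ and $\widetilde{Ell}(\nabla^E)$ has no component in degree zero (it is $Ell - Ell_0$, and indeed $Ell(\nabla^E) - 1$ paired with the relative fundamental class is what matters, so the lowest relevant degree of $\widetilde{Ell}(\nabla^E)$ is $2$), the factor $\left(\frac{iF^L}{2\pi}\right)^k$ has degree $2k$, and $\int_B$ picks out only total degree $2n+2$, so $k$ can run at most from $0$ to $n$. Writing this out gives exactly
$$f(S(L)_|)\equiv\sum_{k=0}^{n} \frac{B_{k+1}}{(k+1)!}\int_B{\left\{\left(\frac{iF^L}{2\pi}\right)^k\widetilde{Ell}(\nabla^{E})\right\}},$$
as claimed. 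I do not anticipate a serious obstacle here — the corollary is essentially a bookkeeping assembly of Theorem \ref{f_from_family}, Proposition \ref{eform}, and the degree count — the only point requiring a little care is confirming that the indeterminacy of $\hat{e}_\Gamma$ really is killed by the pairing with $\widetilde{Ell}(\nabla^E)$, which is precisely the content already established in Proposition \ref{equality}.
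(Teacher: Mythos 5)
Your proposal is correct and follows exactly the paper's own route: invoke Theorem~\ref{f_from_family} for the representative $\int_B\hat{e}_\Gamma\widetilde{Ell}(\nabla^E)$, Proposition~\ref{equality} to identify it with the topological $f$-invariant, Proposition~\ref{eform} to substitute the cohomology class of $\hat{e}_\Gamma$ represented by the curvature form, and the degree count using that $\widetilde{Ell}$ is concentrated in positive degree to truncate at $k=n$. The only difference is that you spell out the bookkeeping (indeterminacy killed by Theorem~\ref{div}, exact terms killed by Stokes) which the paper compresses into a one-line citation.
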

\begin{proof}
The first Chern class of $L$ may be represented by the  normalized curvature two-form, so the result follows from Proposition \ref{equality},  Theorem \ref{f_from_family}, Proposition \ref{eform}, and the fact that powers $k>n$ cannot contribute (take note that $\widetilde{Ell}$ is concentrated in positive degree). \end{proof}
\begin{rmk}
Modulo the integers, $[\hat{e}_{\Gamma}]$ is an odd function of $c_1(L)$. Thus, if we replace $L$ by $L^*$ in Corollary \ref{fcircle}, we see that the $f$-invariant just changes sign. 
\end{rmk}

\begin{rmk}\label{e_rmk}
We may also use Proposition \ref{eform} to compute the $e_{\mathbb{C}}$-invariant of a principal circle bundle $S(L)$ over a closed framed base $B$ of dimension $2k$: Take note that if $E$ is a hermitian vector bundle with unitary connection, then the fact that $B$ is framed implies that $\int_Bch(\nabla^E)$ is an integer, the index of $\eth\otimes E$. Thus, modulo the integers, the evaluation of the relative Todd genus of the $(U,fr)$-manifold $D(L)$ reduces to the evaluation of $[\hat{\eta}(\eth)]+\textstyle{\frac{1}{2}}$ on $B$, which implies that
$$e_{\mathbb{C}}\left(S(L)\right)\equiv\frac{B_{k+1}}{k+1}\int_Bch\left(\nabla^L\right)\mod\mathbb{Z}.$$
In particular, this formula applies to the nilmanifolds of \cite{DSheisenberg}, cf.~\cite{goette00}.
\end{rmk}

\subsection{Calculations for torus bundles}

As an application of Corollary \ref{fcircle}, we consider the situation where the $(U,fr)$-manifold $B$ is a disk bundle itself: Let $L$, $L'$ be hermitian lines over a closed, framed manifold $B'$ of positive even dimension; we choose a connection on $L'$ and turn $D(L')$ into a $(U,fr)$-manifold $B$. Pulling back the line $L$, we proceed similarly to obtain a $(U,fr)^2_f$-manifold $D(\pi^*L)$. In particular, the corner $M$ is a principal torus bundle, which we refer to as the {\em double transfer} of $B'$ (with respect to the lines $L$, $L'$).

\begin{rmk}
The iterated transfers appear in Knapp's work investigating the Adams filtration of Lie groups \cite{kna78}, treating a (compact) Lie group $G$ as principal  bundle over $G/T$, where $T$ is a maximal torus. In particular, the tangent bundle of $G/T$ is stably trivial, and there is a convenient description of the cohomology of $G/T$ in terms of the roots \cite{BH1}.
\end{rmk}

\begin{lem}\label{ezdrop}
Let $\Gamma=\Gamma_1(3)$, let $L$ and $L'$ be hermitian lines over a closed framed manifold  $B'$ of dimension $2n>0$, and let $M$ be the double transfer constructed  above. Then, denoting the first Chern classes of $L$ and $L'$ by $x$ and $y$, respectively, the ${f}$-invariant of   $M$ is given by
$${f}(M)\equiv\sum_{k=1}^{n-1} \frac{B_{k+1}}{(k+1)!}\langle x^k\widetilde{Ell}(y)/y,[B']\rangle.$$
\end{lem}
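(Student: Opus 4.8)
The plan is to specialize Corollary \ref{fcircle} to the case at hand and carry out the fiber integration of the outer disk bundle explicitly. Concretely, $M = S(\pi^*L)_|$ is the framed circle bundle over $\partial B$, where $B = D(L')$ sits over the closed framed manifold $B'$; Corollary \ref{fcircle} then gives
$$f(M)\equiv\sum_{k=0}^{n}\frac{B_{k+1}}{(k+1)!}\int_{D(L')}\left\{\left(\frac{iF^{\pi^*L}}{2\pi}\right)^k\widetilde{Ell}(\nabla^{E_B})\right\},$$
where $E_B\cong TB^{st}$ is the defining bundle of the $(U,fr)$-structure on $B=D(L')$. The first step is to push this integral down to $B'$: since $D(L')\to B'$ is a disk bundle with $T(D(L')/B')\cong \pi^*L'$, one has $TB^{st}\cong\pi^*(TB'^{st}\oplus L')$, and as $B'$ is framed, $\widetilde{Ell}(\nabla^{E_B})$ is cohomologous to $\pi^*$ of $\widetilde{Ell}$ of a connection on $L'$ alone (the $TB'^{st}$ part contributes trivially up to exact forms, by the pure-gauge/flatness argument already used repeatedly, e.g.\ in the proof of Theorem \ref{f_from_family} and Proposition \ref{stable}). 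Writing $x=c_1(L)$, $y=c_1(L')\in H^2(B';\mathbb{Q})$, the pulled-back curvature of $\pi^*L$ represents $\pi^*x$.

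The second step is the fiber integration $\int_{D(L')/B'}$. Since $\widetilde{Ell}(y)$ is concentrated in positive even degree, write $\widetilde{Ell}(y)=y\cdot\bigl(\widetilde{Ell}(y)/y\bigr)$, where $\widetilde{Ell}(y)/y$ is a well-defined power series in $y$ (the division is legitimate because $\widetilde{Ell}$ has no constant term). For a disk bundle with Euler class $y$, integration over the fiber is the Gysin map, which on classes of the form $\pi^*\alpha$ picks out $\int_{D(L')/B'}\pi^*\alpha = 0$ unless corrected by the Thom class; the net effect is $\int_{D(L')/B'}\pi^*(\alpha)\wedge(\text{Thom}) = \alpha$, and more precisely $\int_{D(L')/B'}\pi^*(\beta)\cdot y = \beta$ at the level of cohomology (equivalently: the relative fundamental class of $D(L')$ maps to $[B']$ capped against the Euler class). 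Applying this to $\alpha = (\pi^*x)^k\cdot\pi^*(\widetilde{Ell}(y)/y)$ and using $\widetilde{Ell}(\nabla^{E_B}) = \pi^*(y)\cdot\pi^*(\widetilde{Ell}(y)/y)$ up to exact forms, each term collapses to
$$\int_{D(L')}\left(\frac{iF^{\pi^*L}}{2\pi}\right)^k\widetilde{Ell}(\nabla^{E_B})\;\equiv\;\langle x^k\,\widetilde{Ell}(y)/y,[B']\rangle.$$
Summing over $k$ reproduces the stated formula, and the upper limit drops from $n$ to $n-1$: the $k=0$ term vanishes because $\widetilde{Ell}(y)/y$ is concentrated in degree $\ge 0$ but $\widetilde{Ell}(y)$ itself starts in degree $2$, so $\langle\widetilde{Ell}(y)/y,[B']\rangle$ picks the degree-$2n$ part of $\widetilde{Ell}(y)/y$, i.e.\ the degree-$(2n+2)$ part of $\widetilde{Ell}(y)$, which is zero on the $2n$-manifold $B'$ — wait, more carefully, $\langle x^k \widetilde{Ell}(y)/y,[B']\rangle$ is nonzero only when $2k + (\deg) = 2n$ with $\deg\ge 0$, and for $k=0$ this needs the degree-$2n$ part of $\widetilde{Ell}(y)/y$ which equals the degree-$(2n+2)$ part of $\widetilde{Ell}(y)$ and hence is fine; the actual reason the sum runs to $n-1$ is that $\widetilde{Ell}(y)/y$ has lowest degree $0$ only formally but $\widetilde{Ell}(y)$ has no degree-$2$-free part, so $x^{n}$ would need the degree-$0$ part of $\widetilde{Ell}(y)/y$, i.e.\ the degree-$2$ part of $\widetilde{Ell}$, which is a nonzero modular form — so in fact $k$ can reach $n-1$ because $x^n \cdot y$ already has degree $2n+2 > 2n$. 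I will phrase this degree bookkeeping cleanly in the write-up.

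The main obstacle is the careful justification of the double fiber integration and the degree truncation: one must be precise that $\widetilde{Ell}(\nabla^{E_B})$ for $E_B \cong TB^{st}$ over the disk bundle $B = D(L')$ can be replaced, up to forms that integrate to zero, by $\pi^*\bigl(\widetilde{Ell}(\nabla^{L'})\bigr)$ times nothing extra — this uses that $B'$ is framed (so its tangential contribution is pure gauge and the multiplicativity identity \eqref{theidentity} kills the cross terms modulo exact forms and modulo integral divided congruences, which vanish mod the relevant group) — and then that the Gysin map for $D(L')/B'$ sends $\pi^*\beta$ to $0$ and is governed by the Euler class $y$. The combinatorial/degree-counting step identifying the correct range $1 \le k \le n-1$ and the clean appearance of $\widetilde{Ell}(y)/y$ is routine once the setup is fixed, but it is where sign and indexing errors would creep in, so I would verify it against the lowest-dimensional case (recovering, e.g., the computation of $e_{\mathbb{C}}(S(L))$ in Remark \ref{e_rmk} and the $\nu$-computation in Proposition \ref{nusquared}).
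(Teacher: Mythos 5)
Your main reduction---specializing Corollary \ref{fcircle} to $B=D(L')$, using $TB^{st}\cong\pi^*L'$ (since $B'$ is framed) and the Thom isomorphism for the disk bundle $D(L')/B'$---is the same as the paper's and correctly gives $\sum_{k=0}^{n}\frac{B_{k+1}}{(k+1)!}\langle x^k\widetilde{Ell}(y)/y,[B']\rangle$. But your justification for truncating to $1\le k\le n-1$ is wrong, and you visibly talk yourself into a contradiction while attempting it. By \eqref{expandedgenus} the degree-$2$ part of $\widetilde{Ell}(y)$ is $\frac{iE_1}{2\sqrt3}\,y\neq 0$, so $\widetilde{Ell}(y)/y$ has a \emph{nonzero constant term}; hence $\langle x^n\widetilde{Ell}(y)/y,[B']\rangle$ picks out a nonzero modular-form multiple of $\langle x^n,[B']\rangle$, and the $k=0$ term $\langle\widetilde{Ell}(y)/y,[B']\rangle$ picks out the degree-$2n$ part of $\widetilde{Ell}(y)/y$, likewise generically nonzero. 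Neither boundary term vanishes for degree reasons, and the statement ``$x^n\cdot y$ already has degree $2n+2$'' is a non sequitur, since the pairing is against $x^n\widetilde{Ell}(y)/y$, not $x^n y$.

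The $k=0$ and $k=n$ terms only drop out modulo the indeterminacy of the $f$-invariant, and that requires the product results of section \ref{productsection}, which your proposal never invokes. The paper recognizes the $k=0$ coefficient $B_1/1!=\tfrac12$ as $e_{\mathbb{C}}(S^1)$ and identifies the $k=0$ summand with $\check{f}(S(L')\times S^1)$, and likewise the $k=n$ summand with $\check{f}(S^1\times S(L))$; these are congruent to zero by Propositions \ref{munu} and \ref{musigma}, together with the fact (Remark \ref{e_rmk}) that $e_{\mathbb{C}}(S(L'))$ is an integer multiple of $B_{n+1}/(n+1)$. That congruence argument is the missing idea; without it your derivation produces two spurious extra terms.
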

\begin{proof}
Clearly, the pair $(D(L'),S(L'))$ may be identified with the Thom space of $L'$; furthermore, the tangent of the $(U,fr)$-manifold $B$ is stably isomorphic to $\pi^*L'$, so the Thom isomorphism yields
$$\sum_{k=0}^{n}{\textstyle \frac{B_{k+1}}{(k+1)!}}\langle x^k\widetilde{Ell}(y)/y,[B']\rangle=\sum_{k=0}^{n} {\textstyle \frac{B_{k+1}}{(k+1)!}}\langle (\pi^*x)^k\widetilde{Ell}(TB),[D(L'),S(L')]\rangle,$$
and the RHS is precisely the formula of Corollary \ref{fcircle}. If we interpret the summand for $k=0$ as
$$\frac{1}{2}\langle\widetilde{Ell}(y)/y,[B']\rangle=e_{\mathbb{C}}(S^1)\langle\widetilde{Ell}(y)/y,[B']\rangle\equiv \check{f}(S(L'))\times S^1),$$
the latter is easily seen to be congruent to zero by Propositions \ref{munu}, \ref{musigma}, and the fact that $e_{\mathbb{C}}(S(L'))$ is represented by an integer multiple of $B_{n+1}/(n+1)$, cf.~Remark \ref{e_rmk}. Similarly, the contribution proportional to $\langle x^n,[B']\rangle$ may be identified with $\check{f}(S^1\times S(L))\equiv 0$.
\end{proof}

\begin{rmk}
It should be noted that the explicit choice of framing for $B'$ plays only a minor r\^ole in the formula above; in particular, orientation-preserving reframings of $B'$ lead to a double transfer having the same $f$-invariant.
\end{rmk}

For the remainder of this section, we are going to fix $\Gamma=\Gamma_1(3)$ and use the notations of Lemma \ref{ezdrop}; we are going to use the latter  to compute the $f$-invariant of the {\em general} double transfer in a given dimension, thus enabling us to determine the precise conditions for non-triviality. Clearly, the double transfer on a two-dimensional base will have vanishing $f$-invariant; increasing dimensions, things become more interesting, starting with:

\begin{prop}
Let $B'$ be a closed framed manifold of dimension four. The double transfer w.r.t.\ the lines $L$, $L'$ has non-trivial $f$-invariant if and only if $\langle xy,[B']\rangle$ is odd, in which case it is given by
$$\frac{1}{2}\left(\frac{E_1^2-1}{12}\right)^2\in\underline{\underline{D}}^{\Gamma}_4\otimes\mathbb{Q/Z}.$$
\end{prop}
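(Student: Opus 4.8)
The plan is to specialize Lemma~\ref{ezdrop} to $\dim B'=4$ and then recognize the resulting expression as an integer multiple of the class $f(\nu^2)$ already computed in Proposition~\ref{nusquared}. So first I would note that with $2n=\dim B'=4$ the sum in Lemma~\ref{ezdrop} collapses to the single term $k=1$, giving $f(M)\equiv\tfrac{B_2}{2!}\langle x\,\widetilde{Ell}(y)/y,[B']\rangle$. Since $x$ has cohomological degree two and $B'$ is four-dimensional, only the cohomological-degree-two component of $\widetilde{Ell}(y)/y$ survives the pairing; this component is $\widetilde M_2\,y$, where $\widetilde M_2$ is the weight-two coefficient of the power series $Ell^\Gamma$ with its $q$-constant term removed.

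Next I would pin down $\widetilde M_2$. The space $M_2(\Gamma_1(3))$ is one-dimensional, spanned by $E_1^2$, so the weight-two coefficient of $Ell^\Gamma$ is a rational multiple of $E_1^2$; and its $q$-constant term is the $x^2$-coefficient of $\tfrac{x}{1-e^{-x}}\cdot\tfrac{1-\zeta e^{-x}}{1-\zeta}$, in which the $\zeta$-dependent contributions cancel, leaving the $x^2$-coefficient $\tfrac{B_2}{2!}=\tfrac1{12}$ of $\tfrac{x}{1-e^{-x}}$. Hence $\widetilde M_2=\tfrac1{12}(E_1^2-1)$ — this is just the low-degree case of the level-three expansion of the Hirzebruch genus recorded in appendix~\ref{expell}. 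Combining with $\tfrac{B_2}{2!}=\tfrac1{12}$ and noting that $\langle xy,[B']\rangle\in\mathbb{Z}$, I get $f(M)\equiv\langle xy,[B']\rangle\cdot\tfrac1{12}\cdot\tfrac{E_1^2-1}{12}$, an integer multiple of a fixed divided congruence. (A small point to verify en route is that $\widetilde{Ell}(y)/y$ is genuinely a power series, i.e.\ $Ell^\Gamma$ has constant term the scalar $1$, so $\widetilde{Ell}=Ell-Ell_0$ has vanishing weight-zero part.)

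To conclude I would invoke Proposition~\ref{nusquared}, where it is shown that $f(\nu^2)\equiv\tfrac1{12}\cdot\tfrac{E_1^2-1}{12}\equiv\tfrac12\bigl(\tfrac{E_1^2-1}{12}\bigr)^2$ in $\underline{\underline{D}}^\Gamma_4\otimes\mathbb{Q}/\mathbb{Z}$ and that this class is nontrivial. Since $\bigl(\tfrac{E_1^2-1}{12}\bigr)^2$ has integral $q$-expansion and weight $\le 4$, it lies in $D^\Gamma_4\subseteq\underline{\underline{D}}^\Gamma_4$, so $2\,f(\nu^2)\equiv0$; with nontriviality this forces $f(\nu^2)$ to have order exactly two. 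Therefore $f(M)\equiv\langle xy,[B']\rangle\,f(\nu^2)$ vanishes precisely when $\langle xy,[B']\rangle$ is even, and equals $f(\nu^2)=\tfrac12\bigl(\tfrac{E_1^2-1}{12}\bigr)^2$ when it is odd, nontriviality in that case being inherited from Proposition~\ref{nusquared}. I expect the only real work to be the identification $\widetilde M_2=\tfrac1{12}(E_1^2-1)$; everything else is bookkeeping with Lemma~\ref{ezdrop} and a direct appeal to Proposition~\ref{nusquared}.
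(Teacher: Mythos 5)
Your proposal is correct and follows essentially the same route as the paper: specialize Lemma~\ref{ezdrop} to $n=2$ so that only the $k=1$ term survives, read off the degree-two coefficient $\widetilde M_2=\tfrac{1}{12}(E_1^2-1)$ of $\widetilde{Ell}$, and reduce to Proposition~\ref{nusquared}. The only cosmetic difference is that the paper simply cites the precomputed expansions \eqref{expandedgenus}, \eqref{expandedtodd} to extract the coefficient, whereas you rederive it via the one-dimensionality of $M_2(\Gamma_1(3))$ together with the cancellation of the $\zeta$-dependent terms at $q=0$; you also spell out explicitly that $2f(\nu^2)\equiv0$, which the paper leaves implicit in ``proceed as in Proposition~\ref{nusquared}.''
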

\begin{proof}
Making use of Lemma \ref{ezdrop} and the expansions \eqref{expandedgenus}, \eqref{expandedtodd}, we compute
 $f\equiv\frac{1}{12}\frac{E_1^2-1}{12}\langle xy,[B']\rangle$; thus, we can proceed as in Proposition \ref{nusquared}.
\end{proof}
\begin{ex} An obvious choice is to take $B^{\prime}$ to be $S^2\times S^2$, which we may think of as $Spin(4)/T$.  Taking $x$ and $y$ to be (minus) the generators of the cohomology of the respective factors, we essentially recover the situation of Proposition \ref{nusquared}.\end{ex}

\begin{prop}
Let $B'$ be a closed framed manifold of dimension six. The double transfer w.r.t.\ the lines $L$, $L'$ has non-trivial $f$-invariant if and only if $\langle xy^2,[B']\rangle$ is odd, in which case it is given by
$$\frac{1}{2}\left(\frac{E_1^2-1}{12}\right)^2\in\underline{\underline{D}}^{\Gamma}_5\otimes\mathbb{Q/Z};$$
furthermore, this differs from the situation of  Example \textup{\ref{etasigma}}.
\end{prop}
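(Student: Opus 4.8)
The plan is to feed Lemma \ref{ezdrop} with $\dim B'=2n=6$ and then push the resulting expression through the same divided-congruence bookkeeping as in Proposition \ref{nusquared}. For $n=3$ the formula of Lemma \ref{ezdrop} reads
$$f(M)\equiv\frac{B_2}{2!}\langle x\,\widetilde{Ell}(y)/y,[B']\rangle+\frac{B_3}{3!}\langle x^2\,\widetilde{Ell}(y)/y,[B']\rangle,$$
and since $B_3=0$ the second summand vanishes, leaving $f(M)\equiv\frac{1}{12}\langle x\,\widetilde{Ell}(y)/y,[B']\rangle$. Writing $\widetilde{Ell}^{\Gamma}(x)=\sum_{j\ge1}c_j x^j$ with $c_j$ the reduced weight-$j$ coefficient of the level-three Hirzebruch genus read off from \eqref{expandedgenus} and \eqref{expandedtodd} (so $c_2=\frac{E_1^2-1}{12}$ as used in Proposition \ref{nusquared}), one has $\widetilde{Ell}(y)/y=\sum_{j\ge1}c_j y^{j-1}$; since $x$ has cohomological degree two and $[B']$ degree six, only the $j=3$ term pairs non-trivially, and
$$f(M)\equiv\frac{c_3}{12}\,\langle xy^2,[B']\rangle .$$

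The next step is to reduce $\frac{c_3}{12}$ modulo $\underline{\underline{D}}^{\Gamma}_5$. Substituting the explicit $q$-expansion of the weight-three coefficient $c_3$ at level three, and adding suitable integral divided congruences of weight $\le5$ together with the admissible modular forms of weight zero and five (those tabulated in the appendix, involving $E_1$, $E_3$ and $E_5^{\Gamma}$, exactly as in Example \ref{etasigma}), I expect to obtain
$$\frac{c_3}{12}\equiv\frac{1}{2}\left(\frac{E_1^2-1}{12}\right)^2\mod\underline{\underline{D}}^{\Gamma}_5,\qquad \frac{c_3}{6}\in\underline{\underline{D}}^{\Gamma}_5 .$$
The second relation says that multiplying by an even integer kills the contribution, so together with the first it gives $f(M)\equiv\frac{1}{2}\left(\frac{E_1^2-1}{12}\right)^2$ precisely when $\langle xy^2,[B']\rangle$ is odd, and $f(M)\equiv0$ otherwise.

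It then remains to check non-triviality in $\underline{\underline{D}}^{\Gamma}_5\otimes\mathbb{Q/Z}$ and to separate the answer from Example \ref{etasigma}. For the former I would rerun the coefficient comparison of Proposition \ref{nusquared}, now against a basis of $M_5^{\Gamma}\otimes\mathbb{Q}$ (plus the weight-zero ambiguity): the $q$-expansion $\frac{1}{2}q^2+3q^3+\frac{11}{2}q^4+O(q^5)$ cannot be completed to an integral one, and finitely many coefficients suffice. For the latter, Example \ref{etasigma} gives $f(\eta\sigma)\equiv\frac{1}{2}\frac{E_4-1}{240}$; since both classes lie in the same $2$-torsion group, I would compute the difference $\frac{1}{2}\left(\frac{E_1^2-1}{12}\right)^2-\frac{1}{2}\frac{E_4-1}{240}$ and verify, by another short comparison against $M_5^{\Gamma}\otimes\mathbb{Q}$, that it is non-trivial, so that in particular the double transfer does not represent $\eta\sigma\in\pi^{st}_8$.

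The main obstacle is the middle step. Unlike the four-dimensional case of the preceding proposition, where the contributing coefficient of the genus was already the weight-two divided congruence $\frac{E_1^2-1}{12}$, here $c_3$ is a weight-three object, and bringing $\frac{c_3}{12}$ into the stated form requires cancelling it against the full list of integral level-three congruences while carefully tracking the two-sided (weight zero and weight five) ambiguity of $\underline{\underline{D}}^{\Gamma}_5$; the non-triviality statement and the distinction from $\eta\sigma$ likewise depend on having an explicit basis of $M_5^{\Gamma}$ at hand. Everything else parallels the proof of Proposition \ref{nusquared} essentially verbatim.
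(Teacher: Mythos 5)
Your strategy is identical to the paper's: apply Lemma \ref{ezdrop} with $n=3$, observe that the $k=2$ term is killed by $B_3=0$ so only $\tfrac{B_2}{2!}=\tfrac{1}{12}$ survives, isolate the weight-three coefficient $c_3=\tfrac{i(E_1^3-E_3)}{18\sqrt{3}}$ of $\widetilde{Ell}$ from \eqref{expandedgenus}, and conclude $f(M)\equiv\tfrac{c_3}{12}\langle xy^2,[B']\rangle$. The non-triviality check against a basis of $M_5^\Gamma\otimes\mathbb{Q}$ (plus the weight-zero ambiguity) and the comparison with $f(\eta\sigma)$ are also what the paper does, up to minor stylistic differences (the paper verifies that adding $f(\eta\sigma)$ does not disturb the same two $q$-coefficients, rather than forming the difference explicitly; both are sound).

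However, there is a genuine gap: the step you flag as ``the main obstacle'' --- the reduction
$$\frac{c_3}{12}=\frac{1}{12}\cdot\frac{i(E_1^3-E_3)}{18\sqrt{3}}\ \equiv\ \frac{1}{2}\left(\frac{E_1^2-1}{12}\right)^2\ \bmod\ \underline{\underline{D}}^{\Gamma}_5$$
--- is not proved, only ``expected.'' This is the entire content of the proposition; everything else is bookkeeping. The difficulty is that $\tfrac{c_3}{12}$ is a \emph{homogeneous} weight-three form with coefficients in $\mathbb{Q}[\zeta]$, while the target is an inhomogeneous combination of weights $0,2,4$; bridging the two is not a matter of staring at $q$-coefficients but requires an explicit manipulation of divided congruences. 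Concretely, the paper first rewrites
$$\frac{1}{12}\frac{i(E_1^3-E_3)}{18\sqrt{3}}\equiv-\frac{i\sqrt{3}}{3}\left\{\frac{1}{2}\frac{E_1^2-1}{12}\frac{1-E_3}{9}+\frac{E_1^2-1}{12}\frac{E_1^3-1}{18}\right\},$$
which holds because the discrepancy $\tfrac{i(E_1^3-E_3)E_1^2}{216\sqrt{3}}$ is a rational weight-five form, hence lies in $\underline{\underline{D}}^{\Gamma}_5$. The second brace term is then dropped (it is $\tfrac{i\sqrt{3}}{3}\in\mathbb{Z}^{\Gamma}$ times a product of the integral congruences $\tfrac{E_1^2-1}{12}$ and $\tfrac{E_1^3-1}{18}$), the unit $-\tfrac{i\sqrt{3}}{3}\in\mathbb{Z}^\Gamma$ can be replaced by $1$ at the cost of an integral term since $\tfrac{i\sqrt{3}}{3}-1=\tfrac{2(\zeta-1)}{3}$ is ``even'' in $\mathbb{Z}^\Gamma$, and finally Proposition \ref{E_1squaredE_3} converts $\tfrac{1}{2}\tfrac{E_1^2-1}{12}\tfrac{E_3-1}{9}$ into $\tfrac{1}{2}\left(\tfrac{E_1^2-1}{12}\right)^2$. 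None of this structure appears in your proposal, and a blind search over $q$-expansions modulo the rather large lattice $D^{\Gamma}_5$ plus two-sided weight-$0$/weight-$5$ ambiguity is not a substitute for it. Also note that your second displayed claim $\tfrac{c_3}{6}\in\underline{\underline{D}}^{\Gamma}_5$ is not an independent fact to be established: it follows immediately from the first, since $\left(\tfrac{E_1^2-1}{12}\right)^2$ is an integral inhomogeneous divided congruence of weight $\le 4$ and hence already lies in $D^{\Gamma}_5$.
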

\begin{proof}
By Lemma \ref{ezdrop}, we just have to consider the coefficient of $xy^2$;  by \eqref{expandedgenus} and \eqref{expandedtodd}, it is given by
\begin{equation*}
\begin{split}
\quad\ \frac{1}{12}\frac{iE_1^3-iE_3}{18\sqrt{3}}&\equiv-\frac{i\sqrt{3}}{3}\left\{\frac{1}{2}\frac{E_1^2-1}{12}\frac{1-E_3}{9}+\frac{E_1^2-1}{12}\frac{E_1^3-1}{18}\right\}\\
&\equiv\frac{1}{2}\frac{E_1^2-1}{12}\frac{E_3-1}{9}\equiv\frac{1}{2}\left(\frac{E_1^2-1}{12}\right)^2\\
&=\frac{1}{2}q^2+3q^3+\frac{11}{2}q^4+O(q^5),\\
\end{split}
\end{equation*}
which is seen to be non-trivial by checking the first and fourth coefficients of
\begin{equation*}
\begin{split}
5\frac{E_5^{\Gamma}-1}{3}&=5q-75q^2+5q^3+1205q^4+O(q^5),\\
\frac{E_1^5-1}{6}&=5q+60q^2+365q^3+1205q^4+O(q^5);\\
\end{split}
\end{equation*}
obviously, this argument still holds true  if we add 
$$f(\eta\sigma)\equiv\frac{1}{2}\frac{E_4-1}{240}+\frac{1}{2}\frac{E_5^{\Gamma}-1}{3}=q-3q^2+\frac{29}{2}q^3+157q^4+O(q^5).$$
\end{proof}

\begin{ex}
We borrow an example from \cite{Lau00}, choosing  $B^{\prime}=SU(3)/T$ and $x$ and $y$ to be the simple roots. A straightforward computation yields $\langle xy^2,[B']\rangle=3$, hence establishing non-triviality.
\end{ex}

\begin{prop}
Let $B'$ be a closed framed manifold of dimension eight. Then the double transfer w.r.t.\ any lines $L$, $L'$ has trivial $f$-invariant in $\underline{\underline{D}}^{\Gamma}_6\otimes\mathbb{Q/Z}$.
\end{prop}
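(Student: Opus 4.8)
The plan is to invoke Lemma \ref{ezdrop} with $\dim B'=8$ (so $n=4$) and then unwind the resulting characteristic number. By that lemma,
$$f(M)\equiv\sum_{k=1}^{3}\frac{B_{k+1}}{(k+1)!}\langle x^k\widetilde{Ell}(y)/y,[B']\rangle,$$
and since $B'$ is $8$-dimensional only the degree-$8$ part of $x^k\widetilde{Ell}(y)/y$ survives, i.e.\ the monomial $x^ky^{4-k}$, whose $y$-coefficient in $\widetilde{Ell}(y)/y$ is the weight-$(5-k)$ component $\widetilde{Ell}_{5-k}$ of the Hirzebruch genus. Because $B_3=0$, the term $k=2$ drops out, and with $B_2=\frac16$, $B_4=-\frac1{30}$ one is left with
$$f(M)\equiv\frac1{12}\,\widetilde{Ell}_4\,\langle xy^3,[B']\rangle-\frac1{720}\,\widetilde{Ell}_2\,\langle x^3y,[B']\rangle,$$
where both pairings are integers (Chern numbers of line bundles on the closed manifold $B'$). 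This mirrors the dimension-four and dimension-six cases above, except that there the interior of the sum contained only one term.

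Next I would substitute the explicit level-three expansions \eqref{expandedgenus}, \eqref{expandedtodd}: as in Proposition \ref{nusquared} one has $\widetilde{Ell}_2=\frac{E_1^2-1}{12}$, so the weight-two contribution equals $-\frac1{8640}(E_1^2-1)\langle x^3y,[B']\rangle$, while $\widetilde{Ell}_4$ is the corresponding rational combination of $E_1^4-1$ and $E_1E_3-1$ read off from the appendix. Note that $M$ is not visibly a codimension-three corner here (the base $B'$ is closed), so Theorem \ref{vanish} does not apply and the vanishing must be extracted arithmetically.

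The remaining — and main — step is therefore purely a statement about divided congruences: one shows that each of $\frac1{12}\widetilde{Ell}_4$ and $\frac1{720}\widetilde{Ell}_2$ lies in $\underline{\underline{D}}^\Gamma_6$, whence, $\underline{\underline{D}}^\Gamma_6$ being an additive group stable under multiplication by integers, $f(M)\equiv0$ in $\underline{\underline{D}}^\Gamma_6\otimes\mathbb{Q/Z}$ irrespective of the two Chern numbers. For the weight-two piece one trades $E_1^2$ against the weight-six form $E_1^6$ (the congruence $E_1^6\equiv E_1^2$ to a suitable power of $2$, from the von Staudt theorem and the congruence list of the appendix), so that, modulo an integral divided congruence, a rational weight-six form, and a rational constant — all permitted by the definition of $\underline{\underline{D}}^\Gamma_6$ — nothing is left; for the weight-four piece one argues the same way, reducing $\widetilde{Ell}_4$ against the weight-six generators $E_1^6,E_1^3E_3,E_3^2$ via the congruences of Propositions \ref{E_1squaredE_3}, \ref{E_4/64} and Lemma \ref{E_4power}, exactly as in the bookkeeping of Propositions \ref{musigma}--\ref{nusigma}.

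The hard part will be verifying the $2$-adic congruences that annihilate the weight-four contribution: one must check that the particular rational combination $\widetilde{Ell}_4$ produced by the genus, after division by $12$, is congruent modulo $\mathbb{Z}^\Gamma[\![q]\!]$ to a rational multiple of a weight-six modular form (plus a constant), which forces a careful match of the precise denominators of $E_1$ and $E_3$ against the appendix's congruence table. I would organize this so that the weight-four and weight-two reductions are dispatched by a single lemma, in the spirit of Proposition \ref{nusigma}.
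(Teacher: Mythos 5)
Your outline reproduces the paper's strategy faithfully up to the point where the real work begins, and then stops. The application of Lemma~\ref{ezdrop} with $n=4$, the observation that $B_3=0$ kills the $k=2$ term, and the resulting formula
$$f(M)\equiv\tfrac{1}{12}\widetilde{Ell}_4\,\langle xy^3,[B']\rangle-\tfrac{1}{720}\widetilde{Ell}_2\,\langle x^3y,[B']\rangle$$
are all exactly right, and your strategic observation — that it suffices to show each of the two coefficients lies in $\underline{\underline{D}}^\Gamma_6$ since $\underline{\underline{D}}^\Gamma_6$ is stable under multiplication by the (integer) Chern numbers $\langle xy^3,[B']\rangle$ and $\langle x^3y,[B']\rangle$ — is also correct and is precisely what the paper exploits.

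However, the proposal does not actually establish either coefficient is in $\underline{\underline{D}}^\Gamma_6$; it only gestures at the tools. You say ``one trades $E_1^2$ against the weight-six form $E_1^6$'' and ``one argues the same way, reducing $\widetilde{Ell}_4$ against the weight-six generators,'' and acknowledge the $2$-adic bookkeeping as ``the hard part'' to be done. But that bookkeeping \emph{is} the proof. The paper carries it out explicitly: it uses the identity $13E_1^4-16E_1E_3 = 5E_1^4-2E_4+\dots$ to rewrite $\frac{1}{12}\widetilde{Ell}_4$ in terms of $\frac{E_1^4-1}{8}$ and $\frac{E_4-1}{16}$, then adds the auxiliary integral divided congruence $9\bigl(\frac{E_1^2-1}{12}\bigr)^3$ (a manoeuvre your sketch does not anticipate) to collapse $\frac{1}{8}\frac{E_1^4-1}{8}$ down to $\frac{E_1^2-1}{2^6}$, and finally identifies both coefficients with rational multiples of $\frac{E_4-1}{16}$, whose triviality mod $\underline{\underline{D}}^\Gamma_6$ was already established in Proposition~\ref{nusigma} (via $f(\nu\sigma)\equiv 0$). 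Your remark that one should ``organize this so that the weight-four and weight-two reductions are dispatched by a single lemma, in the spirit of Proposition~\ref{nusigma}'' is in fact the right instinct — Proposition~\ref{nusigma} is literally the lemma the paper invokes — but you have not produced the reduction that makes the invocation legitimate. Until that chain of congruences is verified, the proof is a plan rather than an argument.
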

\begin{proof}
By Lemma \ref{ezdrop}, we know that $f$ is represented by the evaluation of 
$$\frac{1}{12}\frac{13(E_1^4-1)-16(E_1E_3-1)}{2160}y^3x-\frac{1}{720}\frac{E_1^2-1}{12}yx^3,$$
but
\begin{equation*}
\begin{split}
&\frac{1}{12}\frac{13(E_1^4-1)-16(E_1E_3-1)}{2160}=\frac{1}{12}\frac{5(E_1^4-1)-2(E_4-1)}{2160}\\
\equiv&\frac{1}{8}\frac{E_1^4-1}{8}-\frac{1}{2}\frac{E_4-1}{16}\equiv\frac{1}{8}\frac{E_1^4-1}{8}+{9}\left(\frac{E_1^2-1}{12}\right)^3-\frac{1}{2}\frac{E_4-1}{16}\\ \equiv&\frac{E_1^2-1}{2^6}-\frac{1}{2}\frac{E_4-1}{16}\equiv-\frac{E_4-1}{2^6}-\frac{1}{2}\frac{E_4-1}{16}=-\frac{3}{4}\frac{E_4-1}{16}\\
\end{split}
\end{equation*}
and
$$-\frac{1}{720}\frac{E_1^2-1}{12}\equiv\frac{1}{36}\frac{E_4-1}{240}\equiv\frac{1}{4}\frac{E_4-1}{16}$$
may be identified with (multiples of) $f(\nu\sigma)$, but the latter is trivial by Proposition \ref{nusigma}.
\end{proof}

\begin{rmk}
Revisiting the above at level {\em two}, an admittedly tedious calculation along the lines of this section shows that the corresponding $f$-invariant will be non-trivial if and only if $3\nmid\langle x^3y,[B']\rangle$, and $Spin(5)/T$ fits the bill; since this result also follows from  \cite{Lau00}, we do not bother with details.
\end{rmk}

\begin{prop}
Let $B'$ be a closed framed manifold of dimension ten. Then the double transfer w.r.t.\ any lines $L$, $L'$ has trivial $f$-invariant in $\underline{\underline{D}}^{\Gamma}_7\otimes\mathbb{Q/Z}$.
\end{prop}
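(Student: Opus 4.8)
The plan is to follow the template of the dimension-six and dimension-eight computations. First I would apply Lemma~\ref{ezdrop} with $2n=\dim B'=10$: writing $x=c_1(L)$ and $y=c_1(L')$, and expanding the reduced genus as $\widetilde{Ell}(y)/y=\sum_{j\geq 0}\widetilde{c}_{j+1}\,y^{j}$ with $\widetilde{c}_{m}$ the weight-$m$ reduced coefficient read off from \eqref{expandedgenus}, \eqref{expandedtodd}, the degree-$10$ component of $x^{k}\widetilde{Ell}(y)/y$ is $\widetilde{c}_{6-k}\,x^{k}y^{5-k}$, so the $k$-th summand of the Lemma~\ref{ezdrop} formula equals $\frac{B_{k+1}}{(k+1)!}\,\widetilde{c}_{6-k}\,\langle x^{k}y^{5-k},[B']\rangle$. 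Since $B_{3}=B_{5}=0$, the summands $k=2$ and $k=4$ drop out, and one is left with
$$f(M)\equiv\frac{1}{12}\,\widetilde{c}_{5}\,\langle xy^{4},[B']\rangle-\frac{1}{720}\,\widetilde{c}_{3}\,\langle x^{3}y^{2},[B']\rangle,$$
where $\widetilde{c}_{3}$ is the weight-three reduced coefficient already encountered in the dimension-six proposition and $\widetilde{c}_{5}$ is the weight-five one.

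Since the integers $\langle xy^{4},[B']\rangle$ and $\langle x^{3}y^{2},[B']\rangle$ vary independently as $B'$, $L$, $L'$ vary, the next step is to show that $\tfrac{1}{12}\widetilde{c}_{5}$ and $\tfrac{1}{720}\widetilde{c}_{3}$ are \emph{each} $\equiv 0$ in $\underline{\underline{D}}^{\Gamma}_{7}\otimes\mathbb{Q}/\mathbb{Z}$, i.e.\ that each lies in $D^{\Gamma}_{7}+M^{\Gamma}_{0}\otimes\mathbb{Q}+M^{\Gamma}_{7}\otimes\mathbb{Q}$; any integer multiple then vanishes as well. For the weight-three class this should proceed exactly as in the dimension-eight argument: one reduces $\widetilde{c}_{3}$ using the congruences compiled in the appendix (the analogues of Propositions~\ref{E_1squaredE_3} and \ref{E_4/64}) and, up to an integral divided congruence, identifies $\tfrac{1}{720}\widetilde{c}_{3}$ with a multiple of an already-trivial invariant --- for instance $\check{f}(M^{11}\times\eta)$ or $\check{f}(M^{3}\times\mu_{1})$, whose representatives are manifestly integral by Proposition~\ref{munu} (or the relevant $\check{f}(Y_{1}\times Y_{2})$ supplied by Theorem~\ref{f_from_e} together with Lemma~\ref{ereduced}). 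For the weight-five class I would argue in the same spirit, but here one must first add a suitable rational weight-seven Eisenstein combination --- permitted because $\underline{\underline{D}}^{\Gamma}_{7}$ contains $M^{\Gamma}_{7}\otimes\mathbb{Q}$ (and $M^{\Gamma}_{0}\otimes\mathbb{Q}$) --- to bring the representative into a form that telescopes, via the appendix congruences, onto the same vanishing blocks. Assembling the two contributions yields $f(M)\equiv 0$.

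The main obstacle will be the explicit arithmetic of the second step rather than any geometric ingredient: it requires the level-three expansion of the Hirzebruch genus through weight five and a sufficiently rich list of weight-$\leq 7$ congruences to collapse $\tfrac{1}{12}\widetilde{c}_{5}$ and $\tfrac{1}{720}\widetilde{c}_{3}$ onto the standard vanishing combinations. The weight-five term is the delicate one: being of weight strictly less than seven it can never have an integral $q$-expansion on the nose, so one genuinely has to exploit the slack provided by the top-weight and constant summands of $\underline{\underline{D}}^{\Gamma}_{7}$ before its triviality modulo $\underline{\underline{D}}^{\Gamma}_{7}$ becomes visible.
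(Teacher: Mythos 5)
Your setup through the reduction to
$$f(M)\equiv\frac{1}{12}\,\widetilde{c}_{5}\,\langle xy^{4},[B']\rangle-\frac{1}{720}\,\widetilde{c}_{3}\,\langle x^{3}y^{2},[B']\rangle$$
is correct and matches the paper. But the next step rests on a false premise. You assert that the characteristic numbers $\langle xy^{4},[B']\rangle$ and $\langle x^{3}y^{2},[B']\rangle$ ``vary independently'' over all closed framed $B'$ of dimension ten, and you conclude that each of $\tfrac{1}{12}\widetilde{c}_{5}$ and $\tfrac{1}{720}\widetilde{c}_{3}$ must individually be trivial in $\underline{\underline{D}}^{\Gamma}_{7}\otimes\mathbb{Q}/\mathbb{Z}$. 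Neither claim is right, and the second is in fact false: these classes are genuine torsion elements of order (roughly) $4$ and $2$ respectively. This is exactly why the paper never tries to prove they vanish; the proof instead exploits arithmetic constraints on the characteristic numbers themselves.

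The framing of $B'$ forces nontrivial divisibility of the Chern numbers. The paper couples the (canonical, $\hat{A}=1$) Dirac operator on $B'$ to $L'\otimes L\oplus L'\otimes L^*\ominus 2L'$; Riemann--Roch integrality gives $5!\,|\,\langle 20x^{3}y^{2}+10xy^{4},[B']\rangle$, hence $4\,|\,\langle 2x^{3}y^{2}+xy^{4},[B']\rangle$, and interchanging $L,L'$ gives the symmetric statement. A Wu/Steenrod-square computation on the framed manifold ($Sq^{2}$ vanishes on classes of codimension two, $Sq^{1}=0$ on integral classes) then shows $\langle x^{3}y^{2},[B']\rangle$ is even, and combining with the preceding gives $4\,|\,\langle xy^{4},[B']\rangle$. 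After the appendix-style rewrites of $\tfrac{1}{12}\widetilde{c}_{5}$ and $\tfrac{1}{720}\widetilde{c}_{3}$ (which bring out denominators of $4$ and $2$, respectively), these divisibilities supply exactly the compensating factors that make the product an integral $q$-expansion. If you try to sidestep this --- for instance by hunting for a weight-$7$ form and a constant that absorb the non-integral tail of $\tfrac{1}{12}\widetilde{c}_{5}$ --- you will find that no such combination exists, precisely because $\tfrac{1}{12}\widetilde{c}_{5}$ is nontrivial on its own. The geometric input you dismissed as absent (``rather than any geometric ingredient'') is in fact the whole point of the proof.
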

\begin{proof}
By Lemma \ref{ezdrop}, it is sufficient to evaluate
$$\frac{i}{\sqrt{3}}\left(xy^4\frac{1}{12}\frac{E_1^2(E_1^3-E_3)}{216}-x^3y^2\frac{1}{720}\frac{E_1^3-E_3}{18}\right)$$
on $B'$. In $\underline{\underline{D}}^{\Gamma_1(3)}_7\otimes\mathbb{Q/Z}$, we may rewrite
\begin{equation*}
\begin{split}
\frac{1}{12}\frac{E_1^2(E_1^3-E_3)}{216}&\equiv-\frac{1}{4\cdot9}\left(\left(\frac{E_1^2-1}{4}\right)^2+\frac{E_4-1}{16}\right)\frac{E_1^3-E_3}{9},\\
\frac{1}{720}\frac{E_1^3-E_3}{18}&\equiv-\frac{1}{2\cdot3}\frac{E_4-1}{240}\frac{E_1^3-E_3}{9}.\\
\end{split}
\end{equation*}
By coupling the Dirac operator on $B'$ to $L'\otimes L\oplus L' \otimes L^*\ominus2L'$, we obtain the divisibility result $5!|\langle(20x^3y^2+10xy^4),[B']\rangle$, i.e.~$4|\langle(2x^3y^2+xy^4),[B']\rangle$;  interchanging $L$ and $L'$, we get $4|\langle(2y^3x^2+yx^4),[B']\rangle$. But a short calculation with Steenrod squares (see e.g.~\cite{steenrod}, \cite{milnorstasheff}) shows that $x^3y^2$ is already even: Recall that, on a framed manifold, $Sq^k$ vanishes on classes of codimension $k$; furthermore, $Sq^1$ (i.e.~the Bockstein) vanishes on integral classes, so we compute
$$0\equiv Sq^2(x^3y)\equiv Sq^2(x^3)y+x^3Sq^2(y)\equiv x^4y+x^3y^2\equiv x^3y^2\mod2.$$
Thus, $4|\langle  xy^4,[B']\rangle$, and consequently, $f$ admits an integral $q$-expansion.
\end{proof}

\begin{lem}\label{evencohomology}
Let $B'$ be a  closed, framed manifold of dimension twelve and let $x$, $y$ $\in H^2(B',\mathbb{Z})$. Then $\langle x^3y^3,[B']\rangle$ and $\langle xy^5,[B']\rangle$ are even.
\end{lem}
\begin{proof}
Let $L$, $L'$ be hermitian lines such that their first  Chern classes are $x$, $y$, respectively; we endow the lines with unitary connections and compute the index of several twisted Dirac operators to obtain the following divisibility results:
$$6!|\langle(x+y)^6+(x-y)^6-2x^6-2y^6,[B']\rangle$$
$$\Rightarrow8|\langle x^4y^2+x^2y^4,[B']\rangle,$$
$$6!|\langle(x+2y)^6+(2x+y)^6-x^6-y^6,[B']\rangle$$
$$\Rightarrow16|\langle12(x^5y+xy^5)+60(x^4y^2+x^2y^4),[B']\rangle.$$
Thus, we also have $8|\langle6(x^5y+xy^5),[B']\rangle$, and from
$$6!|\langle6(x^5y+xy^5)+20x^3y^3+15(x^4y^2+x^2y^4),[B']\rangle$$
we may now deduce 
$2|\langle x^3y^3,[B']\rangle$.
Finally, making use of Steenrod squares, we have
$$0\equiv Sq^4(x^3y)\equiv x^5y+x^4y^2\equiv x^5y+Sq^6(x^2y)\equiv x^5y\mod 2,$$
so $x^5y$ and, analogously, $xy^5$ are even.
\end{proof}

\begin{prop}\label{G_2}
Let $B'$ be a closed framed manifold of dimension twelve. The double transfer w.r.t.~lines $L$, $L'$ has non-trivial $f$-invariant if and only if $\textstyle{\frac{1}{2}}\langle xy^5,[B']\rangle$ is odd, in which case it is given by
$$\frac{1}{2}\left(\frac{E_1^2-1}{12}\right)^3\in\underline{\underline{D}}^{\Gamma}_8\otimes\mathbb{Q/Z};$$
furthermore, this differs from the situation of Proposition \textup{\ref{sigmasquared}}.
\end{prop}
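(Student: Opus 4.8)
The plan is to follow the template of the preceding propositions, starting from Lemma~\ref{ezdrop}. Since $\dim B'=12$, the sum in that lemma runs over $k=1,\dots,5$; because $B_3=B_5=0$ only the terms $k=1,3,5$ survive, and writing $c_m$ for the coefficient of $y^m$ in $\widetilde{Ell}(y)/y$ --- the $q$-positive part of a modular form of weight $m+1$ for $\Gamma_1(3)$ --- one obtains
\[
f(M)\equiv \tfrac{1}{12}\,c_5\,\langle xy^5,[B']\rangle-\tfrac{1}{720}\,c_3\,\langle x^3y^3,[B']\rangle+\tfrac{1}{30240}\,c_1\,\langle x^5y,[B']\rangle ,
\]
where the $c_m$ are read off from the level-three expansions \eqref{expandedgenus} and \eqref{expandedtodd} in the appendix (so $c_1=\tfrac{E_1^2-1}{12}$ up to normalization, and $c_3$ is the weight-four combination already met in the dimension-eight case). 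The first step is then a matter of substituting these expansions and bringing each of $\tfrac{1}{12}c_5$, $\tfrac{1}{720}c_3$, $\tfrac{1}{30240}c_1$ into a normal form using the congruences collected in the appendix. Note also that $\langle xy^5,[B']\rangle$ is even by Lemma~\ref{evencohomology}, so $\tfrac12\langle xy^5,[B']\rangle$ in the statement is a well-defined integer.

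Next I would dispose of the $k=3$ and $k=5$ contributions. By Lemma~\ref{evencohomology} both $\langle x^3y^3,[B']\rangle$ and $\langle x^5y,[B']\rangle$ are even; combining this with the finer divisibilities extracted in its proof (indices of the Dirac operator on $B'$ twisted by combinations of $L^{\pm1}\otimes L'^{\pm1}$, together with the Steenrod square relations valid on a framed manifold) and with the appendix congruences, the terms $-\tfrac{1}{720}c_3\langle x^3y^3,[B']\rangle$ and $\tfrac{1}{30240}c_1\langle x^5y,[B']\rangle$ become integral divided congruences, rational forms of weight $0$ or $8$, or multiples of the already-vanishing invariant $\check{f}(\nu\times Y^{11})$ of Proposition~\ref{nusigma} (applied with $4k-1=11$); in every case they are $\equiv0$ in $\underline{\underline{D}}^\Gamma_8\otimes\mathbb{Q}/\mathbb{Z}$. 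If the divisibilities recorded in Lemma~\ref{evencohomology} turn out not to suffice, I would add a couple of further index computations on $B'$ twisted by other combinations of $L,L'$, exactly as in the dimension-ten proof and in Lemma~\ref{evencohomology}.

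For the surviving $k=1$ term, write $\langle xy^5,[B']\rangle=2m$, so that modulo integral divided congruences the contribution is $\tfrac16 c_5\,m$. Using the level-three congruences of the appendix (as in Propositions~\ref{nusquared} and \ref{sigmasquared}) one rewrites $\tfrac16 c_5$, modulo $\underline{\underline{D}}^\Gamma_8$, as $\tfrac12\big(\tfrac{E_1^2-1}{12}\big)^3$; hence the $k=1$ term is $\equiv0$ when $m$ is even and $\equiv\tfrac12\big(\tfrac{E_1^2-1}{12}\big)^3$ when $m$ is odd, i.e.\ precisely when $\tfrac12\langle xy^5,[B']\rangle$ is odd. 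Together with the previous step this gives $f(M)\equiv\tfrac12\big(\tfrac{E_1^2-1}{12}\big)^3$ in that case and $f(M)\equiv0$ otherwise. Finally, for non-triviality one expands $\tfrac12\big(\tfrac{E_1^2-1}{12}\big)^3=\tfrac12q^3+O(q^4)$ and compares a handful of low coefficients against a basis of $M_8^\Gamma\otimes\mathbb{Q}$ and the integral divided congruences $\tfrac{E_1^8-1}{48}$, $\tfrac{E_8-1}{480}$, $G_8^*+\tfrac{1093}{240}$ from the appendix; the same comparison against $\tfrac12\big(\tfrac{E_4-1}{240}\big)^2=\tfrac12q^2+9q^3+O(q^4)$ of Proposition~\ref{sigmasquared} --- which lies in the same group $\underline{\underline{D}}^\Gamma_8\otimes\mathbb{Q}/\mathbb{Z}$, since $\sigma^2$ also has total dimension $14$ --- shows the two classes are distinct, establishing the last clause.

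The main obstacle is the second step: isolating the $\langle xy^5,[B']\rangle$-dependence by annihilating the $x^3y^3$- and $x^5y$-contributions requires both a judicious choice of twisted Dirac operators on $B'$ to pin the $2$-adic valuations of $\langle x^3y^3,[B']\rangle$ and $\langle x^5y,[B']\rangle$ (going slightly beyond Lemma~\ref{evencohomology} if necessary) and the correct sequence of appendix congruences to recognize the resulting terms as zero in $\underline{\underline{D}}^\Gamma_8\otimes\mathbb{Q}/\mathbb{Z}$; the congruence bookkeeping that lands the $k=1$ term exactly on $\tfrac12\big(\tfrac{E_1^2-1}{12}\big)^3$, rather than on some equivalent but less transparent representative, is the other place where care is needed.
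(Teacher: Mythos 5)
The setup and the non-triviality check at the end match the paper, but your handling of the $k=5$ (i.e.\ $\langle x^5y\rangle$) term contains a genuine error. You claim that $-\tfrac{1}{720}c_3\langle x^3y^3,[B']\rangle$ \emph{and} $\tfrac{1}{30240}c_1\langle x^5y,[B']\rangle$ each reduce to $\equiv 0$ on their own, and then that $\tfrac16 c_5\equiv \tfrac12\bigl(\tfrac{E_1^2-1}{12}\bigr)^3$ alone accounts for the answer. The first of these (the $x^3y^3$ term) is indeed shown to vanish in the paper: its doubled coefficient reduces to $\tfrac{5}{3^3}\bigl(\tfrac{E_4-1}{240}\bigr)^2$, which is an integral divided congruence. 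But the $\langle x^5y\rangle$ contribution does \emph{not} vanish separately. The paper's route is to use the index-theoretic relation coming from the proof of Lemma~\ref{evencohomology},
$$\langle x^5y,[B']\rangle=60k-\langle xy^5,[B']\rangle-\tfrac{20}{3}\,\tfrac12\langle x^3y^3,[B']\rangle,$$
to substitute and thereby \emph{merge} the $x^5y$ term into the coefficient of $\langle xy^5\rangle$; the residual piece $\pm\tfrac{18c_1}{30240}$ (after the ``$-2=18-20$'' trick) survives, and Proposition~\ref{g2_ugly_2} is precisely the statement that the \emph{sum} $\tfrac{c_5}{6}+\tfrac{18E_1^2}{2^7\cdot3^4\cdot5\cdot7}$ is congruent to $\tfrac12\bigl(\tfrac{E_1^2-1}{4}\bigr)^3$ --- the $E_1^2$ piece is an essential part of that congruence, not a term that can be discarded. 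Concretely, the coefficient you would need to kill is, up to $3$-units, $\tfrac{E_1^2-1}{2^6\cdot5\cdot7}$; Lemma~\ref{E_1squaredvanishes} and Proposition~\ref{nusigma} give vanishing of $\tfrac{E_1^2-1}{2^5}$ and of $\tfrac{E_1^2-1}{2^4\cdot7}$-type expressions respectively, and neither can be divided further by $2\cdot5\cdot7$ while preserving integrality of the integral part of the divided congruence, so your proposed mechanisms (integral divided congruences, weight-$0$/$8$ forms, multiples of $\check f(\nu\times Y^{11})$) do not suffice.

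This means the central congruence of your third step, $\tfrac16 c_5\equiv\tfrac12\bigl(\tfrac{E_1^2-1}{12}\bigr)^3$ mod $\underline{\underline{D}}_8^{\Gamma}$, is not supported by the appendix --- the appendix proves instead the combined congruence in Proposition~\ref{g2_ugly_2}. Your plan does correctly anticipate that extra index computations and Steenrod-square relations are the right tools, but the conclusion you draw from them (separate vanishing of the $x^5y$ term) is the wrong one: the relation is to be used as a substitution collapsing everything onto $\langle xy^5\rangle$, not as a vanishing criterion. Note also the author's note in the text explicitly warns that a previous ``streamlined'' version of this very proof was erroneous, which is exactly the pitfall you have fallen into. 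Your first step (the formula from Lemma~\ref{ezdrop}, including the correct pairing of $c_5$ with $\langle xy^5\rangle$ and $c_1$ with $\langle x^5y\rangle$) and your last step (the $q$-expansion comparison establishing non-triviality and distinctness from $f(\sigma^2)$) are both fine.
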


{\em Note added for v2:} The attempt to streamline the proof of Proposition \ref{G_2} introduced an error in the previous version (v1); the present version (v2) remedies this by reverting to the original, correct form  of  Lemma \ref{E_1squaredvanishes} and the subsequent calculations used in this proof.

\begin{proof}
We apply Lemma \ref{ezdrop}, and by Lemma \ref{evencohomology}, we gain an extra factor of two; in particular, the coefficient of $\textstyle{\frac{1}{2}}\langle x^3y^3,[B']\rangle$ reads
\begin{equation*}
\begin{split}
&2\cdot\frac{1}{720}\frac{16(E_1E_3-1)-13(E_1^4-1)}{2160}
=-\frac{2}{3^3\cdot240}\frac{2(E_4-1)-5(E_1^4-1)}{240}\\
&\equiv\frac{2}{3^3}\left(\frac{E_4-1}{240}\right)^2+\frac{1}{3^4\cdot240}\frac{E_1^4-1}{8}
\equiv-\frac{1}{3^4\cdot8}\frac{E_4-1}{240}\equiv\frac{5}{3^3}\left(\frac{E_4-1}{240}\right)^2;\\
\end{split}
\end{equation*}
thus, we are left with the contributions
$$2\cdot\frac{121(E_1^6-1)-152(E_1^3E_3-1)+40(E_3^2-1)}{2^7\cdot3^6\cdot5\cdot7},$$
$$2\cdot\frac{1}{2^5\cdot3^3\cdot5\cdot7}\frac{E_1^2-1}{12},$$
corresponding to $\textstyle{\frac{1}{2}}\langle x^5y,[B']\rangle$ and $\textstyle{\frac{1}{2}}\langle xy^5,[B']\rangle$, respectively. 
Furthermore, the divisibility results of Lemma \ref{evencohomology} imply that we may write
$\langle x^5y,[B']\rangle=60k-\langle xy^5,[B']\rangle-\frac{20}{3}\frac{1}{2}\langle x^3y^3,[B']\rangle$; combined with
$$20\frac{E_1^2-1}{2^7\cdot5\cdot7}\equiv-\frac{1}{4}\frac{E_6-1}{7\cdot8}\equiv-\frac{E_1^2-1}{2^5}\equiv0,$$
where triviality follows from Lemma \ref{E_1squaredvanishes}, we end up with an expression for $f$ determined by $\langle xy^5,[B']\rangle$ alone. We now set $\langle xy^5,[B']\rangle=2$ (or, more generally, an odd multiple thereof), write $-2=18-20$, and simplify to arrive at
$$f\equiv\frac{121E_1^6-152E_1^3E_3+40E_3^2}{2^7\cdot3^6\cdot5\cdot7}\cdot2+\frac{E_1^2}{2^7\cdot3^4\cdot5\cdot7}\cdot18,$$
which is congruent to the desired result by Proposition \ref{g2_ugly_2}. Non-triviality is established by comparing the first, second and fourth coefficient of the $q$-expansion of
$$\frac{1}{2}\left(\frac{E_1^2-1}{12}\right)^3=\frac{1}{2}q^3+\frac{9}{2}q^4+O(q^5)$$
to those of
\begin{equation*}
\begin{split}
G_8^*+\frac{1093}{240}&=q+129q^2+q^3+16513q^4+O(q^5),\\ 
\frac{E_8-1}{480}&=q+129q^2+2188q^3+16513q^4+O(q^5),\\ 
\frac{E_1^8-1}{48}&=q+21q^2+253q^3+1933q^4+O(q^5);\\ 
\end{split}
\end{equation*}
this argument still applies if we add
$$f(\sigma^2)\equiv\frac{1}{2}\left(\frac{E_4-1}{240}\right)^2=\frac{1}{2}q^2+9q^3+\frac{137}{2}q^4+O(q^5).$$
\end{proof}

\begin{ex} We choose $B'$ to be the framed manifold $G_2/T$. Let us recall some facts about  its cohomology ring \cite{BS55}: Rationally, it is generated by classes $\alpha,\ \beta\in H^2(G_2/T,\mathbb{Z})$ subject to the relations
$$\alpha^2+3\beta^2+3\alpha\beta=0,\qquad\alpha^6=0=\beta^6;$$
furthermore, we have
$$\langle\alpha\beta^5,[G_2/T]\rangle=2,$$
which is precisely what we need, i.e.\ we take $x=\alpha$, $y=\beta$.  
\end{ex}

\begin{rmk}
It is of course possible to continue the  program initiated above, i.e.\ to calculate the $f$-invariant of  the generic double transfer systematically; however, as the computations become increasingly more involved, one should look for results complementing our approach. In fact, such results exist (at least partially): It is possible to compute the $f$-invariant {\em algebraically}, never leaving the context of the ANSS, and this has been done for several beta-elements in \cite{jensniko}. In particular, a straightforward comparison of their results to Proposition \ref{G_2} (making use of Lemma \ref{E_1squaredvanishes}) shows that the example above represents $\beta_3$ at the prime two. (Of course, this does not come as a surprise, since, by our Proposition \ref{G_2}, it cannot be $\sigma^2=\beta_{4/4}$.)
\end{rmk}

\cleardoublepage
\addtocontents{toc}{\protect\contentsline{section}{Appendix}{}}
\appendix
\section{Useful formul\ae}

\subsection*{\dots from analytic number theory}

(See e.g.~\cite{apostol}) The {\em Hurwitz zeta function}  is defined by analytic continuation of the series $\zeta(s,x)=\sum_{n=0}^{\infty}(n+x)^{-s}$, $x>0$ and $s>1$. Take note that $\zeta(s,1)=\zeta(s)$ is the usual Riemann zeta function. It satisfies the functional equation
$$\zeta\left(1-s,\frac{m}{n}\right)=\frac{2\Gamma(s)}{(2\pi n)^s}\sum_{k=1}^{n}{\cos\left(\frac{\pi s}{2}-\frac{2\pi km}{n}\right)\zeta\left(s,\frac{k}{n}\right)}$$
for integers $1\leq m\leq n$. We also have the relation
$$\zeta(-n,x)=-\frac{B_{n+1}(x)}{n+1},$$
where the  Bernoulli polynomials are given by $$\frac{te^{xt}}{e^{t}-1}=\sum_{k=0}^{\infty}{B_k(x)\frac{t^k}{k!}};$$
they  satisfy
$$B_n(1-x)=(-1)^nB_n(x)$$
and 
$$B_n(mx)=m^{n-1}\sum_{k=0}^{m-1}{B_n\left(x+\frac{k}{m}\right)}.$$
If no argument is indicated, it is understood to be one, i.e.\ $B_n=B_n(1)$.

\subsection*{\dots involving modular forms}

Let $\Gamma\subset SL(2,\mathbb{Z})$ be a subgroup of finite index, e.g.~the following {\em congruence subgroups} of level $N>1$ \cite{HBJ}, \cite{schoeneberg}:
\begin{equation*}
\begin{split}
\Gamma_0(N)&=\left\{\gamma\in SL(2,\mathbb{Z})\ |\ \gamma\equiv\left(\begin{array}{cc} * & * \\ 0 & * \end{array}\right)\mod N\right\},\\
\Gamma_1(N)&=\left\{\gamma\in SL(2,\mathbb{Z})\ |\ \gamma\equiv\left(\begin{array}{cc} 1 & * \\ 0 & 1 \end{array}\right)\mod N\right\};\\
\end{split}
\end{equation*}
their index  is given by
$$[SL(2,\mathbb{Z}):\Gamma_1(N)]=N^2\prod_{p|N}(1-p^{-2}),$$
$$[SL(2,\mathbb{Z}):\Gamma_0(N)]=N\prod_{p|N}(1+p^{-1}).$$
Now let $k\geq0$ be an integer. Recall that a function $f:\mathfrak{h}\rightarrow\mathbb{C}$ is called a {\em modular form of weight $k$ w.r.t.~$\Gamma$} if:
\begin{itemize}
\item[(i)]{$f$ is holomorphic on $\mathfrak{h}$,}
\item[(ii)]{for all $\gamma=\left(\begin{array}{cc} a & b \\ c & d \end{array}\right) \in\Gamma$, we have:
$(c\tau+d)^kf(\tau)=f\left(\frac{a\tau+b}{c\tau+d}\right)$},
\item[(iii)]{and for every $S=\left(\begin{array}{cc} a & b \\ c & d \end{array}\right)\in SL(2,\mathbb{Z})$, $(c\tau+d)^{-k}f\left(\frac{a\tau+b}{c\tau+d}\right)$ admits a Fourier expansion of the form $\sum_{n\geq0}a_nq^{n/N}$.}
\end{itemize}
The vector space of modular forms of a given weight $k$ is {\em finite-dimensional}; the valence formula implies the following upper bound:
\begin{equation}\label{valence}
\dim M_k(\Gamma)\leq1+\frac{k}{12}[PSL(2,\mathbb{Z}):P\Gamma].
\end{equation}
Take note that if $f(\tau)$ is a modular form w.r.t.~$SL(2,\mathbb{Z})$, then $g(\tau)=f(N\tau)$ is a modular form w.r.t.~$\Gamma_0(N)$:
\begin{proof}
Let $N|b$ and ${{\left(\begin{array}{cc} a & b \\ c & d \end{array}\right) \in SL(2,\mathbb{Z})}}$. Then we use $$\left( \begin{array}{cc} 1/N & 0 \\ 0 & 1 \end{array}  \right)\left( \begin{array}{cc} a & b \\ c & d \end{array}  \right)\left( \begin{array}{cc} N & 0 \\ 0 & 1 \end{array}  \right)=\left( \begin{array}{cc} a & b/N \\ Nc & d \end{array}  \right)$$ to obtain any element in $\Gamma_0(N)$. Assuming $f$ to be of weight $k$, we have
$$(cN\tau+d)^kf(N\tau)=f\left(\frac{aN\tau+b}{cN\tau+d}\right)=f\left(N\frac{a\tau+b/N}{Nc\tau+d}\right).$$
\end{proof}

\section{Eisenstein series}

\subsection*{Level $1$}

Taking the logarithmic derivative of the product formula of the sine, we obtain 
$$\pi\cot\pi z=i\pi(1-2\sum_{k=0}^{\infty}e^{2\pi ikz})=\frac{1}{z}+\sum_{n=1}^{\infty}\left({\frac{1}{z+n}+\frac{1}{z-n}}\right),$$
assuming that $z\in\mathfrak{h}$. Successive differentiation w.r.t.~$z$ yields (for $r>1$)
$$\frac{1}{z^r}+\sum_{n=1}^{\infty}\left({\frac{1}{(z+n)^r}+\frac{1}{(z-n)^r}}\right)=\frac{(-2\pi i)^r}{(r-1)!}\sum_{k=1}^{\infty}{k^{r-1}e^{2\pi ikz}}.$$
We may now define the {\em Eisenstein series}: Let $k>2$ be an even integer; then
$$E_{k}(\tau)=\zeta(k)^{-1}{\sum_{m,n}}^{\prime}(m\tau+n)^{-k}=1+2\frac{(-2\pi i)^k}{\zeta(k)(k-1)!}\sum_{m=1}^{\infty}\sum_{d=1}^{\infty}d^{k-1}q^{dm},$$
where the prime denotes the omission of the term $m=n=0$. 
This in turn implies
$$E_k(\tau)=1-\frac{2k}{B_k}\sum_{n=1}^{\infty}{\sigma_{k-1}(n)q^n},$$
where $\sigma_k(n)=\sum_{d|n}d^k$. It is straightforward to see that $E_k(\tau+1)=E_k(\tau)$ and $\tau^kE_k(\tau)=E_k(-1/\tau)$.
For $k=2$, we define
$$E_2=1-24\sum_{n=1}^{\infty}\sigma_1(n)q^n,$$
which is holomorphic, but not modular anymore; instead, we have
$$\tau^{-2}E_2(-1/\tau)=E_2+\frac{6}{i\pi\tau},$$
which means that
$$\hat{E}_2=E_2-\frac{3}{\pi\Im({\tau})}$$
behaves well w.r.t.~$\Gamma=SL(2,\mathbb{Z})$.  Sometimes it will be convenient to use another normalization of the Eisenstein series, namely
$$G_k(\tau)=-\frac{B_k}{2k}E_k(\tau).$$

\subsection*{Level $N$}

Fixing a level $N>1$, we may choose to sum over a sublattice $(m_1,m_2)\equiv(a_1,a_2)\mod N$, so, for any integer $k\geq3$, we define

$${\mathcal{G}}_{k}^{(a_1,a_2)}(\tau)=\sum_{{\bf{m}}\equiv{\bf{a}}\ (N)}(m_1\tau+m_2)^{-k}.$$
Obviously, we have
$$(c\tau+d)^{-k}\mathcal{G}_{k}^{(a_1,a_2)}\left(\frac{a\tau+b}{c\tau+d}\right)=\sum_{{\bf{m}}\equiv{\bf{a}}\ (N)}((m_1a+m_2c)\tau+(m_1b+m_2d))^{-k},$$
and we may change the summation to run over the lattice $\bf{m}^{\prime}=\bf{m}\gamma\equiv\bf{a}\gamma$; take note that, modulo $N$,  $\Gamma_1(N)$ preserves ${\bf{a}}=(0,a_2)$. 
In order to obtain a $q$-expansion, we recast the expression for $\mathcal{G}$ by splitting the sums
\begin{equation*}
\begin{split}{\mathcal{G}}_{k}^{(a_1,a_2)}(\tau)&=b_k+N^{-k}\sum_{0<m_1\equiv a_1}\sum_{n\in\mathbb{Z}}\left(\frac{m_1\tau+a_2}{N}+n\right)^{-k}\\
 &\quad+(-N)^{-k}\sum_{0<m_1\equiv-a_1}\sum_{n\in\mathbb{Z}}\left(\frac{m_1\tau-a_2}{N}+n\right)^{-k}\\
 &=b_k+c_k\sum_{0<m_1\equiv a_1}\sum_{d=1}^{\infty}d^{k-1}e^{2\pi i d (m_1\tau+a_2)/N}\\
&\quad+(-1)^kc_k\sum_{0<m_1\equiv -a_1}\sum_{d=1}^{\infty}d^{k-1}e^{2\pi i d (m_1\tau-a_2)/N},\\
\end{split}
\end{equation*}
where $c_k=(-2\pi i/N)^k/((k-1)!)$ and $b_k=\sum_{n\equiv a_2}n^{-k}$.

\subsection*{Level $3$}

Let us focus on $N=3$ and ${\bf{a}}\equiv(0,1)$. For  $k=2n+1$, we have
\begin{equation*}
\begin{split}
b_{2n+1}&=3^{-k}\zeta(2n+1,1/3)-3^{-k}\zeta(2n+1,2/3)\\
&=(-1)^n\frac{(2\pi)^{2n+1}}{(2n)!\sqrt{3}}\zeta(-2n,1/3),\\
\end{split}
\end{equation*}
and
\begin{equation*}\begin{split}
&(-1)^n\frac{(2\pi)^{2n+1}}{3^{2n+1}(2n)!}i^{-1}\sum_{l=1}^{\infty}\sum_{d=1}^{\infty}d^{2n}(e^{2\pi id/3}-e^{-2\pi id/3})q^{dl}\\
=&(-1)^n\frac{(2\pi)^{2n+1}}{3^{2n+1}(2n)!}\sqrt{3}\sum_{l=1}^{\infty}\left(\sum_{d|l}d^{2n}(\frac{d}{3})\right)q^l,\\
\end{split}
\end{equation*}
where we introduced the Legendre symbol,
$$(\frac{d}{3})=-1,0,1 \mbox{ for } d\equiv-1,0,1\mod3.$$
Thus, the normalized odd Eisenstein series for $\Gamma_1(3)$ is given by 
$$E_{2n+1}^{\Gamma_1(3)}(\tau)=1-\frac{2n+1}{3^{2n}B_{2n+1}(1/3)}\sum_{l=1}^{\infty}\left(\sum_{d|l}d^{2n}(\frac{d}{3})\right)q^l.$$
Similarly, for even $k=2n+2$ we compute
\begin{equation*}
\begin{split}
\frac{b_k}{c_k}&=(-1)^{n+1}\frac{(2n+1)!}{(2\pi)^{2n+2}}\left(\zeta(2n+2,1/3)+\zeta(2n+2,2/3)\right)\\
&=\zeta(-(2n+1),1)-{3^{2n+2}}\zeta(-(2n+1),1/3)\\
&=-\frac{B_{2n+2}}{2n+2}+3^{2n+2}\frac{B_{2n+2}(1/3)}{2n+2}\\
&=-\frac{B_{2n+2}}{2n+2}+3^{2n+2}\frac{B_{2n+2}(1/3)+B_{2n+2}(2/3)}{4n+4}\\
&=\left(-1+\frac{3-3^{2n+2}}{2}\right)\frac{B_{2n+2}}{2n+2}
\end{split}
\end{equation*}
which implies that for $n\geq1$
$$2G_{2n+2}^{(0,1)(3)}(\tau):=-\frac{1}{2}(3^{2n+2}-1)\frac{B_{2n+2}}{2n+2}+\sum_{l=1}^{\infty}\sum_{d=1}^{\infty}d^{2n+1}(e^{2\pi i d/3}+e^{-2\pi i d/3})q^{dl}$$
is a modular form  w.r.t.~$\Gamma=\Gamma_1(3)$. In order to treat the situation $n=0$, we observe that$$2G_{2}^{(0,1)(3)}(\tau)-2G_2(\tau)=\frac{B_2}{2}(1-\frac{9}{2}+\frac{1}{2})+\sum_{l=1}^{\infty}\sum_{d=1}^{\infty}d(e^{2\pi i d/3}+e^{-2\pi i d/3}-2)q^{dl}$$
$$=-3\left(\frac{B_2}{2}+\sum_{n=1}^{\infty}\left(\sum_{3\nmid d|n}d\right)q^n\right)=-3\left(\frac{1}{12}+\sum_{n=1}^{\infty}(\sigma_1(n)-3\sigma_1(n/3))q^n\right);$$
this  is proportional to $E_2(\tau)-3E_2(3\tau)=\hat{E}_2(\tau)-3\hat{E}_2(3\tau)$, hence even modular for $\Gamma_0(3)$.\newline
\newline
It is also possible to define a first Eisenstein series by introducing a regularization scheme that preserves modularity (cf.~e.g.~\cite{schoeneberg}); we still focus on the level $N=3$ and define 
$${\mathcal{G}}_{1}^{(a_1,a_2)}(\tau,s)={\sum_{{\bf{m}}\equiv{\bf{a}}}}^{\prime}(m_1\tau+m_2)^{-1}|m_1\tau+m_2|^{-s},$$
which converges for $s$ sufficiently large, so we rearrange the sums as
$${\mathcal{G}}_{1}^{(a_1,a_2)}(\tau,s)=b(s,{\bf{a}})+{\sum_{m_1\equiv a_1}}^{\prime}\sum_{n\in\mathbb{Z}}3^{-1-s}\left(\frac{m_1\tau+a_2}{3}+n\right)^{-1}\left|\frac{m_1\tau+a_2}{3}+n\right|^{-s}$$
using 
$$b(s,{\bf{a}})=\left\{ \begin{array}{ll}\sum_{m_2\equiv a_2}^{\prime}m_2^{-1}|m_2|^{-s}& \mbox{if } a_1\equiv0\mod3\\ 0& \mbox{otherwise}  \end{array} \right. .$$
Next, we observe that for fixed $z$
$$\Psi(u,s)=\sum_{k\in\mathbb{Z}}(z+k+u)[(z+k+u)(\bar z+k+u)]^{-s/2}$$
may be Fourier transformed into
$$\Psi(u,s)=\sum_{m\in\mathbb{Z}}c_m(z,s)e^{2\pi i mu};$$
we may evaluate the Fourier coefficients
\begin{equation*}\begin{split} c_m(z,s)&=\int_0^1\sum_{k\in\mathbb{Z}}(z+k+u)^{-1}|z+k+u|^{-s}e^{-2\pi imu}du\\  &=\int_{-\infty}^{\infty}(z+u)^{-1}|z+u|^{-s}e^{-2\pi imu}du\ ,\\
\end{split}
\end{equation*}
so we have
$$c_m\left(\frac{m_1\tau+a_2}{3},0\right)=\left\{\begin{array}{rcl}-2\pi i\mbox{sgn}(m)e^{2\pi im(m_1\tau+a_2)/3}&\mbox{for}&mm_1>0 \\ 0&\mbox{for}&mm_1<0\end{array} \right.;$$
the zero mode enters into the Eisenstein series via
$$3^{-1-s}{\sum_{m_1\equiv a_1}}^{\prime}c_{0}(z,s)=3^{-1}{\sum_{m_1\equiv a_1}}^{\prime}\ \frac{\mbox{sgn}(m_1)}{|m_1|^s}\int_{-\infty}^{\infty}(\tau+u)^{-1}|\tau+u|^{-s}du,$$
which is holomorphic at $s=0$, and, in particular, does not contribute for $a_1\equiv0$. Finally, we evaluate $b$ by considering
$$ \lim_{s\rightarrow1}[\zeta(s,x)-1/(s-1)]=-\Gamma^{\prime}(x)/\Gamma(x)=-\psi(x),$$
where the digamma function $\psi$ satisfies
$$\psi(1-x)-\psi(x)=\pi\cot(\pi x),$$
which implies that
$$b(0,(0,1))=3^{-1}\lim_{s\rightarrow0}[\zeta(1+s,1/3)-\zeta(1+s,2/3)]=\frac{\pi}{3\sqrt{3}}.$$
Putting everything together, we see that the regularized (and normalized) Eisenstein series has the same $q$-expansion that would have been expected from the na\"ive formula:
$$E^{\Gamma_1(3)}_1=1+6\sum_{n=1}^{\infty}\left(\sum_{d|n}(\frac{d}{3})\right)q^n.$$
Finally, from \eqref{valence}, we know that
$$\dim M_k\left(\Gamma_1(3)\right)\leq1+\frac{k}{3};$$
thus, it is easy to see that the ring of modular forms w.r.t.~$\Gamma_1(3)$ is generated by the first and third Eisenstein series; as confusion is unlikely, we denote them by $E_1$ and $E_3$, respectively.

\section{Expanding the Hirzebruch genus}\label{expell}

Following \cite{HBJ}, we introduce the  $\Phi$-function, 
\begin{equation*}
\begin{split}
\Phi(\tau,x)&=(\xi^{1/2}-\xi^{-1/2})\prod_{n=1}^{\infty}\frac{(1-\xi q^n)(1-\xi^{-1}q^n)}{(1-q^n)^2}\\
&=x\exp\left(-\sum_{k=1}^{\infty}\frac{2}{(2k)!}G_{2k}(\tau)x^{2k}\right),\\
\end{split}
\end{equation*}
where $\xi=\exp x$ and $q=\exp(2\pi i \tau)$; for ${{\left(\begin{array}{cc} a & b \\ c & d \end{array}\right) \in SL(2,\mathbb{Z})}}$,  
it satisfies
$$\Phi\left(\frac{a\tau+b}{c\tau+d},\frac{x}{c\tau+d}\right)(c\tau+d)=\exp\left(\frac{cx^2}{4\pi i (c\tau+d)}\right)\Phi(\tau,x).$$
Integrating and exponentiating 
$$\partial_x\ln\Phi(\tau,x)=\frac{1}{2}\coth\frac{x}{2}-\sum_{n=1}^{\infty}\sum_{d|n}(e^{dx}-e^{-dx})q^n,$$
we conclude that
$$\Phi(\tau,x)=\exp\left(-2\sum_{k=1}^{\infty}\frac{(x-\omega)^k}{k!}G_k^{(\omega)}(\tau)\right)\cdot\left\{\begin{array}{rcl} x & \mbox{for} & \omega=0 \\ \Phi(\tau,\omega)& \mbox{for} & \omega \neq 0 \\ \end{array} \right. ,$$
where
$$G_k^{(\omega)}(\tau)=-\frac{c_k^{(\omega)}}{2k}+\sum_{n=1}^{\infty}\left(\sum_{d|n}\frac{e^{d\omega}+(-1)^ke^{-d\omega}}{2}d^{k-1}\right)q^n$$
and 
$$\frac{1}{2}\coth\frac{x}{2}=\sum_{k=0}^{\infty}c_k^{(\omega)}\frac{(x-\omega)^{k-1}}{k!}.$$
Therefore we have:
\begin{equation*}\begin{split}
&\frac{x}{1-e^{-x}}\frac{1-e^ae^{-x}}{1-e^a}\prod_{n=1}^{\infty}\frac{(1-q^n)^2}{(1-q^ne^x)(1-q^ne^{-x})}\frac{1-e^{-a}q^ne^x}{1-e^{-a}q^n}\frac{1-e^{a}q^ne^{-x}}{1-e^{a}q^n}\\
&=x\frac{\Phi(\tau,x-a)}{\Phi(\tau,x)\Phi(\tau,-a)}=\exp\left(2\sum_{n=1}^{\infty}\frac{x^{2n}}{(2n)!}G_{2n}(\tau)-2\sum_{k=1}^{\infty}\frac{x^k}{k!}G_k^{(-a)}(\tau)\right).\\
\end{split}
\end{equation*}

\subsection*{Deriving the $c_k$ at level three}

By definition,
$$\frac{1}{2}\frac{e^x+e^{\omega}}{e^x-e^{\omega}}=\frac{1}{2}\coth\left(\frac{x-\omega}{2}\right)=\sum_{k=0}^{\infty}\frac{c_k^{(-\omega)}}{k!}(x-\omega-(-\omega))^{k-1}.$$
We set $e^{\omega}=\zeta\neq1$, and, at level three, $\zeta^3=1$. Then,
\begin{equation*}
\begin{split}
\frac{e^x+\zeta}{e^x-\zeta}&=(e^{3x}-1)^{-1}\left((\zeta-\zeta^2)(e^{2x}-e^x)+e^{3x}-e^{2x}-e^x+1\right) \\ &=(\zeta-\zeta^2)\sum_{k=-1}^{\infty}(B_{k+1}({2}/{3})-B_{k+1}({1}/{3}))\frac{(3x)^k}{(k+1)!} \\  &\quad +\sum_{k=-1}^{\infty}(B_{k+1}(1)+B_{k+1}(0)-B_{k+1}({2}/{3})-B_{k+1}({1}/{3}))\frac{(3x)^k}{(k+1)!}\\
&=-2(\zeta-\zeta^2)\sum_{k=0}^{\infty}3^{2k}B_{2k+1}(1/3)\frac{x^{2k}}{(2k+1)!}\\ &\quad+\sum_{n=0}^{\infty}(3^{2n+2}-1)B_{2n+2}\frac{x^{2n+1}}{(2n+2)!}.\\
\end{split}
\end{equation*}
The last step follows from the fact that the minus first summands cancel, and we used
$$B_n(1)=3^{n-1}(B_n(1/3)+B_n(2/3)+B_n(1)),$$ $$B_n(1-x)=(-1)^nB_n(x).$$
So we have
\begin{equation*}
\begin{split}
c_{2n+1}^{(-\omega)}&=(e^{-\omega}-e^{\omega})3^{2n}B_{2n+1}(1/3),\\
c_{2n+2}^{(-\omega)}&=\frac{3^{2n+2}-1}{2}B_{2n+2}.\\
\end{split}
\end{equation*}
Thus, we may express the  elliptic genus of level three as
$$x\frac{\Phi(\tau,x-\omega)}{\Phi(\tau,x)\Phi(\tau,-\omega)}=\exp\left(3\sum_{n=1}^{\infty}\frac{x^{2n}}{(2n)!}G_{2n}^*(\tau)-2\sum_{k=0}^{\infty}\frac{x^{2k+1}}{(2k+1)!}G_{2k+1}^{(-\omega)}(\tau)\right),$$
where
\begin{equation*}
\begin{split}
G_{2n}^*(\tau)&=G_{2n}(\tau)-3^{2n-1}G_{2n}(3\tau),\\
G_{2k+1}^{(-\omega)}(\tau)&=\frac{e^{\omega}-e^{-\omega}}{2}3^{2k}\frac{B_{2k+1}(1/3)}{2k+1}E_{2k+1}^{\Gamma_1(3)}(\tau),\\
\end{split}
\end{equation*}
hence modularity is manifest. 
Choosing $\omega=2\pi i/3$, the first few terms of the genus, when expressed in terms of $E_1$ and $E_3$, read
\begin{equation}\label{expandedgenus}
\begin{split}
Ell^{\Gamma_1(3)}(x) & = 1+\frac{iE_1}{2\sqrt{3}}x+\frac{E_1^2}{12}x^2+\frac{iE_1^3-iE_3}{18\sqrt{3}}x^3+\frac{13E_1^4-16E_1E_3}{2160}x^4\\ &\quad+\frac{iE_1^2(E_1^3-E_3)}{216\sqrt{3}}x^5+\frac{121E_1^6-152E_1^3E_3+40E_3^2}{272160}x^6+O(x^7).\\
\end{split}
\end{equation}
We also list the first few terms of the expansion of the Todd genus:
\begin{equation}\label{expandedtodd}
\frac{x}{1-e^{-x}}=1+\frac{1}{2}x+\frac{1}{12}x^2-\frac{1}{720}x^4+\frac{1}{30240}x^6+O(x^8).\end{equation}

\section{Useful congruences}\label{usecong}

The ring of modular forms w.r.t.~$\Gamma=\Gamma_1(3)$ is generated by
\begin{equation*}
\begin{split}
E_1&=1+6\sum_{n=1}^{\infty}\left(\sum_{d|n}(\frac{d}{3})\right)q^n,\\
E_3&=1-9\sum_{n=1}^{\infty}\left(\sum_{d|n}(\frac{d}{3})d^2\right)q^n,\\
\end{split}
\end{equation*}
so it is straightforward to check that
\begin{equation*}
\begin{split}
E_4&=1+240\sum_{n=1}^{\infty}\left(\sum_{d|n}d^3\right)q^n=9E_1^4-8E_1E_3,\\
E_6&=1-504\sum_{n=1}^{\infty}\left(\sum_{d|n}d^5\right)q^n=-27E_1^6+36E_1^3E_3-8E_3^2,\\
E_8&=1+480\sum_{n=1}^{\infty}\left(\sum_{d|n}d^7\right)q^n=E_4^2,\\
\end{split}
\end{equation*}
\begin{equation*}
\begin{split}
G_2^*&=\frac{1}{12}+\sum_{n=1}^{\infty}\left(\sum_{3\nmid d|n}d\right)q^n=\frac{1}{12}E_1^2,\\
G_4^*&=-\frac{13}{120}+\sum_{n=1}^{\infty}\left(\sum_{3\nmid d|n}d^3\right)q^n=\frac{1}{40}E_1^4-\frac{2}{15}E_1E_3,\\
G_6^*&=\frac{121}{252}+\sum_{n=1}^{\infty}\left(\sum_{3\nmid d|n}d^5\right)q^n=\frac{1}{28}E_1^6+\frac{2}{7}E_1^3E_3+\frac{10}{63}E_3^2,\\
G_8^*&=-\frac{1093}{240}+\sum_{n=1}^{\infty}\left(\sum_{3\nmid d|n}d^7\right)q^n=\frac{9}{80}E_1^8-\frac{6}{5}E_1^5E_3-\frac{52}{15}E_1^2E_3^2,\\
\end{split}
\end{equation*}
from which we may derive `obvious' congruences, e.g.
$$\frac{E_1-1}{6}\in \mathbb{Z}[\![q]\!].$$
Furthermore, we have
\begin{lem}\label{E_4power}
Let $l=(2n+1)2^m$. Then
$$\frac{E_4^l-1}{2^{4+m}}\in\mathbb{Z}[\![q]\!].$$
\end{lem}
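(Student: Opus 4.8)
The plan is to prove the congruence $E_4^l \equiv 1 \pmod{2^{4+m}}$ for $l = (2n+1)2^m$ by induction on $m$, using the elementary fact that $E_4$ has integral $q$-expansion with all non-constant coefficients divisible by $240 = 2^4 \cdot 3 \cdot 5$. The base case $m=0$ (so $l$ odd) requires showing $E_4^l \equiv 1 \pmod{16}$; the inductive step leverages the squaring identity
\[
a^2 - 1 = (a-1)(a+1)
\]
applied to $a = E_4^{(2n+1)2^{m-1}}$, noting that if $E_4^{(2n+1)2^{m-1}} - 1$ is divisible by $2^{3+m}$ then $E_4^{(2n+1)2^m} - 1 = (E_4^{(2n+1)2^{m-1}}-1)(E_4^{(2n+1)2^{m-1}}+1)$ gains one further power of two, since the second factor is even (being $\equiv 2 \pmod{2^{3+m}}$).

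First I would dispose of the base case. Write $E_4 = 1 + 240 S$ where $S = \sum_{n\geq 1}\sigma_3(n)q^n \in \mathbb{Z}[\![q]\!]$. For odd $l$, expand $E_4^l = (1+240S)^l = 1 + 240 l S + \binom{l}{2}(240S)^2 + \cdots$. The linear term $240 l S$ is divisible by $16$ since $16 \mid 240$. Every higher term carries a factor $(240)^k$ with $k \geq 2$, hence is divisible by $240^2$, which is certainly divisible by $16$. Therefore $E_4^l \equiv 1 \pmod{16}$, establishing $(E_4^l - 1)/2^4 \in \mathbb{Z}[\![q]\!]$ when $m=0$.

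For the inductive step, suppose the claim holds for $m-1$, i.e.\ for every odd $t$, $E_4^{t \cdot 2^{m-1}} \equiv 1 \pmod{2^{3+m}}$. Fix $l = (2n+1)2^m$ and set $b = E_4^{(2n+1)2^{m-1}}$, so that $E_4^l = b^2$. By the inductive hypothesis $b - 1 = 2^{3+m}\,c$ for some $c \in \mathbb{Z}[\![q]\!]$; hence $b + 1 = 2 + 2^{3+m}c = 2(1 + 2^{2+m}c)$, and
\[
E_4^l - 1 = b^2 - 1 = (b-1)(b+1) = 2^{3+m}c \cdot 2(1 + 2^{2+m}c) = 2^{4+m}\,c(1 + 2^{2+m}c),
\]
which lies in $2^{4+m}\mathbb{Z}[\![q]\!]$. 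This completes the induction.

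I do not anticipate a genuine obstacle here — the argument is entirely formal once one records the divisibility $16 \mid 240$. The only point demanding mild care is bookkeeping the exact power of two at each stage (distinguishing $2^{3+m}$ from $2^{4+m}$), and making sure the base case is handled for all odd $l$ uniformly rather than just $l=1$, which the binomial expansion above does. One could alternatively phrase the whole thing in a single step via the $2$-adic valuation of $l$ and the lifting-the-exponent lemma applied in $\mathbb{Z}[\![q]\!]$, but the induction is cleaner and self-contained.
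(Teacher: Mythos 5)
Your proof is correct, but it takes a genuinely different route from the paper. The paper proceeds in a single pass: it writes $E_4 = 1 + 16Q$ with $Q \in \mathbb{Z}[\![q]\!]$, expands $E_4^l - 1 = \sum_{i\geq 1}\binom{l}{i}2^{4i}Q^i$, and then bounds the $2$-adic valuation of each binomial coefficient directly via Legendre's formula $\nu_2(i!) < i$ (together with the identity $\binom{l}{i}\,i! = 2^m(2n+1)(l-1)\cdots(l-i+1)$), obtaining $\nu_2\bigl(\binom{l}{i}\bigr) \geq m - i + 1$ and hence $\nu_2\bigl(\binom{l}{i}2^{4i}\bigr) \geq m + 3i + 1 \geq m+4$ for every $i\geq 1$. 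You instead induct on $m$, disposing of the base case $m=0$ by the crude observation that $16 \mid 240$, and then repeatedly applying the factorization $b^2 - 1 = (b-1)(b+1)$ to gain exactly one power of two per doubling — the ``lifting-the-exponent'' mechanism, though implemented by hand rather than by citation. Both arguments are elementary and short; the paper's has the advantage of not splitting into a base case and an inductive step (everything drops out of a single valuation estimate), while yours is perhaps more transparent about \emph{why} the exponent grows linearly in $m$, and it avoids Legendre's formula entirely. Both correctly isolate the same two arithmetic inputs — that $16 \mid 240$ and that $\nu_2(l) = m$ — they just package them differently.
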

\begin{proof}
We rewrite
$$\left(\begin{array}{c} l\\ i\end{array}\right)=\frac{2^m(2n+1)(l-1)!/(l-i)!}{i!},$$
and factorize the powers of two using Legendre's formula
$$\nu_2(i!)=\sum_{k\geq1}\left\lfloor\frac{i}{2^k}\right\rfloor<\sum_{k\geq1}\frac{i}{2^k}=i,$$
which implies
\begin{equation}\label{ichoosel}
2^m\ |\left(\begin{array}{c} l\\ i\end{array}\right)2^{i-1}\ \mbox{for}\ i\geq1.
\end{equation}
Writing  $E_4=1+16Q$, i.e.~
\begin{equation}\label{chooseE_4}
E_4^l-1=\sum_{i=1}^l\left(\begin{array}{c} l \\ i \end{array}\right)2^{4i}Q^i,
\end{equation}
the claim follows.
\end{proof}

But there also more subtle congruences, two of which we list in the following Propositions:

\begin{prop}\label{E_1squaredE_3}
$$\frac{1}{2}\left\{\frac{E_1^2-1}{12}+(2k+1)\frac{E_3-1}{9}\right\}\in\mathbb{Z}[\![q]\!].$$
\end{prop}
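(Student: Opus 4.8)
The plan is to reduce the statement to an elementary parity check on the $q$-expansions. First I would record that the two pieces are separately integral: from the appendix identity $G_2^\ast=\tfrac1{12}E_1^2$ one has $\tfrac{E_1^2-1}{12}=\sum_{n\ge1}\bigl(\sum_{3\nmid d\mid n}d\bigr)q^n\in\mathbb Z[\![q]\!]$, and directly from the $q$-expansion of $E_3$ one has $\tfrac{E_3-1}{9}=-\sum_{n\ge1}\bigl(\sum_{d\mid n}(\tfrac d3)d^2\bigr)q^n\in\mathbb Z[\![q]\!]$. Hence the proposition is really a congruence mod $2$: it suffices to show that for every $n\ge1$ the $q^n$-coefficient of $\tfrac{E_1^2-1}{12}+(2k+1)\tfrac{E_3-1}{9}$ is even. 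Since $2k+1$ is odd and $k\,\tfrac{E_3-1}{9}$ is already integral, this is in turn equivalent to the $q^n$-coefficient of $\tfrac{E_1^2-1}{12}+\tfrac{E_3-1}{9}$ being even, uniformly in $k$.

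Next I would write out that coefficient explicitly. Because the Legendre symbol vanishes on multiples of $3$, both sums may be restricted to divisors prime to $3$, so the $q^n$-coefficient in question is
$$\sum_{3\nmid d\mid n}\Bigl(d-\bigl(\tfrac d3\bigr)d^2\Bigr).$$
For each such $d$ we have $(\tfrac d3)=\pm1$, whence $d-(\tfrac d3)d^2=d\mp d^2=d(1\mp d)$; as $d$ and $1\mp d$ have opposite parities, every summand is even, and therefore so is the whole sum. This proves the coefficient is even, and combining with the reduction of the previous paragraph yields $\tfrac12\bigl(\tfrac{E_1^2-1}{12}+(2k+1)\tfrac{E_3-1}{9}\bigr)\in\mathbb Z[\![q]\!]$.

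There is no serious obstacle here; the only points that require a little care are the bookkeeping with the Legendre symbol (which is $0$, not $\pm1$, on multiples of $3$, so that one may harmlessly restrict either sum to $3\nmid d$) and making sure the odd factor $2k+1$ is handled correctly in the mod-$2$ reduction. Both are routine, so I expect the argument to go through essentially as sketched.
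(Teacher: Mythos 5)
Your argument is correct and is essentially the paper's own proof: both reduce to a parity check on the $q^n$-coefficients after writing out the $q$-expansions, using that $(\tfrac{d}{3})$ is odd for $3\nmid d$ and that $d^2\equiv d\pmod 2$. The paper phrases the parity step as a congruence between the two sums rather than term-by-term as you do, but the content is identical.
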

\begin{proof}
We have to show that
$$\frac{1}{2}\left(\sum_{n=1}^{\infty}\left(\sum_{3\nmid d|n}d\right)q^n-(2k+1)\sum_{n=1}^{\infty}\left(\sum_{d|n}(\frac{d}{3})d^2\right)q^n\right)\in\mathbb{Z}[\![q]\!],$$
but obviously
\begin{equation*}
\sum_{d|n}(\frac{d}{3})d^2\equiv\sum_{3\nmid d|n}d^2\equiv\sum_{3\nmid d|n}d \mod2.
\end{equation*}
\end{proof}

\begin{prop}\label{E_4/64}
$$\frac{1}{4}\frac{E_4-1}{16}-\frac{1}{8}\frac{E_1^2-1}{4}+\frac{1}{4}(E_3^2-1)+\frac{1}{8}(E_1^3E_3-1)\in\mathbb{Z}[\![q]\!].$$
\end{prop}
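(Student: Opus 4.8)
The plan is to clear denominators and reduce everything to an elementary divisibility check. Multiplying the displayed expression by $8$ gives
$$\Theta \;:=\; \frac{E_4-1}{8}-\frac{E_1^2-1}{4}+2(E_3^2-1)+(E_1^3E_3-1),$$
and it suffices to show $\Theta\in 8\,\mathbb{Z}[\![q]\!]$. To make this manageable I would work throughout with the integral series $a:=\tfrac16(E_1-1)=\sum_n\big(\sum_{d\mid n}(\tfrac d3)\big)q^n$ and $b:=\tfrac19(1-E_3)=\sum_n\big(\sum_{d\mid n}(\tfrac d3)d^2\big)q^n$, so that $E_1=1+6a$ and $E_3=1-9b$. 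Then $E_1^2-1=12f$ with $f:=a+3a^2$ and $E_1^2+1=2h$ with $h:=1+6a+18a^2$, hence $E_1^4-1=24fh$. Feeding this into the appendix identity $E_4=9E_1^4-8E_1E_3$ gives $E_4-1=216\,fh-8(E_1E_3-1)$; this simultaneously shows $\tfrac{E_4-1}{8}=27fh-(E_1E_3-1)$ is integral (so $\Theta\in\mathbb Z[\![q]\!]$ to begin with) and lets me eliminate $E_4$ entirely in favour of $a$ and $b$.

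Next I would substitute $E_1E_3-1=6a-9b-54ab$, $\;E_3^2-1=-18b+81b^2$, $\;E_1^3E_3-1=18a+108a^2+216a^3-9b-162ab-972a^2b-1944a^3b$, and $\tfrac{E_1^2-1}{4}=3f$ into $\Theta$ and collect terms, obtaining a polynomial in $a,b$ with explicit integer coefficients. Reducing modulo $8$ — using $27fh-3f=3f(h-1)=18a^2+108a^3+162a^4\equiv 2a^2+4a^3+2a^4$ — collapses this to
$$\Theta\;\equiv\;4a+4b+6a^2+2b^2+4a^3+2a^4+4ab+4a^2b\;\equiv\;2W\pmod 8,$$
where $W:=2a+2b+3a^2+b^2+2a^3+a^4+2ab+2a^2b$, so the claim reduces to $W\equiv 0\pmod 4$.

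For the final step I would invoke the elementary congruence $b\equiv f\pmod 2$, i.e.\ $\sum_{d\mid n}(\tfrac d3)d^2\equiv\sum_{3\nmid d\mid n}d\pmod 2$ (because $(\tfrac d3)d^2\equiv d\pmod 2$ for $3\nmid d$), which is precisely the input of Proposition \ref{E_1squaredE_3}. Writing $b=f+2c$ one gets $b^2\equiv f^2$, $2b\equiv 2f$, $2ab\equiv 2af$, $2a^2b\equiv 2a^2f\pmod 4$, so modulo $4$ every $b$ in $W$ may be replaced by $f=a+3a^2$ (and $3a^2$ by $-a^2$); expanding and collecting, all contributions organise into $4a+8a^2+16a^3+16a^4$, which vanishes mod $4$. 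Hence $W\equiv 0\pmod 4$, so $\Theta\equiv 0\pmod 8$, and the original combination is integral. The only genuine work is the carry-bookkeeping in these two reductions; the point to be careful about is that the naive guess $b\equiv a\pmod 2$ is \emph{false} (already in degree two), and it is the refined relation $b\equiv a+3a^2\pmod 2$ — together with the identity $E_4=9E_1^4-8E_1E_3$ — that makes the cancellation go through.
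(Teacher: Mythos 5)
Your proof is correct and it genuinely works, but it takes a different route from the paper. The paper begins by observing that the specific combination
\[
\left(\frac{E_1^2-1}{8}+\frac{E_3-1}{2}\right)^2 \;+\; \frac{E_1-1}{2}\,\frac{E_1^2-1}{4}\,E_3
\]
is manifestly integral (the square because of the mod-2 congruence in Proposition~\ref{E_1squaredE_3}, the second summand because each factor is integral), and then algebraically massages this expression — using $E_4=9E_1^4-8E_1E_3$ and the integrality of $(E_1^4-1)/8$ — into the target combination plus known integral terms. You instead clear all denominators to get $\Theta$, substitute $E_1=1+6a$, $E_3=1-9b$, $E_4=9E_1^4-8E_1E_3$, and do a direct mod-$8$ reduction of the resulting polynomial in $a,b$, invoking $b\equiv f=a+3a^2\pmod 2$ only at the very end. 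Both arguments use exactly the same two inputs — the identity $E_4=9E_1^4-8E_1E_3$ and the mod-$2$ congruence $b\equiv f$ — but your presentation makes those inputs explicit and mechanical, whereas the paper's is shorter once one has guessed the right square to take. Your version is easier to verify line by line; the paper's reveals more of the structure.

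Two small slips in your write-up, neither of which affects the conclusion. First, $27fh-3f=3f(9h-1)$, not $3f(h-1)$; what you mean is $27fh-3f\equiv 3fh-3f=3f(h-1)\pmod 8$ (using $24fh\equiv 0$), so the ``$=$'' there should be a congruence. Second, in the final collection after substituting $f=a+3a^2$ into $W$ one gets $4a+12a^2+16a^3+16a^4$ rather than $4a+8a^2+16a^3+16a^4$ (the $a^2$ coefficient is $6+3+1+2=12$); all four coefficients are still divisible by $4$, so $W\equiv 0\pmod 4$ holds and the argument goes through.
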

\begin{proof}
We add an obvious congruence to the square of a term that expands integrally by Proposition \ref{E_1squaredE_3} and compute  
\begin{equation*}
\begin{split}0&\equiv\left(\frac{E_1^2-1}{8}+\frac{E_3-1}{2}\right)^2+\frac{E_1-1}{2}\frac{E_1^2-1}{4}E_3\\
&=\frac{E_1^4-1}{2^6}-\frac{E_1^2-1}{2^5}+\frac{1}{2}\frac{E_1^2-1}{2^2}(E_3-1)+\frac{1}{2^2}(E_3^2-1)-\frac{1}{2}(E_3-1)\\ &\quad+\frac{E_1-1}{2}\frac{E_1^2-1}{4}E_3\\
&=\frac{E_1^4-1}{2^6}-\frac{E_1^2-1}{2^5}+\frac{1}{2^2}(E_3^2-1)+\frac{1}{2}\frac{E_1^2-1}{4}E_1E_3-\left(\frac{E_1^2-1}{2^3}+\frac{E_3-1}{2}\right)\\ 
&\equiv\frac{E_1^4-1-8E_1E_3}{2^6}-\frac{E_1^2-1}{2^5}+\frac{1}{2^2}(E_3^2-1)+\frac{1}{2^3}E_1^3E_3\\
&=\frac{E_4-1}{2^6}-\frac{E_1^4-1}{2^3}-\frac{E_1^2-1}{2^5}+\frac{1}{2^2}(E_3^2-1)+\frac{1}{2^3}(E_1^3E_3-1),\\
\end{split}
\end{equation*}
but $(E_1^4-1)/8\in\mathbb{Z}[\![q]\!]$.
\end{proof}

\begin{lem}\label{E_1squaredvanishes}
$$\frac{1}{8}\frac{E_1^2-1}{4}\equiv0 \mod \underline{\underline{D}}_8^{\Gamma}.$$
\end{lem}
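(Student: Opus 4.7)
The strategy is to apply Proposition \ref{E_4/64}, which gives
\begin{equation*}
\frac{E_1^2-1}{32} \equiv \frac{E_4-1}{64} + \frac{E_3^2-1}{4} + \frac{E_1^3 E_3-1}{8} \pmod{D_8^{\Gamma}},
\end{equation*}
and then to show that each of the three terms on the right lies in $\underline{\underline{D}}_8^{\Gamma}$. Setting $T := (E_1^2-1)/12 \in \mathbb{Z}[\![q]\!]$, the weight-6 term $(E_3^2-1)/4$ is handled by multiplying by $E_1^2$: the difference $E_3^2(E_1^2-1)/4 = 3 E_3^2 T$ is integral, so
$(E_3^2-1)/4 \equiv (E_1^2 E_3^2 - 1)/4 = E_1^2 E_3^2/4 - 1/4 \in M_8^{\Gamma} \otimes \mathbb{Q} + M_0^{\Gamma} \otimes \mathbb{Q}$ modulo $\mathbb{Z}[\![q]\!]$.

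The weight-4 term $(E_4-1)/64$ is controlled by the observation $E_8 = E_4^2 \equiv 1 \pmod{32}$: the elementary identity $(E_4-1)^2 = E_8 - 2 E_4 + 1$ together with the divisibility $128 \mid (E_4-1)^2$ (each coefficient is a multiple of $240^2 = 57600$, and $128 \mid 57600$) yields the clean expression
\begin{equation*}
\frac{E_4-1}{64} = \frac{E_8-1}{128} - \frac{(E_4-1)^2}{128} \in M_8^{\Gamma} \otimes \mathbb{Q} + M_0^{\Gamma} \otimes \mathbb{Q} + D_8^{\Gamma}.
\end{equation*}

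The hard part will be the remaining weight-6 term $(E_1^3 E_3-1)/8$: the naive boost by $E_1^2$ fails because $(E_1^3 E_3-1)(E_1^2-1)/8 = (3/2)(E_1^3 E_3-1) T$ is not integral, its $q^2$-coefficient being the odd number $27/2$. To circumvent this, I would use the identity $8 E_1^3 E_3 = 9 E_1^6 - E_1^2 E_4$ (consequence of $E_4 = 9 E_1^4 - 8 E_1 E_3$) to rewrite
\begin{equation*}
\frac{E_1^3 E_3 - 1}{8} = \frac{9(E_1^6-1) - (E_1^2 E_4 - 1)}{64},
\end{equation*}
and then combine this with the ``$E_1^2$-boosted'' version of Proposition \ref{E_4/64}, obtained by multiplying the entire congruence through by $E_1^2$:
\begin{equation*}
\frac{E_1^2 E_4}{64} - \frac{E_1^4}{32} + \frac{E_1^2 E_3^2}{4} + \frac{E_1^5 E_3}{8} - \frac{23 E_1^2}{64} \in \mathbb{Z}[\![q]\!].
\end{equation*}
Dropping the weight-8 pieces $E_1^2 E_3^2/4$ and $E_1^5 E_3/8$ (which lie freely in $M_8^{\Gamma} \otimes \mathbb{Q}$) and exploiting the integralities $(E_4-1)/16$, $(E_1^4-1)/8$, $(E_1^6-1)/36 \in \mathbb{Z}[\![q]\!]$ collected in Appendix \ref{usecong}, one can arrange the weight-4 and weight-6 obstructions to cancel against each other modulo $\underline{\underline{D}}_8^{\Gamma}$, leaving a residual weight-2 piece that matches $(E_1^2-1)/32$ up to constants and weight-8 forms. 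The main obstacle is the careful 2-adic bookkeeping needed to verify that all denominators align in this final cancellation; the key algebraic inputs are the identity $8 E_1^3 E_3 = 9 E_1^6 - E_1^2 E_4$ and the congruence $E_8 \equiv 1 \pmod{32}$.
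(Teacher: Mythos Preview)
Your treatment of the first two summands $(E_3^2-1)/4$ and $(E_4-1)/64$ is correct and essentially coincides with the paper's (the paper writes $\tfrac14 E_3^2 \equiv -\tfrac{E_1^2-1}{4}E_3^2$ and $\tfrac{E_4-1}{64}\equiv -2\bigl(\tfrac{E_4-1}{16}\bigr)^2$, which are just rearrangements of your identities).

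The gap is in your handling of $(E_1^3E_3-1)/8$. Your substitution $8E_1^3E_3=9E_1^6-E_1^2E_4$ does not lower the weight; it replaces one weight-$6$ obstruction by another, plus a new weight-$4$ term coming from the $E_1^2$-boosted Proposition~\ref{E_4/64}. You then promise that these new obstructions ``cancel against each other'' after further $2$-adic bookkeeping, but no such cancellation is exhibited, and a direct computation shows you are left with $\tfrac{9E_1^6}{64}-\tfrac{E_1^4}{32}-\tfrac{23E_1^2}{64}+\text{const}$, which is \emph{not} manifestly in $\underline{\underline{D}}_8^{\Gamma}$. You would need yet another boosting step for each of these, and it is not clear this process terminates rather than circling back to the original problem.

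The paper avoids this loop by a single clean observation: writing $\tfrac18E_1^3E_3=\tfrac{E_1^2}{8}\cdot E_1E_3$, one checks the algebraic identity
\[
\frac{E_1^2}{8}+\left(\frac{E_1^2-1}{4}\right)^2+\frac{E_4-1}{16}=\frac{E_1^4+E_4}{16},
\]
whose right side is pure weight $4$ and whose two added terms on the left already lie in $\mathbb{Z}[\![q]\!]$. Multiplying through by $E_1E_3$ (weight $4$) lands the whole expression in $M_8^{\Gamma}\otimes\mathbb{Q}$, giving immediately
\[
\frac{1}{8}E_1^3E_3\equiv-\left(\left(\frac{E_1^2-1}{4}\right)^2+\frac{E_4-1}{16}\right)E_1E_3\equiv0\mod \underline{\underline{D}}_8^{\Gamma}.
\]
The point is to promote the weight-$2$ factor $E_1^2/8$ directly to weight $4$ by an integral correction \emph{before} multiplying by $E_1E_3$, rather than trying to push everything to weight $8$ via iterated $E_1^2$-multiplication.
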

\begin{proof}
By adding modular forms of weight 8 we obtain the following relations:
$$\frac{1}{8}E_1^3E_3\equiv-\left(\left(\frac{E_1^2-1}{4}\right)^2+\frac{E_4-1}{16}\right)E_1E_3\equiv0 \mod \underline{\underline{D}}_8^{\Gamma},$$
$$\frac{1}{4}E_3^2\equiv-\frac{E_1^2-1}{4}E_3^2\equiv0 \mod \underline{\underline{D}}_8^{\Gamma},$$
$$\frac{1}{4}\frac{E_4-1}{16}\equiv-2\left(\frac{E_4-1}{16}\right)^2\equiv0 \mod \underline{\underline{D}}_8^{\Gamma};$$
these yield the claim upon insertion into \ref{E_4/64}.
\end{proof}

\begin{prop}\label{g2_ugly_2}
$$2\frac{121E_1^6-152E_1^3E_3+40E_3^2}{2^7\cdot3^6\cdot5\cdot7}+18\frac{E_1^2}{2^7\cdot3^4\cdot5\cdot7}\equiv\frac{1}{2}\left(\frac{E_1^2-1}{4}\right)^3\mod \underline{\underline{D}}_8^{\Gamma}.$$
\end{prop}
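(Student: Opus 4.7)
The plan is to show the difference between the two sides lies in $\underline{\underline{D}}_8^{\Gamma}$ by systematically trading weights against one another using the divided congruences already assembled, namely Lemma~\ref{E_1squaredvanishes} and Propositions~\ref{E_1squaredE_3} and~\ref{E_4/64}, together with the basic integrality of $\tfrac{E_1-1}{6}$, $\tfrac{E_3-1}{9}$, $\tfrac{E_4-1}{240}$ and $\tfrac{E_1^2-1}{12}$. First I would expand the right-hand side as
$$\frac{1}{2}\left(\frac{E_1^2-1}{4}\right)^3=\frac{E_1^6-3E_1^4+3E_1^2-1}{128},$$
so that the constant term drops out modulo $M_0^{\Gamma}\otimes\mathbb{Q}$, and the $3E_1^2/128$ piece is killed by Lemma~\ref{E_1squaredvanishes} (as $\tfrac{E_1^2-1}{32}\equiv 0$ absorbs the factor $3/4$ up to weight-8 corrections).

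Next I would rewrite the weight-six content of the left-hand side in the basis $\{E_1^6,E_1^3E_3,E_3^2\}$ of $M_6^{\Gamma_1(3)}$, and deal with the weight-four $E_1^4$ piece on the right via the integer series $\tfrac{E_1^4-1}{8}$ together with an application of Proposition~\ref{E_4/64} multiplied by $\tfrac{E_1^2-1}{12}$; this proposition, further multiplied by integer weight-two series like $\tfrac{E_1^2-1}{12}$, $\tfrac{E_3-1}{9}$ or by itself, produces weight-eight integer series (free modulo $\underline{\underline{D}}_8^{\Gamma}$) and trades the $E_3^2$ and $E_1^3E_3$ contributions against rational multiples of $E_1^6$ and $E_1^2$. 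Proposition~\ref{E_1squaredE_3} then handles the residual $E_3$-contamination in the weight-two part, leaving both sides as a rational polynomial $\sum_{k\leq 3}c_k\bigl(\tfrac{E_1^2-1}{12}\bigr)^k$ of weight $\leq 6$.

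Each $\bigl(\tfrac{E_1^2-1}{12}\bigr)^k$ is an integer $q$-series by Example~\ref{divex}, so any such term with integer $c_k$ lies in $D_6^{\Gamma}\subset\underline{\underline{D}}_8^{\Gamma}$; the proof reduces to checking that the $c_k$ obtained by matching coefficients are integers (the $k=0$ constant being again free), which is a finite verification of a few rational numbers. This justifies the passage from the $\tfrac{1}{2}\bigl(\tfrac{E_1^2-1}{4}\bigr)^3$ version of the claim to the $\tfrac{1}{2}\bigl(\tfrac{E_1^2-1}{12}\bigr)^3$ form used immediately afterwards in the proof of Proposition~\ref{G_2}, since $\tfrac{27}{2}\bigl(\tfrac{E_1^2-1}{12}\bigr)^3-\tfrac{1}{2}\bigl(\tfrac{E_1^2-1}{12}\bigr)^3=13\bigl(\tfrac{E_1^2-1}{12}\bigr)^3\in D_6^{\Gamma}$.

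The \emph{main obstacle} is the 2-adic bookkeeping: the denominator $2^7\cdot 3^6\cdot 5\cdot 7$ on the left and the $128=2^7$ on the right leave almost no slack in the 2-adic valuations, and this is exactly where version~1 of the paper went astray. I would therefore carry out the reductions above by tracking the precise 2-adic valuation of every intermediate integer series ($\tfrac{E_1^2-1}{12}$, $\tfrac{E_1^4-1}{8}$, $\tfrac{E_4-1}{16}$, $\tfrac{E_6-1}{504}$), applying Proposition~\ref{E_4/64} once to dispose of $\tfrac{E_4-1}{64}$, applying it a second time after multiplication by $\tfrac{E_1^2-1}{12}$ to dispose of the resulting weight-six tail, and finally closing the calculation by a $q$-expansion check to low order. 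The 3-adic valuations are harmless since $\tfrac{E_3-1}{9}\in\mathbb{Z}[\zeta,1/3][\![q]\!]$ supplies arbitrarily many factors of $3$ on demand.
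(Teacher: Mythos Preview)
Your proposal is a plausible strategy outline, but it is not a proof: the crucial work of actually clearing the primes $5$ and $7$ from the denominator $2^7\cdot 3^6\cdot 5\cdot 7$ is never carried out, and the line ``a finite verification of a few rational numbers'' is asserted rather than executed. The paper's proof shows that this is where the real content lies. It begins by using the relation $E_6=-27E_1^6+36E_1^3E_3-8E_3^2$ to trade the $E_3^2$ term for an $E_6$ term, and then eliminates the $7$ via congruences like $-\tfrac{E_1^2+E_6}{2^5\cdot 3^3\cdot 7}\equiv\tfrac{E_1^2-1}{12}\cdot\tfrac{E_6-1}{504}\equiv 0$ and $48\tfrac{E_6-1}{2^6\cdot 3^6\cdot 7}\in\mathbb{Z}[1/3][\![q]\!]$, and the $5$ via the algebraic identity $2E_1^3E_3-E_1^6-11E_1^2 = 10(E_1^3E_3-E_1^6-E_1^2)+E_1^2(E_4-1)$. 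Only after these primes are gone does the computation become the kind of $2$-adic bookkeeping you describe, and even then the paper needs further explicit identities such as $\tfrac{E_3^2}{8}\equiv\tfrac{1}{2}\bigl\{\tfrac{E_1^2-1}{4}+(\tfrac{E_1^2-1}{4})^3\bigr\}$ and $E_1^6/64\equiv E_1^2/64$ before Lemma~\ref{E_1squaredvanishes} closes the argument.

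Two specific points: your claim that the $3E_1^2/128$ piece is ``killed by Lemma~\ref{E_1squaredvanishes}'' needs care, since that lemma gives $\tfrac{E_1^2-1}{32}\equiv 0$, not $\tfrac{3E_1^2}{128}\equiv 0$, and the extra factor of $\tfrac{3}{4}$ is exactly the $2$-adic slack you say you don't have. And the proposed ``$q$-expansion check to low order'' at the end is not a valid closing move here: the statement is a congruence modulo $\underline{\underline{D}}_8^{\Gamma}$, which requires exhibiting the difference as an explicit rational combination of modular forms with integral expansion plus forms of weight $0$ and $8$, not a numerical check of finitely many coefficients. Your toolkit is the right one, but the paper's proof demonstrates that the argument is an explicit chain of about a dozen congruences, each of which has to be written down, not a schematic reduction.
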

\begin{proof}
We start by removing the unwanted primes from the denominator:
\begin{equation*}
\begin{split}
&\ \quad2\frac{121E_1^6-152E_1^3E_3+40E_3^2}{2^7\cdot3^6\cdot5\cdot7}+18\frac{E_1^2}{2^7\cdot3^4\cdot5\cdot7}\\
&=\frac{28E_1^3E_3-14E_1^6-5E_6}{2^6\cdot3^6\cdot5\cdot7}+\frac{81E_1^2}{2^6\cdot3^6\cdot5\cdot7}\\
&=\frac{4E_1^3E_3-2E_1^6}{2^6\cdot3^6\cdot5}-\frac{E_6}{2^6\cdot3^6\cdot7}+\frac{81E_1^2}{2^6\cdot3^6\cdot5\cdot7}\\
&\equiv\frac{4E_1^3E_3-2E_1^6}{2^6\cdot3^6\cdot5}-\frac{55E_6}{2^6\cdot3^6\cdot7}-\frac{189E_1^2}{2^6\cdot3^6\cdot5\cdot7}\\
&=\frac{4E_1^3E_3-2E_1^6-22E_1^2}{2^6\cdot3^6\cdot5}-\frac{E_1^2+E_6}{2^6\cdot3^6}-\frac{48E_6}{2^6\cdot3^6\cdot7}\\
&\equiv\frac{4E_1^3E_3-2E_1^6-22E_1^2}{2^6\cdot3^6\cdot5}-\frac{E_1^2+E_6}{2^6\cdot3^6}\\
&=\frac{E_1^3E_3}{2^4\cdot3^6}-\frac{E_1^6+E_1^2}{2^4\cdot3^6}+\frac{1}{2\cdot3^6}\frac{E_4-1}{80}E_1^2-\frac{E_1^2+E_6}{2^6\cdot3^6}\\
&\equiv\frac{E_1^3E_3}{2^4\cdot3^6}-\frac{E_1^2+E_6}{2^6\cdot3^6},\\
\end{split}
\end{equation*}
by using
$$-\frac{E_1^2+E_6}{2^5\cdot3^3\cdot7}\equiv\frac{E_1^2-1}{2^2\cdot3}\frac{E_6-1}{2^3\cdot3^2\cdot7}\equiv0,$$
$$48\frac{E_6-1}{2^6\cdot3^6\cdot7}\in\mathbb{Z}[1/3][\![q]\!],$$
$$\frac{2E_1^3E_3-E_1^6-11E_1^2}{2^5\cdot3^6\cdot5}=\frac{10(E_1^3E_3-E_1^6-E_1^2)+E_1^2(-8E_1E_3+9E_1^4-1)}{2^5\cdot3^6\cdot5},$$
$$\frac{1}{2\cdot3^6}\frac{E_4-1}{80}E_1^2\equiv\frac{1}{2}\frac{E_4-1}{16}E_1^2\equiv-\frac{1}{2}\frac{E_4-1}{16}\equiv4\left(\frac{E_4-1}{16}\right)^2\equiv0,$$ 
$$-\frac{E_1^6+E_1^2}{2^4\cdot3^3}\equiv\frac{E_1^6-1}{2^2\cdot3^2}\frac{E_1^2-1}{2^2\cdot3}\equiv0.$$
So we have
\begin{equation*}
\begin{split}
&\ 2\frac{121E_1^6-152E_1^3E_3+40E_3^2}{2^7\cdot3^6\cdot5\cdot7}+18\frac{E_1^2}{2^7\cdot3^4\cdot5\cdot7}\\
\equiv&\ \frac{1}{2\cdot3^6}\left\{\frac{E_1^2-1}{4}\frac{E_6-1}{8}-\left(\frac{E_1^2-1}{4}\right)^2E_1E_3-\frac{E_4-1}{16}E_1E_3\right\}\\
\equiv&\ \frac{1}{2}\left\{\frac{E_1^2-1}{4}\frac{E_6-1}{8}-\left(\frac{E_1^2-1}{4}\right)^2E_1E_3-\frac{E_4-1}{16}E_1E_3\right\},\\
\end{split}
\end{equation*}
which is manifestly of order two.
Dropping top and zero weight forms and writing $E_6$ in terms of $E_1$, $E_3$, this reads
$$\frac{1}{2}\left\{-\frac{E_1^2}{32}-\frac{36E_1^3E_3-27E_1^6-8E_3^2}{32}+\frac{E_1^3E_3}{8}\right\}=\frac{27}{64}E_1^6+\frac{1}{8}E_3^2-\frac{1}{2}E_1^3E_3-\frac{1}{64}E_1^2.$$
Clearly, we may also drop $E_1^3E_3/2\equiv E_1^5E_3/2$; furthermore, we have
\begin{equation*}
\begin{split}
\frac{E_3^2}{8}&\equiv\frac{1}{2}\frac{1-E_1^2}{4}E_3^2=\frac{1-E_1^2}{4}\left\{\frac{1}{2}+E_3-1+\frac{1}{2}(E_3-1)^2\right\}\\
&\equiv\frac{E_1^2-1}{4}\left\{\frac{1}{2}+\frac{1}{2}(E_3-1)^2\right\}\equiv\frac{1}{2}\left\{\frac{E_1^2-1}{4}+\left(\frac{E_1^2-1}{4}\right)^3\right\}.\\
\end{split}
\end{equation*} 
Finally, we use $E_1^6/64\equiv E_1^2/64$, derived from considering 
$$0\equiv\left(\frac{E_1^2-1}{4}\right)^2\frac{E_1^4-1}{8}=\frac{E_1^8-2E_1^6+2E_1^2-1}{128},$$ and invoke lemma \ref{E_1squaredvanishes}.
\end{proof}

\section{Derivation of the $\hat{\eta}$-form}\label{etazhang}

We use the notation from the proof of Proposition \ref{eform} and follow \cite{zhang94}. The superconnection associated to the family of $Spin^{\mathbb{C}}$ Dirac operators on the circle bundle $S(L)$ is given by
$$\mathbb{A}_t=\tilde\nabla+\sqrt{t}\eth-\frac{c(T)}{4\sqrt{t}}.$$
Introducing a Grassmann variable $z$, we may write down a Weitzenb\"ock-type formula
$$z\left(\sqrt{t}\eth+\frac{c(T)}{4\sqrt{t}}\right)-\mathbb{A}_{t}^2=t\left(\nabla_e+\frac{T}{4t}+z\frac{c(e)}{2\sqrt{t}}\right)^2,$$
where we identified $T$ on the RHS with a basic two-form (by contraction with the dual of the vertical unit tangent $e$).
Now we have to compute
$${\mbox{Tr}}^{ev}\left[\frac{d\mathbb{A}_t}{dt}\exp(-\mathbb{A}_t^2)\right]=\frac{1}{2t}\mbox{Tr}^z\left[\exp\left(t\left(\nabla_e+\frac{T}{4t}+z\frac{c(e)}{2\sqrt{t}}\right)^2\right)\right],$$
where $\mbox{Tr}^z$ denotes the trace restricted to the coefficient of $z$. We insert $\mbox{Tr}[c(e)]=-i$, use  a Fourier decomposition of the sections of the spinor bundle, and perform Poisson resummation to obtain
\begin{equation*}
\begin{split}
 &\ \quad t^{-1}\mbox{Tr}^z\left[\exp\left(t\left(\nabla_e+\frac{T}{4t}+z\frac{c(e)}{2\sqrt{t}}\right)^2\right)\right]\\
&=t^{-1}\left[\sum_{k\in\mathbb{Z}}\exp\left(t\left(ik+\frac{T}{4t}-z\frac{i}{2\sqrt{t}}\right)^2\right)\right]^z\\
&=t^{-1/2}\left[\sum_{k\in\mathbb{Z}}\left(k-i\frac{T}{4{t}}\right)\exp\left(-t\left(k-i\frac{T}{4t}\right)^2\right)\right]\\
&=-\frac{i\pi^{3/2}}{t^2}\sum_{k\in\mathbb{Z}}k\exp\left(2\pi kT/(4t)-k^2\pi^2/t\right).\\
\end{split}
\end{equation*}
Since $\exp(aT)$ is to be understood as formal power series, we may integrate for $k\neq0$
\begin{equation*}
\begin{split}
&\ \quad k\int_0^{\infty}\frac{1}{t^2}\sum_{l=0}^{\infty}\frac{(\pi kT/(2t))^l}{l!}\exp\left(-\frac{k^2\pi^2}{t}\right)dt\\
&=k\int_0^{\infty}\sum_{l=0}^{\infty}\frac{(\pi kTx/2)^l}{l!}\exp\left(-k^2\pi^2x\right)dx\\
&=\frac{1}{k\pi^2}\sum_{l=0}^{\infty}\left(\frac{T}{2\pi k}\right)^l=\frac{1}{\pi^2}\frac{1}{k-\frac{T}{2\pi}}.\\
\end{split}
\end{equation*}
Thus, putting everything together, we have
$$2\tilde{\eta}=\frac{i}{\pi}\sum_{k\neq0}\frac{1}{\frac{T}{2\pi}-k}=\frac{i}{\pi}\left(\pi\cot\left(\frac{T}{2}\right)-\frac{2\pi}{T} \right)=\left(\coth\left(\frac{T}{2i}\right)-\frac{2i}{T}\right).$$
Obviously, we may identify the basic two-form with minus $i$ times the curvature two-form of the hermitian line, i.e. $T=-iF$; therefore, the normalized $\hat{\eta}$-form is given by
$$\hat{\eta}(\eth)=\sum_{k=1}^{\infty}\frac{B_{k+1}}{(k+1)!}\left(\frac{iF}{2\pi}\right)^k.$$

\addcontentsline{toc}{section}{References}

\bibliography{on_the_geometry_of_the_f-invariant_v2}

\end{document}